\documentclass[12pt]{article}
\usepackage{amsmath,amssymb,amsthm,fullpage,url,xypic}

\usepackage{tikz}
\usetikzlibrary{arrows,decorations.pathmorphing,decorations.pathreplacing,positioning,shapes.geometric,shapes.misc,decorations.markings,decorations.fractals,calc,patterns}

\tikzset{>=stealth',
        cvertex/.style={circle,draw=black,inner sep=1pt,outer sep=3pt},
        vertex/.style={circle,fill=black,inner sep=1pt,outer sep=3pt},
        star/.style={circle,fill=yellow,inner sep=0.75pt,outer sep=0.75pt},
        tvertex/.style={inner sep=1pt,font=\scriptsize},
        gap/.style={inner sep=0.5pt,fill=white}}

\theoremstyle{definition}
\newtheorem{thm}{Theorem}
\newtheorem{theorem}[thm]{Theorem}
\newtheorem{lemma}[thm]{Lemma}
\newtheorem{proposition}[thm]{Proposition}
\newtheorem{corollary}[thm]{Corollary}
\newtheorem{definition}[thm]{Definition}

\newtheorem{example}[thm]{Example}
\newtheorem{exercise}[thm]{Exercise}
\newtheorem{question}[thm]{Question}
\newtheorem{conjecture}[thm]{Conjecture}
\newtheorem{remark}[thm]{Remark}

\numberwithin{thm}{subsection}
\numberwithin{figure}{subsection}
\setcounter{tocdepth}{1}

\numberwithin{equation}{section}

\newcommand{\cyc}{{\operatorname{cyc}}}

\newcommand{\obar}{{\operatorname{bar}}}
\newcommand{\MCE}{\operatorname{\mathsf{MCE}}}
\newcommand{\MC}{\operatorname{MC}}
\newcommand{\HKR}{\operatorname{\mathsf{HKR}}}
\newcommand{\Lie}{\operatorname{Lie}}
\newcommand{\ad}{\operatorname{ad}}
\newcommand{\Ad}{\operatorname{Ad}}
\newcommand{\Ass}{\operatorname{Ass}}

\newcommand{\Vect}{\operatorname{Vect}}

\newcommand{\vol}{\mathsf{vol}}
\newcommand{\Span}{\operatorname{Span}}
\newcommand{\bk}{\mathbf{k}}
\newcommand{\Id}{\operatorname{Id}}

\newcommand{\tr}{\operatorname{tr}}

\newcommand{\HH}{\mathsf{HH}}

\newcommand{\symm}{\text{symm}}

\newcommand{\HP}{\mathsf{HP}}

\newcommand{\bR}{\mathbf{R}}
\newcommand{\bQ}{\mathbf{Q}}
\newcommand{\bZ}{\mathbf{Z}}

\newcommand{\bP}{\mathbf{P}}
\newcommand{\iso}{{\;\stackrel{_\sim}{\to}\;}}
\newcommand{\bC}{\mathbf{C}}
\newcommand{\bG}{\mathbf{G}}
\newcommand{\Nil}{\operatorname{Nil}}

\newcommand{\cO}{\mathcal{O}}

\newcommand{\caD}{\mathcal{D}}

\newcommand{\mfh}{\mathfrak{h}}

\newcommand{\im}{\operatorname{im}}
\newcommand{\Hom}{\operatorname{Hom}}
\newcommand{\RHom}{\operatorname{RHom}}
\newcommand{\sign}{\operatorname{sign}}

\newcommand{\End}{\operatorname{End}}
\newcommand{\poly}{{\operatorname{poly}}}
\newcommand{\rk}{\operatorname{rk}}
\newcommand{\Diff}{\operatorname{Diff}}
\newcommand{\Der}{\operatorname{Der}}
\newcommand{\Inn}{\operatorname{Inn}}
\newcommand{\Out}{\operatorname{Out}}
\newcommand{\Aut}{\operatorname{Aut}}
\newcommand{\Ind}{\operatorname{Ind}}
\newcommand{\Rep}{\operatorname{Rep}}
\newcommand{\Ext}{\operatorname{Ext}}
\newcommand{\Tor}{\operatorname{Tor}}
\newcommand{\op}{{\operatorname{op}}}

\newcommand{\mfg}{\mathfrak{g}}

\newcommand{\SL}{\mathsf{SL}}
\newcommand{\GL}{\mathsf{GL}}
\newcommand{\mfgl}{\mathfrak{gl}}
\newcommand{\mfsl}{\mathfrak{sl}}
\newcommand{\Sp}{\mathsf{Sp}}
\newcommand{\gr}{\operatorname{\mathsf{gr}}}
\newcommand{\Spec}{\operatorname{\mathsf{Spec}}}
\newcommand{\Spf}{\operatorname{\mathsf{Spf}}}
\newcommand{\Weyl}{\mathsf{Weyl}}
\newcommand{\onto}{\twoheadrightarrow}
\newcommand{\into}{\hookrightarrow}
\newcommand{\Sym}{\operatorname{\mathsf{Sym}}}

\newcommand{\bA}{\mathbf{A}}


\begin{document}
\title{Deformations of algebras in noncommutative geometry}
\author{Travis Schedler}
\date{\today}
\maketitle

\begin{abstract}
These are significantly expanded lecture notes for the author's
minicourse at MSRI in June 2012, as published in the MSRI lecture note series, with some minor additional corrections.  In these notes, following, e.g.,
\cite{Eti-enadt,Kform,EGdelpezzo}, we give an example-motivated review
of the deformation theory of associative algebras in terms of the
Hochschild cochain complex as well as quantization of Poisson
structures, and Kontsevich's formality theorem in the smooth setting.
We then discuss quantization and deformation via Calabi-Yau algebras
and potentials.
Examples discussed include Weyl algebras, enveloping algebras of Lie
algebras, symplectic reflection algebras, quasihomogeneous isolated
hypersurface singularities (including du Val singularities), and
Calabi-Yau algebras.
\end{abstract}

The exercises are a great place to learn the material more
detail. There are detailed solutions provided, which the reader is
encouraged to consult if stuck.  There are some \emph{starred} (parts
of) exercises which are quite difficult, so the reader can feel free
to skip these (or just glance at them).

There are a lot of remarks, not all of which are essential; so
many of them can be skipped on a first reading.

We will work throughout over a field $\bk$. A lot of the time we will
need it to have characteristic zero; feel free to assume this always.


\subsection*{Acknowledgements}
These notes are based on my lectures for MSRI's 2012 summer graduate
workshop on noncommutative algebraic geometry. I am grateful to MSRI
and the organizers of the Spring 2013 MSRI program on noncommutative
algebraic geometry and representation theory for the opportunity to
give these lectures; to my fellow instructors and scientific
organizers Gwyn Bellamy, Dan Rogalski, and Michael Wemyss for their
help and support; to the excellent graduate students who attended the
workshop for their interest, excellent questions, and corrections; and
to Chris Marshall and the MSRI staff for organizing the workshop. I am
grateful to Daniel Kaplan and Michael Wong for carefully studying
these notes and providing many corrections.

\section*{Introduction}
Deformation theory is ubiquitous in mathematics: given any sort of
structure it is a natural (and often deep and interesting) question to
determine its deformations. In geometry there are several types of
deformations one can consider. The most obvious is actual
deformations, such as a family of varieties (or manifolds), $X_t$,
parameterized by $t \in \bR$ or $\bC$.  Many times this is either too
difficult to study or there are not enough actual deformations (which
are not isomorphic to the original variety $X$), so it makes sense to
consider \emph{infinitesimal} deformations: this is a family of
structures $X_t$ where $t \in \bC[\varepsilon]/(\varepsilon^2)$, i.e.,
the type of family which can be obtained from an actual family by
taking the tangent space to the deformation.  Sometimes, but not
always, infinitesimal deformations can be extended to higher-order
deformations, i.e., one can extend the family to a family where $t \in
\bC[\varepsilon]/(\varepsilon^k)$ for some $k \geq 2$.  Sometimes
these extensions exist to all orders. A \emph{formal deformation} is
the same thing as a compatible family of such extensions for all $k
\geq 2$, i.e., such that restriction from order $k$ to order $j < k$
recovers the deformation at order $j$.

In commutative algebraic geometry, affine varieties correspond to
commutative algebras (which are finitely generated and have no
nilpotent elements): they are of the form $\Spec A$ for $A$
commutative.  In ``noncommutative affine algebraic geometry,''
therefore, it makes sense to study deformations of associative
algebras.  This is the main subject of these notes.  As we will see,
such deformations arise from and have applications to a wide variety
of subjects in representation theory (of algebras, Lie algebras, and
Lie groups), differential operators and D-modules, quantization,
rational homotopy theory, Calabi-Yau algebras (which are a
noncommutative generalization of affine Calabi-Yau varieties), and
many other subjects.  Moreover, many of the important examples of
noncommutative algebras studied in representation theory, such as
symplectic reflection algebras (the subject of Bellamy's chapter)
and many noncommutative projective spaces
(the subject of Rogalski's chapter)
arise in this way. The
noncommutative resolutions studied in
Wemyss's chapter
can
also be deformed, and the resulting deformation theory should be
closely related to that of commutative resolutions.

Of particular interest is the study of noncommutative deformations of
commutative algebras.  These are called quantizations. Whenever one
has such a deformation, the first-order part of the deformation (i.e.,
the derivative of the deformation) recovers a Lie bracket on the
commutative algebra, which is a derivation in each component.  A
commutative algebra together with such a bracket is called a Poisson
algebra. Its spectrum is called an (affine) Poisson variety.  By
convention, a quantization is an associative deformation of a
commutative algebra equipped with a fixed Poisson bracket, i.e., a
noncommutative deformation which, to first order, recovers the Poisson
bracket.  Poisson brackets are very old and appeared already in
classical physics (particularly Hamiltonian mechanics) and this notion
of quantization is already used in the original formulation of quantum
mechanics.  In spite of the old history, quantization has attracted a
lot of recent attention in both mathematics and physics.

One of the most important questions about quantization, which is a
central topic of these notes, is of their existence: given a Poisson
algebra, does there exist a quantization, and can one construct it
explicitly? In the most nondegenerate case, the Poisson variety is a
smooth symplectic variety; in this case, the analogous problem for
$C^\infty$ manifolds was answered in the affirmative in
\cite{DWL-espfdPLaasm}, and an important explicit construction was
given in \cite{Fed-sgcdq}.  In the general case of affine algebraic
Poisson varieties, the answer is negative; see Mathieu's example in
Remark \ref{r:mathieu} below.  A major breakthrough occurred in 1997
with Kontsevich's proof that, for arbitrary smooth $C^\infty$
manifolds, and for real algebraic affine space $\bR^n$, all Poisson
structures can be quantized. In fact, Kontsevich constructed a natural
(i.e., functorial) quantization, and indicated how to extend it to
general smooth affine (or suitable nonaffine) varieties; the details
of this extension and a study of the obstructions for nonaffine
varieties were first completed by Yekutieli \cite{Yek-dqag}, see also
\cite{VdB-gdqac}, but there have been a large body of refinements to
the result, e.g., in \cite{DTT-hgahcraf} for the affine setting, and
in \cite{CV-HcAc,CV-GfGl} for a sheaf version of the global setting.

More recently, the study of Calabi-Yau algebras, mathematically
pioneered by Ginzburg \cite{GinzCY}, has become extremely
interesting. This is a subject of overlap of all the chapters of this
book, since many of the algebras studied in all chapters are
Calabi-Yau, including many of the regular algebras studied in
Rogalski's chapter,
all of the symplectic reflection algebras
studied in Bellamy's chapter,
and all of the noncommutative
crepant resolutions of Gorenstein singularities studied in
Wemyss's chapter.
In the commutative case, Calabi-Yau
algebras are merely rings of functions on affine Calabi-Yau varieties;
the noncommutative generalization is much more interesting, but shares
some of the same properties.  Beginning with a Calabi-Yau variety, one
can consider Calabi-Yau deformations (see, e.g., \cite{VdBV-CYdnch}
for a study of their moduli).  In \cite{EGdelpezzo}, this was applied
to the quantization of del Pezzo surfaces: Etingof and Ginzburg first
reduced the problem to the affine surface obtained by deleting an
elliptic curve; this affine surface embeds into $\bC^3$.  They then
deformed the ambient smooth Calabi-Yau variety $\bC^3$, together with
the hypersurface.  One advantage of this is the fact that many
Calabi-Yau algebras can be defined only by a single noncommutative
polynomial, called the (super)potential, such that the relations for
the algebra are obtained by differentiating the potential (see, e.g.,
\cite{GinzCY,BSW}).  (It was in fact conjectured that all Calabi-Yau
algebras are obtained in this way; this was proved for graded algebras
\cite{Boc-gcyad3} and more generally for completed algebras
\cite{VdB-cyas}, but is false in general \cite{Dav-sam}.)  Thus the
deformations of $\bC^3$ studied in \cite{EGdelpezzo} are very explicit
and given by deforming the potential function.  To quantize the
original affine surface, one then takes a quotient of such a deformed
algebra by a central element. We end this chapter by explaining this
beautiful construction.

Deformations of algebras are closely related to a lot of other
subjects we are not able to discuss here.  Notably, this includes the
mathematical theory of quantum groups, pioneered in the 1980s by
Drinfeld, Jimbo, and others: this is the analogue for groups of the
latter, where one quantizes Poisson-Lie groups rather than Poisson
varieties.  Mathematically, this means one deforms Hopf algebras
rather than associative algebras, and one begins with the commutative
Hopf algebras of functions on a group.  One can moreover consider
actions of such quantum groups on noncommutative spaces, which has
attracted recent attention in, e.g., \cite{EW-sHacd,CWWZ-Hafra}.
Quantum groups also have close relationships to the formality theorem:
Etingof and Kazhdan proved in \cite{EK} that all Lie bialgebras can be
quantized to a quantum group (a group analogue of Kontsevich's
existence theorem), and Tamarkin gave a new proof of Kontsevich's
formality theorem for $\bR^n$ which used this result; in fact, this
result was stronger, as it takes into account the cup product
structure on polyvector fields $\wedge_{\cO(\bR^n)}^\bullet
\Vect(\bR^n)$, i.e., its full differential graded Gerstenhaber algebra
structure, rather than merely considering its differential graded Lie
algebra structure.  Moreover, using \cite{Tam-Fcold}, Tamarkin's proof
works over any field of characteristic zero, and requires only a
rational Drinfeld associator rather than the Etingof-Kazhdan theorem;
in this form the latter theorem also follows as a consequence
\cite{Tam-qLbfold}.


A rough outline of these notes is as follows. In Section
\ref{s:mot-ex}, we will survey some of the most basic and interesting
examples of deformations of associative algebras. This serves not
merely as a motivation, but also begins the study of the theory and
important concepts and constructions.  The reader should have in mind
these examples while reading the remainder of the text.  In
particular, we will consider Weyl algebras and algebras of
differential operators, universal enveloping algebras of Lie algebras,
quantizations of the nilpotent cone, and we will conclude by
explaining the Beilinson-Bernstein localization theorem, which relates
all of these examples and is one of the cornerstones of geometric
representation theory.

In Section \ref{s:kont-state}, we will define the notions of formal
deformations, Poisson structures, and deformation quantization.  We
culminate with the statement of Kontsevich's theorem (and its
refinements) on deformation quantization of smooth Poisson manifolds
and smooth affine Poisson varieties. We will come back to this in
Section \ref{s:dgla}.

In Section \ref{s:hoch}, we begin a systematic study of deformation
theory of algebras, focusing on their Hochschild cohomology.  This
allows us to classify infinitesimal deformations and the obstructions
to second-order deformations, as well as some theory of deforming
their modules.

In Section \ref{s:dgla} we pass from the Hochschild cohomology to the
Hochschild cochain complex, which is the structure of a differential
graded Lie algebra.  This allows us to classify formal deformations.
We then return to the subject of Kontsevich's theorem, and explain how
it follows from his more refined statement on formality of the
Hochschild cochain complex as a differential graded Lie algebra.

Finally, in Section \ref{s:cy} we discuss Calabi-Yau algebras, which
is a subject that connects all of the chapters of the book.  This came up
already in previous sections because our main examples up to this
point are all (twisted) Calabi-Yau, including Weyl algebras, universal
enveloping algebras (as well as many of their central reductions), and
symplectic reflection algebras.  In this section, we explain how to
define and deform Calabi-Yau algebras using potentials. We then
apply this to quantization of hypersurfaces in $\bC^3$, following
\cite{EGdelpezzo}.

We stress that these notes only scratch the surface of the theory
of deformations of associative algebras.  Many subjects are not
discussed, such as Gerstenhaber and Schack's detailed study via
Hochschild cohomology and their cocycles (via lifting one by one from
$k$-th to $(k+1)$-st order deformations); other important subjects are
mentioned only in the exercises, such as the Koszul deformation
principle (Theorem \ref{t:kdp}).

\section{Motivating examples}\label{s:mot-ex}
In this section, we begin with the definitions of graded associative
algebras and filtered deformations. We proceed with the fundamental
examples of Weyl algebras and universal enveloping algebras of Lie
algebras, which we define and discuss, along with the invariant
subalgebras of Weyl algebras. We then introduce the concept of Poisson
algebras, which one obtains from a filtered 
deformation of a commutative algebra, such as in the previous cases,
and define a filtered quantization, which is a filtered deformation
whose associated Poisson algebra is a fixed one.  We consider the
algebra of functions on the nilpotent cone of a semisimple Lie
algebra, and explain how to construct its filtered quantization by
central reductions of the universal enveloping Lie algebra.  We
explain how the geometry of the nilpotent cone encapsulates
representations of Lie algebras, via the Beilinson-Bernstein
theorem. Finally, we conclude by discussing an important example of
deformations of a noncommutative algebra, namely the preprojective
algebra of a quiver; this also allows us to refer to quivers in later
examples in the text.

\subsection{Preliminaries}
A central object of study for us is a graded associative algebra. First
we define a graded vector space:
\begin{definition}
A $\bZ$-graded vector space is a vector space $V = \bigoplus_{m \in \bZ} V_m$.
A homogeneous element is an element of $V_m$ for some $m \in \bZ$. For
$v \in V_m$ we write $|v|=m$.
\end{definition}
\begin{remark}
  There are two conventions for indicating the grading:
  either using a subscript, or a superscript. We will switch to the
  latter in later sections when dealing with dg algebras. See also
  Remark \ref{r:hom-cohom}.
\end{remark}
\begin{definition}
  A $\bZ$-graded associative algebra is an algebra $A= \bigoplus_{m
    \in \bZ} A_m$ with $A_m A_n \subseteq A_{m+n}$ for all $m,n \in
  \bZ$. 
%
\end{definition}
A graded associative algebra is in particular a graded vector space,
so we still write $|a|=n$ when $a \in A_n$.

The simplest (but very important) example of a graded associative algebra
is a tensor algebra $TV$ for $V$ a vector space:
\begin{definition}
  The tensor algebra $TV$ is defined as $TV := \bigoplus_{m \geq 0}
  T^m V$, with $T^m V := V^{\otimes m}$. The multiplication is the
  tensor product, and the grading is tensor degree: $(TV)_m := T^m V$.  
\end{definition}
Note that $(TV)_0 = \bk$.

The next example, which is commutative,
 is a symmetric algebra $\Sym V$ for $V$ a vector space:
\begin{definition}
  The symmetric algebra $\Sym V$ is defined by $\Sym V := TV / (xy-yx
  \mid x,y \in V)$.  It is graded again by tensor degree: $(\Sym V)_m$
  is defined to be the image of $(TV)_m = T^m V$, also denoted $\Sym^m
  V$.  The exterior algebra $\wedge V := TV / (x^2 \mid x \in V)$
  is defined similarly, with $(\wedge V)_m=\wedge^m V$.
\end{definition}
In other words, $(\Sym V)_m$ is spanned by monomials of length $m$:
$v_1 \cdots v_m$ for $v_1, \ldots, v_m \in V$.  Note that $(\Sym V)_m
= \Sym^m V := T^m V / S_m$, the quotient of the vector space $T^m V$
by the action of the symmetric group $S_m$, which is called
the $m$-th symmetric power of $V$.

Given a commutative algebra $B$ (which is finitely generated over
$\bk$ and has no nilpotent elements), we can consider equivalently the
affine variety $\Spec B$.  We use the notation $\cO(\Spec B) := B$ for
affine varieties.

There is a geometric interpretation of the above, which the reader
unfamiliar with algebraic groups can safely skip: a grading on a
commutative algebra $B$ is the same as an action of the algebraic
group $\bG_m = \bk^\times$ (the group whose $\bk$-points are nonzero
elements of $\bk$ under multiplication) on $\Spec B$, with $B_m$ the
weight space of the character $\chi(z)=z^m$ of $\bG_m$.
\begin{remark}
  In this section we will deal with many such algebras satisfying the
  commutativity constraint $xy=yx$.  We note that there is a very
  important alternative commutativity constraint, called
  supercommutativity or graded commutativity, given by $xy = (-1)^{|x|
    |y|} yx$; this constraint will come up in later sections.  The
  choice of commutativity constraint is linked to how one views the
  grading: gradings where the former (usual) constraint is applied are
  often ``weight'' gradings, coming from an action of the
  multiplicative group $\bG_m$ (otherwise known as $\bk^\times$) as
  explained above, whereas gradings where the latter type of
  constraint are applied are often ``homological'' or
  ``cohomological'' gradings, coming up, e.g., in Hochschild
  cohomology of algebras.
\end{remark}


\subsection{Weyl algebras}
Let $\bk$ have characteristic zero.  Letting $\bk \langle x_1, \ldots,
x_n \rangle$ denote the noncommutative polynomial algebra in variables
$x_1, \ldots, x_n$, one can define the $n$-th Weyl algebra as
\[
\Weyl_n := \bk \langle x_1, \ldots, x_n, y_1, \ldots, y_n \rangle / ([x_i,
y_j] - \delta_{ij}, [x_i, x_j], [y_i, y_j]),
\]
denoting here $[a,b] := ab-ba$.
\begin{exercise}\label{exer:weyl-diff}
  (a) Show that, setting $y_i := - \partial_{i}$, one obtains a
  surjective homomorphism from $\Weyl_n$ to the algebra, $\caD(\bA^n)$, of
  differential operators on $\bk[x_1, \ldots, x_n] = \cO(\bA^n)$ with
  polynomial coefficients (this works for any characteristic).

  (b)(*) Show that, assuming $\bk$ has characteristic zero, this
  homomorphism is an isomorphism.
\end{exercise}
More invariantly, recall that a \emph{symplectic vector space} is a
vector space equipped with a skew-symmetric, nondegenerate bilinear
form (called the symplectic form).
 \begin{definition}
   Let $V$ be a symplectic vector space with symplectic form
   $(-,-)$. Then, the Weyl algebra $\Weyl(V)$ is defined by
\[
\Weyl(V) = TV / (x y - yx - (x,y)).
\]
\end{definition}
(The above definition makes sense even if $(-,-)$ is
degenerate, but typically one imposes nondegeneracy.)
\begin{exercise}\label{exer:weylv}
  For every $n$, we can consider the symplectic vector space $V$ of
  dimension $2n$ with basis $(x_1,\ldots,x_n,y_1,\ldots,y_n)$, and
  form $(x_i, y_j)=\delta_{ij}=-(y_j, x_i), (x_i,x_j)=0=(y_i,y_j)$,
  for all $1\leq i,j \leq n$.  Show that $\Weyl(V) = \Weyl_n$ (i.e., check
  that $\Weyl(V)$ is generated by the $x_i$ and $y_i$, with the same
  relations as $\Weyl_n$.)
\end{exercise}
The Weyl algebra deforms the algebra $\Sym V$ in the following sense:
\begin{definition}
  An (increasing) filtration on a vector space $V$ is a sequence of
  subspaces $V_{\leq m} \subseteq V$, such that $V_{\leq m} \subseteq
  V_{\leq n}$ for all $m \leq n$.  This is called nonnegative if
  $V_{\leq m} = 0$ whenever $m < 0$.  It is called exhaustive if $V =
  \bigcup_{m} V_{\leq m}$.  It is called Hausdorff if
  $\bigcap_m V_{\leq m} = 0$.  A vector space with a filtration is
  called a filtered vector space.
\end{definition}
We will only consider exhaustive, Hausdorff filtrations, so we omit those
terms.  Most of the time, we will only consider nonnegative filtrations
(which immediately implies Hausdorff).  We will also usually only use
increasing filtrations, so we often omit that term.
\begin{definition}
  An (increasing) filtration on an associative algebra $A$ is an
  increasing filtration $A_{\leq m}$ such that
\[
  A_{\leq m} \cdot A_{\leq n} \subseteq A_{\leq (m+n)}, \forall m,n
  \in \bZ.
\]
An algebra equipped with such a filtration is called a filtered algebra.
\end{definition}
\begin{definition}
  For a filtered algebra $A = \bigcup_{m \geq 0} A_{\leq m}$, the 
  associated graded algebra is  $\gr A := \bigoplus_{m} A_{\leq m} / A_{\leq (m-1)}$. Let $\gr_m A = (\gr A)_m = A_{\leq m} /
  A_{\leq (m-1)}$.
\end{definition}
We now return to the Weyl algebra. It is equipped with two different
nonnegative filtrations, the \emph{additive} or \emph{Bernstein}
filtration, and the \emph{geometric} one. We first consider the
additive filtration.  This is the filtration by degree of
noncommutative monomials in the $x_i$ and $y_i$, i.e.,
\[
\Weyl(V)_{\leq n} = \Span \{v_1 \cdots v_m \mid v_i \in V, m \leq n\}.
\]
The \emph{geometric} filtration, on the other hand, assigns the $x_i$
degree $0$ and only the $y_i$ degree one.  This filtration has the
advantage that it generalizes from $\Weyl(V)$ to the setting of
differential operators on arbitrary varieties, since there one obtains
the filtration by order of differential operators, but has the
disadvantage that the full symmetry group $\Sp(V)$ does not preserve
the filtration, but only the subgroup $\GL(U)$, where $U = \Span
\{ x_i \}$ (which acts on $U' = \Span\{ y_i \}$ by
the inverse transpose matrix of the one on $U$, in the bases of the
$y_i$ and $x_i$, respectively).
\begin{exercise} \label{exer:weyl-gr} Continuing to assume that $\bk$
  is in characteristic zero, show that, with either the additive or
  geometric filtration, $\gr \Weyl(V)$ is isomorphic to the symmetric
  algebra $\Sym V$, but that the induced gradings on $\Sym V$ are
  different!   With the additive filtration, $\Sym V \cong \gr
  \Weyl(V)$ with the grading placing $V$ in degree one (i.e., the
  usual grading on the symmetric algebra placing $\Sym^k V$ in degree
  $k$).  With the geometric filtration, show that $V = V_0 \oplus V_1$
  where $V_0$ is spanned by the $x_i$ and $V_1$ is spanned by the
  $y_i$.
  
  The easiest way to do this is to use Exercise \ref{exer:weyl-diff}
  and to show that a basis for $\caD(\bA^n)$ is given by monomials of
  the form $f(x_1, \ldots, x_n) g(y_1, \ldots, y_n)$, where $f$ and $g$
   are considered as commutative monomials in the commutative subalgebras $\bk[x_1,\ldots,x_n]$ and $\bk[y_1,\ldots,y_n]$, respectively.  
\end{exercise}
\begin{exercise} \label{exer:weyl-basis}
More difficult: the algebra $\Weyl(V)$ can actually
  be defined over an arbitrary field (or even commutative ring) $\bk$.
  In general, show that, for $V$ the vector space with basis the $x_i$
  and $y_i$ as above (or the free $\bk$-module in the case $\bk$ is a
  commutative ring), a $\bk$-linear basis of $\Weyl(V)$ can be
  obtained via monomials $f(x_1,\ldots,x_n) g(y_1,\ldots,y_n)$ as in
  the previous exercise, and conclude that the canonical homomorphism
  $\Sym(V) \to \gr \Weyl(V)$ is still an isomorphism.

  To do this, first note that $\Sym(V) \to \gr \Weyl(V)$ is obviously
  surjective; we just have to show injectivity.  Next note that, if $R
  \subseteq S$ are commutative rings, and $V$ an $R$-module with a
  skew-symmetric pairing, then $\Weyl(V)$ can be defined, and that
  $\Weyl(V \otimes_R S) = \Weyl(V) \otimes_R S$.  So the desired
  statement reduces to the case $\bk=\mathbf{Z}$, with $V$ the free
  module generated by the $x_i$ and $y_i$.  There one can see that the
  desired monomials are linearly independent by using their action by
  differential operators on $\mathbf{Z}[x_1,\ldots,x_n]$, i.e.,
  sending $f(x_1,\ldots,x_n) g(y_1,\ldots,y_n)$ to $f(x_1,\ldots,x_n)
  g(-\partial_1,\ldots,-\partial_n)$.  This implies injectivity of
  $\Sym(V) \to \gr \Weyl(V)$.
\end{exercise}
This motivates the following:
\begin{definition}\label{d:filt-def}
A filtered deformation of a graded algebra $B$ is a filtered algebra $A$
such that $\gr(A) \cong B$ (as graded algebras).
\end{definition}
In the case that $B$ is commutative and $A$ is noncommutative, we will
call such a deformation a \emph{filtered quantization}. We will give a
formal definition in \S \ref{ss:qpa}, once we discuss Poisson
algebras.

\subsubsection{Invariant subalgebras of Weyl algebras}\label{sss:iswa}
Recall that the symplectic group $\Sp(V)$ is the group of linear
transformations $V \to V$ preserving the symplectic form $(-,-)$,
i.e., $\Sp(V) = \{T: V \to V \mid (T(v),T(w))=(v,w), \forall v,w \in
V\}$.

Observe that $\Sp(V)$ acts by algebra automorphisms of $\Weyl(V)$, and
that it preserves the additive (but not the geometric) filtration.
Let $\Gamma < \Sp(V)$ be a finite subgroup of order relatively prime
to the characteristic of $\bk$.  Then, one can consider the invariant
subalgebras $\Weyl(V)^\Gamma$ and $\Sym(V)^\Gamma$.

The latter can also be viewed as the algebra of polynomial functions
on the singular quotient $V^*/\Gamma$.  Recall that an affine variety
$X$ is defined as the spectrum $\Spec \cO(X)$ of a commutative
$\bk$-algebra $\cO(X)$ (to be called a variety, $\cO(X)$ should be finitely
generated over $\bk$ and have no nilpotents).  Then, for $V$ a vector
space, the spectrum $\Spec \Sym V$ is identified with the dual vector
space $V^*$ via the correspondence between elements $\phi \in V^*$ and
maximal ideals $\mathfrak{m}_\phi \subseteq \Sym V$, of elements of
$\Sym V$ such that evaluating everything in $V$ against $\phi$
simultaneously yields zero, i.e., $\sum_i v_{i,1} \cdots v_{i,j_i} \in
\mathfrak{m}_\phi$ if and only if $\sum_i \phi(v_{i,1}) \cdots
\phi(v_{i,j_i}) = 0$, for $v_{i,k} \in V$ and some $j_i \geq 1$.

Next, recall that an action of a discrete group $\Gamma$ on an affine
variety $X = \Spec \cO(X)$ is the same as an action of $\Gamma$ on the
commutative algebra $\cO(X)$.  
 The following
definition is then standard:
\begin{definition}
  The quotient $X/\Gamma$ of an affine variety $X$ by a discrete group
  $\Gamma$ is the spectrum, $\Spec \cO(X)^\Gamma$, of the algebra of
  invariants, $\cO(X)^\Gamma$.
\end{definition}
Therefore, one identifies $V^*/\Gamma$ with $\Spec (\Sym V)^\Gamma$,
i.e., $(\Sym V)^\Gamma = \cO(V^*/\Gamma)$.

Later on we will also have use for the case where $\Gamma$ is replaced
by a non-discrete algebraic group, in which case the same definition
is made with a different notation:
\begin{definition}
  The (categorical) quotient $X/\!/G$ is $\Spec \cO(X)^G$.
\end{definition}
The reason for the (standard) double slash $/\!/$ and for
``categorical'' in parentheses is to distinguish from other types of
quotients of $X$ by $G$, most notably the stack and GIT (geometric invariant
theory) quotients, which we will not need in these notes.

Let us return to the setting of $\Gamma < \Sp(V)$ which is relatively
prime to the characteristic of $\bk$.
One easily sees that $\Weyl(V)^G$ is filtered (using the additive
filtration)\footnote{When $G$ preserves the span of the $x_i$, then
  it also preserves the geometric filtration, and in this case
  $\Weyl(V)^G$ is also filtered with the geometric filtration. More
  generally, if $G$ acts on an affine variety $X$, one can consider
  the $G$-invariant differential operators $\caD(X)^G$.} and that
\[
\gr(\Weyl(V)^\Gamma) \cong \Sym(V)^\Gamma =: \cO(V^*/\Gamma).
\]
In the exercises, we will see that the algebra $\Weyl(V)^G$ can be
further deformed; this yields the so-called spherical symplectic
reflection algebras, which are an important subject of
Bellamy's chapter.
\begin{example}\label{ex:duval2}
  The simplest case is already interesting: $V = \bk^2$ and $\Gamma =
  \{\pm\Id\} \cong \bZ/2$, with $\bk$ not having characteristic two.
  Then, $$\Sym(V)^\Gamma = \bk[x^2,xy,y^2] \cong \bk[u,v,w]/(v^2-uw),$$
  the algebra of functions on a singular quadric hypersurface in
  $\bA^3$.  The noncommutative deformation, $\Weyl(V)^\Gamma$, on the
  other hand, is \emph{homologically smooth} for all $V$ and $\Gamma <
  \Sp(V)$ finite, i.e., the algebra $A := \Weyl(V)^\Gamma$ has a
  finitely-generated projective $A$-bimodule resolution.  In fact, it
  is a \emph{Calabi-Yau algebra of dimension $\dim V$} (see Definition
  \ref{d:cya} below).
\end{example}

\subsection{Universal enveloping algebras of Lie algebras}
Next, we consider the enveloping algebra $U \mfg$, which
deforms the symmetric algebra $\Sym \mfg$.  This is the algebra
whose representations are the same as representations of the Lie algebra
$\mfg$ itself.

\subsubsection{The enveloping algebra}
Let $\mfg$ be a Lie algebra with Lie bracket $[-,-]$.  Then the
representation theory of $\mfg$ can be restated in terms of the
representation theory of its \emph{enveloping algebra},
\begin{equation}
U\mfg := T \mfg/ (xy-yx-[x,y] \mid x,y \in \mfg),
\end{equation}
where $T \mfg$ is the tensor algebra of $\mfg$ and $(-)$ denotes
the two-sided ideal generated by this relation.
\begin{proposition}
A representation $V$ of $\mfg$ is the same as a representation of $U \mfg$.
\end{proposition}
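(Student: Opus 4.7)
The plan is to show this via the universal property of $U\mfg$ as the left adjoint to the forgetful functor from associative algebras (with commutator bracket) to Lie algebras. The two-sided ideal in the definition of $U\mfg$ is precisely designed to enforce this universal property.

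First I would unpack the two notions concretely. A representation of $\mfg$ is a Lie algebra homomorphism $\rho : \mfg \to \gl(V)$, where $\gl(V) = \End(V)$ is endowed with the commutator bracket $[A,B] = AB - BA$. A representation of $U\mfg$ is an associative algebra homomorphism $\pi : U\mfg \to \End(V)$ (sending $1 \mapsto \Id_V$). I will construct mutually inverse assignments between these two data.

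For the direction $\rho \mapsto \pi$: by the universal property of the tensor algebra, the linear map $\rho : \mfg \to \End(V)$ extends uniquely to an associative algebra homomorphism $\tilde\rho : T\mfg \to \End(V)$. The key check is that $\tilde\rho$ kills the two-sided ideal generated by $xy - yx - \{x,y\}$ for $x, y \in \mfg$; this is exactly the statement that $\rho([x,y]_\mfg) = \rho(x)\rho(y) - \rho(y)\rho(x)$, i.e., that $\rho$ is a Lie homomorphism. Hence $\tilde\rho$ descends to $\pi : U\mfg \to \End(V)$.

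For the reverse direction $\pi \mapsto \rho$: compose $\pi$ with the canonical linear map $\iota : \mfg \hookrightarrow T\mfg \twoheadrightarrow U\mfg$ to get $\rho := \pi \circ \iota : \mfg \to \End(V)$. The defining relations in $U\mfg$ imply $\iota(\{x,y\}) = \iota(x)\iota(y) - \iota(y)\iota(x)$, so applying the associative homomorphism $\pi$ shows that $\rho$ respects brackets. The two constructions are manifestly mutually inverse, since $\pi$ is determined on generators $\iota(\mfg) \subset U\mfg$ and these generators are what $\rho$ records. The only genuine subtlety — and the point one might consider the main obstacle — is the implicit fact that $\iota : \mfg \to U\mfg$ is itself injective, so that ``restriction to $\mfg$'' is meaningful; this follows from the PBW theorem, though it is not strictly needed to establish the bijection of representations, which works whether or not $\iota$ is an embedding.
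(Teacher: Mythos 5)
Your proof is correct and follows essentially the same route as the paper's: extend a Lie action on $V$ to $T\mfg$ and check it kills the defining relations (the paper does this by writing out the action $x_1\cdots x_n(v) = x_1(\cdots(x_n(v))\cdots)$ explicitly, you do it by invoking the universal property of $T\mfg$), and conversely restrict a $U\mfg$-action to $\mfg$. The paper in fact relegates the adjoint-functor viewpoint you lead with to the remark immediately following its proof, and your closing comment correctly identifies that injectivity of $\mfg\to U\mfg$ (PBW) is a separate matter not needed for the bijection of representations.
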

\begin{proof}
  If $V$ is a representation of $\mfg$, we define the action of $U
  \mfg$ by
\[
x_1 \cdots x_n(v) = x_1(x_2(\cdots(x_n(v))\cdots)).
\]
We only have to check the relation defining $U \mfg$:
\begin{equation}\label{e:ug-reln}
(xy-yx-[x,y])(v) = x(y(v))-y(x(v)) - [x,y](v),
\end{equation}
which is zero since the action of $\mfg$ was a Lie action.  

For the opposite direction, if $V$ is a representation of $U \mfg$, we
define the action of $\mfg$ by restriction from $U \mfg$ to $\mfg$.  This
defines a Lie action since the LHS of \eqref{e:ug-reln} is zero.
\end{proof}
\begin{remark}
More conceptually, the assignment $\mfg \mapsto U \mfg$ defines a
functor from Lie algebras to associative algebras. Then the above
statement says
\[
\Hom_{\Lie}(\mfg, \End(V)) = \Hom_{\Ass}(U \mfg, \End(V)),
\]
where $\Lie$ denotes Lie algebra homomorphisms, and $\Ass$ denotes
associative algebra homomorphisms. This statement is a consequence of the
statement that $\mfg \mapsto U \mfg$ is a functor from Lie algebras to
associative algebras which is left adjoint to the restriction functor
$A \mapsto A^- := (A, [a,b] := ab-ba)$.  That is, we can write the above
equivalently as
\[
\Hom_{\Lie}(\mfg, \End(V)^-) = \Hom_{\Ass}(U \mfg, \End(V)).
\]
\end{remark}
\subsubsection{$U\mfg$ as a filtered deformation of $\Sym \mfg$}
 There is a natural increasing filtration on
$U \mfg$  by
assigning $\mfg$ degree one: that is, we define
\[
(U \mfg)_{\leq m} = \langle x_1 \cdots x_j \mid x_1, \ldots, x_j \in \mfg,
j \leq m \rangle.
\]

There is a surjection of algebras,
\begin{equation}\label{e:pbw}
  \Sym \mfg \onto \gr (U \mfg), \quad (x_1 \cdots x_m) \mapsto \gr (x_1 \cdots x_m),
\end{equation}
which is well-defined since, in $\gr (U \mfg)$, one has $xy-yx=0$.
The Poincar\'e-Birkhoff-Witt theorem states that this is an
isomorphism:
\begin{theorem}\label{t:pbw}
  (PBW) The map \eqref{e:pbw} is an isomorphism.
\end{theorem}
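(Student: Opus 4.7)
The plan is to prove the map is both surjective and injective, where surjectivity is easy and injectivity is the substance of the theorem. Fix a totally ordered basis $x_1, \ldots, x_n$ of $\mfg$ (or a well-ordering if $\mfg$ is infinite-dimensional). Call the monomials $x_{i_1} x_{i_2} \cdots x_{i_m}$ with $i_1 \leq i_2 \leq \cdots \leq i_m$ \emph{ordered monomials}; their images span $\Sym\mfg$ freely.

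For surjectivity, I would show by induction on $m$ that ordered monomials of length $\leq m$ span $(U\mfg)_{\leq m}$. The inductive step uses the defining relation $x_j x_i = x_i x_j + \{x_j, x_i\}$ for $j > i$: an arbitrary product can be sorted by adjacent transpositions, and each transposition produces either a swap (still length $m$) or the Lie bracket term $\{x_j, x_i\} \in \mfg \subseteq (U\mfg)_{\leq 1}$, which when multiplied with the remaining factors lies in $(U\mfg)_{\leq m-1}$. Hence the map \eqref{e:pbw} is onto each $\gr_m U\mfg$.

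For injectivity, the standard route is to construct a representation $\rho : U\mfg \to \End(\Sym\mfg)$ such that ordered monomials in $U\mfg$ act on $1 \in \Sym\mfg$ as the corresponding ordered monomials in $\Sym\mfg$; linear independence of the latter in $\Sym\mfg$ then forces linear independence of the former in $U\mfg$. Concretely, one defines $\rho(x_i)$ on an ordered monomial $x_I = x_{i_1} \cdots x_{i_m}$ with $i_1 \leq \cdots \leq i_m$ by
\[
\rho(x_i)(x_I) = \begin{cases} x_i x_I, & i \leq i_1, \\ x_{i_1} \rho(x_i)(x_{i_2}\cdots x_{i_m}) + \rho(\{x_i, x_{i_1}\})(x_{i_2}\cdots x_{i_m}), & i > i_1, \end{cases}
\]
interpreted recursively on $m$ (with the convention that the bracket term is expanded linearly in its first argument). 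The main obstacle, and the only real content of the theorem, is verifying that this recursion assembles into a Lie algebra map $\mfg \to \End(\Sym\mfg)$, i.e., that $[\rho(x_i), \rho(x_j)] = \rho(\{x_i, x_j\})$. Equivalently, this is the confluence check in Bergman's diamond lemma for the rewriting system $x_j x_i \to x_i x_j + \{x_j, x_i\}$ for $j > i$: after reducing a length-three monomial $x_i x_j x_k$ with $i > j > k$ in two different ways, the discrepancy vanishes precisely by the Jacobi identity for $\{-,-\}$.

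Given this verification, $\rho$ extends to a representation of $U\mfg$ on $\Sym\mfg$. For any ordered monomial, induction shows $\rho(x_{i_1} \cdots x_{i_m})(1) = x_{i_1} \cdots x_{i_m}$ in $\Sym\mfg$. Any linear relation in $\gr U\mfg$ among ordered monomials would therefore yield the same relation in $\Sym\mfg$, which is impossible. Combined with surjectivity, this shows \eqref{e:pbw} is an isomorphism. The expected difficulty is entirely concentrated in the Jacobi-identity check for well-definedness of $\rho$; once that is in place the rest is bookkeeping.
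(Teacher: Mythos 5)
The paper states the PBW theorem without proof, quoting it as a classical result, so there is no in-text argument for me to compare against. Your proposal follows the standard representation-on-$\Sym\mfg$ route (Birkhoff--Witt, as in Humphreys or Jacobson), which is the correct strategy, and the surjectivity argument is fine. There is, however, a genuine error in your recursion defining $\rho$ in the case $i > i_1$: the first term should be the operator $\rho(x_{i_1})$ applied to $\rho(x_i)(x_{i_2}\cdots x_{i_m})$, \emph{not} multiplication by the element $x_{i_1}$ inside the commutative ring $\Sym\mfg$. As written, your $\rho$ is not in general a Lie algebra representation, and the confluence check you allude to would simply fail.

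Concretely, take $\mfg$ three-dimensional with ordered basis $x_1 < x_2 < x_3$ and brackets $\{x_2,x_1\}=x_1$, $\{x_3,x_2\}=x_1$, $\{x_3,x_1\}=0$ (Jacobi holds). Your recursion gives $\rho(x_3)(x_2)=x_2x_3+x_1$ and then
\[
\rho(x_3)(x_2^2) = x_2\,(x_2x_3+x_1) + \rho(x_1)(x_2) = x_2^2 x_3 + 2\,x_1x_2,
\]
whereas in $U\mfg$ the ordered normal form of $x_3 x_2^2$ is $x_2^2 x_3 + 2\,x_1 x_2 + x_1$; the extra $x_1$ arises from $x_2 x_1 = x_1 x_2 + x_1$, which the naive multiplication by $x_{i_1}$ does not see. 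One can check directly that $[\rho(x_3),\rho(x_2)](x_2) = x_1 x_2 - x_1$ while $\rho(\{x_3,x_2\})(x_2) = x_1 x_2$, so the bracket relation fails.

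Once the first term is replaced by $\rho(x_{i_1})\bigl(\rho(x_i)(x_{i_2}\cdots x_{i_m})\bigr)$, a second point needs care: $\rho(x_i)(x_{i_2}\cdots x_{i_m})$ can have degree $m$, so you are applying $\rho(x_{i_1})$ to degree-$m$ elements while in the middle of defining $\rho$ on degree $m$. The standard fix is a double induction (on total degree and then on the leading index), together with the companion lemma that $\rho(x_i)(x_I) - x_i x_I$ has degree at most $|I|$; that lemma guarantees the recursion closes up, since the leading ordered monomial of $\rho(x_i)(x_{i_2}\cdots x_{i_m})$ has first index $\geq i_1$ and hence $\rho(x_{i_1})$ acts on it by plain multiplication. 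You have correctly identified that the Jacobi identity carries the essential content, and your pointer to Bergman's diamond lemma is a genuinely cleaner and logically equivalent route: the reduction-system formulation avoids the representation construction and the double induction, leaving only the single overlap-ambiguity check on $x_k x_j x_i$ for $k>j>i$, which is resolved by Jacobi.
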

The PBW is the \emph{key} property that says that the deformations
have been deformed in a \emph{flat} way, so that the algebra has not
gotten any smaller (the algebra cannot get bigger by a filtered
deformation of the relations, only smaller).  This is a very special
property of the deformed relations: see the following exercise.
\begin{exercise}\label{exer:def-reln}
  Suppose more generally that $B = TV / (R)$ for $R \subseteq TV$ a
  homogeneous subspace (i.e., $R$ is spanned by homogeneous
  elements). Suppose also that $E \subseteq TV$ is an arbitrary
  filtered deformation of the relations, i.e., $\gr E = R$.  Let
  $A := TV/(E)$.  Show that
  there is a canonical surjection
\[
B \onto \gr A.
\]
So by deforming relations, the algebra $A$ can only get smaller than
$B$ (by which we mean that the above surjection is not injective), and
cannot get larger.  In general, it can get smaller: see Exercise
\ref{exer:nonflat-def}  for an example where the
above surjection is not injective; in that example, $B$
is infinite-dimensional and $\gr A$ is one-dimensional. In the case when
the above surjection is injective, we call the deformation $A$
\emph{flat}, which is equivalent to saying that $A$ is a filtered
deformation in the sense of Definition \ref{d:filt-def}.
\end{exercise}

\subsection{Quantization of Poisson algebras}\label{ss:qpa}
As we will explain, the isomorphism \eqref{e:pbw} is compatible with
the natural Poisson structure on $\Sym \mfg$.
\subsubsection{Poisson algebras}
\begin{definition}
A Poisson algebra is a commutative algebra $B$ equipped with a Lie bracket
$\{-,-\}$ satisfying the Leibniz identity:
\[
\{ab,c\} = a\{b,c\} + b \{a,c\}.
\]
\end{definition}
Now, let $B := \Sym \mfg$ for $\mfg$ a Lie algebra with bracket
$[-,-]$. Then, $B$ has a canonical Poisson bracket which extends the
Lie bracket:
\begin{equation}\label{e:sym-g}
\{a_1 \cdots a_m, b_1 \cdots b_n\} = \sum_{i,j} [a_i, b_j]
a_1 \cdots \hat a_i \cdots a_m b_1 \cdots \hat b_j \cdots b_n.
\end{equation}
Here and in the sequel, the hat denotes that the corresponding term is
\emph{omitted} from the list of terms: so $a_i$ and $b_j$ do not
appear in the products, but all other $a_k$ and $b_\ell$ do (for $k
\neq i$ and $\ell \neq j$).

\subsubsection{Poisson structures on associated graded algebras}
Generally, let $A$ be an increasingly filtered
algebra such that $\gr A$ is commutative.  This implies that
there exists $d \geq 1$ such that
that
\begin{equation}\label{e:mnd}
[A_{\leq m}, A_{\leq n}] \subseteq A_{\leq (m+n-d)}, \forall m,n.
\end{equation}
One can always take $d = 1$, but in general we want to take $d$
maximal so that the above property is satisfied; see Exercise
\ref{exer:zeropbr} below. Fix a value of $d \geq 1$ satisfying
\eqref{e:mnd}.

We claim that $\gr A$ is canonically Poisson, with the bracket, for
$a \in A_{\leq m}$ and $b \in A_{\leq n}$,
\[
\{\gr_m a, \gr_n b\} := \gr_{m+n-d} (ab-ba).
\]
\begin{exercise}\label{exer:poissbr}
  Verify that the above is indeed a Poisson bracket, i.e., it
  satisfies the Lie and Leibniz identities.
\end{exercise}
\begin{exercise}\label{exer:zeropbr}
Suppose that $d \geq 1$ is \emph{not} the maximum possible value such
that \eqref{e:mnd} is satisfied.  Show that the Poisson bracket on
$\gr A$ is zero.  This explains why we usually will take $d$ to be the
maximum possible.
\end{exercise}
We conclude that $\Sym \mfg$ is equipped with a Poisson bracket by
Theorem \ref{t:pbw}, i.e., by the isomorphism \eqref{e:pbw}, taking $d := 1$.
\begin{exercise}\label{exer:pb-lie}
Verify that the Poisson bracket on $\Sym \mfg$ obtained from \eqref{e:pbw}
is the same as the one of \eqref{e:sym-g}, for $d := 1$.
\end{exercise}
\begin{exercise}\label{exer:weyl-symv}
  Equip $\Sym V$ with the unique Poisson bracket such that
  $\{v,w\}=(v,w)$ for $v, w \in V$. This bracket has degree $-2$.
  Show that, with the additive filtration, $\gr \Weyl(V) \cong \Sym V$
  as Poisson algebras, where $d=2$ in \eqref{e:mnd}.  

  On the other hand, show that, with the geometric filtration, one can
  take $d=1$, and then one obtains an isomorphism $\gr \Weyl(V) \cong
  \Sym V'$ , where $V'$ is the same underlying vector space as $V$,
  but placing the $x_i$ in degree zero and the $y_i$ in degree one (so
  $V' = (V')_0 \oplus (V')_1$ with $\dim (V')_0 = \dim (V')_1 =
  \frac{1}{2} \dim V$). The Poisson bracket on $\Sym V'$ is given by
  the same formula as for $\Sym V$.
\end{exercise}

\subsubsection{Filtered quantizations}
The preceding example motivates the definition of a filtered quantization:
\begin{definition}
  Let $B$ be a graded Poisson algebra, such that the Poisson bracket
  has negative degree $-d$. Then a \emph{filtered quantization} of $B$
  is a filtered associative algebra $A$ such that \eqref{e:mnd} is
  satisfied, and such that $\gr A \cong B$ as Poisson algebras.
\end{definition}
Again, the key property here is that $\gr A \cong B$.  Since, in the
case where $A = U \mathfrak{g}$, this property is the PBW theorem, one
often refers to this property in general as the "PBW property." In
many examples of $B$ and $A$ of interest, proving that this property
holds is an important theorem, which is often called a "PBW theorem."

\subsection{Algebras of differential operators}\label{ss:ado}
As mentioned above, when $\bk$ has characteristic zero, the Weyl
algebra $\Weyl_n = \Weyl(\bk^{2n})$ is isomorphic to the algebra of
differential operators on $\bk^n$ with polynomial coefficients.

More generally, we can define
\begin{definition}[Grothendieck]
  Let $B$ be a commutative $\bk$-algebra.  We define the space
  $\Diff_{\leq m}(B)$ of differential operators \emph{of order $\leq
    m$} inductively on $m$.  For $a \in B$ and $\phi \in \End_\bk(B)$,
  let $[\phi,a] \in \End_\bk(B)$ be the linear operator
\[
[\phi,a](b) := \phi(ab) - a\phi(b), \forall b \in B.
\]
Then we define
\begin{gather}
  \Diff_{\leq 0}(B) := \{\phi \in \End_{\bk}(B) \mid [\phi, a] = 0,
  \forall a \in B\}
  = \End_B(B) \cong B; \\
  \Diff_{\leq m}(B) := \{\phi \in \End_{\bk}(B) \mid [\phi, a] \in
  \Diff_{\leq (m-1)}(B), \forall a \in B\}.
\end{gather}
Let $\Diff(B) := \bigcup_{m \geq 0} \Diff_{\leq m}(B)$.
\end{definition}
\begin{exercise}\label{exer:diffb}
  Verify that $\Diff(B)$ is a nonnegatively filtered associative
  algebra whose associated graded algebra is commutative.
\end{exercise}
Now, suppose that $B$ is finitely generated commutative and $X :=
\Spec(B)$. Then the global vector fields on $X$ are the same
as $\bk$-algebra derivations of the algebra $B$,
\[ \begin{array}{rl}
\Vect(X) &:=\ \Der_{\bk}(\cO(X),\cO(X)) \\ \noalign{\vskip 5pt}
&:=\ \{\phi \in \Hom_\bk(\cO(X),\cO(X)) \mid
\phi(fg) = \phi(f) g + f \phi(g), \forall f,g \in \cO(X)\}.
\end{array}
\]
This is naturally a $\cO(X)$-module.  Note that it can also be viewed
as global sections of the tangent sheaf, denoted by $T_X$ (which is in
general defined so as to have sections on open affine subsets given by
derivations as above).

Recall that $T^* X$, the total space of the cotangent bundle, can be
defined as $\Spec \Sym_{\cO(X)} \Vect(X)$.  Points of $T^*X$ are in
bijection with pairs $(x,p)$ where $x \in X$ and $p \in T^*_x X =
\mathfrak{m}_x/\mathfrak{m}_x^2$, where $\mathfrak{m}_x \subseteq
\cO(X)$ is the maximal ideal of functions vanishing at $x$. In the
case where $X$ is a smooth affine complex variety, this is the same as
a pair of a point of the complex manifold $X$ and a cotangent vector
at $x$.





Now we can state the result that, for smooth affine varieties in
characteristic zero, $\Diff(\cO(X))$ quantizes the cotangent bundle:
\begin{proposition}\label{p:diff-quant}
  If $X$ is a smooth (affine) variety and $\bk$ has characteristic
  zero, then as Poisson algebras,
\[
\gr \Diff(\cO(X)) \cong \Sym_{\cO(X)} \Vect(X) \cong \cO(T^* X).
\]
\end{proposition}
\begin{remark}
  When $X$ is not affine (but still smooth), the above generalizes if
  we replace the algebras $\cO(X)$ and $\cO(T^* X)$ by the sheaves
  $\cO_X$ and $\cO_{T^* X}$ on $X$, and the global vector fields
  $\Vect(X)$ by the tangent sheaf $T_X$. The material in the remainder
  of this section also generalizes similarly to the smooth nonaffine
  context.
\end{remark}

The proposition requires some clarifications.  First, since $\Vect(X)$ is
actually a Lie algebra, we obtain a Poisson structure on $\Sym_{\cO(X)}
\Vect(X)$ by the formula
\[
\{\xi_1 \cdots \xi_m, \eta_1 \cdots \eta_n\} = \sum_{i,j} [\xi_i,
\eta_j] \cdot \xi_1 \cdots \hat \xi_i \cdots \xi_m \eta_1 \cdots \hat
\eta_j \cdots \eta_n.
\]

Proposition \ref{p:diff-quant} says that, for smooth affine
varieties, the algebra $\Diff(\cO(X))$ quantizes $\cO(T^*X)$. To prove
this result, we will want to have an alternative construction
 of $\Diff(\cO(X))$:
\begin{definition}
The universal enveloping algebroid $U_{\cO(X)}(\Vect(X))$ is the quotient of
the usual enveloping algebra
$U(\Vect(X)) = U_{\bk}(\Vect(X))$ by the relations
\begin{equation}
  f \cdot \xi = f\xi,  \quad \xi \cdot f = \xi(f) + f\xi, 
  \quad f \in \cO(X), \quad \xi \in \Vect(X).
\end{equation}
\end{definition}
\begin{remark}\label{r:lie-algebroid}
  More generally, the above definition extends to the setting when we
  replace $\Vect(X)$ by (global sections of) an arbitrary Lie
  algebroid $L$ over $X$.  Namely, such an $L$ is a
  Lie algebra which is an $\cO(X)$-module together with a
  $\cO(X)$-linear map $a: L \to T_X$ satisfying the Leibniz rule,
\[ [\xi, f \eta] = a(\xi)(f) \eta + f [\xi,\eta], \quad \forall
\xi,\eta \in L, \quad \forall f \in \cO(X).
\]
This implies that the anchor map $a$ is a homomorphism of Lie algebras.
Note that a simple example of such an $L$ is $\Vect(X)$ itself, with
$a = \Id$.  One then defines the universal enveloping algebroid of $L$
as $U_{\cO(X)} L := U L / (f \cdot \xi - f\xi, \xi \cdot f - a(\xi)(f)
- f\xi)$.
\end{remark}
\begin{remark}
  The universal enveloping algebroid can also be defined in a way
  which generalizes the definition of the usual enveloping
  algebra. Namely, if $L$ is as in the previous remark,
  define a new $\cO(X)$-bimodule structure on $L$ by giving the usual
  left action, and defining the right multiplication by $\xi \cdot f =
  a(\xi)(f) + f\xi$.  Given any bimodule $M$ over a (not-necessarily
  commutative) associative algebra $A$, we can define the tensor
  algebra $T^{\text{nc}}_A M := \bigoplus_{m \geq 0} M^{\otimes_m A}$.
  Then, $U_{\cO(X)} L := T^{\text{nc}}_{\cO(X)} L / (\xi \cdot \eta -
  \eta \cdot \xi - [\xi,\eta])$.  Then, in the case that $\cO(X)=\bk$
  itself, this recovers the usual definition of the enveloping algebra
  $UL = U_{\bk} L$.
\end{remark}
\begin{proof}[Sketch of proof of Proposition \ref{p:diff-quant}]
  Filter  $U_{\cO(X)} \Vect(X)$  by making 
  $(U_{\cO(X)} \Vect(X))_{\leq m}$  the image of $T^{\leq m}(\Vect(X)) =
  \bigoplus_{0 \leq i \leq m} \Vect(X)^{\otimes i}$ under the defining
  quotient.  Then we have a natural map $\Sym_{\cO(X)} \Vect(X) \to \gr
  U_{\cO(X)} \Vect(X)$.

  Next, if $\Vect(X)$ is free, or more generally projective
  (equivalently, $T_X$ is a locally free sheaf), as will be true when
  $X$ is smooth, then the PBW theorem generalizes to show that the
  natural map $\Sym_{\cO(X)} \Vect(X) \to \gr U_{\cO(X)} \Vect(X)$ is
  an isomorphism.

So, the proposition reduces to showing that, in the case $X$ is smooth,
the canonical map
\begin{equation} \label{e:udiff}
U_{\cO(X)} \Vect(X) \to \Diff(\cO(X))
\end{equation}
is an isomorphism.

To prove this, it suffices to show that the associated graded
homomorphism, $\Sym_{\cO(X)} \Vect(X) \to \gr \Diff(\cO(X))$, is an
isomorphism. This statement can be checked locally, in the formal
neighborhood of each point $x \in X$, which is isomorphic to a formal
neighborhood of affine space of the same dimension at the origin.
More precisely, we can replace $\cO(X)$ by a formal power series ring
$\bk[\![x_1,\ldots,x_n]\!]$ and require that all derivations and
differential operators are continuous in the adic topology (i.e., if
$\xi$ is a derivation, we require that
$\xi(\sum a_{r_1, \ldots, r_n} x_1^{r_1} \cdots x_n^{r_n}) = \sum
a_{r_1,\ldots,r_n} \xi(x_1^{r_1} \cdots x_n^{r_n})$.)
But then $\Diff(\cO(X))$ becomes
$\bk[\![x_1,\ldots,x_n]\!][\partial_1, \ldots, \partial_n]$, a
completion of the Weyl algebra, and the statement follows as in
Exercises \ref{exer:weyl-diff} and \ref{exer:weyl-gr}.
(Alternatively, instead of using formal power series, one can use
ordinary localization at $x$; since $\dim \mathfrak{m}_x /
\mathfrak{m}_x^2 = \dim X$, Nakayama's lemma implies that a basis for
this vector space lifts to a collection of $\dim X$ algebra generators
of the local ring $\cO_{X,x}$, and then differential operators of
order $\leq m$ are determined by their action on products of $\leq m$
generators, and all possible actions are given by $\Sym^{\leq
  m}_{\cO_{X,x}} T_{X,x}$.)
\end{proof}
\begin{remark} Whenever $L$ is (the global sections of) a Lie
  algebroid over $\cO(X)$ as in Remark \ref{r:lie-algebroid}, then it
  follows just as in the case $L=\Vect(X)$ that $\Sym_{\cO(X)} L$ is a
  Poisson algebra. Then as before we have a natural map $\Sym_{\cO(X)}
  L \to \gr(U_{\cO(X)} L)$ and in the case $L$ is locally free, the
  PBW theorem generalizes to show that this is an isomorphism.  This
  applies even in the case $X$ is not smooth (and hence $\Vect(X)$
  itself is not locally free), since often one can nonetheless define
  interesting locally free Lie algebroids (and the same applies when
  $X$ need not be affine, replacing $\cO(X)$ by $\cO_X$ and $\Vect(X)$
  by $T_X$).  In fact, this is the setting of a large body of
  interesting recent work, such as Calaque-Van den Bergh's analogues
  of Kontsevich's formality theorem \cite{CV-HcAc,CV-GfGl} and their
  proof of C\u ald\u araru's conjecture \cite{CRV-CcTf} on the
  compatibility of the former with cap products with Hochschild
  homology.
\end{remark}
\subsubsection{Preview: $\caD$-modules}
Since Proposition \ref{p:diff-quant} does not apply to singular
varieties, it is less clear how to treat algebras of differential
operators and their modules in the singular case (again with
$\bk$ having characteristic zero).

One solution, discovered by Kashiwara, is to take a singular
variety $X$ and embeds it into a smooth variety $V$, and define the
category of right $\caD$-modules on $X$ to be the category of right
$\caD(V)$-modules supported on $X$, i.e., right modules $M$ over the
ring $\caD(V)$ of differential operators on $V$, with the property
that, for all $m \in M$, there exists $N \geq 1$ such that $m \cdot
I_X^N = 0$, where $I_X$ is the ideal corresponding to $X$.

This circumvents the problem that the ring $\caD(X)$ of differential
operators is not well-behaved, and one obtains a very useful
theory. An equivalent definition to Kashiwara's goes under the name of
\emph{(right) crystals}.  Under some restrictions (when the $\caD$-modules are
holonomic with regular singularities), one can also replace
$\caD$-modules by \emph{perverse sheaves}, which are, roughly
speaking, gluings of local systems on subvarieties, or more precisely,
complexes of such gluings with certain properties.

\subsection{Invariant differential operators}
It is clear that the group of automorphisms $\Aut(X)$ of the variety
$X$ acts by filtered automorphisms also on $\Diff(X)$ and also by
graded automorphisms of $\gr \Diff(X) = \Sym_{\cO(X)} \Vect(X)$. Let us continue
to assume $\bk$ has characteristic zero.

Now, suppose that $G<\Aut(X)$ is a finite subgroup of automorphisms of
$X$. Then one can form the algebras $\Diff(X)^G$ and $(\Sym_{\cO(X)}
\Vect(X))^G$.  By Proposition \ref{p:diff-quant}, we conclude that $\gr
\Diff(X)^G$ is a quantization of $(\Sym_{\cO(X)} \Vect(X))^G$.

One example of this is when $X = \bA^n$ and $G < \GL(n) <
\Aut(\bA^n)$.  Then we obtain that $\Weyl(\bA^{2n})$ is a quantization
of $\cO(T^* \bA^n) = \cO(\bA^{2n})$, which is the special case of
the example of \S \ref{sss:iswa} where $G < \GL(n)$ (note that $\GL(n)
< \Sp(2n)$, where explicitly, a matrix $A$ acts by the block matrix
$\begin{pmatrix} A & 0 \\ 0 & (A^t)^{-1} \end{pmatrix}$, with $A^t$
denoting the transpose of $A$.)
\begin{remark} By deforming $\Diff(X)^G$, one obtains global analogues
  of the spherical rational Cherednik algebras \cite{Eti-chavfga}; see
  Example \ref{ex:gca} below.
\end{remark}
\begin{example}Let $X = \bA^1 \setminus \{0\}$ (note that this is affine)
 and let $G = \{1,g\}$
  where $g(x)=x^{-1}$.  Then $(T^*X)/\!/G$ is a ``global'' or
  ``multiplicative'' version of the variety $\bA^2/(\bZ/2)$ of Example
  \ref{ex:duval2}. A quantization is $\caD(X)^G$, and one has, by the
  above,
\[
\gr \caD(\bA^1 \setminus \{0\})^{\bZ/2} \cong \cO(T^* (\bA^1 \setminus \{0\}))^{\bZ/2}.
\]
Explicitly, the action on tangent vectors is $g(\partial_x) = -x^2 \partial_x$, i.e., so that, applying the operator $g(\partial_x)$ to $g(x)$, we get $g(\partial_x)( g(x) )= 1 = -x^2\partial_x(x^{-1})$.  Thus, setting $y := \gr \partial_x$, we have $g(y) = -x^2 y$. So
$\cO(T^* X /\!/ G) = \bC[x+x^{-1},y-x^2y,x^2y^2]$ and $\caD(X)^G = \bC[x+x^{-1}, \partial_x - x^2 \partial_x, (x\partial_x)^2] \subseteq \caD(X)^G$.
\end{example}

\subsection{Quantization of the nilpotent cone}
\subsubsection{Central reductions}
Suppose, generally, that $A$ is a filtered quantization of $B = \gr
A$.  Suppose in addition that there is a central filtered subalgebra
$Z \subseteq A$.  Then $\gr Z$ is Poisson central in $B$:
\begin{definition}
  The center, $Z(B)$, of a Poisson algebra, $B$, is the subalgebra of
  elements $z \in B$ such that $\{z,b\}=0$ for all $b \in B$.  An
  element is called Poisson central if it is in $Z(B)$. A subalgebra
$C \subseteq B$ is called central if $C \subseteq Z(B)$.
\end{definition}
Next, for every character $\eta: Z \to \bk$, we obtain \emph{central reductions}
\[
A^\eta := A / \ker(\eta) A, \quad B^\eta := B / \gr(\ker \eta) B.
\]
Note here that $\ker(\eta) A$ is actually a two-sided ideal since $Z$
is central.  In the case that $B_0 = \bk$ (or more generally $(\gr
Z)_0 = \bk$), which will be the case for us, note that $B^\eta$ does
not actually depend on $\eta$, and we obtain $B^\eta = B / (\gr Z)_+
B$ for all $\eta$, where $(\gr Z)_+ \subseteq \gr Z$ is the
augmentation ideal (the ideal of positively-graded elements).

Then, the category $\Rep(A^\eta)$ can be identified with the category
of representations $V$ of $A$ such that $Z(A)$ acts by the character
$\eta$, i.e., for all $v \in V$ and all $z \in Z(A)$, we have $z \cdot
v = \eta(z) v$.
\begin{remark}
  The subcategories $\Rep(A^\eta) \subseteq \Rep(A)$ are all
  \emph{orthogonal} for distinct $\eta$, which means that there are no
  nontrivial homomorphisms or extensions between representations $V
  \in \Rep(A^\eta), W \in \Rep(A^{\xi})$ with distinct central
  characters $\eta \neq \xi$. This is because the ideal generated by
  $(z-\eta(z))$ maps to the unit ideal in $A^{\xi}$ for $\xi \neq
  \eta$, i.e., there is an element $z \in Z(A)$ which acts by one on
  all representations in $\Rep(A^{\xi})$ and by zero on all
  representations in $\Rep(A^\eta)$, and this allows one to
  canonically split any extension of representations in the two
  categories $\Rep(A^\xi)$ and $\Rep(A^\eta)$: if $V$ is such an
  extension, then $V = z V \oplus (1-z)(V)$.
\end{remark}
\subsubsection{The nilpotent cone}
Now let us restrict to our situation of $A = U \mfg$ and $B = \Sym
\mfg$ with $\bk$ of characteristic zero. Let us suppose moreover that
$\mfg$ is finite-dimensional semisimple (see, for example,
\cite{Hum-ilart}; for the reader who is not familiar with this, one
can restrict to the most important examples, such as the Lie algebra
$\mathfrak{sl}_n(\bk)$ of trace-zero $n \times n$ matrices, or the Lie
algebra $\mathfrak{so}_n(\bk)$ of skew-symmetric $n \times n$
matrices).  Then, the structure of the center $Z(A)$ is well-known.
\begin{example}\label{ex:sl2}
  Let $\mfg = \mfsl_2$ with basis $(e, f, h)$.  Then $Z(U\mfg) =
  \bk[C]$, where the element $C$ is the Casimir element, $C = ef + fe
  + \frac{1}{2} h^2$.  In this case, the central reduction
  $(U\mfg)^\eta$ describes those representations on which $C$ acts by
  a fixed scalar $\eta(C)$.  For example, there exists a
  finite-dimensional representation of $(U\mfg)^\eta$ if and only if
  $\eta(C) \in \{m + \frac{1}{2} m^2 \mid m \geq 0\}$, since $\eta(C)$
  acts on a highest-weight vector $v$ of $h$ of weight $\lambda$,
  i.e., a vector such that $ev=0$ and $hv=\lambda v$, by $C \cdot v =
  (h+\frac{1}{2}h^2)v = (\lambda + \frac{1}{2} \lambda^2)v$.  In
  particular, there are only countably many such characters $\eta$
  that admit a finite-dimensional representation.

  Moreover, when $\eta(C) \in \{m + \frac{1}{2} m^2 \mid m \geq 0\}$,
  there is \emph{exactly one} finite-dimensional representation of
  $(U\mfg)^\eta$: the one with highest weight $m$.  So these
  quantizations $(U\mfg)^\eta$ of $(\Sym \mfg)/(\gr C)$
  have at most one finite-dimensional representation, and only
  countably many have this finite-dimensional representation.
\end{example}
More generally, if $\mfg$ is finite-dimensional semisimple (still with
$\bk$ of characteristic zero), it turns out that $Z(U \mfg)$ is a
polynomial algebra, and $\gr Z(U\mfg) \to Z(\Sym \mfg)$ is an
isomorphism of polynomial algebras.  
\begin{definition}
  Given a Lie algebra $\mfg$ and a representation $V$, the invariants
  $V^\mfg$ are defined as $V^{\mfg} := \{v \in V \mid x \cdot v = 0,
  \forall x \in \mfg\}$.
\end{definition}
Note that, if $B$ is an algebra with a $\mfg$-action, i.e., $B$ is an
$\mfg$ representation and an algebra such that the multiplication map
$B \otimes B \to B$ is a map of $\mfg$-representations, then $B^\mfg
\subseteq B$ is a subalgebra.  In particular, one has $(\Sym
\mfg)^\mfg \subseteq \Sym \mfg$ which is an algebra.  In fact, this is
the Poisson center of $\Sym \mfg$, since $\{z,f\}=0$ for all $f \in
\Sym \mfg$ if and only if $\{z,x\}=0$ for all $x \in \mfg$: i.e.,
$Z(\Sym \mfg) = (\Sym \mfg)^\mfg$.

We then have the following extremely important result, whose history
is discussed in more detail in Remark \ref{r:t-z-ss-hist}:
\begin{theorem}[H.~Cartan, Chevalley \cite{Che-appLg,Chevalley},
  Coxeter, Harish-Chandra \cite{HC-saueasLa}, Koszul, Shephard and Todd
  \cite{ST}, Weil] \label{t:z-ss} Let $\mfg$ be finite-dimensional
  semisimple and $\bk$ of characteristic zero. Then $Z(U \mfg) \cong
  \bk[x_1, \ldots, x_r]$ is a polynomial algebra, with $r$ equal to
  the semisimple rank of $\mfg$.  Moreover, the polynomial algebra
  $\gr Z(U \mfg)$ equals the Poisson center $Z(\Sym \mfg) = (\Sym
  \mfg)^\mfg$ of $\gr U \mfg$, and there is a canonical algebra isomorphism
 $(\Sym \mfg)^\mfg \to Z(U \mfg)$.
\end{theorem}
The final isomorphism $(\Sym \mfg)^\mfg \to Z(U\mfg)$ is called
the \emph{Harish-Chandra isomorphism}; it actually generalizes to
an isomorphism which is defined 
for \emph{arbitrary} finite-dimensional $\mfg$ and is due to Kirillov
and Duflo; we will explain how this latter isomorphism also follows
from Kontsevich's formality theorem in Corollary \ref{c:hphh} (and in
fact, Kontsevich's result implies that it holds even for
finite-dimensional Lie superalgebras).

The degrees $d_i$ of the generators $\gr x_i$ are known as the
\emph{fundamental degrees}.  (In fact, they satisfy $d_i = m_i + 1$ where
$m_i$ are the Coxeter exponents of the associated root system,
cf.~e.g., \cite{Hum-rgcg}; we will not need to know anything about
Coxeter exponents or precisely what the $m_i$ are below.)

By the theorem, for every character $\eta: Z(U\mfg) \to \bk$, one
obtains an algebra $(U \mfg)^\eta$ which quantizes $(\Sym \mfg) /
((\Sym \mfg)^\mfg_+)$.  Here $(\Sym \mfg)^\mfg_+$ is the augmentation
ideal of $\Sym \mfg$, which equals $\gr (\ker \eta)$ since $(\Sym
\mfg)_0 = \bk$ (cf.~the comments above).

Recall that the dual vector space $\mfg^*$ is canonically a
representation of $\mfg$, with action, called the coadjoint action,
given by $(x \cdot \phi)(y) := -\phi([x,y])$ for $x,y\in \mfg$ and
$\phi \in \mfg^*$.  We denote $x \cdot \phi$ also by $\ad(x)(\phi)$.
\begin{definition}\label{d:nilcone}
The \emph{nilpotent cone} $\Nil \mfg \subseteq \mfg^*$ is the set
of elements $\phi \in \mfg^*$ such that, for some $x \in \mfg$, we have
$\ad(x)\phi = \phi$.  
\end{definition}
\begin{remark}
  Note that, in the case where $\mfg = \Lie G$ for $G$ a connected Lie
  (or algebraic) group, then $\Nil \mfg$ is the set of elements $\phi$
  such that the coadjoint orbit $G \cdot \phi$ contains the line
  $\bk^\times \cdot \phi$ (in the Lie group case, $\bk$ should be
  $\bR$ or $\bC$).
\end{remark}
\begin{remark}\label{r:ad-nil}
  In the case when $\mfg$ is finite-dimensional and semisimple, it is
  well-known that the Killing form isomorphism $\mfg^* \cong \mfg$
  takes $\Nil(\mfg)$ to the cone of elements which are ad-nilpotent
  (i.e., $(\ad x)^N = 0$ for some $N \geq 1$, where $\ad x(y) :=
  [x,y]$), which explains the terminology.  It is perhaps more
  standard to define the nilpotent cone as the latter cone inside
  $\mfg$, but for us it is more natural to use the above definition.
\end{remark}


\begin{proposition}
  Let $\mfg$ be finite-dimensional semisimple.
  Then, the algebra $B^0$ is the algebra of functions on the nilpotent
  cone $\Nil \mfg \subseteq \mfg^*$.
\end{proposition}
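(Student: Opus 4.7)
The plan is to transport the statement across the isomorphism $\mfg \cong \mfg^*$ furnished by the Killing form, and then invoke the classical description of the nilpotent cone as the scheme-theoretic zero locus of the positive-degree adjoint invariants.

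First, I would use the Killing form to identify $\Sym\mfg$ with the polynomial functions $\cO(\mfg)$. Since $\mfg$ is semisimple (hence integrable to the adjoint group $G$), the adjoint action makes $(\Sym\mfg)^{\mfg}$ correspond to $\cO(\mfg)^G$, the $G$-invariant polynomial functions. By Theorem \ref{t:z-ss}, this is a polynomial algebra, and the augmentation ideal $(\Sym\mfg)^{\mfg}_+$ corresponds to the ideal $J \subseteq \cO(\mfg)$ generated by the positive-degree fundamental invariants. So the problem reduces to identifying $\cO(\mfg)/J$ with $\cO(\Nil\mfg)$.

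Next, I would identify the vanishing locus $V(J) \subseteq \mfg$ set-theoretically with $\Nil\mfg$. For one inclusion, if $x \in \Nil\mfg$, then by Jacobson--Morozov $x$ fits into an $\mfsl_2$-triple $(e,h,f)$ with $e=x$, and $\exp(t \ad h) \cdot x = e^{2t} x$, so $0 \in \overline{G \cdot x}$; hence every $G$-invariant polynomial without constant term vanishes at $x$. Conversely, if $x$ has Jordan decomposition $x = x_s + x_n$ with $x_s \neq 0$, then the characteristic polynomial of $\ad x$ has at least one nonzero non-leading coefficient (detecting a nonzero eigenvalue of $\ad x_s$); since these coefficients are $G$-invariant polynomials in $x$, some positive-degree invariant fails to vanish at $x$. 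Thus $V(J) = \Nil\mfg$.

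Finally, I would upgrade this set-theoretic equality to an isomorphism of algebras. This is the main obstacle: one must show the quotient $\cO(\mfg)/J$ is reduced (equivalently, that the scheme-theoretic nilpotent cone is reduced). I would invoke Kostant's theorem, which states that $\cO(\mfg)$ is a free module over $\cO(\mfg)^G$; this implies in particular that the fundamental invariants form a regular sequence, so $\cO(\mfg)/J$ is a complete intersection of the expected dimension $\dim\mfg - r$, and combined with Kostant's description of this quotient as a graded $G$-module it is reduced. Hence $B_0 = \cO(\mfg)/J = \cO(\Nil\mfg)$, as claimed. (If one is content with the set-theoretic statement, the argument of the previous paragraph suffices and the Kostant freeness input can be avoided.)
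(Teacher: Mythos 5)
Your proof is correct, and it goes a somewhat different route from the paper's while also being more careful on one point. Both proofs begin with the Killing form identification $\mfg\cong\mfg^*$ and reduce the statement to identifying the vanishing locus of the ideal $J = (\Sym\mfg)^\mfg_+\cdot\Sym\mfg$. The paper does the set-theoretic identification via the Chevalley restriction isomorphism $(\Sym\mfg)^\mfg\cong\cO_{\mfh/W}$, observing that $J$ cuts out those $x$ with $\ov{G\cdot x}\cap\mfh=\{0\}$, which are exactly the elements with zero semisimple part (this implicitly uses that invariants separate closed orbits). You instead argue both containments more directly: Jacobson--Morozov plus the $\bG_m$-action coming from an $\mfsl_2$-triple to show $0\in\ov{G\cdot x}$ for $x$ nilpotent, and the coefficients of the characteristic polynomial of $\ad x$ as explicit positive-degree invariants to detect a nonzero semisimple part. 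Either set of ingredients is fine.

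Where you genuinely add something is the final paragraph: the paper's proof stops at the set-theoretic equality $V(J)=\Nil\mfg$ and does not address whether $\cO(\mfg)/J$ is reduced, which is exactly what is needed for $B_0$ to literally equal the coordinate ring $\cO_{\Nil\mfg}$. Your invocation of Kostant's theorem (freeness of $\cO(\mfg)$ over $\cO(\mfg)^G$, hence the fundamental invariants form a regular sequence and the nilpotent cone is a reduced, indeed normal and irreducible, complete intersection) fills this gap. One small stylistic caveat: the phrase ``combined with Kostant's description of this quotient as a graded $G$-module it is reduced'' is a little vague as stated; the cleanest route is Cohen--Macaulayness from the complete intersection plus generic reducedness at a regular nilpotent (where $dp_1,\ldots,dp_r$ are independent), via Serre's criterion, or simply to cite Kostant's theorem that $J$ is prime. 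But the structure of your argument is sound and strictly more complete than the proof in the text.
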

We give a proof modulo a number of facts about semisimple Lie algebras
and groups (and algebraic groups), so the reader not familiar with
them may skip it.
\begin{proof}  
We may assume that $\bk$ is algebraically closed (otherwise let $\bar \bk$ be an algebraic closure and replace $\mathfrak{g}$ by $\mathfrak{g} \otimes_{\bk} \bar \bk$). 

Let $\mathfrak{h} \subseteq \mathfrak{g}$ be a Cartan subalgebra and
$W$ be the Weyl group.  The Chevalley isomorphism states that $(\Sym
\mfg)^{\mfg} \cong \cO(\mfh^*/W)$.  This isomorphism maps the
augmentation ideal $(\Sym \mfg)^{\mfg}_+$ to the augmentation ideal of
$\cO(\mfh^*/W)_+$, which is the ideal of the zero element $0 \in
\mfh^*$.

Now, let $G$ be a connected algebraic group such that $\mathfrak{g} =
\Lie G$ (this exists by the well-known classification of semisimple
Lie algebras, see, e.g., \cite{Hum-ilart}, and by the existence
theorem for reductive algebraic groups given root data, see, e.g.,
\cite[16.5]{Spr-lag}); in the case $\bk = \bC$, one can let $G$ be a
complex Lie group such that $\Lie G = \mathfrak{g}$. Then, $(\Sym
\mfg)^{\mfg} = (\Sym \mfg)^G = \cO(\mfg^*/\!/G)$, and the ideal $
\cO(\mfh^*/W)_+ \cdot \Sym \mfg$ thus defines those elements $\phi
\in \mfg^*$ such that $\overline{G \cdot \phi} \cap \mfh^* = \{0\}$.
This set is stable under dilation, so that for all such nonzero
$\phi$, the coadjoint orbit $G \cdot \phi$ intersects a neighborhood
of $\phi$ in the line $\bk \cdot \phi$, and is hence in the nilpotent
cone; conversely, if $\phi$ is in the nilpotent cone, the ideal vanishes
on $\phi$.
\end{proof}
\begin{corollary}
  For $\mfg$ finite-dimensional semisimple, the central reductions
  $(U\mfg)^\eta = U\mfg / (\ker \eta) \cdot U\mfg$ quantize $\cO(\Nil
    \mfg)$.
\end{corollary}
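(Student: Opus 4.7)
The plan is to combine Theorem \ref{t:z-ss} and the preceding Proposition with the general framework for central reductions established earlier. I would apply the central reduction setup to $Z := Z(U\mfg) \subseteq U\mfg$, which by Theorem \ref{t:z-ss} is a filtered central subalgebra satisfying $\gr Z \iso (\Sym\mfg)^\mfg = Z(\Sym\mfg)$. Since $(\gr Z)_0 = \bk$, the earlier observation about central reductions shows that the target $B_\eta = (\Sym\mfg) / (\gr Z)_+ \cdot \Sym\mfg = (\Sym\mfg) / (\Sym\mfg)^\mfg_+ \cdot \Sym\mfg$ does not depend on $\eta$, and by the preceding Proposition this commutative algebra is exactly $\cO_{\Nil\mfg}$.

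Next I would verify that $(U\mfg)_\eta$ really is a \emph{filtered quantization} of $B_\eta$, equipped with the induced quotient filtration from $U\mfg$. Picking polynomial generators $z_1, \ldots, z_r$ of $Z$ of positive filtration degrees $d_1, \ldots, d_r$, the ideal $\ker\eta \subseteq Z$ is generated by $z_i - \eta(z_i)$; since each $\eta(z_i) \in \bk$ lies in strictly lower filtration than $z_i$, the leading symbol of $z_i - \eta(z_i)$ is $\gr z_i$, and these symbols generate $(\Sym\mfg)^\mfg_+$. This yields the canonical surjection
\[
B_\eta = \cO_{\Nil\mfg} \onto \gr (U\mfg)_\eta,
\]
and it is automatically a map of Poisson algebras, since the Poisson bracket on $\gr (U\mfg)_\eta$ is the one induced from the commutator on $U\mfg$, descended through the central quotient.

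The main obstacle is proving that this surjection is an isomorphism, i.e., the flatness/PBW-type equality $\gr(\ker\eta \cdot U\mfg) = (\Sym\mfg)^\mfg_+ \cdot \Sym\mfg$. The cleanest route is to invoke Kostant's theorem, which says that $U\mfg$ is free as a module over its center $Z$, with a basis $\{b_\alpha\}$ whose leading symbols form a $(\Sym\mfg)^\mfg$-module basis of $\Sym\mfg$. Freeness gives $\ker\eta \cdot U\mfg = \bigoplus_\alpha \ker\eta \cdot b_\alpha$ as a filtered decomposition, and passing to associated graded (using that the subtracted constants $\eta(z_i)$ disappear on the symbol level) yields precisely $(\Sym\mfg)^\mfg_+ \cdot \Sym\mfg$. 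If one prefers to avoid Kostant, an alternative is to use that $\gr z_1, \ldots, \gr z_r$ form a regular sequence in $\Sym\mfg$ (equivalent to $\Nil\mfg$ being a complete intersection of the expected codimension $r$), and to argue by induction on filtration degree: a relation of minimal degree in $\gr(\ker\eta \cdot U\mfg)$ not coming from $(\Sym\mfg)^\mfg_+ \cdot \Sym\mfg$ would produce a syzygy violating regularity.
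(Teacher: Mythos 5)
Your proof is correct, and in fact is more careful than what the paper offers: the paper states the Corollary without proof, essentially treating it as an immediate consequence of the central-reduction setup combined with Theorem \ref{t:z-ss} and the Proposition identifying $B_0$ with $\cO_{\Nil\mfg}$. You have identified the genuine content hiding behind the word ``quantize,'' namely the PBW-type flatness statement that the canonical surjection $\cO_{\Nil\mfg} \onto \gr(U\mfg)_\eta$ is an isomorphism (cf.~Exercise \ref{exer:def-reln}, which shows one only gets a surjection in general). The paper's framework defines $A_\eta$ and $B_\eta$ but never verifies that $\gr A_\eta \cong B_\eta$; that is exactly the gap you close. Your two suggested remedies are both standard and both work: Kostant's theorem that $U\mfg$ is free over $Z(U\mfg)$ gives flatness directly and also makes the Poisson-algebra compatibility transparent, while the regular-sequence argument (using that $\Nil\mfg$ is a complete intersection of codimension $r$) avoids the freeness statement at the cost of a filtration-degree induction. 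Since Kostant's theorem is a nontrivial input not cited anywhere in the paper, your argument is, strictly speaking, a different route from the one the paper implicitly takes --- which is simply to assert the conclusion. You might note one small point in the regular-sequence alternative: one also needs that $\gr(\ker\eta) = (\gr Z)_+$ as ideals of $\gr Z$, which follows from $\eta(z_i) \in \bk$ having filtration degree $0 < d_i$ as you observe, but is worth flagging since it is where the choice of $\eta$ disappears.
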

\begin{example}\label{ex:sl2-2}
  In the case $\mfg = \mfsl_2$, the quantizations $(U \mfg)^\eta$,
  whose representations are those $\mfsl_2$-representations on which
  the Casimir $C$ acts by a fixed scalar, all quantize the cone of
  nilpotent $2 \times 2$ matrices,
\[
\Nil(\mfsl_2) = \Bigl \{ \begin{pmatrix} a & b \\ c &
  -a \end{pmatrix} \Big| a^2+bc=0 \Bigr \},
\]
which are identified with functions on matrices via the trace pairing:
$X(Y) := \tr(XY)$.
\end{example}
\begin{remark}\label{r:McKay}
  The quadric of Example \ref{ex:sl2-2} is isomorphic to the one
  $v^2=uw$ of Example \ref{ex:duval2}, i.e., $\bA^2/(\bZ/2) \cong
  \Nil(\mfsl_2)$. Thus we have given \emph{two quantizations} of the
  same variety: one by the invariant Weyl algebra,
  $\Weyl_1^{\bZ/2}$, and the other a family of quantizations
  given by the central reductions $(U\mfsl_2)^\eta = U\mfsl_2 /
  (C-\eta(C))$.  In fact, the latter family is a universal family of
  quantizations (i.e., all quantizations are isomorphic to one of
  these via a filtered isomorphism whose associated graded homomorphism is
  the identity), and one can see that $\Weyl_1^{\bZ/2} \cong (U
  \mfsl_2)^\eta$ where $\eta(C) = -\frac{3}{8}$ (see Exercise \ref{exer:weyl-usl2}).

  This coincidence is the first case of a part of the \emph{McKay
    correspondence} \cite{McKay,Brisessag,Slflsgs,Slsssag}, which
  identifies, for every finite subgroup $\Gamma < \SL_2(\bC)$, the
  quotient $\bC^2/\Gamma$ with a certain two-dimensional ``Slodowy''
  slice of $\Nil(\mfg)$, where $\mfg$ is the Lie algebra whose Dynkin
  diagram has vertices labeled by the irreducible representations of
  $\Gamma$, and a single edge from $V$ to $W$ if and only if $V
  \otimes W$ contains a copy of the standard representation $\bC^2$. See
  Wemyss's chapter,
  \S 5.
\end{remark}

\begin{remark}\label{r:t-z-ss-hist}
  Theorem \ref{t:z-ss} is stated anachronistically and deserves some
  explanation.  Let us restrict for simplicity to the case $\bk=\bC$
  (although everything below applies to the case $\bk$ is
  algebraically closed of characteristic zero, and to deal with the
  non-algebraically closed case, one can tensor by an algebraic
  closure).  Coxeter originally observed that $(\Sym \mfh)^W$ is a
  polynomial algebra for $W$ a Weyl (or Coxeter) group.  In the more
  general situation where $W$ is a complex reflection group, the
  degrees $d_i$ were computed by Shephard and Todd \cite{ST} in a
  case-by-case study, and shortly after this Chevalley
  \cite{Chevalley} gave a uniform proof that $(\Sym \mfh)^W$ is a
  polynomial algebra if and only if $W$ is a complex reflection group.

  Chevalley also observed that $(\Sym \mfg)^{\mfg}\cong (\Sym \mfh)^W$
  where $\mfh \subseteq \mfg$ is a Cartan subalgebra and $W$ is the
  Weyl group (the Chevalley restriction theorem) (this mostly follows
  from the more general property of conjugacy of Cartan subgroups and
  its proof, in \cite{Che-appLg}).  Note here that $(\Sym \mfg)^{\mfg}
  = \cO(\mfg^* /\!/ G)$, the functions on coadjoint orbits in
  $\mfg^*$. The latter are identified with adjoint orbits of $G$ in
  $\mfg$ by the Killing form $\mfg^* \cong \mfg$. The closed adjoint
  orbits all contain points of $\mfh$, and their intersections with
  $\mfh$ are exactly the $W$-orbits.
  
  Harish-Chandra constructed in \cite{HC-saueasLa} an explicit
  isomorphism $HC: Z(U \mfg) \iso \Sym(\mfh)^W$ (such an isomorphism
  was, according to Godement's review on Mathematical Reviews (MR0044515) of
  this article, independently a consequence of results of H.~Cartan,
  Chevalley, Koszul, and Weil in algebraic topology).
  Harish-Chandra's isomorphism is defined by the property that, for
  every highest-weight representation $V_\lambda$ of $\mfg$ with
  highest weight $\lambda \in \mfh^*$ and (nonzero) highest weight
  vector $v \in V_\lambda$,
\[
z \cdot v = HC(z)(\lambda) \cdot v,
\]
viewing $HC(z)$ as a polynomial function on $\mfh^*$.  The
Harish-Chandra isomorphism is nontrivial: indeed, the target of $HC$
equips $\mfh^*$ not with the usual action of $W$, but the
\emph{affine} action, defined by $w \cdot \lambda :=
w(\lambda+\delta)-\delta$, where the RHS uses the usual action of $W$
on $\mfh$, and $\delta$ is the sum of the fundamental weights. This
shifting phenomenon is common for the center of a quantization.  In
this case one can explicitly see why the shift occurs: the center must
act by the same character on highest weight representations
$V_\lambda$ and $V_{w \cdot \lambda}$, and computing this character
(done in the $\mfsl_2$ case in Example \ref{ex:sl2}) yields the invariance
under the affine action. To see why the center acts by the same
character on $V_\lambda$ and $V_{w \cdot \lambda}$, in the case where
$\lambda$ is dominant (i.e., $\lambda(\alpha) \geq 0$ for all positive
roots $\alpha$), and these are the Verma modules (i.e., $V_\lambda =
\Ind_{\mathfrak{b}}^{\mathfrak{g}} \chi_\lambda$ where $\chi_\lambda$
is the corresponding character of a Borel subalgebra $\mathfrak{b}$
containing $\mathfrak{h}$, and the same is true for $V_{w \cdot
  \lambda}$), one can explicitly check that $V_{w \cdot \lambda}
\subseteq V_\lambda$.  The general case follows from this one. See
\cite[\S 23]{Hum-ilart} for a detailed proof.
\end{remark}

\subsection{Beilinson-Bernstein localization theorem and global
  quantization}
The purpose of this section is to explain a deep property of the
quantization of the nilpotent cone of a semisimple Lie algebra
discussed above: this \emph{resolves} to a global (nonaffine)
symplectic quantization of a cotangent bundle (namely, the cotangent
bundle of the flag variety). We still assume that $\bk$ has
characteristic zero.

Return to the example $\Nil(\mfsl_2) \cong \bA^2 / \{\pm \Id\}$.
There is another way to view this Poisson algebra, by the
\emph{Springer resolution}.  Namely, let us view $\Nil(\mfsl_2)$ as
the locus of nilpotent elements in $\mfsl_2$, i.e., the nilpotent
two-by-two matrices (this is consistent with Definition
\ref{d:nilcone} if we identify $\mfsl_2 \cong \mfsl_2^*$ via the trace
pairing $(x,y)=\tr(xy)$, cf.~Remark \ref{r:ad-nil}).  Then, a nonzero
nilpotent element $x \in \mfsl_2$, up to scaling, is uniquely
determined by the line $\ker(x) = \im(x)$ in $\bk^2$. Similarly, a
nonzero element $\phi \in \Nil(\mfsl_2) \subseteq \mfg^*$ is uniquely
determined, up to scaling, by the line $\ell \subseteq \bk^2$ such
that $\phi(x) = 0$ whenever $\im(x) \subseteq \ell$. With a slight
abuse of notation we will also refer to this line as $\ker(\phi)$.

Consider the locus of pairs
\[
X:= \{(\ell, x) \in \bP^1 \times \Nil(\mfsl_2) \mid \ker(x) \subseteq
\ell\} \subseteq \bP^1 \times \Nil(\mfsl_2).
\]
This projects to $\Nil(\mfsl_2)$.  Moreover, the fiber over $x \neq 0$
is evidently a single point, since $\ker(x)$ determines $\ell$. Only
over the singular point $0 \in \Nil(\mfsl_2)$ is there a larger fiber,
namely $\bP^1$ itself.
\begin{lemma} $X \cong T^* \bP^1$.
\end{lemma}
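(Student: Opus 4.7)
The plan is to exhibit $X$ as a rank-one vector bundle over $\bP^1$ via the first projection $p\colon X \to \bP^1$, and then identify this bundle with $T^*\bP^1$.

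First I would analyze the fibre $p^{-1}(\ell)$ over a point $\ell \in \bP^1$. It consists of nilpotent $x \in \mfsl_2$ with $\ker x \supseteq \ell$. A nonzero nilpotent $2\times 2$ matrix satisfies $x^2 = 0$ and has one-dimensional kernel, so the condition $\ker x \supseteq \ell$ forces $\ker x = \ell$, and then $\im x \subseteq \ker x = \ell$; in the zero case this is trivial. Thus $x$ kills $\ell$ and has image in $\ell$, so it factors uniquely as
\[
\bk^2 \onto \bk^2/\ell \xrightarrow{\bar x} \ell \into \bk^2,
\]
yielding a natural bijection $p^{-1}(\ell) \iso \Hom(\bk^2/\ell, \ell)$; conversely any $\bar x \in \Hom(\bk^2/\ell, \ell)$ produces via this composition a nilpotent $x$ with $\ker x \supseteq \ell$.

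Next I would recall the standard identification $T_\ell \bP^1 = \Hom(\ell, \bk^2/\ell)$, obtained from the Euler sequence or directly from infinitesimal deformations of the line $\ell \subseteq \bk^2$. Dualising gives $T^*_\ell \bP^1 = \Hom(\bk^2/\ell, \ell)$, which matches the description of $p^{-1}(\ell)$ above on the nose.

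Finally I would globalise: letting $S \subseteq \bk^2 \otimes \cO_{\bP^1}$ denote the tautological subbundle and $Q := (\bk^2 \otimes \cO_{\bP^1})/S$ the universal quotient, the incidence condition defining $X$ exhibits it as the total space of the sheaf $\Hom_{\cO_{\bP^1}}(Q, S)$, which is precisely $T^*\bP^1$ (and, incidentally, $\cO_{\bP^1}(-2)$). Since the assignments $x \mapsto \bar x$ and $\bar x \mapsto x$ are polynomial in homogeneous coordinates on $\bP^1$ and in the matrix entries of $x$, the pointwise bijection upgrades to an isomorphism of schemes. The only real obstacle is the small observation that, for $2\times 2$ nilpotent matrices, $\ker x \supseteq \ell$ automatically forces $\im x \subseteq \ell$; once this is noted, no extra incidence condition has been dropped, and the identification of fibres plus the globalisation are routine.
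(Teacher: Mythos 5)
Your proof is correct and follows essentially the same route as the paper: both identify $T_\ell\bP^1$ with $\Hom(\ell,\bk^2/\ell)$ and then show the fibre $p^{-1}(\ell)$ is its dual via the factorization of a nilpotent $x$ with $\ker x = \ell$ through $\bk^2/\ell \to \ell$ (the paper presents this as a perfect pairing with $T_\ell\bP^1$, you present it as a direct isomorphism onto $\Hom(\bk^2/\ell,\ell)$, but these are the same observation). You add two useful touches the paper leaves implicit: you quietly correct the incidence condition to $\ker(x)\supseteq\ell$ (or equivalently $\im(x)\subseteq\ell$), which is what is needed for the fibre over $0\in\Nil(\mfsl_2)$ to be all of $\bP^1$ as the paper asserts; and you globalise the pointwise identification to an isomorphism of varieties via the tautological sub- and quotient bundles $S, Q$ on $\bP^1$, rather than stopping at a fibrewise bijection.
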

\begin{proof}
  Fix $\ell \in \bP^1$. Note that $T_\ell \bP^1$ is naturally
  $\Hom(\ell, \bk^2/\ell)$. On the other hand, the locus of $x$ such
  that $\ker(x) \subseteq \ell$ naturally acts linearly on $T_\ell
  \bP^1$: given such an $x$ and given $\phi \in \Hom(\ell,
  \bk^2/\ell)$, we can take $x \circ \phi \in \Hom(\ell, \ell) \cong
  \bk$.  Since this locus of $x$ is a one-dimensional vector space, we
  deduce that it is $T^*_\ell \bP^1$, as desired.
\end{proof}
Thus, we obtain a resolution of singularities
\begin{equation}\label{e:spr-sl2}
\rho: T^* \bP^1 \onto \Nil(\mfsl_2) = \bA^2 / \{\pm \Id\}.
\end{equation}
Note that $\Nil(\mfsl_2)$ is affine, so that, since the map
$\rho$ is birational (as all resolutions must be),
$\cO(\Nil(\mfsl_2))$ is the algebra of global sections $\Gamma(T^*
\bP^1, \cO_{T^* \bP^1})$ of functions on $T^* \bP^1$.

Moreover, equipping $T^* \bP^1$ with its standard symplectic
structure, $\rho$ is a Poisson map, i.e., $\rho^* \pi_{\Nil(\mfsl_2)}
= \pi_{T^* \bP^1}$ for $\pi_{\Nil(\mfsl_2)}$ and $\pi_{T^* \bP^1}$ the
Poisson bivectors on $\Nil(\mfsl_2)$ and $T^* \bP^1$, respectively.
Indeed, the Poisson structure on $\cO_{\Nil(\mfsl_2)}$ is obtained
from the one on $\cO_{T^* \bP^1}$ by taking global sections.

In fact, the map $\rho$ can be \emph{quantized}: let $\caD_{\bP^1}$ be
the sheaf of differential operators with polynomial coefficients on
$\bP^1$. This quantizes $\cO_{T^* \bP^1}$, as we explained, since
$\bP^1$ is smooth (this fact works for nonaffine varieties as well, if
one uses sheaves of algebras).  Then, there is the deep

\begin{theorem}(Beilinson-Bernstein for $\mfsl_2$) \label{thm:BB}
\begin{enumerate}
\item[(i)] There is an isomorphism of algebras $(U \mfsl_2)^{\eta_0}
  \iso \Gamma(\bP^1, \caD_{\bP^1})$, where $\eta_0(C) = 0$;
\item[(ii)]  Taking global sections yields an equivalence of abelian categories
\[
\caD_{\bP^1}\text{-mod} \iso (U \mfsl_2)^{\eta_0}\text{-mod}.
\]
\end{enumerate}
\end{theorem}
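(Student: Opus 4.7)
For (i), I would first construct the natural algebra map $\phi \colon U\mfsl_2 \to \Gamma(\bP^1,\caD_{\bP^1})$ by differentiating the transitive action of $\SL_2$ on $\bP^1$: each $x \in \mfsl_2$ is sent to the global vector field it generates, and this extends uniquely to an algebra homomorphism on $U\mfsl_2$. To see that $\phi$ factors through $(U\mfsl_2)_{\eta_0}$, I would compute $\phi(C)$ explicitly in the affine chart with coordinate $t$, where $e,f,h$ act by scalar multiples of $\partial_t$, $t^2\partial_t$, $t\partial_t$. Applying $\phi(C) = \phi(ef+fe+\tfrac{1}{2} h^2)$ to the basis $\{t^n\}$ yields the cancellation $-n(n+1)-n(n-1)+2n^2 = 0$, so $\phi(C)$ annihilates $\bk[t]$ and therefore vanishes as a differential operator on that chart; covering $\bP^1$ by two such charts makes it vanish globally.

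To show that the induced map $\bar\phi \colon (U\mfsl_2)_{\eta_0} \to \Gamma(\bP^1,\caD_{\bP^1})$ is an isomorphism, I would pass to associated graded with respect to the order filtration. On the source, Theorem \ref{t:z-ss} (plus $\gr C = C_2$) gives $\gr (U\mfsl_2)_{\eta_0} \cong \cO_{\Nil(\mfsl_2)}$. On the target, since $T_{\bP^1} \cong \cO(2)$, each $\caD_{\bP^1}^{\leq m}$ admits a filtration whose quotients $\Sym^k T_{\bP^1} \cong \cO(2k)$, $0 \leq k \leq m$, all have vanishing $H^1$; hence $\gr \Gamma(\bP^1,\caD_{\bP^1}) \cong \Gamma(\bP^1,\gr\caD_{\bP^1}) = \Gamma(T^*\bP^1, \cO_{T^*\bP^1}) = \cO_{\Nil(\mfsl_2)}$, the last equality following from the Springer lemma above. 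Since $\gr\bar\phi$ is precisely the comoment map for the $G$-action, namely the identity on $\cO_{\Nil(\mfsl_2)}$ induced by the Springer map, $\bar\phi$ is itself an isomorphism.

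For (ii), I would define the localization functor $\Delta(M) := \caD_{\bP^1} \otimes_{(U\mfsl_2)_{\eta_0}} M$ as left adjoint to $\Gamma$ (well-defined by (i)), and establish that the unit $M \to \Gamma\Delta M$ and counit $\Delta\Gamma\mathcal{M} \to \mathcal{M}$ are isomorphisms. This reduces to the $\caD$-affinity of $\bP^1$ at central character $\eta_0$: (a) $H^i(\bP^1,\mathcal{M}) = 0$ for all $i > 0$ and every quasi-coherent $\caD_{\bP^1}$-module $\mathcal{M}$, and (b) $\Gamma$ is faithful, i.e., every nonzero $\mathcal{M}$ has a nonzero global section. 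Given (a) and (b), the unit and counit are checked on the free modules $\caD_{\bP^1}$ and $(U\mfsl_2)_{\eta_0}$ (where they reduce to (i)) and then extended to arbitrary $M,\mathcal{M}$ using exactness of $\Gamma$.

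The main obstacle is establishing (a) and (b). For (a), I would reduce by filtered colimits to $\caD$-coherent $\mathcal{M}$, equip it with a good filtration so that $\gr\mathcal{M}$ corresponds to a coherent sheaf on $T^*\bP^1$; since $T^*\bP^1$ is the projective Springer resolution of the affine $\Nil(\mfsl_2)$, Leray combined with affine vanishing on $\Nil(\mfsl_2)$ gives $H^i(\bP^1, \gr\mathcal{M}) = 0$ for $i > 0$, and a filtration spectral sequence transfers this to $\mathcal{M}$. For (b), the genuinely hard half, I would use a projective-generator argument: $\caD_{\bP^1}$ is projective in the category of $\caD_{\bP^1}$-modules, and via a BGG/Cousin-type resolution adapted to the central character $\eta_0$---which corresponds under the Harish-Chandra isomorphism to a regular weight---every simple $\caD_{\bP^1}$-module receives a nonzero map from $\caD_{\bP^1}$, producing a nonzero global section upon applying $\Gamma$. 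The regularity of $\eta_0$ is essential: it is precisely what kills the Ext obstruction to lifting local sections to global ones.
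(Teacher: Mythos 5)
The paper itself states this Beilinson--Bernstein theorem without proof (it is invoked as a known deep result), so there is no proof of the paper's own to compare against; what follows is a review of your proposal on its merits.

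Your treatment of part (i) is sound as a sketch: the vector field computation in the affine chart correctly gives $\phi(C)(t^n) = (-n(n+1)-n(n-1)+2n^2)t^n = 0$, and the passage to associated graded --- using $T_{\bP^1}\cong\cO(2)$, vanishing of $H^1(\bP^1,\cO(2k))$ for $k\geq 0$, and identifying $\gr\bar\phi$ with the comoment map for the Springer resolution --- is the standard and correct route. One should also verify that $\gr(\ker\eta_0 \cdot U\mfsl_2) = (\gr C)\cdot\Sym\mfsl_2$ (which holds because $\gr C$ is a nonzerodivisor and $C$ is central), but that is a routine point.

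Part (ii) contains a genuine gap in step (a). You claim that, for a coherent $\caD_{\bP^1}$-module $\mathcal{M}$ with a good filtration so that $\gr\mathcal{M}$ is a coherent sheaf on $T^*\bP^1$, the facts that the Springer map $\pi\colon T^*\bP^1\to\Nil(\mfsl_2)$ is projective and $\Nil(\mfsl_2)$ is affine force $H^i(T^*\bP^1,\gr\mathcal{M}) = 0$ for $i>0$. This is false: "projective over affine" does not give Serre--type vanishing, because the higher direct images $R^i\pi_*$ need not vanish. Concretely, if $\iota\colon\bP^1\hookrightarrow T^*\bP^1$ is the zero section and $\mathcal{F} = \iota_*\cO_{\bP^1}(-2)$, then $R^1\pi_*\mathcal{F}$ is a nonzero skyscraper at the origin and $H^1(T^*\bP^1,\mathcal{F}) = H^1(\bP^1,\cO(-2)) = \bk \neq 0$. (That $\Nil(\mfsl_2)$ has rational singularities gives $R\pi_*\cO_{T^*\bP^1} = \cO_{\Nil}$, but nothing stronger for general coherent sheaves.) So the filtration spectral sequence you invoke does not degenerate to give vanishing; the graded pieces $\gr_k\mathcal{M}$ need not satisfy $H^1(\bP^1,\gr_k\mathcal{M}) = 0$, and your argument breaks at exactly that point. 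The actual Beilinson--Bernstein proof of cohomology vanishing does not go through the associated graded at all: it tensors $\mathcal{M}$ with the $G$-equivariant line bundle $\cO_{\bP^1}(1)$ to shift to a regular dominant twist where vanishing is a direct consequence of Borel--Weil--Bott, then uses the $\mfsl_2$-module structure on $R\Gamma(\bP^1,\mathcal{M}\otimes\cO(n))$ and a filtration on $\cO(n)\otimes V^*$ (where $V$ is the standard representation) to propagate vanishing back to $\mathcal{M}$; the regularity of $\eta_0$ is used there, not merely in the faithfulness step (b) as your sketch suggests. I would redo (a) along those lines and keep the rest of the adjunction argument, which is correctly structured.
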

This quantizes the Springer resolution \eqref{e:spr-sl2}.  
\begin{remark}
  This implies that the quantization $\caD_{\bP^1}$ of $\cO_{T^*
    \bP^1}$ is, in a sense, \textbf{affine}, since its category of
  representations is equivalent to the category of representations of
  its global sections.  The analogous property holds for the sheaf of
  algebras $\cO(X)$ on an arbitrary variety $X$ if and only if $X$ is
  affine.  But, even though $\bP^1$ is projective (the opposite of
  affine), the noncommutative algebra $\caD_{\bP^1}$ is still affine,
  in this sense.

  There is a longstanding conjecture that says that, if there is an
  isomorphism $\caD_X\text{-mod} \iso \Gamma(X,\caD_X)\text{-mod}$
  (i.e., $X$ is ``$\caD$-affine''), and $X$ is smooth, projective, and
  connected, then $X$ is of the form $X \cong G/P$ where $P < G$ is a
  parabolic subgroup of a connected semisimple algebraic group.  (The
  converse is a theorem of Beilinson-Bernstein, which generalizes
  Theorem \ref{t:spr-borel} below to the parabolic case $G/P$ instead
  of $G/B$.)  Similarly, there is also an (even more famous)
  ``associated graded'' version of the conjecture, which says that if $X$ is a smooth
  projective variety and $T^* X \onto Y$ is a symplectic resolution
  with $Y$ affine (i.e., $Y = \Spec \Gamma(T^* X, \cO(T^*X))$), then
  $X \cong G/P$ as before.
\end{remark}

In fact, this whole story generalizes to arbitrary connected
semisimple algebraic groups $G$ with $\mfg := \Lie G$
the associated finite-dimensional semisimple Lie algebra.  Let
$\mathcal{B}$ denote the flag variety of $G$, which can be defined as
the symmetric space $G/B$ for $B < G$ a Borel subgroup. Then, $\Nil(\mfg) \subseteq \mfg^*$ consists of the union of $\mathfrak{b}^\perp := \{\phi \mid \phi(x) = 0, \forall x \in \mathfrak{b}\}$ over all Borels.  
\begin{theorem}\label{t:spr-borel}
\begin{enumerate}
\item[(i)] (Springer resolution) There is a symplectic resolution $T^*
  \mathcal{B} \to \Nil(\mfg)$, which is the composition
\[
T^* \mathcal{B} = \{(\mathfrak{b}, x) \mid x \in \mathfrak{b}^\perp\}
\subseteq (\mathcal{B} \times \mfg^*) \onto \Nil(\mfg),
\]
where the last map is the second projection;
\item[(ii)] (Beilinson-Bernstein) There is an isomorphism
  $(U\mfg)^{\eta_0} \iso \Gamma(\mathcal{B}, \caD_{\mathcal{B}})$,
  where $\eta_0$ is the augmentation character, i.e., 
$\ker(\eta_0)$ acts by zero on the trivial representation of
  $\mfg$;
\item[(iii)] (Beilinson-Bernstein) Taking global sections yields an
  equivalence of abelian categories, $\caD_{\mathcal{B}}\text{-mod} \iso
  (U\mfg)^{\eta_0}\text{-mod}$.
\end{enumerate}
\end{theorem}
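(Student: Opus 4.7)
The plan is to handle the three parts in the order stated, since each builds on the previous, and to model the argument on the $\mfsl_2$ case already in the text: (i) identify $T^*\mathcal{B}$ with the incidence variety of pairs (Borel, element of its nilradical) and project to $\Nil(\mfg)$; (ii) quantize this picture by constructing a map from $U\mfg$ to $\Gamma(\mathcal{B},\caD_\mathcal{B})$; (iii) bootstrap (ii) to a categorical equivalence by producing an inverse localization functor.

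For (i), I would use the Killing form to identify $T_\mathfrak{b}^*\mathcal{B} = (\mfg/\mathfrak{b})^*$ with the nilradical $\mathfrak{n}_\mathfrak{b} = [\mathfrak{b},\mathfrak{b}]$, so that a cotangent vector at $\mathfrak{b}$ is an element $x \in \mathfrak{n}_\mathfrak{b} \subseteq \Nil(\mfg)$. This realizes $T^*\mathcal{B}$ as the claimed incidence variety inside $\mathcal{B} \times \Nil(\mfg)$ and makes the second projection manifestly well-defined. Surjectivity onto $\Nil(\mfg)$ reduces to the classical fact that every nilpotent element of $\mfg$ lies in some Borel (either via Jacobson--Morozov or by conjugating into a chosen one). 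Birationality follows because a regular nilpotent $x$ lies in a unique Borel, so the fiber over $x$ is a single point; away from the regular locus, the fibers are the (connected, projective) Springer fibers. The symplecticity of the map is automatic because it is the moment-map construction for the $G$-action on $T^*\mathcal{B}$ followed by restriction to the zero-fiber direction.

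For (ii), the $G$-action on $\mathcal{B}$ induces a Lie algebra map $\mfg \to \Gamma(\mathcal{B}, T_\mathcal{B})$, which by the universal property of $U\mfg$ and Proposition \ref{p:diff-quant} applied locally extends to a filtered algebra map $\varphi: U\mfg \to \Gamma(\mathcal{B}, \caD_\mathcal{B})$. To see that $\varphi$ kills $\ker(\eta_0)$, I would use the Harish-Chandra description of $Z(U\mfg)$: a central element $z$ acts on the trivial representation of $\mfg$ by $\eta_0(z)$, and since constants embed as scalars in $\Gamma(\mathcal{B},\caD_\mathcal{B})$, any $G$-invariant global differential operator coming from the center is forced (by evaluating on the global constant section $1 \in \cO_\mathcal{B}$) to agree with its $\eta_0$-value. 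To upgrade to an isomorphism $(U\mfg)_{\eta_0} \iso \Gamma(\mathcal{B},\caD_\mathcal{B})$, I would pass to associated graded: the LHS becomes $\cO_{\Nil(\mfg)}$ by Theorem \ref{t:z-ss} and the proposition on the nilpotent cone, while the RHS becomes $\Gamma(T^*\mathcal{B}, \cO_{T^*\mathcal{B}})$ provided that the higher cohomology $H^{>0}(T^*\mathcal{B}, \cO)$ vanishes and $\Gamma$ commutes with taking associated graded. That vanishing, together with the statement $\Gamma(T^*\mathcal{B},\cO) = \cO_{\Nil(\mfg)}$, follows from Grauert--Riemenschneider (or the general fact that the structure sheaf of a symplectic resolution of a normal affine variety has no higher cohomology), and identifies the two graded algebras via the Springer map of part (i).

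For (iii), the strategy is the standard one: produce the localization functor $\Delta(M) := \caD_\mathcal{B} \otimes_{(U\mfg)_{\eta_0}} M$ and show it is a two-sided inverse to $\Gamma$. The main obstacle, and the substantive content of Beilinson--Bernstein at weight $\eta_0$, is establishing the two pillars of (iii): \emph{exactness} of $\Gamma$ on $\caD_\mathcal{B}$-mod, and \emph{faithfulness}, i.e.\ that $\Gamma(\mathcal{B}, \mathcal{M}) = 0$ forces $\mathcal{M} = 0$. Both rest on a cohomology-vanishing statement for a family of line bundles on $\mathcal{B}$: writing $\caD_\mathcal{B}$ as the $\lambda \to 0$ limit of the twisted sheaves $\caD_{\mathcal{B},\lambda}$ and filtering a coherent $\caD_\mathcal{B}$-module by $\cO_\mathcal{B}$-coherent pieces, exactness reduces to $H^{>0}(\mathcal{B}, \mathcal{L}) = 0$ and faithfulness to the existence of a nonzero section for a sufficiently positive twist of any nonzero coherent subsheaf. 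I would invoke Kodaira/Borel--Weil--Bott style vanishing on the flag variety to supply both; this is the deepest input, and controlling it uniformly in the filtration (so that a single bound works for an arbitrary quasi-coherent $\caD_\mathcal{B}$-module, not just finitely generated ones) is where I expect the real work to lie. Once exactness and faithfulness are in hand, the unit and counit $M \to \Gamma\Delta(M)$ and $\Delta\Gamma(\mathcal{M}) \to \mathcal{M}$ are checked to be isomorphisms by a five-lemma/resolution argument, completing the equivalence and thereby the quantization of the Springer resolution.
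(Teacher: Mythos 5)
The paper does not prove this theorem: it is stated as a deep named result (Springer resolution; Beilinson--Bernstein localization), generalizing the $\mfsl_2$ case displayed just above it, with the proof outsourced to the literature. So there is no internal argument to compare against; your sketch has to stand on its own. Your outlines of (i) and (iii) are the standard ones. In (iii), the phrase ``writing $\caD_\mathcal{B}$ as the $\lambda\to 0$ limit of the twisted sheaves $\caD_{\mathcal{B},\lambda}$'' is not really how the argument is organized --- the actual mechanism is tensoring coherent $\caD_{\mathcal{B}}$-modules by sufficiently dominant line bundles and using Borel--Weil--Bott or Serre vanishing on the twists to propagate exactness and nonvanishing back down --- but you correctly identify exactness and faithfulness of $\Gamma$ as the substantive content.

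The genuine gap is in (ii), in the step asserting $\varphi(\ker\eta_0)=0$. You argue that since a central element $z$ maps to a $G$-invariant global differential operator, its image is forced, by evaluating on the global constant section $1\in\cO_\mathcal{B}$, to equal $\eta_0(z)$. But a differential operator is not determined by its action on $1$: any operator with vanishing order-zero part kills $1$, so evaluation on $1$ only sees the multiplication-by-a-function piece. What you need \emph{before} any evaluation is that $\varphi(z)$ is a scalar. This is where the Springer map of (i) enters: by the same graded argument you invoke a sentence later, $\gr\bigl(\Gamma(\mathcal{B},\caD_\mathcal{B})^G\bigr) \subseteq \Gamma(T^*\mathcal{B},\cO)^G \cong \cO_{\Nil(\mfg)}^G = \bk$, the last equality because the regular nilpotent orbit is dense in $\Nil(\mfg)$, so any $G$-invariant regular function on the nilpotent cone is constant; hence $\Gamma(\mathcal{B},\caD_\mathcal{B})^G = \bk$. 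Once $\varphi(z)\in\bk$ is established, the action of $z$ on the trivial $\mfg$-module $\Gamma(\mathcal{B},\cO_\mathcal{B})=\bk$ pins the scalar down as $\eta_0(z)$, and $\ker\eta_0$ is killed. Without the scalar step, the ``evaluate on $1$'' shortcut does not close.
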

\begin{remark}
The theorem generalizes in order to replace the augmentation character
$\eta_0$ by arbitrary characters $\eta$, at the price of replacing the category
of $D$-modules $\mathcal{D}_B\text{-mod}$ by the category of twisted $D$-modules,
with twisting corresponding to the character $\eta$.
\end{remark}
\subsection{Quivers and preprojective algebras}\label{ss:quiver}
There is a very important generalization of tensor algebras, and hence
also of finitely presented algebras, that replaces a set of variables
$x_1, \ldots, x_n$ by a directed graph, which is typically called a
\emph{quiver}; the variables $x_1, \ldots, x_n$ then correspond to the
directed edges, which are called \emph{arrows}.  Note that it may seem
odd to have a special name for a completely ordinary object, the
directed graph, but we have P.~Gabriel to thank for this suggestive
renaming: essentially, whenever you see the word ``quiver,'' you
should realize this is merely a directed graph, except that one is
probably interested in representations of the quiver, or equivalently
of its path algebra, as defined below.
\begin{definition} A quiver is a directed graph, whose directed edges
  are called arrows. Loops (arrows from a vertex to itself) and
  multiple edges (multiple arrows with the same endpoints) are
  allowed.
\end{definition}
Typically, a quiver $Q$ has its set of vertices denoted by $Q_0$ and
its set of arrows denoted by $Q_1$.
\begin{definition}
  A representation $(\rho,(V_i)_{i \in Q_0})$ 
 of a quiver $Q$ is an assignment to each
  vertex $i \in Q_0$ a vector space $V_i$, and to each arrow $a: i \to
  j$ a linear map $\rho(a): V_i \to V_j$.  The dimension vector of
  $\rho$ is defined as $d(\rho) := (\dim V_i)_{i \in Q_0} \in
  \bZ_{\geq 0}^{Q_0}$.
\end{definition}
\begin{definition}
For every dimension vector $d \in \bZ_{\geq 0}^{Q_0}$, let
$\Rep_d(Q)$ be the set of representations of the form $(\rho, (\bk^{d_i}))$, i.e.,
with $V_i = \bk^{d_i}$ for all $i$.
\end{definition}
\begin{definition}
  The path algebra $\bk Q$ of a quiver $Q$ is defined as the vector
  space with basis the set of paths in the quiver $Q$ (allowing paths
  of length zero at each vertex), with multiplication of paths given
  by reverse concatenation: if $p: i \to j$ is a path from $i$ to $j$
  and $q: j \to \ell$ is a path from $j$ to $\ell$, then $qp: i \to
  \ell$ is the concatenated path from $i$ to $\ell$.  The product of
  two paths that cannot be concatenated (because their endpoints don't
  match up) is zero.
\end{definition}
Given an arrow $a \in Q_1$, let $a_h$ be its head (the incident vertex
it points to) and $a_t$ be its tail (the incident vertex the arrow
points away from), so that $a: a_t \to a_h$. Note that, viewing $a_t$
and $a_h$ as zero-length paths, we have $a = a_h a a_t$ (because we
are using reverse concatenation).  We further observe that every
zero-length path defines an idempotent in the path algebra: in particular,
$a_h$ and $a_t$ are idempotents.
\begin{exercise} \label{exer:quiver1}
\begin{enumerate}
  \item[(i)] Show that a representation of $Q$ is the same as a
    representation of the algebra $\bk Q$.
  \item[(ii)] Show that the set of representations $\Rep_{d}(Q)$ is
    canonically isomorphic to the vector space $\bigoplus_{a \in Q_1}
    \Hom(\bk^{d(a_t)}, \bk^{d(a_h)})$.
\end{enumerate}
\end{exercise}
\begin{exercise} \label{exer:quiver2}
Suppose that $Q$ has only one vertex, so that $Q_1$
  consists entirely of loops from the vertex to itself. Then show that
  $\bk Q$ is the tensor algebra over the vector space with basis
  $Q_1$.
\end{exercise}
\begin{definition}
  Given a quiver $Q$, the double quiver $\bar Q$ is defined as the
  quiver with the same vertex set, $\bar Q_0 := Q_0$, and with double
  the number of arrows, $\bar Q_1 := Q_1 \sqcup Q_1^*$, obtained by
  adding, for each arrow $a \in Q_1$, a reverse arrow, $a^* \in Q_1$,
  such that if $a: i\to j$ has endpoints $i$ and $j$, then $a^*: j \to
  i$ has the same endpoints but with the opposite orientation; thus
  $Q_1^* := \{a^* \mid a \in Q_1\}$.
\end{definition}
\begin{exercise}\label{exer:quiver3}
  Show that there is a canonical isomorphism $\Rep_{d}(\bar Q) \cong
  T^* \Rep_d(Q)$.
\end{exercise}
\begin{definition}
Let $\lambda \in \bk^{Q_0}$.  Then the \emph{deformed preprojective algebra} 
$\Pi_\lambda(Q)$ (defined in \cite{CrawleyBoeveyHolland}) is defined as 
\begin{equation}
\Pi_\lambda(Q) := \bk \bar Q / (\sum_{a \in Q_1} aa^*-a^* a - \sum_{i \in Q_0} \lambda_i).
\end{equation}
The (undeformed) preprojective algebra is $\Pi_0(Q)$ (defined in \cite{GP}).
\end{definition}
The algebra $\Pi_\lambda(Q)$ is filtered by the length of paths:
$(\Pi_\lambda(Q))_{\leq m}$ is the span of paths of length $\leq m$.
\begin{exercise}\label{exer:quiver4}
\begin{enumerate}
\item[(i)] Show that $\Pi_0(Q)$ is actually graded by path length.
\item[(ii)] Show that there is a canonical surjection $\Pi_0(Q) \to \gr \Pi_\lambda(Q)$ (Hint: observe that the relations are deformed, and refer to Exercise
\ref{exer:def-reln}).
\item[(iii)] Give an example to show that this surjection need not be
  flat in general (i.e., $\Pi_\lambda(Q)$ is not a (flat) filtered
  deformation in general). Hint: Try a quiver with one arrow and two
  vertices.
\begin{remark}\label{r:pil-flat}
  It is a deep fact that, whenever $Q$ is not Dynkin (i.e., the
  underlying graph forgetting orientation is not Dynkin), then
  $\Pi_\lambda(Q)$ is always a (flat) filtered deformation of
  $\Pi_0(Q)$, but we will not prove this here (it follows from the
  fact that $\Pi_0(Q)$ satisfies a quiver analogue of the Koszul
  property: it is Koszul over the semisimple ring $\bk Q_0$).
\end{remark}
\end{enumerate}
\end{exercise}
\begin{definition} For $d = (d_i) \in \bZ_{\geq 0}^{Q_0}$,
  let $\mfg(d) := \prod_{i \in Q_0} \mathfrak{gl}(d_i)$.  The
  \emph{moment map} on $\Rep_d(\bar Q)$ is the map $\mu: \Rep_d(\bar
  Q) \to \mfg(d)$ defined by
\begin{equation}
  \mu(\rho) = \sum_{a \in Q_1} \rho(a) \rho(a^*) - \rho(a^*) \rho(a).
\end{equation}
\end{definition}
Let $\Rep_d(\Pi_\lambda(Q))$ be
the set of representations of $\Pi_\lambda(Q)$ on $(\bk^{d_i})$, i.e., the set of
$\bk Q_0$-algebra homomorphisms 
\[
\Pi_\lambda(Q) \to \End(\bigoplus_{i \in Q_0} \bk^{d_i}),
\]
equipping the RHS with the $\bk Q_0$-algebra structure where each
vertex $i \in Q_0$ is the projection to $\bk^{d_i}$. In other words,
this is the set of representations on $\bigoplus_i \bk^{d_i}$ with
each arrow $a \in \bar Q_1$ given by a map from the $a_t$-factor
$\bk^{d_{a_t}}$ to the $a_h$-factor $\bk^{d_{a_h}}$.
\begin{exercise}\label{exer:quiver5}
  Show that there is a canonical identification
  $\Rep_d(\Pi_\lambda(Q)) = \mu^{-1}(\lambda \cdot \Id)$.
\end{exercise}
Note that, since $\Pi_0(Q)$ is not commutative, the deformation
$\Pi_\lambda(Q)$ (in the case it is flat) does \emph{not} give
$\Pi_0(Q)$ a Poisson structure (this would not make sense, since for
us Poisson algebras are by definition commutative).
\begin{remark}
  The above is closely related to an important example of
  quantization, namely the quantized quiver varieties (this is known
  since the origin of preprojective algebras, but see, e.g.,
  \cite{BL-Ecqqv} for a recent paper on the subject, where the algebra
  is denoted $A_\lambda(v)$, setting $v=d$ and $w=0$).  These are
  defined as follows. Letting $G(d) := \prod_{i \in Q_0} \GL(d_i)$, we
  can consider the variety $\Rep_d(\Pi_\lambda(Q))/\!/G(d)$
  parameterizing representations of $\Pi_\lambda(Q)$ of dimension
  vector $d$ \emph{up to isomorphism}.  This turns out to be Poisson,
  i.e., its algebra of functions,
  $\cO(\Rep_d(\Pi_\lambda(Q)))^{G(d)}$, is Poisson. To quantize it,
  one replaces $T^*\Rep_d(Q)$ by its quantization $\caD(\Rep_d(Q))$,
  and hence replaces the Poisson algebra
  $\cO(\Rep_d(\Pi_0(Q)))^{G(d)}$, which is a quotient of
  $\cO(T^*\Rep_d(Q)){G(d)}$, by the corresponding quotient of
  $\caD(\Rep_d(Q))^{G(d)}$: Let $\tr: \mfg(d) \to \bk$ be the sum of
  the trace functions $\GL(V_i) \to \bk$. Then we define
\[
A_\mu(d) := \caD(\Rep_d(Q))^{G(d)} / (\tr(x (\lambda \Id - \sum_{a
  \in Q_1} \rho(a) \rho(a^*) - \rho(a^*) \rho(a)))_{x \in \mfg(d)}),
\]
where the ideal we quotient by is two-sided (although actually equals
the same as the one-sided ideal on either side with the same
generators), and we consider the trace to be a function of $\rho$ for
every $x \in \mfg(d)$, and hence an element of $\cO(\Rep_d(Q))
\subseteq \caD(\Rep_d(Q))$, which is evidently $G(d)$-invariant.
\end{remark}
\subsubsection{Dynkin and extended Dynkin quivers}
Finally, as motivation for further study,
we briefly explain how the behavior of quivers and
preprojective algebras falls into three distinct classes, depending on
whether the quiver is Dynkin, extended Dynkin, or otherwise. Readers
not familiar with Dynkin diagrams can safely skip this subsection.  We
restrict to the case of a connected quiver, i.e., one whose underyling
graph is connected.  A quiver is Dynkin if its underlying graph is a
type $ADE$ Dynkin diagram, and it is extended Dynkin if its underlying
graph is a simply-laced extended Dynkin diagram, i.e., of types
$\tilde A_n$, $\tilde D_n$, or $\tilde E_n$ for some $n$.    Note that we already
saw one way in which the behavior changes between the Dynkin and non-Dynkin
case, in Remark \ref{r:pil-flat}.

Let us assume $\bk$ is algebraically closed of characteristic zero.
\begin{theorem}\label{t:quiv}
Let $Q$ be a connected quiver. Let $\lambda \in \bk^{Q_0}$ 
be arbitrary.
\begin{enumerate}
\item The following are equivalent: (a) $Q$ is Dynkin;
(b) The algebra $\Pi_\lambda(Q)$
  is finite-dimensional; and (c) the quiver $Q$ has finitely many
  indecomposable representations up to isomorphism;
\item The following are equivalent: (a) the quiver $Q$ is extended Dynkin;
(b) the algebra $\Pi_0(Q)$ has an
infinite-dimensional center; (c) the center of $\Pi_0(Q)$ is
of the form $\cO(\bA^2)^\Gamma = \cO(\bA^2/\Gamma)$ for a finite
subgroup $\Gamma < \SL_2(\bk)$.
\end{enumerate}
\end{theorem}
The correspondence between extended Dynkin quivers $Q$ and finite
subgroups $\Gamma < \SL_2(\bk)$ is called the \emph{McKay
  correspondence}: when $Q$ is of type $\tilde A_n$, then $\Gamma =
\bZ/n$ is cyclic; when $Q$ is of type $\tilde D_n$, then $\Gamma$ is a
double cover of a dihedral group; and when $Q$ is of type $\tilde
E_n$, then it is a double cover of the tetrahedral, octahedral, or
icosahedral rotation group. See also Remark \ref{r:McKay} and
Wemyss's chapter,
\S 5.
\begin{remark}
  The various parts of the theorem appeared as follows: Equivalence
  (1).(a) $\Leftrightarrow$ (1).(b) is due to Gelfand and Ponomarev
  \cite{GP} (at least for $\lambda = 0$) and the equivalence (1).(a)
  $\Leftrightarrow$ (1).(c) is due to Peter Gabriel \cite{Gab-ud1}.
  Equivalences (2).(a) $\Leftrightarrow$ (2).(b) and (2).(a)
  $\Leftrightarrow$ (2).(c) follow from the stronger statements that,
  when $Q$ is neither Dynkin nor extended Dynkin, then the center of
  $\Pi_0(Q)$ is just $\bk$ (\cite[Proposition 8.2.2]{CBEG}; see also
  \cite[Theorem 1.3.1]{EGncci}), whereas when $Q$ is extended Dynkin,
  then the center is $\cO(\bA^2)^\Gamma$ \cite{CrawleyBoeveyHolland}.  The theorem also
  extends to characteristic $p$ without much modification (only
  (2).(c) needs to be modified); see \cite[Theorem 10.1.1]{Shochv1}.
\end{remark}
In the next remark and later on, we will need to use skew product algebras:
\begin{definition}\label{d:skew-prod}
Let $\Gamma$ be a finite (or discrete) group acting by automorphisms on an algebra $A$. The skew (or smash) product $A \rtimes \Gamma$ is defined as the
algebra which, as a vector space, is the tensor product $A \otimes \bk[\Gamma]$,
with the multiplication 
\[
(a_1 \otimes g_1)(a_2 \otimes g_2) := a_1 g_1(a_2) \otimes g_1 g_2.
\]
\end{definition}
\begin{remark}
  In the extended Dynkin case, when $\lambda$ lies in a particular
  hyperplane in $\bk^{Q_0}$, then $\Pi_\lambda$ also has an infinite
  center, and this center $Z(\Pi_\lambda)$ gives a commutative
  deformation of $\cO(\bA^2/\Gamma)$ which is the versal deformation.
  For general $\lambda$, $\Pi_\lambda$ is closely related to a
  symplectic reflection algebra $H_{t,c}(\Gamma)$ deforming
  $\cO(\bA^2) \rtimes \Gamma$ (see Bellamy's chapter)
  or Example
  \ref{ex:sra} below for the definition of this algebra): there is an
  idempotent $f \in \bC[\Gamma]$ such that $\Pi_\lambda = f
  H_{t,c}(\Gamma) f$ for $t,c$ determined by $\lambda$, and this makes
  $\Pi_\lambda$ Morita equivalent to the symplectic reflection
  algebra; this follows from \cite{CrawleyBoeveyHolland}.  Similarly,
  $\cO(\bA^2/\Gamma) = e\Pi_0 e$ for $e \in \bk Q$ the idempotent
  corresponding to the extending vertex, and $e \Pi_\lambda e$ yields
  the spherical symplectic reflection algebra, $e H_{t,c}(\Gamma)e$.
\end{remark}
\begin{remark}
  There is also an analogue of (1).(c) for part two (due to
  \cite{Naz-rqit} and \cite{DF-rtfgaa}, see also \cite{DR-rga}):
  Continue to assume that $\bk$ has characteristic zero (or, it
  suffices for this statement for it to be infinite). Then, a quiver
  $Q$ is extended Dynkin if and only if, for each dimension vector,
  the isomorphism classes of representations can be parameterized by
  finitely many curves and points, i.e., the variety of representations
  modulo equivalence has dimension $\leq 1$, and there exists a
  dimension vector for which this has dimension one (i.e., it does not
  have finitely many indecomposable representations for every
  dimension vector, which would imply it is Dynkin by the theorem).
  In fact,  the only dimension vectors for which there are
  infinitely many indecomposable representations are the imaginary
  roots (the dimension vectors that are in the kernel of the Cartan
  matrix associated to the extended Dynkin diagram), and for these all
  but finitely many indecomposable representations are parameterized
  by the projective line $\bP^1$.
\end{remark}

\subsection{Exercises}

Exercises from the notes: \ref{exer:weyl-diff}, \ref{exer:weylv},
\ref{exer:weyl-gr}, \ref{exer:weyl-basis}, \ref{exer:def-reln},
\ref{exer:poissbr}, \ref{exer:zeropbr}, \ref{exer:pb-lie},
\ref{exer:weyl-symv}, and \ref{exer:diffb}; and the
exercises on quivers: \ref{exer:quiver1}, \ref{exer:quiver2},
\ref{exer:quiver3}, \ref{exer:quiver4}, and \ref{exer:quiver5}.

Additional exercises:
\begin{exercise}\label{exer:nonflat-def} We elaborate on the final point of
  Exercise \ref{exer:def-reln}, giving an example where a filtered
  deformation of homogeneous relations can yield an algebra of smaller
  dimension. Consider the quadratic algebra $B = \bk\langle
  x,y \rangle / (xy,yx)$ (by quadratic, we mean presented by quadratic
  relations, i.e., homogeneous relations of degree two). First show
  that $B$ has a basis consisting of monomials in either $x$ or $y$
  but not both, and hence it is infinite-dimensional. Now consider the
  family of deformations parameterized by pairs $(a,b) \in \bk^2$,
  given by
\[
A_{a,b} := \bk\langle x,y \rangle / (xy-a,yx-b),
\] 
i.e., this is a family of filtered algebras obtained by deforming the
relations of $B$.  Show that, for $a \neq b$, we get $A_{a,b} = \{0\}$, 
the zero ring.  Hence $A_{a,b}$ is not a (flat)
filtered deformation of $A_{0,0}$ for $a \neq b$;
equivalently, the surjection of
Exercise \ref{exer:def-reln} is not an isomorphism for $a \neq b$.

On the other hand, show that, for $a = b \neq 0$, then $A_{a,a} \cong
\bk[x,x^{-1}]$, and verify that the basis we obtained for $B$
(monomials in $x$ or $y$ but not both) gives also a basis for
$A_{a,a}$. So the family $A_{a,b}$ \emph{is} flat along the diagonal
$\{(a,a)\} \subseteq \bk^{2}$.
\end{exercise}

\begin{exercise}\label{exer:weyl-usl2}
(a) Verify, using Singular, that $\Weyl_1^{\bZ/2} \cong (U \mfsl_2)^\eta$
where $\eta(C)=\eta(ef+fe+\frac{1}{2}h^2)=-\frac{3}{8}$.

How to do this: Type the commands
\begin{verbatim}
LIB "nctools.lib";
def a = makeWeyl(1);
setring a;
a;
\end{verbatim}
Now you can play with the Weyl algebra with variables $D$ and $x$;
$D$ corresponds to $\partial_x$.  Now you need to figure out some
polynomials $e(x,D), f(x,D)$, and $h(x,D)$ so that
\[
[e,f] = h, \quad [h,e] = 2e, \quad [h,f] = -2f.
\]
Then, once you have done this, compute the value of the Casimir,
\[
C = ef+fe+\frac{1}{2} h^2,
\]
and verify it is $-\frac{3}{8}$.

Hint: the polynomials $e(x,D), f(x,D)$, and $h(x,D)$
 should be homogeneous quadratic polynomials (that way
the bracket is linear).

For example, try first
\begin{verbatim}
def e=D^2;
def f=x^2;
def h=e*f-f*e;
h*e-e*h;
2*e;
h*f-f*h;
2*f;
\end{verbatim}
and you see that, defining $e$ and $f$ as above and $h$ to be $ef-fe$
as needed, we don't quite get $he-eh=2e$ or $hf-fh=2f$. But you can
correct this...

(b) Show that the highest weight of a highest weight
  representation of $(U\mfsl_2)^\eta$, for this $\eta$, is either
  $-\frac{3}{2}$ or $-\frac{1}{2}$. Here, a highest weight
  representation is one generated by a vector $v$ such that $e \cdot v
  = 0$ and $h \cdot v = \mu v$ for some $\mu \in \bk$.  Then, $\mu$ is
  called its highest weight.

  The value $-\frac{1}{2}$ is halfway between the value $0$ of the
  highest weight of the trivial representation of $\mfsl_2$ and the
  value $-1$ of the highest weight of the Verma module for the unique
  $\chi$ such that $(U \mfsl_2)^\chi$ has infinite Hochschild
  dimension (see Remark \ref{r:hd} for the definition of Hochschild
  dimension).
\begin{remark}
  Here, a Verma module of $\mfsl_2$ is a module of the form $U \mfsl_2
  / (e, h-\mu)$ for some $\mu$; this Verma module of highest weight
  $-1$ is also the unique one that has different central character
  from all other Verma modules, since the action of the Casimir on a
  Verma with highest weight $\mu$ is by $\frac{1}{2}\mu^2 + \mu$, so
  $C$ acts by multiplication by $-\frac{1}{2}$ on this Verma and no
  others.  Therefore, the Bernstein-Gelfand-Gelfand \cite{BGG}
  category $\cO$ of modules for $U \mfsl_2$ (these are defined as
  modules which are finitely generated, have semisimple action of the
  Cartan subalgebra, and are such that, for every vector $v$, $e^N v =
  0$ for some $N \geq 0$) which factor through the central quotient $U
  \mfsl_2 / (C+\frac{1}{2})$ is equivalent to the category of vector
  spaces, with this Verma as the unique simple object. This is the
  only central quotient $U \mfsl_2 / (C-(\frac{1}{2}\mu^2+\mu))$, with
  $\mu$ integral, having this property.  Also, the corresponding
  Cherednik algebra $H_{1,c}(\bZ/2)$ considered in
  Bellamy's chapter,
  of which $U \mfsl_2/(C+\frac{1}{2})$ is the
  spherical subalgebra, has semisimple category $\cO$ as defined there
  (this algebra has two simple Verma modules: one of them is killed by
  symmetrization $V \mapsto eVe$, so only one yields a module over the
  spherical subalgebra).
\end{remark}
(c) Identify the representation $\bk[x^2]$ over $\Weyl_1^{\bZ/2}$ with a highest weight representation of $(U\mfsl_2)^\eta$ (using an appropriate choice
of $e, h$, and $f$ in $\Weyl_1^{\bZ/2}$).

(d) Use the isomorphism $\Weyl_1^{\bZ/2} \cong (U\mfsl_2)^\eta$ and
representation theory of $\mfsl_2$ to show that $\Weyl_1^{\bZ/2}$
admits no finite-dimensional irreducible representations.  Hence it
admits no finite-dimensional representations at all.  (Note: in
Exercise \ref{exer:weyl-simple} below, we will see that
$\Weyl_1^{\bZ/2}$ is in fact simple, which strengthens this, and we
will generalize it to $\Weyl(V)^{\bZ/2}$ for arbitrary $V$ and $\bZ/2$ acting
by $\pm \Id$.)
\end{exercise}

We will often need the notion of \emph{Hilbert series}:
\begin{definition}\label{d:hs}
  Let $B$ be a graded algebra, $B = \bigoplus_{m \in \bZ} B_m$, with
  all $B_m$ finite-dimensional.  Then the Hilbert series is defined as
  $h(B;t) = \sum_{m \in \bZ} \dim B_m t^m$.
\end{definition}

\begin{exercise}\label{exer:flat-hilb}
  In this exercise, we will learn another characterization of flatness
  of deformations via Hilbert series.
Let $A$ and $B$ be as in Exercise \ref{exer:def-reln}.  Suppose first
that $B$ is finite-dimensional.  Show that $A$ is a (flat) filtered
deformation of $B$, i.e., the surjection $B \to \gr(A)$ is an
isomorphism, if and only if $\dim A = \dim B$.  More generally,
suppose that $B$ has finite-dimensional weight spaces. Show then that
$\gr(A)$ automatically also has finite-dimensional weight spaces, and
that $A$ is a (flat) filtered deformation if and only if $h(\gr A;t)
:= h(B;t)$.
\end{exercise}

\begin{exercise}\label{exer:grob}
Now we use GAP to try to play with flat deformations. In these
  examples, you may take the definition of a flat filtered deformation
  of $B$ to be an algebra $A$ given by a filtered deformation of the relations of $B$ such that $h(\gr A;t) = h(B;t)$.

  One main technique to use is that of Gr\"obner bases. See
  Rogalski's chapter
  for the necessary background on these.

  Here is code to get you started for the universal enveloping algebra
  of $\mfsl_2$:
\begin{verbatim}
LoadPackage("GBNP");
A:=FreeAssociativeAlgebraWithOne(Rationals, "e", "f", "h");
e:=A.e;; f:=A.f;; h:=A.h;; o:=One(A);;
uerels:=[f*e-e*f+h,h*e-e*h-2*e,h*f-f*h+2*f];
uerelsNP:=GP2NPList(uerels);;
PrintNPList(uerelsNP);
GBNP.ConfigPrint(A);
GB:=SGrobner(uerelsNP);;
PrintNPList(GB);
\end{verbatim}
This computes the Gr\"obner basis for the ideal generated by the relations.
You can also get explicit information about how each Gr\"obner
basis element is obtained
from the original relations:
\begin{verbatim}
GB:=SGrobnerTrace(uerelsNP);;
PrintNPListTrace(GB);
PrintTracePol(GB[1]);
\end{verbatim}
Here the second line gives you the list of Gr\"obner basis elements,
and for each element $GB[i]$, the third line will tell you how it is
expressed in terms of the original relations.


(a) Verify that, with the relations, $[x,y]=z, [y,z]=x,
[z,x]=y$, one gets a flat filtered deformation of $\bk[x,y,z]$, by
looking at the Gr\"obner basis. Observe that this is the enveloping
algebra of a three-dimensional Lie algebra isomorphic to $\mfsl_2$.

  (b) Now play with modifying those relations and see that, for most
  choices of filtered deformations, one need not get a flat
  deformation.

  (c) Now play with the simplest Cherednik algebra: the deformation of
  $\Weyl_1 \rtimes \bZ/2$ (cf.~Definition \ref{d:skew-prod}). The
  algebra $\Weyl_1 \rtimes \bZ/2$ itself is defined by relations
  $x*y-y*x-1$ (the Weyl algebra relation), together with $z^2-1$ (so
  $z$ generates $\bZ/2$) and $x*z+z*x, y*z+z*y$, so $z$ anticommutes
  with $x$ and $y$.  Show that this is a flat deformation of
  $\Sym(\bk^2) \rtimes \bZ/2$, i.e., $\bk[x,y] \rtimes \bZ/2$.
  Equivalently, show that the latter is the associated graded algebra
  of $\Weyl_1 \rtimes \bZ/2$.

  Then, the deformation of $\Weyl_1 \rtimes \bZ/2$ 
  is given by replacing the relation $x*y-y*x-1$
  with the relation $x*y-y*x-1-\lambda \cdot z$, for $\lambda \in \bk$
  a parameter. Show that this is also a flat deformation of $\bk[x,y] \rtimes \bZ/2$ for all $\lambda$
 (again you can just compute the Gr\"obner basis).

  (d) Now modify the action of $\bZ/2$ so as to not preserve the
  symplectic form: change $y*z+z*y$ into $y*z-z*y$.  What happens to
  the algebra defined by these relations now?
\end{exercise}

\begin{exercise}\label{exer:weyl-simple}
  Prove that, over a field $\bk$ of characteristic zero, $V$ a
  symplectic vector space, and $\bZ/2 = \{\pm \Id\} < \Sp(V)$, the
  skew product $\Weyl(V) \rtimes \bZ/2$ is a simple algebra, and
  similarly show the same for $(\Weyl(V))^{\bZ/2}$.  Hint: For the
  latter, you can use the former, together with the symmetrizer
  element $e = \frac{1}{2}(1+\sigma)$, for $\bZ/2=\{1,\sigma\}$,
  satisfying $e^2 = e$ and $e(\Weyl(V) \rtimes \bZ/2)e =
  \Weyl(V)^{\bZ/2}$.

  (In fact, one can show that the same is true for $\Weyl(V) \rtimes
  \Gamma$ for arbitrary finite $\Gamma < \Sp(V)$: 
this is a much more difficult exercise you can
  try if you are ambitious. From this one can deduce that
  $(\Weyl(V))^\Gamma$ is also simple by the following Morita equivalence:
  if $e \in \bk[\Gamma]$ is the symmetrizer element, then $\Weyl(V)^\Gamma =
  e(\Weyl(V) \rtimes \Gamma)e$ and $(\Weyl(V) \rtimes \Gamma)e (\Weyl(V) \rtimes
  \Gamma) = \Weyl(V) \rtimes \Gamma$.)
\end{exercise}




For the next exercises, we need to recall the notions of graded
and filtered modules.

Recall that for a graded algebra $B = \bigoplus_m B_m$, a graded
module is a module $M = \bigoplus_m M_m$ such that $B_i \cdot M_j
\subseteq M_{i+j}$ for all $i,j \in \bZ$.  As before, we will call
these gradings the weight gradings. We say that $\phi: M \to
N$ has weight $k$ if $\phi(M_m) \subseteq N_{m+k}$ for all $m$.
If $M$ is a graded right $B$-module and $N$ is a graded
left $B$-module, then $M \otimes_B N$ is a graded vector space,
placing $M_m \otimes_B N_n$ in weight $m+n$.

This induces gradings also on the corresponding derived functors,
i.e., $M \otimes_B N$ becomes canonically graded and $\Ext_B^i(M,N)$ has
a notion of weight $k$ elements.
Explicitly, if $M$ is a graded left $B$-module and $P_\bullet
\twoheadrightarrow M$ is a graded projective resolution, then 
weight $k$ cocycles of the complex $\Hom_B(P_\bullet, N)$ induce
weight $k$ elements of $\Ext_B^\bullet(M,N)$.  Similarly if $M$ is a
graded right $B$-module and $P_\bullet
\twoheadrightarrow M$ is a graded projective resolution, then the
complex $P_\bullet \otimes_B N$ computing $\Tor_\bullet(M,N)$ is a
graded complex.

Given a nonnegatively filtered algebra $A$, a (good) nonnegatively
filtered module $M$ is one such that $M_{\leq -1} = 0 \subseteq
M_{\leq 0} \subseteq \cdots$, with $M = \bigcup_m M_{\leq m}$, and
$A_{\leq m} M_{\leq n} \subseteq M_{\leq (m+n)}$. (Note that
everything generalizes to the case where $B$ is $\bZ$-graded and $M$
is $\bZ$-filtered, if we replace this condition by $\bigcap_m M_{\leq m} =
\{0\}$.)

\begin{exercise}
\label{exer:hoch-res} Given an algebra $B$, to understand its
  Hochschild cohomology (which governs deformations of associative
  algebras!) as well as many things, we need to construct resolutions.
  A main tool then is \emph{deformations} of resolutions.

Suppose that $B$ is nonnegatively graded, i.e., $B = \bigoplus_{m \geq
  0} B_m$ for $B_m$ the degree $m$ part of $B$.  Let $A$ be a filtered
deformation (i.e., $\gr A = B$) and $M$ be a nonnegatively filtered
$A$-module. (We could also work for this problem in the $\bZ$-graded
and filtered setting, but do not do so for simplicity.)

Suppose that $Q_\bullet \to M$ is a nonnegatively filtered complex 
with $Q_i$ projective 
such that $\gr Q_\bullet \to \gr M$ is a projective resolution of
$\gr M$.  Show that $Q_\bullet \to M$ is a projective resolution of
$M$.


Hint: Show that $\dim H_i(\gr C_\bullet) \geq \dim H_i(C_\bullet)$ for
an arbitrary filtered complex $C_\bullet$, because $\ker (\gr C_i \to
\gr C_{i-1}) \supseteq \gr \ker(C_{i} \to C_{i-1})$ and $\im(\gr C_{i+1}
\to \gr C_i) \subseteq \gr \im (C_{i+1} \to C_i)$. Conclude, for an
arbitrary filtered complex, that exactness of $\gr(C_\bullet)$ implies
exactness of $C_\bullet$ (the converse is not true).
\end{exercise}


\begin{proposition}\label{p:def-res}
  Suppose that $B = TV/(R)$ for some homogeneous relations $R
  \subseteq R$, and that $B$ has a nonnegatively graded
  free $B$-bimodule
  resolution, $P_\bullet \onto B$, which is finite-dimensional in each
  weighted degree. If $E \subseteq TV$ is a filtered
  deformation of the homogeneous relations $R$, i.e., $\gr E = R$,
  then the following are equivalent:
\begin{itemize}
\item[(i)] $A := TV / (E)$ is a flat filtered deformation of $B$,
  i.e., the canonical surjection $B \onto \gr A$ is an isomorphism;
\item[(ii)] The resolution $P_\bullet \to B$ deforms to a filtered free
  resolution $Q_\bullet \to A$.
\item[(ii')] The resolution $P_\bullet$ deforms to a filtered complex
  $Q_\bullet \to A$.
\end{itemize}
In this case, the deformed complex in (ii) is a free resolution of
$A$.
\end{proposition}
We remark that the existence of $P_\bullet$ is actually automatic,
since $B$ always admits a unique (up to isomorphism) minimal free
resolution (where minimal means that, for each $i$, $P_i = B_i \otimes
V_i$ for $V_i$ a nonnegatively graded vector space with minimum
possible Hilbert series, i.e., for any other choice $V_i'$,
we have that $h(V_i';t)-h(V_i;t)$ has nonnegative coefficients).
 For the minimal resolution, each $P_i$ is in
degrees $\geq i$ for all $i$, and is finite-dimensional in each
degree, which implies that $\bigoplus_i P_i$ is also
finite-dimensional in each degree.
\begin{proof}[Sketch of Proof]
  It is immediate that (ii) implies (ii'), and also that (ii') implies
  (i), since part of the statement is that $\gr(A) \cong B$.  Also, by
  the exercise, (ii') implies (ii).

  To show (i) implies (ii) (it admittedly does not really help to try
  to show only (ii'), inductively construct a deformation of
  $P_\bullet$ to a filtered free resolution of $A$.  It suffices to
  let $P_\bullet$ be the minimal resolution as above, since any other
  resolution is obtained from this one by summing with a split exact
  complex of free graded modules. First of all, we know that $P_0 \onto B$
  deforms to $Q_0 \onto A$, since we can arbitrarily lift the map $P_0
  = B^{r_0} \to B$ to a filtered map $Q_0:=A^{r_0} \to A$, and this
  must be surjective since $B \onto \gr A$ is surjective.  Since in
  fact it is an isomorphism, the Hilbert series (Definition
  \ref{d:hs}) of the kernels are the same, and so we can arbitrarily
  lift the surjection $P_1 =B^{r_1} \onto \ker(P_0 \to B)$ to a
  filtered map $Q_1 := A^{r_1} \onto \ker(Q_0 \to A)$, etc. Fill in
  the details!
\end{proof}

In the case $B$ is Koszul, we can do better using the form of the
Koszul complex. This is divided over the next three exercises. In the
first exercise we explain augmented algebras. In the next exercise, we
explain quadratic and Koszul algebras. In the third exercise, we
improve the above result in the case $B$ is Koszul.

We begin with the definition and an exercise on augmented algebras:
\begin{definition}
An augmented algebra is an associative $\bk$-algebra $B$ together
with an algebra homomorphism $B \to \bk$. The \emph{augmentation ideal}
is the kernel, $B_+ \subseteq B$.
\end{definition}
\begin{exercise}\label{exer:aug}
(a) Show that an augmented algebra is equivalent to the data of
an algebra $B$ and a codimension-one ideal $B_+ \subseteq B$.

(b) Show that $TV$ is augmented with augmentation ideal $(TV)_+
  := \bigoplus_{m \geq 1}V^{\otimes m}$.

(c) Show that an augmented algebra $B$ is always of the
form $TV/(R)$ for $V$ a vector space and
 $R \subseteq (TV)_+$.

 (d) 
Suppose now that $R \subseteq ((TV)_+)^2$.\footnote{One can show that,
  if we complete $B$ and $TV$ with respect to the augmentation ideals,
  we can always assume this; for simplicity here we will not take the
  completion.}  Then we have an isomorphism $V \cong \Tor_1(\bk, \bk)$
(for $B = TV/(R)$).  Hint: one has a projective $B$-module resolution
of $\bk$ of the form
\begin{equation}\label{e:kr-2}
\cdots \to B \otimes R \to B \otimes V \to B \twoheadrightarrow \bk.
\end{equation}
Here the maps are given by
\[
b \otimes \sum_i (v_{i,1} \cdots v_{i,k_i}) \mapsto \sum_i
b (v_{i,1} \cdots v_{i,k_i-1}) \otimes v_{i,k_i}, \quad
b \otimes v \mapsto bv.
\]
In other words, the first map is obtained by restricting to $B \otimes R$
the splitting map
$B \otimes (TV)_+ \to B  \otimes V$, given by linearly
extending to all of $B \otimes (TV)_+$ the assignment
\[
b \otimes v_1 \cdots v_k \mapsto b v_1 \cdots v_{k-1} \otimes v_k.
\]
Then show that, when $R \subseteq ((TV)_+)^2$, then after applying the functor
$M \mapsto M \otimes_B \bk$ to the sequence, the last two differentials (before the
map $B \twoheadrightarrow \bk$) become zero.

(e) Continue to assume $R \subseteq ((TV)_+)^2$.  Assume that $R \cap
(RV+VR) = \{0\}$, where $(RV+VR)$ denotes
the ideal, i.e., $(TV)_+ R (TV) + (TV) R (TV)_+$; this is a minimality
condition.
 Show in this case that $\Tor_2(\bk, \bk) \cong R$.  In
particular, if $R$ is spanned by homogeneous elements,
 show that, replacing $R$ with a suitable subspace,
it satisfies this condition, and then $\Tor_2(\bk, \bk)
\cong R$.  Hint:
Show that the projective resolution \eqref{e:kr-2} can be
extended to
\begin{equation}\label{e:kr-3}
\cdots \to B \otimes S \to B \otimes R \to B \otimes V \to B \twoheadrightarrow \bk,
\end{equation}
where $S := (TV \cdot R) \cap (TV \cdot R \cdot TV_+)$ (or any
subspace which generates this as a left $TV$-module).  Show that the
multiplication map map $TV \otimes R \to TV \cdot R$ is injective.
Then, the map $B \otimes S \to B \otimes R$ is given by the
restriction of
\[
B \otimes (TV \otimes R) \to B \otimes R, \quad b \otimes (f \otimes r)
\mapsto bf \otimes r.
\]
Check that this gives the exact
sequence \eqref{e:kr-3} (beginning with $B \otimes S$).
\end{exercise}
We proceed to the definition and an exercise on quadratic algebras. 
\begin{definition}
  A quadratic algebra is an algebra of the form $B = TV/(R)$ where $R
  \subseteq T^2 V = V \otimes V$.
\end{definition}
In particular, such an algebra is nonnegatively graded, $B =
\bigoplus_{m \geq 0} B_m$, with $B_0 = \bk$.  For the rest of the
problem, whenever $B$ is graded, we will call the grading on $B$ the
\emph{weight grading}, to avoid confusion with the \emph{homological
  grading} on complexes of (graded) $B$-modules.

\begin{exercise}
\label{exer:quadratic}
 Let $B= \bigoplus_{m \geq 0} B_m$ be a nonnegatively graded
algebra with $B_0 = \bk$. Show that $B$ is quadratic if and only if
$\Tor_2(\bk, \bk)$ is concentrated in weight two.  In this case,
conclude that $B \cong TV/(R)$ where $V \cong \Tor_1(\bk,\bk)$ and $R
\cong \Tor_2(\bk,\bk)$.  
\end{exercise}
Now we define and study Koszul algebras:
\begin{definition} A Koszul algebra is a nonnegatively graded algebra
  $B$ with $B_0 = \bk$ such that $\Tor^B_i(\bk,\bk)$ is concentrated in
  weight $i$ for all $i \geq 1$.
\end{definition}
We remark that the above is often stated dually as:
$\Ext_B^i(\bk,\bk)$ is concentrated in weight $-i$.
\begin{exercise}\label{exer:koszul}
Show that the following are equivalent:
\begin{enumerate}
\item $B$ is Koszul.
\item $\bk$ has a projective resolution of the form
\begin{equation}\label{e:k}
  \cdots \to B \otimes V_i \to \cdots \to B \otimes V_1 \to B \twoheadrightarrow \bk,
\end{equation}
where $V_i$ are graded vector spaces placed in degree $i$ for all $i$,
and the differentials preserve the weight grading (or alternatively,
you can view $V_i$ as vector spaces in degree zero, and then the
differentials all increase weights by $1$).
\end{enumerate}
\end{exercise}
We now proceed to discuss a deep property of Koszul algebras which makes
them very useful in deformation theory, the \emph{Koszul deformation principle}, due to Drinfeld.
\begin{exercise}\label{exer:kdp-filt}
  (a) For a quadratic algebra $B = TV/(R)$, let $S := R\otimes V \cap
  V \otimes R$, taking the intersection in the tensor algebra.  Let $E
  \subseteq T^{\leq 2}$ be a deformation of $R$, in the sense that the
  composite map $E \to T^{\leq 2} V \twoheadrightarrow V^{\otimes 2}$ is an
  isomorphism onto $R$.  Let $A := TV / (E)$. Then we can consider $T
  := E \otimes V  \cap (V+\bk) \otimes E$, again taking the intersection in
  the tensor algebra (where $(V+\bk)$ does \emph{not} denote an ideal).

  Show first that the composition $T \to T^{\leq 3} V
  \twoheadrightarrow V^{\otimes 3}$ is an injection to $S$.

(b) Verify that the following are complexes:
\[
B \otimes S \to B \otimes R \to B \otimes V \to B \twoheadrightarrow
\bk,
\]
and
\[
A \otimes T \to A \otimes E \to A \otimes V \to A \twoheadrightarrow \bk.
\]
The maps in the above are all obtained by restriction from splitting
maps, $x \otimes \bigl(\sum_i v_i \otimes y_i) \mapsto x v_i \otimes
y_i$ for $x$ in either $A$ or $B$, $v_i \in V$, and $y_i$ in one of
the spaces $T, E, S$, or $R$; the map $A \otimes V \to A$ is the
multiplication map.

Show also that: they are exact at $B \otimes V$ and $B$, and at $A
\otimes V$ and $A$, respectively, and that the first complex is exact
in degrees $\leq 3$.

(c) Prove that, if $A$ is a flat deformation, then the map in (a) must
be an isomorphism, and hence the second sequence in (b) deforms the
first (in the sense that the first is obtained from the second by
taking the associated graded sequence, $\gr(T)= S$, $\gr(E)=R$).  Note
the similarity to Proposition \ref{p:def-res}.

(d)(*, but with the solution outlined) Finally, we state the Koszul
deformation principle, which is a converse of (c) in the Koszul
setting (see Theorem \ref{t:kdp} below for the usual version).  In
this case, by hypothesis, the first sequence in (b) is exact (in all
degrees).

\begin{theorem} \label{t:kdp-filt}
  \cite{Dri-qcrqc,PBW,BGS-Kdprt,PP-qa} (Koszul deformation principle:
  filtered version) Suppose $B = TV/(R)$ is a Koszul algebra and $E
  \subseteq T^{\leq 2}V$ is a deformation of $R$, i.e.,
  $\gr(E)=R$. Then $A = TV/(E)$ is a flat deformation of $B$ if and
  only if $\dim T = \dim S$, i.e., the injection $\gr(T) \to S$ is
  an isomorphism.
\end{theorem}
That is, in the Koszul case, $A$ is a flat deformation if and only if
$T$ is a flat deformation of $S$.  In this
case, the second sequence in (b) deforms the first one, and hence is
also exact.

We will give the more standard version of this using formal
deformations in Exercise \ref{exer:deduce-kdp-filt}.

\textbf{Prove that, in the situation of the theorem, the whole
  resolution \eqref{e:k} deforms to a resolution of $A$.}

We outline how to do this: First, you can form a minimal resolution of
$A$ analogously to \eqref{e:k}, of the form
\begin{equation}\label{e:k-def}
\cdots \to A \otimes W_i \to \cdots \to A \otimes W_1 \to A,
\end{equation}
where $W_i$ are now filtered vector spaces with $W_{\leq (i-1)} = 0$.
Note that $W_1 = V, W_2 = E$, and $W_3 = T$, and the assumption of the
theorem already gives that $\gr(W_i)=V_i$ for $i \leq 3$.

If we take the associated graded of \eqref{e:k-def}, since $\gr(A)=B$,
we obtain a complex which is exact beginning with $B \otimes S = B
\otimes V_3$. We want to show it is exact.  Suppose it is not, and
that the first nonzero homology is in degree $m \geq 3$, i.e., $\ker(B
\otimes \gr(W_{m}) \to B \otimes \gr(W_{m-1})) \neq \im(B \otimes
\gr(W_{m+1}) \to B \otimes \gr(W_m))$. 

First verify that, in this case, $\gr(W_i) = V_i$ for all $i \leq m$,
but $\gr_{m+1}(W_{m+1}) \subsetneq V_{m+1}$.

Next, taking the Hilbert series $h(\gr A;t) + \sum_{i \geq 1} (-1)^i
h(\gr(W_i);t) h(\gr A;t)$, we must get zero, since the sequence is
exact. The same fact says that $h(B;t) + \sum_{i \geq 1} (-1)^i
h(V_i;t) h(B;t) = 0$.  On the other hand, flatness is saying that $\gr
A = B$. Conclude that
\[
\sum_{i \geq 1} (-1)^i h(V_i;t) = \sum_{i \geq 1} (-1)^i h(\gr(W_i);
t).
\]
Finally, by construction, $(W_i)_{\leq (i-1)} = 0$ for all $i$, so
that $h(\gr(W_i);t)$ is zero in degrees $\leq (i-1)$.  Moreover,
$\gr(W_i) = V_i$ for $i \leq m$.  Taking the degree $m+1$ part of the
above equation, we then get $h(V_{m+1};t) = h(\gr_{m+1}(W_{m+1});t)$. This
contradicts the fact that $\gr_{m+1}(W_{m+1}) \subsetneq V_{m+1}$ (as
noted above by our assumption).
\end{exercise}

\section{Formal deformation theory and Kontsevich's
  theorem}\label{s:kont-state}
In this section, we state and study the question of deformation
quantization of Poisson structures on the polynomial algebra
$\bk[x_1,\ldots,x_n]=\cO(\bA^n)$, or more generally on $\cO(X)$ for
$X$ a smooth affine variety, with $\bk$ of characteristic zero.  We
defined the notion of Poisson structure in the previous section: such
a structure on $B:=\cO(X)$ means a Poisson bracket $\{-,-\}$ on
$B$. In this section we define the notion of deformation quantization,
which essentially means an associative product $\star$ on
$B[\![\hbar]\!]$ which, modulo $\hbar$, reduces to the usual
multiplication, and modulo $\hbar^2$, satisfies $[a,b] \equiv
\hbar\{a,b\}$, for $a,b \in \cO(X)$.  As proved by Kontsevich for
$\bk=\bR$, all Poisson structures on $X=\bA^n$ admit a quantization,
and his proof extends to the case of smooth affine (and some
nonaffine) varieties, as fleshed out by Yekutieli and others (and his
proof can even be extended to the case of general characteristic zero
fields, as shown recently in \cite{Dol-acekfqrrn}).  Moreover,
Kontsevich gives an explicit formula for the quantization in terms of
operators associated to graphs.  The coefficients of these operators
are given by certain, very interesting, explicit integrals over
configuration spaces of points in the upper-half plane, which
unfortunately cannot be explicitly evaluated in general.

Our goal is to develop enough of the definitions and background in
order to explain Kontsevich's answer, omitting the integral formulas
for the coefficients.  To our knowledge, this result itself is not
proved on its own in the literature: Kontsevich proves it as a
corollary of a more general result, his formality theorem, which led
to an explosion of literature on refinements, analogues, and related
results.

\subsection{Differential graded algebras}
In this section, we will often work with dg, i.e., differential graded
algebras.  These algebras have \emph{homological} grading, which means
that one should always think of the permutation of tensors $v \otimes
w \mapsto w \otimes v$ as carrying the additional sign
$(-1)^{|v||w|}$.  Precisely, this means the following:
\begin{remark}\label{r:hom-cohom}
  We will use superscripts for the grading on dg algebras since the
  differential increases degree by one (i.e., this is cohomological
  grading).  If one uses subscripts to indicate the grading
  (homological grading) then the differential should rather decrease
  degree by one. We will typically refer to this grading as
  ``homological'' even when it is really cohomological.
\end{remark}
\begin{definition}
  A dg vector space, or complex of vector spaces, is a graded vector
  space $V = \bigoplus_{m \in \bZ} V^m$ equipped with a linear
  differential $d: V^\bullet \to V^{\bullet + 1}$ satisfying $d(d(b))
  = 0$ for all $b \in V$.  A morphism of dg vector spaces is a linear
  map $\phi: V \to W$ such that $\phi(V^m) \subseteq W^m$ for all $m
  \in \bZ$ and $\phi \circ d = d \circ \phi$.
\end{definition}
\begin{definition}
  A dg associative algebra (or dg algebra) is a dg vector space $A =
  \bigoplus_{m \in \bZ} A^m$ which is also a graded associative
  algebra, i.e., equipped with an associative multiplication
  satisfying $A^m A^n \subseteq A^{m+n}$, such that the differential
  is a graded derivation:
\[
d(ab) = d(a)\cdot b +(-1)^{|a|} a \cdot d(b).
\]
A dg algebra morphism $A \to B$ is a homomorphism of associative algebras
which is also a morphism of dg vector spaces.
\end{definition}
\begin{definition}
  A dg commutative algebra $B$ is a dg associative algebra satisfying
  the graded commutativity rule, for homogeneous $a,b \in B$,
\begin{equation}\label{e:gr-comm}
a b = (-1)^{|a| |b|}ba.
\end{equation}
\end{definition}
A dg commutative algebra morphism is the same thing as a dg
(associative) algebra morphism (that is, commutativity adds no
constraint on the homomorphism).
\begin{definition}
  A dg Lie algebra (dgla) $L$ is a dg vector space equipped with a
  bracket $[-,-]: L \otimes L \to L$ which is a morphism of complexes
  (i.e., $[L^m, L^n] \subseteq L^{m+n}$ and
  $d[a,b]=[da,b]+(-1)^{|a|}[a,db]$) and satisfies the graded
  skew-symmetry and Jacobi identities:
\begin{gather}
[v,w] = -(-1)^{|v| |w|} [w,v], \\
[u,[v,w]] + (-1)^{|u|(|v|+|w|)} [v,[w,u]] + (-1)^{(|u|+|v|)|w|} [w,[u,v]] = 0.
\end{gather}
A dg Lie algebra morphism is a homomorphism of Lie algebras which is
also a morphism of dg vector spaces.
\end{definition}
\begin{remark}
  If you are comfortable with the idea of the category of dg vector
  spaces (i.e., complexes) as equipped with a tensor product
  (precisely, a symmetric monoidal category where the permutation is,
  as indicated above, the signed one $v \otimes w \mapsto (-1)^{|v|
    |w|} w \otimes v$), then a dg (associative, commutative, Lie)
  algebra is exactly an (associative, commutative, Lie) algebra in the
  category of dg vector spaces.
\end{remark}


\subsection{Definition of Hochschild (co)homology}
Let $A$ be an associative algebra. The Hochschild (co)homology is the
natural (co)homology theory attached to associative algebras.  We
give a convenient definition in terms of $\Ext$ and $\Tor$.  Let $A^e
:= A \otimes_\bk A^\op$, where $A^\op$ is the opposite algebra,
defined to be the same underlying vector space as $A$, but with the
opposite multiplication, $a \cdot b := ba$. Note that $A^e$-modules
are the same as $A$-bimodules (where, by definition, $\bk$ acts the
same on the right and the left, i.e., by the fixed $\bk$-vector space
structure on the bimodule).
\begin{definition} \label{d:hh-co} Define the Hochschild homology and
  cohomology, respectively, of $A$, with coefficients in an
  $A$-bimodule $M$, by
\[
\HH_i(A, M) := \Tor_i^{A^e}(A, M), \quad \HH^i(A, M) := \Ext^i_{A^e}(A,
M).
\]
Without specifying the bimodule $M$, we are referring to $M=A$, i.e.,
$\HH_i(A) := \HH_i(A,A)$ and $\HH^i(A):=\HH^i(A,A)$.
\end{definition}
In fact, $\HH^\bullet(A)$ is a \emph{ring} under the Yoneda product of
extensions: see \S \ref{ss:mod-hoch} below for more details (we will
not use this until then).  It moreover has a Gerstenhaber bracket
(which is a shifted version of a Poisson bracket), which we will
introduce and use beginning in \S \ref{ss:gerst} below.

The most important object above for us will be $\HH^2(A)$, in
accordance with the (imprecise) principle:
\begin{multline}
  \text{The space $\HH^2(A)$ parameterizes all (infinitesimal,
    filtered, or formal)} \\ 
  \text{deformations of $A$, up to obstructions (in $\HH^3(A)$) and
    equivalences (in $\HH^1(A)$).}
\end{multline}
We will give a first definition of infinitesimal and formal
deformations in the next subsection, leaving the general notion to \S
\ref{ss:fdef} below.

More generally, given any type of algebra structure, $\mathcal{P}$, on
an ordinary (not dg) vector space $B$:
\begin{multline*}
\text{The space $H_{\mathcal{P}}^2(B,B)$ parameterizes 
(infinitesimal, filtered, or formal)} \\
\text{deformations of
the $\mathcal{P}$-algebra structure on $B$,} \\
\text{up to obstructions
(in $H_{\mathcal{P}}^3(B,B)$) and equivalences (in $H_{\mathcal{P}}^1(B,B)$).}
\end{multline*}
When $\mathcal{P}$ indicates associative algebras,
$H_{\mathcal{P}}^\bullet(B,B) = \HH^\bullet(B,B)$, and for Lie,
commutative, and Poisson algebras, one gets Chevalley-Eilenberg,
Harrison, and Poisson cohomology, respectively.  For example, as we
will use later, when $\mathcal{P}$ indicates Poisson algebras, we
denote the Poisson cohomology by $\HP^\bullet(B,B)$, so that
$\HP^2(B,B)$ parameterizes Poisson deformations of a Poisson algebra
$B$, up to the aforementioned caveats.
\begin{remark}
  One way of making the above assertion precise is to make
  $\mathcal{P}$ an \emph{operad} and $B$ an algebra over this operad
  (see, e.g., \cite{MSS,LV-ao}): for example, there are commutative,
  Lie, Poisson, and associative operads, and algebras over each of
  these are commutative, Lie, Poisson, and associative algebras,
  respectively.  In some more detail, a $\bk$-linear \emph{operad}
  $\mathcal{P}$ is an algebraic structure which consists of, for every
  $m \geq 0$, a vector space $\mathcal{P}(m)$ equipped with an action
  of the symmetric group $S_m$, together with composition operations
  $\mathcal{P}(m) \times \bigl( \mathcal{P}(n_1) \times
  \mathcal{P}(n_2) \times \cdots \times \mathcal{P}(n_m) \bigr) \to
  \mathcal{P}(n_1 + n_2 + \cdots + n_m)$, for all $m \geq 1$ and $n_1,
  n_2, \ldots, n_m \geq 0$, together with a unit $1 \in
  \mathcal{P}(1)$, satisfying certain associativity and unitality
  conditions.  Then, a $\mathcal{P}$-algebra is a vector space $V$
  together with, for all $m \geq 0$ and $\xi \in \mathcal{P}(m)$, an
  $m$-ary operation $\mu_\xi: V^{\otimes m} \to V$, satisfying certain
  associativity and unitality conditions.  
  Then, for any operad $\mathcal{P}$ and any $\mathcal{P}$-algebra
  $A$, there is a natural notion of the $\mathcal{P}$-(co)homology of
  $A$, which recovers (in degrees at least two) in the
  associative case the Hochschild (co)homology; in the Lie case over
  characteristic zero, the Chevalley-Eilenberg (co)homology; and in
  the commutative case over characteristic zero, the Harrison or
  Andr\'e-Quillen (co)homology.  Moreover, one has the notion of an
  $A$-module $M$, which in the aforementioned cases recover bimodules,
  Lie modules, and modules, respectively, and one has the notion of
  $\mathcal{P}$-(co)homology of $A$ valued in $M$, which recovers (at
  least in positive degrees) the Hochschild, Chevalley-Eilenberg, and
  Harrison (co)homology valued in $M$. 

  In the case of Lie algebras $\mfg$, one can explicitly describe the
  Chevalley-Eilenberg cohomology of $\mfg$, owing to the fact that the
  category of $\mfg$-modules
  is equivalent to the category of modules over the associative
  algebra $U\mfg$. Let $H^\bullet_{\Lie}(\mfg,-)$ denote the Lie (or
  Chevalley-Eilenberg) cohomology of $\mfg$ with coefficients in Lie
  modules.  One then has (cf.~\cite[Exercise 7.3.5]{Weibel})
  $\Ext^\bullet_{U\mfg}(M, N) \cong H^\bullet_{\Lie}(\mfg, \Hom_\bk(M,
  N))$. So $H^\bullet_{\Lie}(\mfg, N) \cong \Ext^\bullet_{U\mfg}(\bk,
  N)$, the extensions of $\bk$ by $N$ as $\mfg$-modules; this differs
  from the associative case where one considers the bimodule
  extensions of $A$, rather than the Lie module extensions of $\bk$.
  (Note that, since $A$ need not be augmented, one does not
  necessarily have a module $\bk$; indeed one need not have a
  one-dimensional module at all, such as in the case where $A$ is the
  algebra of $n$ by $n$ matrices for $n \geq 2$, or when $A$ is the
  Weyl algebra in characteristic zero, which has no finite-dimensional
  modules).
%
\end{remark}
\subsection{Formality theorems}
We will also state results for $C^\infty$ manifolds, both for added
generality, and also to help build intuition. In the $C^\infty$ case
we will take $\bk = \bC$ and let $\cO(X)$ be the algebra of smooth
complex-valued functions on $X$; there is, however, one technicality:
we restrict in this case to the ``local'' part of Hochschild
cohomology as defined in Remark \ref{r:lhc} (this difference can be
glossed over in a first reading). In the affine case, in this section,
we will assume that $\bk$ is a field of characteristic zero.
\begin{theorem}[Hochschild-Kostant-Rosenberg] \label{t:hkr}
  Let $X$ be a either smooth affine variety over a field $\bk$ of
  characteristic zero or a $C^\infty$ manifold. Then, the Hochschild
  cohomology ring $\HH^\bullet(\cO(X),\cO(X)) \cong
  \wedge^\bullet_{\cO(X)} \Vect(X)$ is the ring of \emph{polyvector fields}
  on $X$.
\end{theorem}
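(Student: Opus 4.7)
The plan is to compute Hochschild cohomology as $\Ext$ over $\cO_X^e$ using a small Koszul-type resolution arising from the diagonal. Recall $\HH^\bullet(\cO_X, \cO_X) = \Ext^\bullet_{\cO_X^e}(\cO_X, \cO_X)$; since $\cO_X$ is commutative, $\cO_X^e = \cO_X \otimes_\bk \cO_X \cong \cO_{X \times X}$, and $\cO_X$ becomes an $\cO_{X\times X}$-module as the quotient by the ideal $I_\Delta$ of the diagonal $\Delta: X \hookrightarrow X \times X$. The essential input from smoothness is that $\Delta$ is a regular closed embedding of codimension $n = \dim X$: locally $I_\Delta$ is generated by the regular sequence $\{x_i \otimes 1 - 1 \otimes x_i\}_{i=1}^n$, and canonically $I_\Delta / I_\Delta^2 \cong \Omega^1_X$, with $x_i \otimes 1 - 1 \otimes x_i$ corresponding to $dx_i$.

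Next, I would write down the Koszul resolution $K^\bullet \onto \cO_X$ by locally free $\cO_{X \times X}$-modules,
\[
K^{-k} \cong \cO_{X\times X} \otimes_{\cO_X} \wedge^k_{\cO_X} \Omega^1_X,
\]
whose differential is multiplication by the defining equations of the diagonal; exactness is local and follows from the regular-sequence property. Applying $\Hom_{\cO_{X\times X}}(-, \cO_X)$ termwise, Hom--tensor adjunction yields
\[
\Hom_{\cO_{X\times X}}(K^{-k}, \cO_X) \cong \Hom_{\cO_X}(\wedge^k \Omega^1_X, \cO_X) \cong \wedge^k_{\cO_X} T_X.
\]
The crucial observation is that the induced differential on this complex vanishes identically, because the Koszul differential is multiplication by elements of $I_\Delta$, and these act as zero on $\cO_X$. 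Thus the Ext groups are read off termwise and one obtains $\HH^k(\cO_X, \cO_X) \cong \wedge^k_{\cO_X} T_X$. The ring structure then matches because the Koszul resolution is naturally a graded-commutative dg-algebra over $\cO_{X \times X}$, so the induced Yoneda product corresponds to the wedge product of polyvector fields.

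For a $C^\infty$ manifold, the algebraic bimodule picture breaks down since $C^\infty(X) \otimes_\bC C^\infty(X)$ is much smaller than $C^\infty(X \times X)$; one instead works with the local Hochschild complex of polydifferential cochains, and the analogous Koszul-type argument applies on Euclidean charts (equivalently, the HKR antisymmetrization map admits an explicit homotopy inverse on $\bR^n$), with globality coming from the locality of the cochain complex. The main obstacle throughout is the careful globalization of the Koszul resolution: one must verify that smoothness genuinely ensures both the regular embedding of the diagonal and the canonical identification $I_\Delta/I_\Delta^2 \cong \Omega^1_X$, so that local Koszul complexes patch into a global resolution (and that the resulting dg-algebra structure is compatible with this identification, yielding the ring isomorphism rather than merely a graded-module one).
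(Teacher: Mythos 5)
The paper does not actually prove this theorem; it states it as a known result and, later, merely records the antisymmetrization map $\HKR: T_\poly \to D_\poly$ and asserts it is a quasi-isomorphism. Your Koszul-resolution-of-the-diagonal argument is the other standard route, essentially dual in content: the antisymmetrization cochain is what one gets by comparing the bar resolution with the Koszul resolution of the diagonal. Your route has the advantage of delivering the ring structure essentially for free from the exterior-algebra coproduct on the Koszul complex, which the paper's sketch does not touch.

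There is, however, a gap in your globalization that deserves more weight than the closing remark gives it. To write a global Koszul complex $K^{-k} \cong \cO_{X\times X}\otimes_{\cO_X}\wedge^k_{\cO_X}\Omega^1_X$ with differential ``multiplication by the defining equations of the diagonal,'' you need a global $\cO_X$-linear splitting $s:\Omega^1_X\to I_\Delta$ of the conormal surjection $I_\Delta\onto I_\Delta/I_\Delta^2\cong\Omega^1_X$. In the affine case such an $s$ exists ($\Omega^1_X$ is projective), but $s$ only generates $I_\Delta$ in a \emph{neighborhood} of the diagonal (Nakayama); away from $\Delta$ it can acquire spurious zeros, which destroy exactness of the global Koszul complex. (Already for $X=\bA^1$: $s(dt)=(x-y)\bigl(1+c(x-y)\bigr)$ splits the conormal surjection but vanishes off the diagonal.) The standard repairs are to pass to the $I_\Delta$-adic completion $\widehat{\cO_{X\times X}}$, where this issue evaporates and $\Ext$ is unaffected since $\cO_X$ is supported on $\Delta$; or to shrink $X\times X$ to an open neighborhood of $\Delta$ where $s$ cuts out exactly $\Delta$; or to abandon the global Koszul resolution entirely and verify instead, Zariski-locally, that the antisymmetrization map is a quasi-isomorphism. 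With any of these fixes in place, the rest of your argument --- the adjunction computation, the vanishing of the induced differential because $I_\Delta$ kills $\cO_X$, the ring-structure step, and the $C^\infty$ sketch via polydifferential cochains --- is sound.
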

Explicitly, a polyvector field of degree $d$ is a sum of elements of
the form
\[
\xi_1 \wedge \cdots \wedge \xi_d,
\]
where each $\xi_i \in \Vect(X)$ is a vector field on $X$, i.e., when
$X$ is an affine algebraic variety,
$\Vect(X) = \Der(\cO(X),\cO(X))$.

Thus, the deformations are classified by certain \emph{bivector
  fields} (those whose obstructions vanish). As we will explain, the
first obstruction is that the bivector field be Poisson.  Moreover,
deforming along this direction is the same as quantizing the Poisson
structure.  So the question reduces to: which Poisson structures on
$\cO(X)$ can be quantized?  It turns out, by a very deep result of
Kontsevich, that \emph{they all can.}

To make this precise, we need to generalize the notion of
quantization: we spoke about quantizing graded $\cO(X)$, but usually
this is not graded when $X$ is smooth. (Indeed, a grading would mean
that $X$ has a $\bG_m$-action, and in general the fixed point(s) will
be singular.  For example, if $\cO(X)$ is nonnegatively graded with
$(\cO(X))_0 = \bk$, i.e., the action is contracting with a single fixed
point, then $X$ is singular unless $X = \bA^n$ is an affine space.)

Instead, we introduce a formal parameter $\hbar$.\footnote{The use of
  $\hbar$ to denote the formal parameter originates from quantum
  physics, where it is actually a constant, given by Planck's
  constant divided by $2\pi$.}  Given a vector space $V$, let
$V[\![\hbar]\!] := \{\sum_{m \geq 0} v \hbar^m\}$ be the vector space
of formal power series with coefficients in $V$. If $V=B$ is an
algebra, we obtain an algebra $B[\![\hbar]\!]$. We will be interested
in assigning this a deformed associative multiplication $\star$, i.e.,
a $\bk[\![\hbar]\!]$-bilinear associative multiplication $\star:
B[\![\hbar]\!] \otimes B[\![\hbar]\!] \to B[\![\hbar]\!]$.  
\begin{exercise}\label{exer:cont-fd}
Show that such a multiplication is automatically 
continuous in the $\hbar$-adic
topology, i.e., that, for all $b_m, c_n \in B$,
\begin{equation}\label{e:star-cont}
  \bigl( \sum_{m \geq 0} b_m \hbar^m\bigr) \star \bigl( \sum_{n \geq 0}
  c_n \hbar^n \bigr) = \sum_{m,n \geq 0} (b_m \star c_n) \hbar^{m+n}.
\end{equation}
\end{exercise}
\begin{definition} \label{d:opfd}
A \emph{(one-parameter) formal deformation} of an
  associative algebra $B$ is an associative algebra $A_\hbar =
  (B[\![\hbar]\!], \star)$ such that
\begin{equation}\label{e:form-def}
 a \star b \equiv ab  \pmod \hbar, \forall a,b \in B.
\end{equation}
We require that $\star$ be associative and $\bk[\![\hbar]\!]$-bilinear.
\end{definition}
The general definition of formal deformation will be given in \S
\ref{ss:fdef} below; in contexts where it is clear we are speaking
about one-parameter formal deformations, we may omit
``one-parameter.''
\begin{remark}\label{r:sp-iso} Equivalent to the above, and often found in the
  literature, is the following alternative formulation: A
  one-parameter formal deformation is a $\bk[\![\hbar]\!]$-algebra
  $A_\hbar$, 
  together
  with an algebra isomorphism $A_\hbar/\hbar A_\hbar \iso B$, such
  that $A_\hbar$ is topologically free (i.e., isomorphic to
  $V[\![\hbar]\!]$ for some $V$, which in this case we can take to be
  any section of the map $A_\hbar \onto B$). The equivalence is given
  by taking any vector space section $\tilde B \subseteq A_\hbar$ of
  $A_\hbar \onto B$, and writing $A_\hbar = (\tilde B[\![\hbar]\!],
  \star)$ for a unique binary operation $\star$, which one can check
  must be 
  bilinear and satisfy \eqref{e:form-def}.
  Conversely, given $A_\hbar=(\tilde B[\![\hbar]\!],\star)$, one has a
  canonical isomorphism $A_\hbar/\hbar A_\hbar =B$.
\end{remark}
Modulo $\hbar^2$, we get the notion of infinitesimal deformation:
\begin{definition} \label{d:infinitesimal-def}
An \emph{infinitesimal deformation} of an associative algebra
is an algebra $(B[\varepsilon]/(\varepsilon^2), \star)$ such that $a \star b\equiv ab \pmod \varepsilon$.   
\end{definition}
Now we let $B$ be a Poisson algebra (which is automatically commutative).
\begin{definition}
  A \emph{deformation quantization} is a one-parameter formal
  deformation of a Poisson algebra $B$ which satisfies the
 identity
\begin{equation}\label{e:dq}
  a \star b - b \star a \equiv \hbar \{a,b\} \pmod{\hbar^2}, \forall a,b
  \in B.
\end{equation}
\end{definition}

We will often use the fact that, for $X$ a smooth affine variety in
odd characteristic or a smooth manifold, a Poisson structure on
$\cO(X)$ is the same as a bivector field
$\pi \in \wedge^2_{\cO(X)} \Vect(X)$, via
\[
\{f,g\} = i_\pi(df \wedge dg) := \pi(f \otimes g),
\]
satisfying the Jacobi identity (note here and in the sequel that
$i_\eta(\alpha)$ denotes the contraction of a polyvector field $\eta$
with a differential form $\alpha$).  When $X$ is affine space $\bA^n$
or a smooth manifold, this is clear.  For the general case of a smooth
affine variety, this can be shown as follows (the reader uncomfortable
with the necessary algebraic geometry can skip it and take $X=\bA^n$):
One needs to show, more generally, that $\wedge^2_{\cO(X)} \Vect(X)$
is canonically isomorphic to the vector space,
$\text{SkewBiDer}(\cO(X))$, of skew-symmetric biderivations $\cO(X)
\otimes_\bk \cO(X) \to \cO(X)$, i.e., skew-symmetric brackets
satisfying the Leibniz identity (but not necessarily the Jacobi
identity). Then, one has a canonical map $\wedge^2 \Vect(X) \to
\text{SkewBiDer}(\cO(X))$, and this is a map of $\cO(X)$-modules.
Then, the fact that it is an isomorphism is a local statement.
However, if $X$ is smooth, then $\Vect(X)$ is a projective
$\cO(X)$-module, i.e., the tangent sheaf $T_X$ is locally free.  On an
open affine subset $U \subseteq X$ such that $\Vect(U)$ is free as a
$\cO(U)$-module, it is clear that the canonical map is an isomorphism.
This implies the statement.

In terms of $\pi$, the Jacobi identity says $[\pi, \pi] = 0$, using
the Schouten-Nijenhuis bracket (see Proposition \ref{p:pb-sn}), defined as follows:
\begin{definition}\label{d:sn}
  The Schouten-Nijenhuis Lie bracket on $\wedge_{\cO(X)}^\bullet \Vect(X)$ is given
  by the formula
\begin{equation} \label{e:gerst}
[\xi_1 \wedge \cdots \wedge \xi_m, \eta_1 \wedge \cdots \wedge
\eta_n] = \sum_{i,j} (-1)^{i+j+m-1} [\xi_i, \eta_j] \wedge \xi_1
\wedge \cdots \hat \xi_i \cdots \wedge\xi_m \wedge \eta_1 \wedge
\cdots \hat \eta_j \cdots \wedge \eta_n.
\end{equation}
\end{definition}
As before, the hat indicates that the given terms are
\emph{omitted} from the product.
\begin{remark}
  Alternatively, the Schouten-Nijenhuis bracket is the Lie bracket
  uniquely determined by the conditions that $[\xi, \eta]$ is the
  ordinary Lie bracket for $\xi,\eta \in \Vect(X)$, that $[\xi,f] =
  \xi(f)$ for $\xi \in \Vect(X)$ and $f \in \cO(X)$, and such that the
  graded Leibniz identity is satisfied, for all homogeneous
  $\theta_1,\theta_2,\theta_3 \in \wedge_{\cO(X)}^\bullet \Vect(X)$ (see also
  Definition \ref{d:gerst-br} below):
\begin{equation}\label{e:gerst2}
[\theta_1, \theta_2 \wedge \theta_3] =
  [\theta_1, \theta_2] \wedge \theta_3 + (-1)^{|\theta_2||\theta_3|}
  [\theta_1, \theta_3] \wedge \theta_2.
\end{equation}
\end{remark}
\begin{example}\label{ex:moyal}
  The simplest example is the case $X=\bA^{n}$ with a constant Poisson
  bivector field, which can always be written up to choice of
  coordinates $(x_1, \ldots, x_m, y_1, \ldots, y_m, z_{1}, \ldots,
  z_{n-2m})$, as
\[
\pi = \sum_{i=1}^m \partial_{x_i} \wedge \partial_{y_i}, \quad
\text{i.e.,} \quad \{f, g\}= \sum_{i=1}^m \frac{\partial f}{\partial x_i}  \frac{\partial g}{\partial y_i} - \frac{\partial f}{\partial y_i} \frac{\partial g}{\partial x_i},
\]
for a unique $m \leq n/2$.
Then, for $\bk$ of characteristic zero, 
there is a well-known deformation quantization, called the
Moyal-Weyl star product:
\[
f \star g = \mu \circ e^{\frac{1}{2} \hbar \pi}(f \otimes g), \quad
\mu(a \otimes b):=ab.\footnote{In physics, over $\bk=\bC$, often one
  sees an $i = \sqrt{-1}$ also in the exponent, so that $a \star b - b
  \star a \equiv i\hbar \{a,b\} \pmod{\hbar^2}$, but according to our
  definition, which works over arbitrary $\bk$, we don't have it.}
\]
When $2m=n$, so that the Poisson structure is symplectic,
this is actually isomorphic to the usual Weyl quantization: see the next
exercise.  
\end{example}
\begin{exercise}\label{exer:moyal} We consider the Moyal-Weyl star product 
  for $X = \bA^n$ with coordinates $(x_1, \ldots, x_m, y_1, \ldots,
  y_m, z_{1}, \ldots, z_{n-2m})$ as above, for $\bk$ of characteristic zero.

  (a) Show that, for the Moyal-Weyl star product on
  $\cO(X)[\![\hbar]\!]$ with $X = \bA^n$ as above, in the above basis,
\[
x_i \star y_j - y_j \star x_i = \hbar\delta_{ij}, \quad x_i \star x_j
= x_j \star x_i, \quad y_i \star y_j = y_j \star y_i.
\]
whereas $z_1, \ldots, z_{n-2m}$ are central: $z_i \star f = f \star
z_i$ for all $1 \leq i \leq n-2m$ and all $f \in \cO(X)$.

(b) Show that this star product is actually defined over polynomials in
$\hbar$, i.e., on $\cO(X)[\hbar]$.  That is, if $f, g \in
\cO(X)[\hbar]$, so is $f \star g$, and hence we get an associative
algebra $(\cO(X)[\hbar],\star)$.  Note that this is homogeneous with
respect to the grading where $|x_i|=|y_i|=1$ and $|\hbar|=2$.

(c) Note that $(\hbar-1)$ is an ideal in $\bk[\hbar]$ and hence in
$(\cO(X)[\hbar], \star)$. Taking the quotient, get a
filtered quantization $(\cO(X)[\hbar], \star)/(\hbar-1)$, which is in
other words obtained by setting $\hbar=1$ above.

(d) Now we get to the goal of the exercise: to relate the quantization
$(\cO(X)[\hbar], \star)/(\hbar-1)$ to the Weyl algebra $\Weyl_m$.
Show first that there is a unique isomorphism of algebras,
\[
\Weyl_m \otimes \bk[z_1,\ldots,z_{n-2m}] \to (\cO(X)[\hbar],\star)/(\hbar-1),
\]
satisfying $x_i \mapsto x_i, y_i \mapsto y_i$, and $z_i \mapsto z_i$.

(e)(*) The main point of the exercise is to give the explicit
\emph{inverse} of (d).  To begin, for any vector space $V$, with $\bk$
of characteristic zero, we can consider the symmetrization map, $\Sym
V \to TV$, given by
\[
v_1 \cdots v_k \mapsto \frac{1}{k!} \sum_{\sigma \in S_k}
v_{\sigma(1)} \otimes \cdots \otimes v_{\sigma(k)}
\]
where $v_i \in V$ for all $i$. (Caution: this is \emph{not} an algebra
homomorphism.)

Now let $V = \Span
\{x_1,\ldots,x_m,y_1,\ldots,y_m,z_1,\ldots,z_{n-2m}\}$, and consider
the composition of the above linear map $\Sym V = \cO(X) \to TV$ with
the obvious quotient $TV \to \Weyl_m \otimes
\bk[z_1,\ldots,z_{n-2m}]$.  Show that the result yields an algebra
homomorphism
\[
(\cO(X)[\hbar],\star)/(\hbar-1) \to \Weyl_m \otimes \bk[z_1,\ldots,z_{n-2m}],
\]
which inverts the homomorphism of (d).
\end{exercise}
\begin{exercise}\label{exer:moyal-weyl-unique}
  In this exercise, we explain a uniqueness statement for the
  Moyal-Weyl quantization.  Suppose that $\star'$ is any other star
  product formula of the form
\begin{equation}\label{e:cont-star-graphs}
f \star' g = \mu \circ F(\pi) (f \otimes g),
\end{equation}
for $F = 1 + \frac{1}{2} \hbar \pi + \sum_{i \geq 2} \hbar^i
F_i(\pi)$, with each $F_i$ a polynomial in $\pi$.  Then, if
$(\cO(\bA^n)[\![\hbar]\!], \star')$ quantizes the Poisson bracket
$\{-,-\}$ given by $\pi$, show that we still have the relations of
part (a) of the previous exercise, for $\star'$ instead of $\star$.
Similarly to part (d) above, conclude that we have an isomorphism
\[
(\cO(\bA^n)[\![\hbar]\!], \star) \to (\cO(\bA^n)[\![\hbar]\!], \star'),
\]
given uniquely by
\[
v_1 \star \cdots \star v_m \mapsto v_1 \star' \cdots \star' v_m,
\]
for all linear functions $v_1,\ldots,v_m \in \Span
\{x_1,\ldots,x_m,y_1,\ldots,y_m,z_1,\ldots,z_{n-2m}\}$.  Moreover,
show that this isomorphism is the identity modulo $\hbar$.

(It is a much deeper fact that, when $n=2m$, we can drop the
assumption on the formula \eqref{e:cont-star-graphs}.)
\end{exercise}


We will also need the notion of a \emph{formal Poisson deformation} of
a Poisson algebra $\cO(X)$: 
\begin{definition}
A formal Poisson deformation of a Poisson algebra $\cO(X)$
 is a 
$\bk[\![\hbar]\!]$-linear Poisson
bracket on $\cO(X)[\![\hbar]\!]$ which reduces modulo $\hbar$ to the
original Poisson bracket on $\cO(X)$.
\end{definition}
\begin{theorem}\label{t:ps-quant}
  \cite{Kform,Kon-dqav,Yek-dqag,DTT-hgahcraf} (among others) Every
  Poisson structure on a smooth affine variety over a field of
  characteristic zero, or on a $C^\infty$ manifold, admits a canonical
  deformation quantization. In particular, every graded Poisson
  algebra with Poisson bracket of degree $-d < 0$ admits a filtered
  quantization.

Moreover, there is a canonical bijection, up to isomorphisms equal to
the identity modulo $\hbar$, between deformation quantizations and
formal Poisson deformations.
\end{theorem}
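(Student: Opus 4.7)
The plan is to reduce the entire theorem to Kontsevich's formality theorem for the Hochschild cochain complex, and then apply the general principle that $L_\infty$-quasi-isomorphisms induce bijections on Maurer--Cartan moduli. The two DGLAs in play are: the shifted Hochschild cochain complex $C^{\bullet+1}(\cO_X,\cO_X)$ of polydifferential operators, equipped with the Gerstenhaber bracket and Hochschild differential; and the space $T_{\poly}(X) = \bigwedge^{\bullet+1}_{\cO_X} T_X$ of polyvector fields, equipped with the Schouten--Nijenhuis bracket and zero differential. The first key observation is that Maurer--Cartan elements $\hbar \mu \in \hbar C^2(\cO_X)[\![\hbar]\!]$ are precisely the star products $\star$ (associativity of $\star$ translates to $d\mu + \tfrac12[\mu,\mu]=0$), while Maurer--Cartan elements in $\hbar T_{\poly}^1(X)[\![\hbar]\!] = \hbar \bigwedge^2 T_X[\![\hbar]\!]$ are exactly formal Poisson bivectors (Jacobi identity is $[\pi,\pi]=0$). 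Gauge equivalence on the Hochschild side is precisely equivalence of star products by isomorphisms equal to the identity mod $\hbar$, and on the polyvector side it is equivalence of formal Poisson structures by formal diffeomorphisms.

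Next I would invoke Theorem \ref{t:hkr} (HKR), which provides a quasi-isomorphism of complexes $\HKR : T_{\poly}(X) \to C^{\bullet+1}(\cO_X,\cO_X)$ sending $\xi_1 \wedge \cdots \wedge \xi_k$ to the antisymmetrized polydifferential operator $\tfrac{1}{k!}\sum_\sigma \sgn(\sigma)\, \xi_{\sigma(1)} \otimes \cdots \otimes \xi_{\sigma(k)}$. The crux, and the hardest part of the whole theorem, is that $\HKR$ is \emph{not} a morphism of DGLAs (it does not respect brackets on the nose), but it does extend to an $L_\infty$-quasi-isomorphism
\[
\mathcal{U} : T_{\poly}(X) \longrightarrow C^{\bullet+1}(\cO_X,\cO_X),
\]
whose first Taylor coefficient $\mathcal{U}_1$ equals $\HKR$. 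This is Kontsevich's formality theorem, proved for $X = \bR^d$ (or $\bA^d$) by his explicit formula expressing $\mathcal{U}_n(\xi_1,\ldots,\xi_n)$ as a sum over certain decorated graphs weighted by integrals over configuration spaces of points in the upper half-plane, and globalized to arbitrary smooth $X$ by Kontsevich, Yekutieli, and Dolgushev--Tamarkin--Tsygan using formal geometry and a resolution by jets of a formal-coordinate torsor.

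Granting the existence of $\mathcal{U}$, the rest is formal. A standard fact about $L_\infty$-morphisms between nilpotent (or pronilpotent, as here after tensoring with $\hbar\bk[\![\hbar]\!]$) DGLAs is that they send Maurer--Cartan elements to Maurer--Cartan elements via
\[
\pi \;\longmapsto\; \sum_{n \geq 1} \frac{1}{n!}\, \mathcal{U}_n(\pi,\ldots,\pi),
\]
and that an $L_\infty$-quasi-isomorphism descends to a bijection on gauge-equivalence classes. Applied to $\mathcal{U}$, this produces the canonical bijection between formal Poisson deformations and deformation quantizations (mod equivalence), proving the second assertion of the theorem; in particular, starting from any classical Poisson bivector $\pi_0$, the element $\pi := \hbar\pi_0$ is a Maurer--Cartan element, and its image under the formula above is a canonical star product quantizing $\pi_0$, proving the first assertion.

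Finally, for the filtered-quantization statement, I would exploit the $\bG_m$-action. If $B = \cO_X$ is graded with Poisson bracket of degree $-d<0$, put weight $d$ on $\hbar$ so that $\hbar\pi_0$ becomes homogeneous of degree $0$. Because Kontsevich's $\mathcal{U}_n$ are built from natural (coordinate-independent, in fact $\GL$-equivariant) polydifferential expressions, the resulting $\star$-product is then homogeneous of degree $0$ as well. Specializing $\hbar \to 1$ and passing to the subspace of elements of bounded total $B$-degree yields a filtered $\bk$-algebra $A$ with $\gr A \cong B$ as Poisson algebras, which is the desired filtered quantization. The main obstacle throughout is the one already flagged: constructing $\mathcal{U}$ (Kontsevich's formula plus its globalization); everything else is a formal consequence of $L_\infty$-deformation theory.
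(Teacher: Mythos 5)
Your proposal is correct and follows essentially the same route the paper itself takes: state the result as a consequence of Kontsevich's $L_\infty$-quasi-isomorphism $\mathcal{U}: T_\poly(X) \to D_\poly(X)$ (with $\mathcal{U}_1 = \HKR$), then use the general fact that $L_\infty$-quasi-isomorphisms of pronilpotent dglas induce bijections on gauge-equivalence classes of Maurer--Cartan elements, identifying formal Poisson bivectors with star products. The paper develops exactly this machinery in Lecture 4 (Propositions on Maurer--Cartan elements, gauge equivalence, the Chevalley--Eilenberg reformulation of $L_\infty$-morphisms, and Theorems \ref{t:form-mc} and \ref{t:form-linf}), and treats the filtered-quantization corollary via the equivalence between filtered deformations and homogeneous formal deformations (the Rees algebra correspondence), whereas you argue by $\GL$-equivariance of Kontsevich's graph formula. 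The two approaches to the graded case are compatible: since a smooth affine variety with a $\bG_m$-action making $\cO_X$ nonnegatively graded with $(\cO_X)_0=\bk$ is necessarily $\bA^n$, your equivariance argument applies precisely where the paper's grading hypothesis is nonvacuous, and it makes the homogeneity of the output star product transparent rather than invoking the Rees equivalence.
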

\begin{remark} \label{r:mathieu}
As shown by O. Mathieu \cite{Mat-haps}, in general
  there are obstructions to the existence of a quantization. We
  explain this following \S 1.4 of
  \url{www.math.jussieu.fr/~keller/emalca.pdf}, which more generally
  forms a really nice reference for much of the material discussed in
  these notes!

  Let $\mfg$ be a Lie algebra over a field $\bk$ of
  characteristic zero such that $\mfg \otimes_\bk \bar \bk$ is simple
  and not isomorphic to $\mfsl_n(\bar \bk)$ for any $n$, where $\bar
  \bk$ is the algebraic closure of $\bk$.  (In the cited references,
  one takes $\bk = \bR$ and thus $\bar \bk = \bC$, but this assumption
  is not needed.) For instance, one could take $\mfg = \mathfrak{so}(n)$
  for $n = 5$ or $n \geq 7$.

  One then considers the Poisson algebra $B := \Sym \mfg / (\mfg^2)$,
  equipped with the Lie bracket on $\mfg$. In other words, this is the
  quotient of $\cO(\mfg^*)$ by the square of the augmentation ideal,
  i.e., the maximal ideal of the origin.  Then, $\Spec B$, which is
  known as the first infinitesimal neighborhood of the origin in
  $\mfg^*$, is nonreduced and set-theoretically a point, albeit with a
  nontrivial Poisson structure.

  We claim that $B$ does not admit a deformation quantization. For a
  contradiction, suppose it did admit one, $B_\hbar :=
  (B[\![\hbar]\!], \star)$.  Since $B = \bk \oplus \mfg$ has the
  property that $B \otimes_{\bk} \bar \bk \cong (\bar \bk \oplus
  (\mfg \otimes_{\bk} \bar \bk))$ with $\mfg \otimes_{\bk} \bar \bk$ a
  semisimple Lie algebra, it follows from basic Lie cohomology
  (e.g., \cite[\S 7]{Weibel}) that
  $H^2_{\Lie}(B \otimes_{\bk} \bar \bk, B \otimes_{\bk} \bar \bk) = 0$
  and hence also $H^2_{\Lie}(B,B)=0$, i.e., $B$ has no deformations as
  a Lie algebra.  Hence, $B_\hbar \cong B[\![\hbar]\!]$ as a Lie
  algebra; in fact there is a 
  $\bk[\![\hbar]\!]$-linear
  isomorphism of Lie algebras which is the identity modulo $\hbar$.
  Now let $K := \overline{\bk(\!(\hbar)\!)}$, an algebraically closed
  field, and set $\tilde B := B_\hbar \otimes_{\bk[\![\hbar]\!]}
  K$. Since this is finite-dimensional over the algebraically closed
  field $K$, Wedderburn theory implies that, if $J \subseteq \tilde B$
  is the nilradical, then $M:= \tilde B /J$ is a product of matrix
  algebras over $K$ of various sizes. Then $J$ is also a Lie ideal in
  $\tilde B$ which is, as a Lie algebra, a sum
  $K \oplus (\mfg \otimes_{\bk} K)$ (using that $\mfg$ is
  finite-dimensional). As $J$ cannot be the unit ideal and $J$ is
  nilpotent, it must be zero, so $\tilde B$ itself is a direct sum of
  matrix algebras. But this would imply that $\mfg \otimes_{\bk}
  K \cong \mfsl_n(K)$, and hence
  $\mfg \otimes_{\bk} \bar \bk \cong \mfsl_n(\bar \bk)$, contradicting
  our assumptions.
\end{remark}
\begin{example}
  In the case $X = \bA^{n}$ with a constant Poisson bivector field,
  Kontsevich's star product coincides with the Moyal-Weyl one.
\end{example}
\begin{example}\label{ex:symg-sp}
  Next, let $\{-,-\}$ be a linear Poisson bracket on $\mfg^*=\bA^n$,
  i.e., a Lie bracket on the vector space $\mfg = \bk^n$ of linear
  functions. As explained in Exercise \ref{exer:kont-ug-iso} below
  (following \cite{Kform}), if $(\cO(\bA^n)[\![\hbar]\!], \star)$ is
  Kontsevich's canonical quantization, then
\[
x \star y - y \star x = \hbar [x,y],
\]
so that, as in Exercise \ref{exer:moyal}, the map which is the
identity on linear functions yields an isomorphism
\[
U_\hbar(\mfg) \to (\cO(\mfg^*)[\![\hbar]\!], \star). 
\]
Modulo $\hbar$, the inverse to this is the symmetrization map of
Exercise \ref{exer:moyal}. Thus, if we apply a gauge equivalence to
Kontsevich's star-product, then the inverse really is the
symmetrization map.  This is explained in detail in \cite{Dit-kspdla},
where the gauge equivalence is also explicitly computed.  The
resulting star product on $\cO(\mfg^*)$ is called the Gutt product and
dates to \cite{Gut-espctLg}; for a description of this product, see,
e.g., \cite[(13)]{Dit-kspdla}. Moreover, as first noticed in
\cite{Arn-pskdaln} (see also \cite{Dit-kspdla}), there is no gauge
equivalence required when $\mfg$ is nilpotent, i.e., in this case the
Kontsevich star-product equals the Gutt product (this is essentially
because nothing else can happen in this case).
\end{example}
These theorems all rest on the basic statement that the Hochschild
cohomology of a smooth affine variety or $C^\infty$ manifold is
\emph{formal}, i.e., the Hochschild cochain complex (which computes its
cohomology), is equivalent to its cohomology not merely as a vector
space, but as dg Lie algebras, up to homotopy.  (In fact, this
statement can be made to be equivalent to the bijection of Theorem
\ref{t:ps-quant} if one extends to deformations over dg commutative
rings rather than merely formal power series.)

We will explain what formality of dg Lie algebras means precisely
later: we note only that it is a completely different use of the term
``formal'' than we have used it before for formal deformations.  For
now, we only give some motivation via the analogous concept of
formality for vector spaces, modules, etc.  First of all, note that
all \emph{complexes of vector spaces are always equivalent to their
  cohomology}, i.e., they are all formal.  This follows because, given
a complex $C^\bullet$, one can always find an \emph{isomorphism of
  complexes}
\[
C^\bullet \iso H^\bullet(C^\bullet) \oplus S^\bullet,
\]
where $H^\bullet$ is the homology of $C^\bullet$, and $S^\bullet$ is a
contractible complex.  Here, contractible means that there exists a
linear map $h: C^\bullet \to C^{\bullet-1}$, decreasing degree by one,
so that $dh+hd = \Id$; such an $h$ is called a \emph{contracting
  homotopy}.  In particular, a contractible complex is acyclic (and
for complexes of vector spaces, the converse is also true). 

But if we consider modules over a more general ring that is not a
field, it is no longer true that all complexes are isomorphic to a
direct sum of their homology and a contractible complex.  Consider,
for example, the complex
\[
0 \to \bZ \mathop{\to}^{\cdot 2} \bZ \to 0.
\]
The homology is $\bZ/2$, but it is impossible to write the complex as
a direct sum of $\bZ/2$ with a contractible complex, since $\bZ$ has
no torsion.  That is, the above complex is \emph{not} formal.

Now, the subtlety with the formality of Hochschild cohomology is that,
even though the underlying Hochschild cochain complex is automatically
formal as a \emph{complex of vector spaces}, it is \emph{not}
automatically formal as a \emph{dg Lie algebra}.  For example, it may
not necessarily be isomorphic to a direct sum of its cohomology and
another dg Lie algebra (although being formal does not require this,
but only that the dg Lie algebra be ``homotopy equivalent'' to its
cohomology).

The statement that the Hochschild cochain complex is formal is
\emph{stronger} than merely the existence of deformation
quantizations.  It implies, for example:
\begin{theorem}\label{t:fdef-symp}
  If $(X,\omega)$ is either an affine symplectic variety over a field
  of characteristic zero or a symplectic $C^\infty$ manifold, and
  $A=(\cO(X)[\![\hbar]\!], \star)$ is a deformation quantization of
  $X$, then $\HH^\bullet(A[\hbar^{-1}]) \cong
  H_{DR}^\bullet(X,\bk(\!(\hbar)\!))$, and there is a versal formal
  deformation of $A[\hbar^{-1}]$ over the base $\hat \cO(H_{DR}^2(X))$
  (the completion of $\Sym (H_{DR}^2(X))^*$).
\end{theorem}
Here $H_{DR}^\bullet$ denotes the algebraic de Rham cohomology, which
in the case $\bk=\bC$ coincides with the ordinary topological de Rham
cohomology.  

We will define the notion of versal formal deformation
more precisely later; roughly speaking, such a deformation $U$ over a
base $\bk[\![t_1,\ldots,t_n]\!]$ is one so that every formal
deformation $(A[\hbar^{-1}][\![t]\!], \star)$ of $A[\hbar^{-1}]$ with
deformation parameter $t$ is isomorphic, as a $\bk[\![t]\!]$-algebra,
to $U
\otimes_{\bk[\![t_1,\ldots,t_n]\!]} \bk[\![t]\!]$ for some continuous
homomorphism $\bk[\![t_1,\ldots,t_n]\!] \to \bk[\![t]\!]$ (i.e., some
assignment of each $t_i$ to a power series in $t$ without constant
term) and that this isomorphism is the identity modulo $t$.  In the
situation of the theorem, $t_1,\ldots,t_n$ should be a basis of
$(H_{DR}^2)^*$.  We will sketch how the theorem follows from
Kontsevich's theorem in \S \ref{ss:tw-linf} below.

  

\subsection{Description of Kontsevich's deformation quantization for
  $\bR^d$}\label{ss:dq-fla}
It is worth explaining the general form of the star-products given by
Kontsevich's theorem for $X=\bR^d$, considered either as a smooth
manifold or an affine algebraic variety over $\bk=\bR$.  This is taken
from \cite[\S 2]{Kform}; the interested reader is recommended to look
there for more details.

Suppose we are given a Poisson bivector $\pi \in \wedge_{\cO(X)}^2 \Vect(X)$.  Then
Kontsevich's star product $f \star g$ is a linear combination of all
possible ways of applying $\pi$ multiple times to $f$ and $g$, but
with very sophisticated weights.

The possible ways of applying $\pi$ are easy to describe using
directed graphs.  Namely, the graphs we need to consider are placed in
the closed upper-half plane $\{(x,y) \mid y \geq 0\} \subseteq \bR^2$,
satisfying the following properties:
\begin{enumerate}
\item[(1)] There are exactly two vertices along the $x$-axis, labeled
  by $L$ and $R$.  The other vertices are labeled $1, 2, \ldots, m$.
\item[(2)] $L$ and $R$ are sinks, and all other vertices have exactly
two outgoing edges.
\item[(3)] At every vertex $j \in \{1,2,\ldots,m\}$, the two outgoing edges should be
  labeled by the symbols $e_1^j$ and $e_2^j$. That is, we fix an
  ordering of the two edges and denote them by $e_1^j$ and $e_2^j$.
\end{enumerate}
Examples of such graphs are given in Figures \ref{fig:pb-graph} and
\ref{fig:graph2}.

\begin{figure}\label{fig:pb-graph}

\begin{tikzpicture}

\draw (-5,0) -- (5,0);
\node [] (L) at (-3,0) {};
\draw [fill] (L) circle (0.05cm);
\node [above left] at (L) {$L$};
\node [above right] at (3,0) {$R$};
\node [] (R) at (3,0) {};
\draw [fill] (R) circle (0.05cm);

\draw [fill] (0,5) circle (0.05cm);
\node (1) [right] at (0,5) {$1$};

\draw [->] (1) -- (L);
\node [left] at (-1.5,2.5) {$e^1_1$};
\draw [->] (1) -- (R);
\node [right] at (1.6,2.5) {$e^1_2$};

\end{tikzpicture}
\caption{The graph corresponding to $f \otimes g \mapsto \{f,g\}$}
\end{figure}
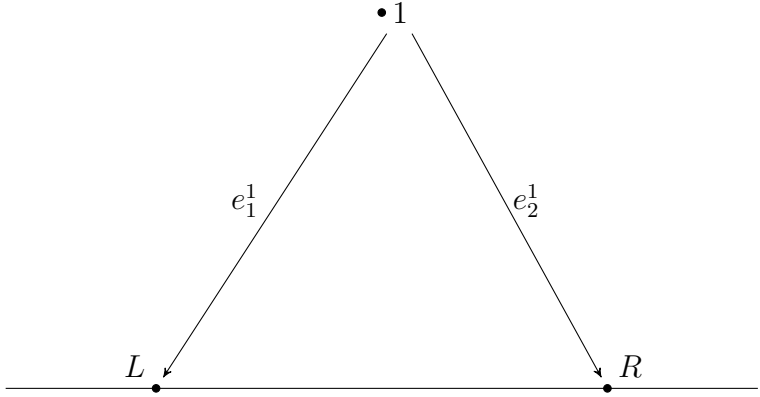

\begin{figure}\label{fig:graph2}

\begin{tikzpicture}

\draw (-5,0) -- (5,0);
\node [] (L) at (-3,0) {};
\draw [fill] (L) circle (0.05cm);
\node [above left] at (L) {$L$};
\node [above right] at (3,0) {$R$};
\node [] (R) at (3,0) {};
\draw [fill] (R) circle (0.05cm);

\draw [fill] (0,3) circle (0.05cm);
\node (2) [right] at (0,3) {$2$};

\draw [fill] (0,6) circle (0.05cm);
\node(1) [right] at (0,6) {$1$};

\draw [->] (2) -- (L);
\draw [->] (2) -- (R);
\node [left] at (-1.5,1.5) {$e^2_1$};
\node [right] at (1.6,1.5) {$e^2_2$};
\draw [->] (1) -- (L);
\draw [->] (1) -- (2);
\node [left] at (-1.5,3) {$e^1_1$};
\node [right] at (0.1,4.5) {$e^1_2$};

\end{tikzpicture}
\caption{The graph corresponding to $f \otimes g \mapsto
  \sum_{i,j,k,\ell}
  \pi^{i,j} \partial_j(\pi^{k,\ell}) \partial_i \partial_k(f) \partial_{\ell}(g)$}
\end{figure}
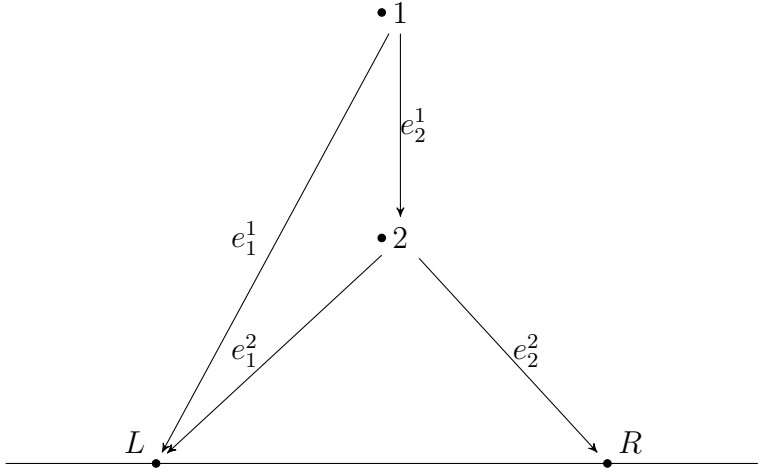

Write our Poisson bivector $\pi$ in coordinates as
\[
\pi = \sum_{i<j} \pi^{i,j} \partial_i \wedge \partial_j.
\]
Let $\pi^{j,i} := -\pi^{i,j}$.  We attach to the graph $\Gamma$ a
bilinear differential operator $B_{\Gamma,\pi}: \cO(X)^{\otimes 2} \to
\cO(X)$, as follows. Let $E_\Gamma = \{e_1^1,e_1^2, \ldots, e_m^1,
e_m^2\}$ be the set of edges. Given an edge $e \in E_\Gamma$, let
$t(e)$ denote the target vertex of $e$.\footnote{This corresponds to
  what we called the \emph{head}, $e_h$ in the quiver context: we
  caution that there $e_t$ was the tail, which is \emph{not} the same
  as the target vertex $t(e)$ here!} Then
\begin{multline*}
  B_{\Gamma,\pi}(f \otimes g) := \sum_{I: E_\Gamma \to \{1,\ldots,d\}}
  \biggl[ \prod_{i=1}^m \Bigl( \prod_{e \in E_\Gamma \mid t(e) =
    i} \partial_{I(e)} \Bigr) \pi^{I(e_i^1),I(e_i^2)} \biggr] \\ \cdot
  \Bigl(\prod_{e \in E_\Gamma \mid t(e)=L}
 \partial_{I(e)} \Bigr)(f) \cdot \Bigl(\prod_{e \in E_\Gamma \mid t(e)=R}
 \partial_{I(e)} \Bigr)(g).
\end{multline*}
Now, let $V_\Gamma$ denote the set of vertices of $\Gamma$ other than $L$ and $R$, so that in
the above formula, $m = |V_\Gamma|$.  Then the star product is given by
\[
f \star g = \sum_\Gamma \hbar^{|V_\Gamma|} w_\Gamma B_{\Gamma,\pi}(f
\otimes g),
\]
where we sum over isomorphism classes of graphs satisfying conditions
(1)--(3) above (we only need to include one for each isomorphism class
forgetting the labeling, since the operator is the same up to a sign).
The $w_\Gamma \in \bR$ are weights given by very explicit integrals
(which in general are impossible to evaluate).  Note that, in order to
have $f \star g \equiv fg \pmod{\hbar}$ (i.e., \eqref{e:form-def}),
then if $\Gamma_0$ is the graph
with no edges (and thus only vertices $L$ and $R$), $w_{\Gamma_0} =
1$. Similarly, if we let $\Gamma_1$ be the graph in Figure \ref{fig:pb-graph} with
only three vertices $L,R$, and $1$ and we include this graph in the
sum but not the isomorphic graph obtained by swapping the labels of
the edges, then the relation $f \star g - g \star
f \equiv \hbar \{f,g\} \pmod{\hbar^2}$ (i.e., \eqref{e:dq})
forces $w_{\Gamma_1}=1/2$ as well.

The \textbf{main observation} which motivates the operators
$B_{\Gamma,\pi}$ is the following: linear combinations of operators
$B_{\Gamma,\pi}$ as above are exactly all of bilinear operators
obtainable by contracting tensor powers of $\pi$ with $f$ and $g$.

\subsection{Formal deformations of algebras}\label{ss:fdef}
Now we define more precisely formal deformations.  These generalize
star products to deformations of arbitary associative algebras,
and also to the setting where more than one deformation parameter is
allowed. Still more generally, we are interested in deformations
over a base commutative augmented $\bk$-algebra
$R \supseteq R_+$ such that $R$ is complete with respect to the
$R_+$-adic topology, i.e., such that $R = \lim_{m \to \infty}
R/R_+^m$, taking the inverse limit under the system of surjections
$\cdots \to R/R_+^m \to R/R_+^{m-1} \to \cdots \to R/R_+ = \bk$.  We
call such rings \emph{complete augmented} commutative $\bk$-algebras. We will
need the completed tensor product,
\begin{equation}\label{e:hatotimes}
A \hat \otimes R := \lim_{m \to \infty} A \otimes R/R_+^m.
\end{equation}
\begin{example} When
$R$ is a formal power series ring
$R = \bk[\![t_1,\ldots,t_n]\!]$,
\[
A \hat \otimes R = A[\![t_1, \ldots, t_n]\!] = \{\sum_{i_1,\ldots,i_n
  \geq 0} a_{i_1, \ldots, i_n} t_1^{i_1} \cdots t_n^{i_n}\mid a_{i_1,
  \ldots, i_n} \in A\}.
\]
\end{example}
\begin{definition}
  A formal deformation of $A$ over a commutative complete augmented
  $\bk$-algebra $R$ is an $R$-algebra $A'$ isomorphic to $A \hat \otimes_\bk R$
  as an $R$-module such that $A' \otimes_R (R/R_+) \cong A$ as a
  $\bk$-algebra. 
\end{definition}
Equivalently, a formal deformation is an $R$-algebra $(A \hat \otimes_\bk
R, \star)$ such that $a \star b \equiv ab \pmod {R_+}$.
\begin{example}
  If $R = \bk[\![t_1,\ldots,t_n]\!]$, then a formal deformation of $A$
  over $R$ is the same as an algebra $(A[\![t_1,\ldots,t_n]\!],
  \star)$ such that $a \star b \equiv ab \pmod {t_1, \ldots, t_n}$.  In the case $n=1$, we often denote the parameter by $\hbar$, and then we recover
the notion of one-parameter formal deformations (Definition \ref{d:opfd}).
\end{example}

As a special case of formal deformations, we also have deformations
over augmented $\bk$-algebras where the augmentation
ideal is nilpotent: $R_+^n = 0$ for some $n \geq 1$.  Such
deformations are often simply called deformations, since one can also
think of $R$ as an ordinary (abstract) $\bk$-algebra without a
topology, and then $R$ is already complete: $R = \lim_{m \to \infty}
R/R_+^m$.  (Actually, it is enough to study only these, and this is
frequently done in some literature, since formal deformations are
always limits of such deformations; we are nonetheless interested in
formal deformations in these notes and Kontsevich's theorem as well as
many nice examples are tailored to them.)

In particular, such deformations include infinitesimal deformations
(Definition \ref{d:infinitesimal-def}) as well as $n$-th order
deformations:
\begin{definition}\label{d:nord-def}
  An $n$-th order deformation is a deformation over $R =
  \bk[\varepsilon]/(\varepsilon^{n+1})$, with $R_+ := \varepsilon R$.
\end{definition}
Note that a first-order deformation is the same thing as an
infinitesimal deformation.

\subsection{Formal vs. filtered deformations}
Main idea: if $A$ is a graded algebra, we can consider filtered
deformations on the same underlying filtered vector space $A$. These
are equivalent to homogeneous formal deformations of $A$ (over
$\bk[\![\hbar]\!]$), by replacing relations of degree $\leq m$,
$\sum_{i=0}^m p_i = 0$ with $|p_i|=i$, by $\sum_i \hbar^{m-i} p_i =
0$, which are now homogeneous with the sum of the grading on $A$ and
$|\hbar|=1$.  We can also do the same thing with $|\hbar| = d \geq 1$
and $d \mid m$, replacing $\sum_{i=0}^{m/d} p_{di}$ for $|p_{di}|=di$
by $\sum_i \hbar^{m-di} p_{di}$.  

We begin with two motivating examples:
\begin{example}\label{ex:hom-weyl}
We can form a homogenized version of the Weyl algebra with $|\hbar|=2$:
\[
\Weyl_\hbar(V) = TV[\![\hbar]\!] / (vw-wv-\hbar(v,w)).
\]
\end{example}
\begin{example}
  The homogenized universal enveloping algebra $U_\hbar \mfg$ is given
  by
\[
U_\hbar \mfg = T\mfg[\![\hbar]\!] / (xy-yx-\hbar [x,y]),
\]
with $|\hbar|=1$.
\end{example}
We now proceed to precise definitions and statements, for the case $|\hbar|=1$:
\begin{definition}
  Let $A$ be an increasingly filtered associative algebra. Then, the
  \emph{Rees algebra} of $A$ is the graded algebra
\[
RA := \bigoplus_{i \in \bZ} A_{\leq i} \cdot \hbar^i,
\]
with $|\hbar|=1$ and $|A|=1$, equipped with the multiplication
\[
(a \cdot \hbar^i) \cdot (b \cdot \hbar^j) = ab \cdot \hbar^{i+j}.
\]
Similarly, let the \emph{completed} Rees algebra, call it $\hat A$, be the
$\hbar$-adic completion of $RA$, $\hat A := \lim_{n \to \infty} RA/\hbar^n RA$. This can also be seen as the space of power series of the form $\sum_i f_i \hbar^i$ for $f_i \in A$, such that,
letting $|f_i| \in \bZ$ be the least integer such that $f_i \in A_{\leq |f_i|}$, we have $|f_i| \leq i$ and
$\lim_{i \to \infty} i - |f_i| = \infty$. This may seem a little odd, but it
is designed so that a topological $\bk[\![\hbar]\!]$-basis is given by any
lift of a basis of $\bigoplus_i \gr_i A \cdot \hbar^i$.
\end{definition}

The Rees algebra construction defines an equivalence between filtered
deformations and homogeneous formal deformations:
\begin{lemma}
  The functor $A \mapsto RA$ defines an equivalence of categories from
  increasingly filtered $\bk$-algebras to nonnegatively graded
  $\bk[\hbar]$-algebras (with $|\hbar|=1$) which are free as
  $\bk[\hbar]$-modules.  A quasi-inverse is given by $C \mapsto C /
  (\hbar - 1)$, assigning the filtration which lets
  $(C/(\hbar-1))_{\leq m}$ be the image of those elements of degree
  $\leq m$ in $C$.
\end{lemma}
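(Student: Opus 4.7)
The plan is to verify that the functors $F: A \mapsto A^r$ and $G: C \mapsto C/(\hbar - 1)$ (with the filtration on $G(C)$ defined as in the statement) are mutually quasi-inverse. Functoriality of both is immediate on morphisms, so the real content is constructing the two natural isomorphisms $G \circ F \simeq \mathrm{Id}$ and $F \circ G \simeq \mathrm{Id}$.

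First I would confirm that $F$ lands in the asserted target category. Multiplication on $A^r$ is well-defined because $A_{\leq i} \cdot A_{\leq j} \subseteq A_{\leq i+j}$; the grading with $|\hbar| = 1$ is manifest; and nonnegativity of the filtration on $A$ gives nonnegativity of the grading on $A^r$. For freeness as a graded $\bk[\hbar]$-module, observe that multiplication by $\hbar$ on $A^r$ is just the filtration inclusion $A_{\leq i} \hookrightarrow A_{\leq i+1}$, hence injective, so $A^r$ is $\hbar$-torsion-free; then any lift of a graded basis of $A^r/\hbar A^r \cong \gr A$ to $A^r$ gives a graded $\bk[\hbar]$-basis. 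On the $G$ side, the proposed filtration is evidently increasing, exhaustive, and closed under multiplication because $C_i \cdot C_j \subseteq C_{i+j}$.

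For $G \circ F \simeq \mathrm{Id}$, I would use the $\bk$-linear surjection $A^r \to A$, $a\hbar^i \mapsto a$. By exhaustiveness this is surjective, and it sends $\bigoplus_{i \leq m} A_{\leq i} \hbar^i$ onto $\sum_{i=0}^m A_{\leq i} = A_{\leq m}$, giving the correct filtration. A direct degree-by-degree computation with the recursion $b_i = b_{i-1} - a_i$ shows that $\sum_i a_i \hbar^i$ is in the kernel iff $\sum a_i = 0$, and in that case $\sum a_i \hbar^i = (\hbar - 1)\sum_i b_i \hbar^i$ with $b_i := -\sum_{j \leq i} a_j \in A_{\leq i}$, identifying the kernel with $(\hbar - 1) A^r$.

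The main step is $F \circ G \simeq \mathrm{Id}$. I would define $\phi_C: C \to F(G(C))$ on homogeneous elements by $\phi_C(c) := \bar c \cdot \hbar^m$ for $c \in C_m$, where $\bar c$ is the class in $G(C)$; this is a graded $\bk[\hbar]$-algebra map by inspection. For surjectivity, the key point is that $G(C)_{\leq m}$ is already the image of $C_m$ alone: for $c_i \in C_i$ with $i \leq m$ one has $\bar{c_i} = \overline{\hbar^{m-i} c_i}$ in $G(C)$, and $\hbar^{m-i} c_i \in C_m$. The main obstacle is injectivity: if $c \in C_m$ and $c = (\hbar - 1) d$ with $d = \sum_i d_i \in C$, matching degree-$i$ components gives $d_i = \hbar d_{i-1}$ for $i \neq m$ (with $d_{-1} = 0$) and $c = \hbar d_{m-1} - d_m$. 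The first relation inductively forces $d_i = 0$ for $i < m$, hence $d_m = -c$ and $d_{m+k} = -\hbar^k c$ for all $k \geq 0$. Since $d$ has only finitely many nonzero graded pieces and $C$ is $\hbar$-torsion-free, this forces $c = 0$, proving injectivity and completing the equivalence.
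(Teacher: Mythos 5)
The paper leaves this lemma as an exercise, so there is no proof to compare against. Your argument is correct and complete: you correctly check that $A^r$ is a free graded $\bk[\hbar]$-module (via $\hbar$-torsion-freeness plus lifting a graded basis of $A^r/\hbar A^r \cong \gr A$), identify $\ker(A^r \to A)$ with $(\hbar-1)A^r$ by the telescoping substitution, and, in the harder direction, prove injectivity of $\phi_C$ by the degree-by-degree recursion $d_i = \hbar d_{i-1}$ — correctly noting that this step is exactly where both hypotheses on $C$ (nonnegative grading, to kill $d_i$ for $i<m$, and freeness over $\bk[\hbar]$, to pass from $\hbar^k c = 0$ to $c=0$) are used.
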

The proof is left as an exercise. In other words, the equivalence
replaces $A_{\leq m}$ with the homogeneous part $(RA)_m$.
\begin{remark}
  The Rees algebra can be viewed as interpolating between $A$ and $\gr
  A$, in that $RA/(\hbar-1) = A$ and $RA/(\hbar) = \gr A$; actually
  for any $c \in \bk\setminus\{0\}$ we have a canonical isomorphism
  $RA/(\hbar-c) \cong A$ (sending $[a_{\leq i} \cdot \hbar^i]$, in the
  image of $A_{\leq i} \cdot \hbar^i \subseteq RA$, to $c^i a_{\leq i}
  \in A$).  Similarly, $\hat A[\hbar^{-1}] = A(\!(\hbar)\!)$ and $\hat
  A / (\hbar)=\gr A$.  So over $1 \in \Spec \bk[\hbar]$ (or any point
  other than the origin), or over the generic point
  of $\Spec \bk[\![\hbar]\!]$, we recover $A$ or $A(\!(\hbar)\!)$,
  respectively, and at the origin, or the special point of $\Spec
  \bk[\![\hbar]\!]$, we obtain $\gr A$.
\end{remark}
Given a graded algebra $A$, we canonically obtain a filtered algebra
by $A_{\leq n} = \bigoplus_{m \leq n} A_m$.  In this case, we can
consider filtered deformations of $A$ of the form $(A, \star_f)$,
where $\star_f$ reduces to the usual product on $\gr A = A$, i.e.,
$\gr (A, \star_f) = A$.

\begin{corollary}\label{cor2.5.6}
Let $A$ be a graded algebra. Then there is an equivalence
  between filtered deformations $(A, \star_f)$ and formal deformations
  $(A[\![\hbar]\!], \star)$ which are graded with respect to the sum
  of the grading on $A$ and $|\hbar|=1$, given by $(A, \star_f)
  \mapsto \widehat{(A, \star_f)}$.  For the opposite direction,
  because of the grading, $(A[\![\hbar]\!], \star) = (A[\hbar], \star)
  \hat \otimes_{\bk[\hbar]} \bk[\![\hbar]\!]$, so one can take $(A[\hbar],
  \star) / (\hbar-1)$.
\end{corollary}
\begin{remark}
  One can similarly give an analogue in the case $|\hbar|=d$: if one
  begins with a graded Poisson algebra $B$ with Poisson bracket of
  degree $-d$, then filtered quantizations $(B,\star_f)$ of the form
  $a \star_f b = ab + \sum_{i \geq 1} f_i(a,b)$ with $|f_i(a,b)| =
  |ab|-di$ are equivalent to formal deformations
  $(B[\![\hbar]\!],\star)$ which are homogeneous for $|\hbar|=d$.  In
  the case $d=2$, this includes the important Example
  \ref{ex:hom-weyl}, which we have also discussed earlier.
\end{remark}

\subsection{Universal deformations and gauge equivalence}
We begin with the proper notion of equivalence of two deformation
quantizations:
\begin{definition}\label{d:ge}
  Given two formal deformations $(A[\![\hbar]\!],\star)$ and
  $(A[\![\hbar]\!],\star')$, a gauge equivalence $\star \sim \star'$
  means a $\bk[\![\hbar]\!]$-linear automorphism $\Phi$ of
  $A[\![\hbar]\!]$ which is the identity modulo $\hbar$, such that
\begin{equation}\label{e:ge}
\Phi(a \star b) = \Phi(a) \star' \Phi(b).
\end{equation}
Similarly, given two formal deformations $(A \hat \otimes R, \star)$
and $(A \hat \otimes R, \star')$ over $R$, a gauge equivalence is an
$R$-linear automorphism of $A \hat \otimes R$ which is the identity modulo $R_+$,
such that \eqref{e:ge} holds.
\end{definition}
\begin{exercise}\label{exer:cont-fd2}
  $\Phi$ is automatically continuous 
in the $\hbar$-adic topology.
\end{exercise}
We are interested in finding a family which parameterizes all formal
deformations up to gauge equivalence.  To do so, we define two
notions: a versal deformation exhausts all formal deformations, and a
universal deformation exhausts the formal deformations uniquely.  
In general, one cannot expect to obtain a (nice) explicit (uni)versal
deformation, but there are cases where one can, as we explain in
Theorem \ref{t:hh3=0} below.
\begin{definition} Given a continuous homomorphism $p: R \to R'$ and a
  formal deformation $(A \hat \otimes R, \star)$, the formal
  deformation defined by base-change from $p$ is $(A \hat \otimes R',
  p(\star))$, with
\[
a\, p(\star)\, b := (\Id \otimes p)(a \star b).
\]
\end{definition}
\begin{definition}
  A \emph{versal} formal deformation $(A \hat \otimes R, \star)$, is
  one such that,
  for every other formal deformation $(A \hat \otimes R', \star')$,
  there exists a continuous homomorphism $p: R \to R'$ such that $(A
  \hat \otimes R', \star')$ is gauge-equivalent to the base-change
  deformation $(A \hat \otimes R', p(\star))$.

  The deformation is \emph{universal} if the homomorphism $p$ is
  always unique.
\end{definition}
In particular, if $R' = \bk[\![\hbar]\!]$, we see that, given a versal
deformation $(A \hat \otimes R, \star)$, all formal deformations
$(A[\![\hbar]\!],\star)$ are obtained from continuous homomorphisms $R
\to \bk[\![\hbar]\!]$.
\begin{remark}  
  The relationship between $(A[\![\hbar]\!], \star)$ and the
  (uni)versal $(A \hat \otimes R, \star_u)$ can be stated
  geometrically as follows: $p$ is a formal $\Spf
  \bk[\![\hbar]\!]$-point $p$ of $\Spf R$, and $(A[\![\hbar]\!],
  \star)$ is gauge-equivalent to the pullback of $(A \hat \otimes R,
  \star_u)$, namely, $(A[\![\hbar]\!], \star) = (A \hat \otimes R,
  \star_u) \otimes_{R} p$.
\end{remark}
\begin{remark}
  One can also define similarly the notion of versal and universal
  filtered deformations.  These are not actually filtered deformations
  of $A$, but rather filtered deformations of $A \otimes R$ for some
  graded $\bk$-algebra $R$, such that all filtered deformations are
  obtained by base-change by a character $R \to \bk$, up to
  $\bk$-linear filtered automorphisms of $A$ whose associated graded
  automorphism is the identity. We will not need this notion, although
  we will give some examples (see Examples \ref{ex:sra} and
  \ref{ex:gca} below).
\end{remark}
The principle that $\HH^3(A)$ classifies obstructions (which will be
made precise in \S \ref{ss:obst-3o} below) leads to the following
important result, which is provable using the Maurer-Cartan formalism
discussed in \S \ref{s:dgla}.  Given a finite-dimensional vector space
$V$, let $\hat \cO(V)$ be the completion of the algebra $\cO(V)$ at
the augmentation ideal. Explicitly, if $v_1,\ldots,v_n$ is a basis of
$V$ and $c_1, \ldots, c_n$ the dual basis of $V^*$, so that $\cO(V) =
\bk[f_1,\ldots,f_n]$, then we set $\hat \cO(V) :=
\bk[\![c_1,\ldots,c_n]\!]$, the formal power series algebra.
\begin{theorem}\label{t:hh3=0}
  If $\HH^3(A)=0$, then there exists a versal formal deformation of
  $A$ over $\hat \cO(\HH^2(A))$.  If, furthermore, $\HH^1(A)=0$, then
  this is a universal deformation.
\end{theorem}
At the end of the exercises from Section \ref{s:dgla}, we will include
an outline of a proof of a slightly weaker version of the first
statement of the theorem.

In the case that $\HH^3(A) \neq 0$, then in general there may not
exist a versal formal deformation over $\hat \cO(\HH^2(A))$;
see, however, Theorem \ref{t:fdef-symp} for another situation where this exists.




\begin{example}\label{ex:sra}
  If $A = \Weyl(V) \rtimes G$ for $G < \Sp(V)$ a finite subgroup
  (see Definition \ref{d:skew-prod}) then,
  by \cite{AFLS}, $\HH^i(A)$ is the space of
  conjugation-invariant functions on the set of group elements $g \in
  G$ such that $\rk(g - \Id) = i$. In particular, $\HH^i(A) = 0$ when
  $i$ is odd, and $\HH^2(A)$ is the space of conjugation-invariant
  functions on the set of \emph{symplectic reflections}: those
  elements fixing a codimension-two symplectic hyperplane.  Thus,
  there is a universal formal deformation over $\hat \cO(\HH^2(V))$.

   Moreover, $\HH^2(A) = \HH^2(A)_{\leq -2}$ is all in
  degree $-2$ (the degree also of the Poisson bracket on $\cO(V)$).
   This actually implies that 
  there is a universal filtered deformation of $A$ parameterized
  by elements $c \in \HH^2(A)$ as above, in the sense that every
  filtered deformation is isomorphic to one of these via a filtered
  isomorphism whose associated graded isomorphism is the
  identity.  (In fact, the universal formal deformation is obtained from this
  one by completing at the augmentation ideal of $\cO(\HH^2(V))$.)

  This universal filtered deformation admits an explicit description,
  known as the \emph{symplectic reflection algebra} \cite{EG}, first
  constructed by Drinfeld, which is defined as follows.  Let $S$ be
  the set of symplectic reflections, i.e., elements such that
  $\rk(g-\Id)=2$. Then $\HH^2(A) \cong \bk[S]^G$. Let $c \in
  \text{Fun}_{G}(S, \bk)$ be a conjugation-invariant $\bk$-valued
  function on $S$.  Then the corresponding symplectic reflection
  algebra $H_{1,c}(G)$ is presented as
\[
H_{1,c}(G) := TV / \Bigl(xy-yx-\omega(x,y)+2 \sum_{s \in S} c(s)
\omega_s(x,y) \cdot s\Bigr),
\]
where $\omega_s$ is the composition of $\omega$ with the projection to
the sum of the nontrivial eigenspaces of $s$ (which is a
two-dimensional symplectic vector space). In other words, $\omega_s$
is the restriction of the symplectic form $\omega$ to the
two-dimensional subspace orthogonal to the symplectic reflecting
hyperplane of $s$. The algebra $H_{1,c}(G)$ above is the case $t=1$ of
the more general symplectic reflection algebra $H_{t,c}(G)$ studied in
Bellamy's chapter,
which is obtained by replacing the
$\omega(x,y)$ by $t \cdot \omega(x,y)$ in the relation.
\end{example}
\begin{example}\label{ex:gca}
  If $X$ is a smooth affine variety or smooth $C^\infty$ manifold, and
  $G$ is a group acting by automorphisms on $X$, then by
  \cite{Eti-chavfga}, $\HH^2(\caD(X) \rtimes G) \cong H^2_{DR}(X)^G
  \oplus \bk[S]^G$, where $S$ is the set of pairs $(g,Y)$ where $g \in
  G$ and $Y \subseteq X^g$ is a connected (hence irreducible)
  subvariety of codimension one. Then $\bk[S]^G$ is the space of
  $\bk$-valued functions on $S$ which are invariant under the action
  of $G$, $h \cdot (g,Y) = (hgh^{-1}, h(Y))$.  Furthermore, by
  \cite{Eti-chavfga}, all deformations of $\caD(X) \rtimes G$ are
  \emph{unobstructed} and there exists a universal filtered
  deformation $H_{1,c,\omega}(X)$ parameterized by $c \in \bk[S]^G$
  and $\omega \in H^2_{DR}(X)^G$. More precisely, to each such
  parameters we have a filtered deformation of $\caD(X) \rtimes G$,
  and these exhaust all filtered deformations up to filtered
  isomorphism.
%
\end{example}

\subsection{Exercises}

Exercises from the notes: \ref{exer:cont-fd}, \ref{exer:moyal}, \ref{exer:moyal-weyl-unique}, and \ref{exer:cont-fd2}.

Additional exercises:
\begin{exercise}\label{exer:koszul-complexes}
We introduce Koszul complexes, which are one of the main
tools for computing Hochschild cohomology.

First consider, for the algebra $\Sym V$, the Koszul resolution of the
  augmentation module $\bk$:
\begin{gather}
0 \to \Sym V \otimes \wedge^{\dim V} V \to \Sym V\otimes \wedge^{\dim
V - 1} V \to \cdots \to \Sym V \otimes V \to \Sym V \onto \bk, \\
f \otimes (v_1 \wedge \cdots \wedge v_i) \mapsto \sum_{j=1}^i
(-1)^{j-1} (fv_j)
\otimes (v_1 \wedge \cdots \hat v_j \cdots \wedge v_i),
\end{gather}
where $\hat v_j$ means that $v_j$ was omitted from the wedge
product. We remark that this complex itself cannot deform to a complex
of $\Weyl(V)$-modules, since the $\bk$ itself does not deform (since
$\Weyl(V)$ is simple, it has no finite-dimensional modules).
Nonetheless, we will be able to deform a bimodule analogue of the
above.

(a) Construct analogously to the Koszul resolution a bimodule resolution of
$\Sym V$, of the form
\[
\Sym V \otimes \wedge^\bullet V \otimes \Sym V \onto \Sym V.
\]
Using this complex, show that $$\HH^i(\Sym V,\Sym V \otimes \Sym V) :=
\Ext^i_{(\Sym V)^e}(\Sym V, \Sym V \otimes \Sym V) \cong\Sym V[-\dim
V].$$ (As we will see in Exercise \ref{exer:kos-cy},
this implies that
$\Sym V$ is a Calabi-Yau algebra of dimension $\dim V$.)

(b) Now replace $\Sym V$ by the Weyl algebra, $\Weyl(V)$.
Show that the Koszul complex above deforms to give a complex whose zeroth
homology is $\Weyl(V)$:
\begin{multline}
0 \to \Weyl(V) \otimes \wedge^{\dim V} V \otimes \Weyl(V) \to
 \Weyl(V)\otimes \wedge^{\dim V - 1}V \otimes \Weyl(V)
 \\ \to \cdots \to \Weyl(V) \otimes V \otimes \Weyl(V)
\to \Weyl(V) \otimes \Weyl(V) \onto \Weyl(V),
\end{multline}
using the same formula.  Hint: You only need to show it is a complex,
by Exercise \ref{exer:hoch-res}.  Note also the fact that such a deformation
exists is a consequence of the corollaries of Exercise \ref{exer:hoch-res}.

Deduce that $\HH^\bullet(\Weyl(V), \Weyl(V) \otimes \Weyl(V)) \cong \Weyl
V[-d]$ (so, as we will see in Exercise \ref{exer:kos-cy}, it
is also Calabi-Yau of dimension $\dim V$).

(c) Suppose that $V = \mfg$ is a (finite-dimensional) Lie
algebra. Deform the complex to a resolution of the universal enveloping
algebra using the Chevalley-Eilenberg complex:
\[
U \mfg \otimes \wedge^{\dim \mfg} \mfg \otimes U \mfg
\to U \mfg \otimes \wedge^{\dim \mfg-1} \mfg \otimes U \mfg \to \cdots
\to U \mfg \otimes \mfg \otimes U\mfg \to U \mfg \otimes U \mfg
\onto U \mfg,
\]
where the differential is the sum of the preceding differential for
$\Sym V = \Sym \mfg$ and the additional term,
\begin{equation}\label{e:ce-diff}
x_1 \wedge \cdots \wedge x_k \mapsto
\sum_{i< j} [x_i, x_j] \wedge x_1 \wedge \cdots \hat x_i \cdots \hat x_j \cdots \wedge x_k.
\end{equation}
That is, verify this is a complex, and conclude from the (a) and the
preceding exercise that it must be a resolution.  Conclude as before
that $\HH^\bullet(U\mfg, U\mfg \otimes U\mfg) \cong U\mfg[-d]$.  (As
we will see in Exercise \ref{exer:ug-cy}, $U\mfg$ is twisted
Calabi-Yau, but not Calabi-Yau in general.)

(d) Recall the usual Chevalley-Eilenberg complexes computing Lie
algebra (co)homology with coefficients in a $\mfg$-module $M$: if
$C^{CE}_\bullet(\mfg) = \wedge^\bullet \mfg$ is the complex inside of
the two copies of $U\mfg$ above (with the differential as in
\eqref{e:ce-diff}), these are
\[
C^{CE}_\bullet(\mfg, M) := C^{CE}_\bullet(\mfg) \otimes M, \quad
C_{CE}^\bullet(\mfg, M) := \Hom_\bk(C^{CE}_\bullet(\mfg), M).
\]
Conclude that (cf.~\cite[Theorem 3.3.2]{L}), if $M$ is a $U\mfg$-bimodule,
\[
\HH^\bullet(U\mfg, M) \cong H_{CE}^\bullet(\mfg, M^{\ad}),
\quad \HH_\bullet(U\mfg, M) \cong H^{CE}_\bullet(\mfg, M^{\ad}),
\]
where $M^{\ad}$ is the $\mfg$-module obtained from $M$ by the adjoint action,
\[
{\ad}(x)(m) := xm-mx,
\]
where the LHS gives the Lie action of $x$ on $m$, and the RHS uses the
$U\mfg$-bimodule action.
  Conclude that $\HH^\bullet(U\mfg)
  \cong H_{CE}^\bullet(\mfg, U \mfg^{\ad})$. 

  Next, recall (or accept as a black box) that, when $\mfg$ is
  finite-dimensional semisimple (or more generally reductive) and $\bk$ has characteristic zero, then $U
  \mfg^{\ad}$ decomposes into a direct sum of finite-dimensional
  irreducible representations, and $H_{CE}^\bullet(\mfg, V) =
  \Ext_{U\mfg}^\bullet(\bk, V)= 0$ if $V$ is finite-dimensional
  irreducible and \emph{not} the trivial representation.

Conclude that, in this case, $\HH^\bullet(U\mfg) \cong \Ext_{U\mfg}^\bullet(\bk, U\mfg^{\ad}) \cong Z(U\mfg) \otimes \Ext_{U\mfg}^\bullet(\bk, \bk)$.  

\textbf{Bonus}: Compute that, still assuming that $\mfg$ is
finite-dimensional semisimple (or reductive) and $\bk$ has characteristic
zero, $\Ext_{U\mfg}(\bk,\bk)
\cong H_{CE}^\bullet(\mfg, \bk)$ is isomorphic to $(\wedge^\bullet
\mfg^*)^\mfg$.  To do so, 
show that the inclusion $C_{CE}^\bullet(\mfg, \bk)^\mfg\into
C_{CE}^\bullet(\mfg, \bk)$ is a quasi-isomorphism. This follows by
defining an operator $d^*$ with opposite degree of $d$, so that the
Laplacian $d d^* + d^* d$ is the quadratic Casimir element $C$.  Thus
this Laplacian operator defines a contracting homotopy onto the part
of the complex on which the quadratic Casimir acts by zero.  Since $C$
acts by a positive scalar on all nontrivial finite-dimensional
irreducible representations, this subcomplex is
$C_{CE}^\bullet(\mfg, \bk)^\mfg$.
But the latter is just
$(\wedge \mfg^*)^\mfg$ with zero differential.

Conclude that $\HH^\bullet(U\mfg) = Z(U \mfg) \otimes (\wedge^\bullet
\mfg^*)^\mfg$.  In particular, for $\mfg$ semisimple, $\HH^2(U\mfg) =
\HH^1(U\mfg) = 0$. As we will see, this implies that $U\mfg$ has no
nontrivial formal deformations.
\end{exercise}
\begin{remark}
  For $\mfg=\mfsl_2$, $U\mfsl_2$ is a Calabi-Yau algebra of dimension
  three, since it is a Calabi-Yau deformation of the Calabi-Yau
  algebra $\cO(\mfsl_2^*) \cong \bA^3$, which is Calabi-Yau with the
  usual volume form.  So $\HH^3(U\mfsl_2) \cong \HH_0(U\mfsl_2)$ by
  Van den Bergh duality, and the latter is $U\mfsl_2 / [U\mfsl_2,
  U\mfsl_2]$, which you should be able to prove is isomorphic to
  $(U\mfsl_2)_{\mfsl_2} \cong (U\mfsl_2)^{\mfsl_2}$, i.e., this is a
  rank-one free module over $\HH^0(U\mfsl_2)$.

  In other words, $H^3_{CE}(\mfg, \bk) \cong \bk$, which is also true
  for a general simple Lie algebra.  Since this group controls the
  \emph{tensor category deformations} of $\mfg$-mod, or the Hopf
  algebra deformations of $U\mfg$, this is saying that there is a
  one-parameter deformation of $U\mfg$ as a Hopf algebra which gives
  the quantum group $U_q \mfg$.  But since $\HH^2(U\mfg) = 0$, as
  an associative algebra, this deformation is trivial, i.e.,
  equivalent to the original algebra; this is \emph{not} true as a
  Hopf algebra!
\end{remark}

\begin{exercise}\label{exer:sra-flat}
  Using the Koszul deformation principle (Theorem \ref{t:kdp-filt}),
  show that the symplectic reflection algebra (Example \ref{ex:sra})
  is a flat filtered deformation of $\Sym(V) \rtimes G$.  Hint:
  This amounts to the ``Jacobi'' identity
\[ 
[\omega_s(x,y), z] + [\omega_s(y,z), x] + [\omega_s(z,x), y] = 0,
\]
for all symplectic reflections $s \in S$, and all $x, y, z \in V$, as
well as the fact that the relations are $G$-invariant.  Compare with
\cite[\S 2.3]{Eti-enadt}, where it is also shown that (the Rees
algebras of these) yield all formal deformations of $\Weyl(V) \rtimes
G$.
\end{exercise}

\begin{exercise}\label{exer:deduce-kdp-filt} 
In this exercise, we deduce the filtered version of the Koszul
  deformation principle, Theorem \ref{t:kdp-filt}, from the more
  standard deformation version (Theorem \ref{t:kdp}), using the
  correspondence between filtered deformations and homogeneous formal
  deformations.

  Let $R \subseteq TV$ and $B := TV/(R)$. Let $E \subseteq
  TV[\![\hbar]\!]$ be a subspace which lifts $R$ modulo $\hbar$, i.e.,
  such that the composition $E \to TV[\![\hbar]\!] \to TV$ is an
  isomorphism onto $R$.  Let $A := TV[\![\hbar]\!]/(E)$.  Then we have
  a canonical surjection $B \to A/\hbar A$.  We call $A$ a (flat)
  formal deformation of $B$ if this surjection is an isomorphism.

  (a) A priori, the above is a slightly stronger version of formal
  deformation than the usual one, because it requires a deformation
  $E$ of the relations.  Show, however, that such an $E$ always exists
  given a (one-parameter) formal deformation $A$ of $B := TV/(R)$, so
  the notion is equivalent to the usual one.  Hint: Recall that, by
  Remark \ref{r:sp-iso}, a one-parameter formal deformation $A$ can be alternatively
  viewed as a topologically free $\bk[\![\hbar]\!]$-algebra $A$
  together with an isomorphism $\phi: A/\hbar A \iso B$.
  Show that there exists 
  $E \subseteq V^{\otimes 2}[\![\hbar]\!]$ deforming $R$ such
  that $A=TV[\![\hbar]\!]/(E)$ and $\phi: A /\hbar A \to B$ is the
  identity on $V$.  

  (b)(*, but with the solution outlined) We are interested below in the case that $R \subseteq V^{\otimes
    2}$, i.e., $B$ is a quadratic algebra, and that $E \subseteq
  V^{\otimes 2}[\![\hbar]\!]$, i.e., $B$ is also quadratic (as a
  $\bk[\![\hbar]\!]$-algebra).  (The proof of (a) then adapts to show
  that such an $E$ exists for every graded formal deformation $A$ of
  $B$; moreover, the span $\bk[\![\hbar]\!] \cdot E \cong
  R[\![\hbar]\!]$ is in fact canonical, and independent of the choice
  of $E$ above, since it is the degree-two part of the kernel of
  $TV[\![\hbar]\!]  \to A$.)

The usual version of the Koszul deformation principle is as follows:
\begin{theorem}\cite{Dri-qcrqc,PBW,BGS-Kdprt,PP-qa}
 \label{t:kdp} (Koszul deformation principle) Assume that $B$
  is Koszul. If $E \subseteq V^{\otimes 2}[\![\hbar]\!]$ is a formal
  deformation, then $A := TV[\![\hbar]\!]/(E)$ is a (flat) formal
  deformation of $B$ if and only if it is flat in weight three, i.e.,
  the surjection $B_3 \to A_3/\hbar A_3$ is an isomorphism.
\end{theorem}
Use this to prove Theorem \ref{t:kdp-filt}, as follows.  Given a
Koszul algebra $B= TV/(R)$ and a filtered deformation $E \subseteq
T^{\leq 2} V$ of $R$, i.e., such that the composition $E \to T^{\leq
  2} \twoheadrightarrow V \otimes V$ is an isomorphism onto $R$, we
can form a corresponding formal deformation as follows. 
Let $A := TV/(E)$. First we homogenize $A$ by
forming the Rees algebra over the parameter $t$:
\[
R_{t}A := \bigoplus_{i \geq 0} A_{\leq i} t^i.
\]
This algebra is graded by degree in $t$, and deforms $B$ in the sense
that $R_t A / t R_t A \cong B$.  Thus $R_t A$ is a quadratic
algebra. Set $\tilde V := V \oplus \bk \cdot t$. Then we in fact have
$R_t A = T \tilde V / \tilde E$, where $\tilde E$ is the span of $t v
- v t$ for $v \in V$ together with the homogenized versions of
the elements of $E$: given a relation $r = r_2 + r_1 + r_0 \in E$ for
$r_i \in V^{\otimes i}$, we have an associated element $\tilde r \in
\tilde E$ given by $\tilde r = r_2 + r_1 t + r_0 t^2$.  This $\tilde E$
is spanned by the $\tilde r$ and the $tv-vt$.

Moreover, $R_t A$ can also be viewed as a filtered deformation of $B[t]$:
put the filtration on $R_t A$ by the filtration in $A$
(ignoring the $t$), i.e., $(R_t A)_{\leq m} = A_{\leq m} t^m \otimes
\bk[t]$ (thus, this is \emph{not} the filtration given by the grading
on $R_t A$).  Then $\gr(R_t A) = B[t]$.

Using the preceding filtration,
form  the completed Rees algebra in $\hbar$,
\[
\hat A := \widehat R_{\hbar} R_t A := \prod_{i \geq 0} (R_t A)_{\leq
  i} \cdot \hbar^i.
\]
The result is an algebra $\hat A$ such that $\hat A/\hbar \hat A \cong
B[t]$ (rather than $R_t A$), since we used the filtration on $R_t A$
coming from $A$ only. Moreover, $\hat A$ is a quadratic formal
deformation of $R_t A$, using the grading on $R_t A$, placing $\hbar$
in degree zero.

Finally, $A$ satisfies the assumption of Theorem \ref{t:kdp-filt},
i.e., that $\gr_3(E \otimes V \cap (V+\bk) \otimes E) = (R \otimes V \cap V
\otimes R)$ (where $\gr_3: T^{\leq 3}V \to V^{\otimes 3}$ is the
projection modulo $V^{\otimes \leq 2}$), if and only if $\hat A$
satisfies the condition that the natural map
\[
(R \otimes V \cap V \otimes R) \to (\tilde E \otimes \tilde V \cap
\tilde V \otimes \tilde E) / \hbar (\tilde E \otimes \tilde V \cap
\tilde V \otimes \tilde E)
\]
is an isomorphism.

Check that the latter condition is equivalent to the condition that,
in degree three, $\hat A$ is a flat deformation of $B[t]$, i.e., that
$(B[t])_3 \cong (\hat A)_3 / \hbar (\hat A)_3$.  (Note that $(B[t])_3
= B_3 \oplus B_2 t \oplus B_1 t^2 \oplus \bk t^3$, and the difficulty
is in dealing with the $B_3$ part).

Thus, Theorem \ref{t:kdp} applies and yields that, if $B$ is Koszul,
then for every filtered deformation $E$ of the relations $R$ of $B$,
the algebra $A=TV/(E)$ is a (flat) filtered deformation of $B$ if and
only if $\hat A$ is a (flat) formal graded deformation, which is true
if and only if the assumption of Theorem \ref{t:kdp-filt} is
satisfied, which proves that theorem.
\end{exercise}

\begin{exercise}\label{exer:koszul-def}
 Parallel to Exercise \ref{exer:koszul}, show
  that, if $A$ is a (flat) graded formal deformation of a Koszul
  algebra $B$ as in Theorem \ref{t:kdp}, then $A$ is also Koszul over
  $\bk[\![\hbar]\!]$ (meaning, $\Tor_i^A(\bk[\![\hbar]\!],\bk[\![\hbar]\!])$ is concentrated in
  degree $i$, or alternatively $A$ admits a free graded $A \otimes_{\bk[\![\hbar]\!]} A^{\op}$-module
  resolution $P_\bullet^\hbar \to A$ with $P_i^\hbar$ generated in
  degree $i$).
\end{exercise}


\section{Hochschild cohomology and infinitesimal
  deformations}\label{s:hoch}
In this section, we return to a more basic topic, that of Hochschild
(co)homology of algebras.  We first describe the explicit complexes
used to compute these, and how they arise from the bar resolution of
an algebra as a bimodule over itself.  Using this description, we give
the standard interpretation of the zeroth, first, second, and third
Hochschild cohomology groups as the center, the vector space of outer
derivations, the vector space of infinitesimal deformations up to
equivalence, and the vector space of obstructions to lifting an
infinitesimal deformation to a second-order deformation. We also
explain the definition of Calabi-Yau algebras, both to illustrate an
application of Hochschild cohomology, and to further understand our
running examples of Weyl algebras and universal enveloping algebras.

\subsection{The bar resolution}
To compute Hochschild (co)homology using Definition \ref{d:hh-co}, one
can resolve $A$ as an $A^e$-module, i.e., $A$-bimodule. The standard
way to do this is via the \emph{bar resolution}:
\begin{definition}
  The bar resolution, $C^{\obar}(A)$, of an associative algebra $A$
  over $\bk$ is the complex
\begin{equation}
\begin{tikzpicture}
\node (c1) at (-6, 0) {$\cdots$};
\node (c2) at (-3,0) {$A \otimes A \otimes A$};

\node (c3) at (0,0) {$A \otimes A$};
\node (c4) at (3,0) {$A,$};

\node (c5) at (-3,-1) {$a \otimes b \otimes c$};
\node (c6) at (0,-1) {$ab \otimes c - a \otimes bc$};
\node (c7) at (0,-2) {$a \otimes b$};
\node (c8) at (3,-2) {$ab$};

\draw[->] (c1) -- (c2);
\draw[->] (c2) -- (c3);
\draw[->] (c3) -- (c4);

\draw[|->] (c5) -- (c6);
\draw[|->] (c7) -- (c8);

\end{tikzpicture}
\end{equation}
More conceptually, we can define the complex as
\[
T_A (A \cdot \epsilon \cdot A), \quad d = \partial_\epsilon,
\]
viewing $A \cdot \epsilon \cdot A$ as an $A$-bimodule, assigning
degrees by $|\epsilon|=1, |A|=0$, and viewing the differential $d
= \partial_\epsilon$ is a \emph{graded derivation} of degree $-1$,
i.e.,
\[
\partial_\epsilon(x \cdot y) = \partial_\epsilon(x) \cdot y
+ (-1)^{|y|} x \cdot \partial_\epsilon(y).
\]
Finally, $\partial_\epsilon(\epsilon)=1$ and $\partial_\epsilon(A)=0$.
\end{definition}
\subsection{The Hochschild (co)homology complexes}
Using the bar resolution, we conclude that
\begin{equation}
  \HH_i(A,M) = \Tor_i^{A^e}(A,M) = H_i(C^\obar_\bullet(A)
  \otimes_{A^e} M) =
  H_i(C_\bullet(A,M)),
\end{equation}
where
\begin{equation}
  \begin{tikzpicture}
\node[left] (1) at (-4,0) {$C_\bullet(A,M)$};
\node[right] (1') at (-4,0) {$= (M \otimes_A C^\obar_\bullet(A)) /
      [A, M \otimes_A C^\obar_\bullet(A)]$};
\node (2)[right] at (-4,-1)  {$= \cdots$};
\node (3) at (-1,-1)   {$M \otimes A \otimes A$};
\node (4) at (3,-1)    {$M \otimes A$};
\node (5) at (6,-1)    {$M$};
\node (6) at (-1,-2)    {$(m \otimes b \otimes c)$};
\node (7) at (3,-2)    {$mb \otimes c -
      m \otimes bc + cm \otimes b$,};
\node (8) at (3,-3)    {$m \otimes a$};
\node (9) at (6,-3)    {$ma-am$.};
\draw[->] (2) -- (3);
\draw[->] (3) -- (4);
\draw[->] (4) -- (5);
\draw[|->] (6) -- (7);
\draw[|->] (8) -- (9);


\end{tikzpicture}
\end{equation}
Similarly, 
\begin{equation}
  \HH^i(A,M) = \Ext^i_{A^e}(A,M) = H^i(C^\bullet(A,M)),
\end{equation}
where
\begin{equation}\label{e:hoch-cocomplex}
\begin{tikzpicture}
\node[left] (1) at (-5,0) {$C^\bullet(A,M)$};
\node[right] (2) at (-5,0) {$=
\Hom_{A^e}(C^\obar_\bullet(A),
  M))$};

\node[right](3) at (-5,-1) {$= \cdots$};
\node (4) at (-1,-1) {$\Hom_\bk(A \otimes A, M)$};
\node (5) at (3,-1) {$\Hom_\bk(A, M)$};
\node (6) at (6,-1) {$M$,};

\node (7) at (-1,-2) {$(d\phi)(a \otimes b) = a\phi(b) - \phi(ab) + \phi(a) b$};
\node (8) at (3,-2) {$\phi$};
\node (9) at (3,-3) {$(dx)(a)=ax-xa$};
\node (10) at (6,-3) {$x.$};

\draw[->] (4) -- (3);
\draw[->] (5) -- (4);
\draw[->] (6) -- (5);
\draw[|->] (8) -- (7);
\draw[|->] (10) -- (9);
\end{tikzpicture}
\end{equation}
As for cohomology, we will use the notation $C^\bullet(A) := C^\bullet(A,A)$.
\begin{remark}\label{r:lhc}
 To extend Hochschild cohomology $\HH^\bullet(A)$ to the $C^\infty$ context,
  i.e., where $A =
  \cO(X) := C^\infty(X)$ for $X$ a $C^\infty$ manifold, we do not make
  the same definition as above (i.e., we do not treat
  $A$ as an ordinary associative algebra). Instead, we define
  the Hochschild cochain complex so as to place in degree $m$ the
 subspace of
 $\Hom_\bk(A^{\otimes m}, A)$ of \emph{smooth polydifferential
   operators}, i.e., linear maps spanned by tensor products of smooth
 (rather than algebraic) differential operators $A \to A$, or
 equivalently, operators which are spanned by polynomials in smooth
 vector fields (which are not the same as algebraic derivations in the
 smooth setting: a smooth vector field is always a derivation of $A$
 as an abstract algebra, but not conversely). This can be viewed as
 restricting to the ``local'' part of Hochschild cohomology 
 (because polydifferential operators are local in the sense that the
 result at a point $x$ only depends on the values in a neighborhood of
 $x$).
\end{remark}
\subsection{Zeroth Hochschild homology}
We see from the above that $\HH_0(A) = A/[A,A]$, where the quotient
is taken as a vector space.
\begin{example}
  Suppose that $A = TV$ is a free algebra on a vector space $V$.  Then
  $\HH_0(A) = TV/[TV,TV]$ is the vector space of \emph{cyclic words}
  in $V$.

  Similarly, if $A = \bk Q$ is the path algebra of a quiver $Q$ (see
  \S \ref{ss:quiver}), then $\HH_0(A) = \bk Q / [\bk Q, \bk Q]$ is the
  vector space of \emph{cyclic paths} in the quiver $Q$ (which by
  definition do not have an initial or terminal vertex).
\end{example}
\begin{example}
  If $A$ is commutative, then $\HH_0(A) = A$, since $[A,A]=0$.
\end{example}

\subsection{Zeroth Hochschild cohomology}

Note that $\HH^0(A) = \{a \in A \mid ab-ba=0, \forall b \in A\} = Z(A)$,
the center of the algebra $A$.
\begin{example}
  If $A = TV$ is a tensor algebra and $\dim V \geq 2$, then $\HH^0(A)
  = \bk$: only scalars are central.  The same is true if $A = \bk Q$
  for $Q$ a connected quiver which has more than one edge or more than
  one vertex (see \S \ref{ss:quiver}).
\end{example}
\begin{example}
  If $A$ is commutative, then $\HH^0(A) = A$, since every element is
  central.
\end{example}

\subsection{First Hochschild cohomology}
A Hochschild one-cocycle is an element $\phi \in \End_\bk(A)$ such
that $a\phi(b)+\phi(a)b = \phi(ab)$ for all $a, b \in A$. These are
the derivations of $A$.  A Hochschild one-coboundary is an element
$\phi = d(x), x \in A$, and this has the form $\phi(a)=ax-xa$ for all
$a \in A$. Therefore, these are the inner derivations. We conclude
that
\[
\HH^1(A)= \Out(A) := \Der(A) / \Inn(A),
\]
the vector space of outer derivations of $A$, which is by definition
the quotient of all derivations $\Der(A)$ by the inner derivations
$\Inn(A)$.  We remark that $\Inn(A) \cong A / Z(A) = A / \HH^0(A)$,
since an inner derivation $a \mapsto ax-xa$ is zero if and only if $x$
is central.
\begin{example}
  If $A = TV$ for $\dim V \geq 2$, then $\HH^1(A) = \Out(TV) =
  \Der(TV)/\Inn(TV) = \Der(TV) /\overline{TV}$, where $\overline{TV} =
  TV / \bk$, since $Z(TV)=\bk$.  Explicitly, derivations of $TV$ are
  uniquely determined by their restrictions to linear maps $V \to TV$,
  i.e., $\Der(TV) \cong \Hom_\bk(V, TV)$.  So we get $\HH^1(A) \cong
  \Hom_\bk(V, TV) / \overline{TV}$. (Note that the inclusion
  $\overline{TV} \to \Hom_{\bk}(V, TV)$ is explicitly given by $f
  \mapsto \ad(f)$, where $\ad(f)(v) = fv-vf$.)
\end{example}
\begin{example}
  If $A$ is commutative, then $\Inn(A) = 0$, so $\HH^1(A)=\Der(A)$.
  If, moreover, $A = \cO(X)$ is the commutative algebra of functions on
  an affine variety, then this is also known as the global vector
  fields $\Vect(X)$, as we discussed.
\end{example}
\begin{example}
In the case $A = C^\infty(X)$, restricting our Hochschild cochain complex
to differential operators in accordance with Remark \ref{r:lhc}, then
$\HH^1(A)$ is the space of smooth vector fields on $X$.
\end{example}
\begin{example}
  In the case that $V$ is smooth affine (or more generally normal),
  $\bk$ has characteristic zero, and $\Gamma$ is a finite group of
  automorphisms of $V$ which acts freely outside of a codimension-two
  subset of $V$, then all vector fields on the smooth locus of
  $V/\Gamma$ extend to derivations of $\cO(V/\Gamma)=\cO(V)^\Gamma$,
  since $V/\Gamma$ is normal.  Moreover, vector fields on the smooth
  locus of $V/\Gamma$ are the same as $\Gamma$-invariant vector fields
  on the locus, call it $U \subseteq V$, where $\Gamma$ acts freely.
  These all extend to all of $V$ since $V\setminus U$ has codimension
  two.  Thus we conclude that $\HH^1(\cO(V)^\Gamma) =
  \Der(\cO(V)^\Gamma) = \Vect(V)^\Gamma$ is the space of
  $\Gamma$-invariant vector fields on $V$ (the first equality here is
  because $\cO(V)^\Gamma$ is commutative).  In particular this
  includes the case where $V$ is a symplectic vector space and $\Gamma
  < \Sp(V)$, since the hyperplanes with nontrivial stabilizer group
  must be symplectic and hence of codimension at least two.
\end{example}

\subsection{Infinitesimal deformations and second Hochschild
  cohomology}
We now come to a key point.  A Hochschild two-cocycle is an element
$\gamma \in \Hom_\bk(A \otimes A, A)$ satisfying
\begin{equation}\label{e:h-tc}
a\gamma(b \otimes c) - \gamma(ab \otimes c) + \gamma(a \otimes bc) - \gamma(a
\otimes b)c = 0.
\end{equation}
This has a nice interpretation in terms of infinitesimal deformations
(Definition \ref{d:infinitesimal-def}).
Explicitly, an infinitesimal deformation is given by a linear map $\gamma: A
\otimes A \to A$, by the formula
\[
a \star_\gamma b = ab + \varepsilon \gamma(a \otimes b).
\]
Then, the associativity condition is exactly \eqref{e:h-tc}.

Moreover, we can interpret two-coboundaries as \emph{trivial}
infinitesimal deformations. More generally, we say that two
infinitesimal deformations $\gamma_1, \gamma_2$ are \emph{equivalent}
if there is a $\bk[\varepsilon]/(\varepsilon^2)$-module automorphism
of $A_\varepsilon$ which is the identity modulo $\varepsilon$ which
takes $\gamma_1$ to $\gamma_2$. Such a map has the form $\phi := \Id +
\varepsilon \cdot \phi_1$ for some linear map $\phi_1: A \to A$, i.e.,
$\phi_1 \in C^1(A)$.  We then compute that
\[
\phi^{-1} \bigl( \phi(a) \star_\gamma \phi(b) \bigr) = a
\star_{\gamma + d\phi_1} b.
\]
We conclude:
\begin{proposition}
  $\HH^2(A)$ is the vector space of equivalence classes of
  infinitesimal deformations of $A$.
\end{proposition}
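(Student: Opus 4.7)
The plan is to reduce the statement to two routine verifications, both of which are essentially set up by the preceding discussion.

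First, I would check that the bijection between 2-cochains $\gamma \in C^2(A) = \Hom_\bk(A \otimes A, A)$ and infinitesimal deformations, given by $\gamma \mapsto \star_\gamma$ where $a \star_\gamma b := ab + \varepsilon\gamma(a \otimes b)$, is well-defined: any $\bk[\varepsilon]/(\varepsilon^2)$-bilinear multiplication on $A \otimes_\bk \bk[\varepsilon]/(\varepsilon^2)$ reducing to $ab$ modulo $\varepsilon$ has exactly this form, with $\gamma$ uniquely determined by $\bk[\varepsilon]/(\varepsilon^2)$-bilinearity from its restriction to $A \otimes A$. Then I would expand the associativity relation $(a \star_\gamma b) \star_\gamma c = a \star_\gamma (b \star_\gamma c)$ modulo $\varepsilon^2$; using $\varepsilon^2 = 0$, the identity collapses to $\varepsilon$ times the Hochschild 2-cocycle condition \eqref{e:h-tc}. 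Thus associative infinitesimal deformations are in bijection with Hochschild 2-cocycles $Z^2(A)$.

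Second, I would analyze equivalences. Any $\bk[\varepsilon]/(\varepsilon^2)$-module automorphism $\phi$ of $A_\varepsilon$ that is the identity modulo $\varepsilon$ is determined by its restriction to $A$, which, by the module condition, must have the form $a \mapsto a + \varepsilon\phi_1(a)$ for a unique $\phi_1 \in C^1(A) = \Hom_\bk(A, A)$; its inverse is $\Id - \varepsilon\phi_1$ since $\varepsilon^2 = 0$. The identity asserted in the excerpt, namely
\[
\phi^{-1}\bigl(\phi(a) \star_\gamma \phi(b)\bigr) = ab + \varepsilon\bigl(\gamma(a \otimes b) + a\phi_1(b) - \phi_1(ab) + \phi_1(a) b\bigr) = a \star_{\gamma + d\phi_1} b,
\]
is then a direct expansion in $\bk[\varepsilon]/(\varepsilon^2)$. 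Hence $\gamma_1$ and $\gamma_2$ define equivalent deformations precisely when $\gamma_1 - \gamma_2 = d\phi_1$ for some $\phi_1 \in C^1(A)$, i.e., when they represent the same class in $\HH^2(A) = Z^2(A)/B^2(A)$.

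Putting the two steps together gives the claimed bijection. The only mildly delicate point is confirming that every module automorphism lifting the identity is really of the form $\Id + \varepsilon\phi_1$ rather than something more general; this reduces to the observation that $\bk[\varepsilon]/(\varepsilon^2)$-linearity pins $\phi$ down by its restriction to $A$, while the congruence $\phi(a) \equiv a \pmod{\varepsilon}$ then forces $\phi(a) - a \in \varepsilon \cdot A$. Everything else is a routine manipulation with $\varepsilon^2 = 0$.
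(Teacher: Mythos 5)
Your argument is correct and follows the same route as the paper: first identify $\bk[\varepsilon]/(\varepsilon^2)$-bilinear products $\star_\gamma$ deforming the multiplication with $2$-cochains $\gamma$, check that associativity modulo $\varepsilon^2$ is precisely the Hochschild $2$-cocycle condition \eqref{e:h-tc}, and then observe that an automorphism $\Id + \varepsilon\phi_1$ transforms $\gamma$ by adding the coboundary $d\phi_1$. Your writeup is slightly more careful than the paper's in spelling out that \emph{every} such module automorphism has the required form $\Id + \varepsilon\phi_1$, but this is the same proof.
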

\begin{example}
  For $A = TV$, a tensor algebra, we claim that $\HH^2(A) = 0$, so
  there are no nontrivial infinitesimal deformations.  Indeed, one can
  construct a short bimodule resolution of $A$,
\begin{gather*}
  0 \longrightarrow A \otimes V \otimes A \longrightarrow A \otimes A \onto A, \\
  a \otimes v \otimes b \mapsto av \otimes b - a \otimes vb, \quad a
  \otimes b \mapsto ab.
\end{gather*}
Since this is a projective resolution of length one, we conclude that
 $\Ext^2_{A^e}(A, M) = 0$ for all bimodules $M$, i.e., $\HH^2(A, M) = 0$
  for all bimodules $M$.
\end{example}
\begin{example} 
  If $A = \Weyl(V)$ for a symplectic vector space $V$, then
  $\HH^\bullet(A) = \bk$ (in degree zero), and hence $\HH^2(\Weyl(V))$ and $\HH^1(\Weyl(V))$ are zero,
 so that there are no nontrivial infinitesimal deformations nor
  outer derivations. Moreover, $\HH_\bullet(A) = \bk[-\dim V]$ (i.e.,
  $\bk$ in degree $\dim V$ and zero elsewhere). To see this, we can
  use the resolution of Exercise \ref{exer:koszul-complexes} (see also
  the solution).
\end{example}
\subsection{Remarks on Calabi-Yau algebras}
The Koszul resolutions above imply that $\Weyl(V)$ and $\Sym V$ are
\emph{Calabi-Yau algebras of dimension $\dim V$}, which we will define
as follows (and which was also mentioned in
Rogalski's chapter.
One
can also conclude from this that $\Weyl(V) \rtimes \Gamma$ and $\Sym V
\rtimes \Gamma$ are Calabi-Yau of dimension $\dim V$ for $\Gamma <
\Sp(V)$ finite, as we will do in the exercises.

\begin{definition}\label{defn:invbimod}
  An $A$-bimodule $U$ is right invertible if there exists an
  $A$-bimodule $R$ such that $U \otimes_A R \cong A$ as
  $A$-bimodules. It is left invertible if there exists an $A$-bimodule
  $L$ such that $L \otimes_A U \cong A$ as $A$-bimodules.  It is
  invertible if it is both left and right invertible.
\end{definition}
Note that, if $U$ is invertible, then any left inverse is a
right inverse (just as in the case of monoids), so this is also the
unique (two-sided) inverse of $U$ up to isomorphism.
\begin{question} Can you find an example of a left but not right invertible bimodule?
\end{question}
The following basic example will be our typical invertible bimodule:
\begin{example}\label{ex:asigma}
Given an algebra automorphism $\sigma: A \to A$, we can define
the $A$-bimodule $A^\sigma$, which as a vector space is merely $A$ itself, but with the bimodule action given by the formula, for $a, b \in A$ and $m \in A^{\sigma}$,
\[
a \cdot m \cdot b = (am\sigma(b)).
\]
\end{example}
\begin{exercise}\label{exer:asigma}
\begin{itemize}
\item[(a)]  Prove that, if $M$ is an $A$-bimodule which is free of rank
  one as a left and as a right module (i.e., isomorphic to $A$ as a
  left and as a right module) using the same generator $m \in M$, then
  $M \cong A^\sigma$ for some algebra automorphism $\sigma: A \to A$.
  Hint: Let $1 \in M$ be a left module generator, realizing $A \cong A
  \cdot 1 = M$.  Consider the map $\sigma: A \to A$ given by $1 \cdot
  h = \sigma(h) \cdot 1$.  Show that $\sigma$ is a $\bk$-algebra homomorphism,
  with inverse $\sigma^{-1}$ given by $h \cdot 1 = 1 \cdot
  \sigma^{-1}(h)$.


\item[(b)] Now assume $A$ is graded and $M$ is a graded bimodule
  obeying the assumptions of (a), with common generator $m$ is in
  degree zero.  Conclude that $M \cong A^\sigma$ as a graded bimodule,
  where $\sigma$ is a graded automorphism.

\end{itemize}
\end{exercise}
In order to define the notion of Calabi-Yau and twisted Calabi-Yau
algebras, we will need the following exercise:
\begin{exercise}\label{exer:hh-bimod}
  Verify that $\HH^\bullet(A, A\otimes A)$ is a canonically an
  $A$-bimodule using the \emph{inner} action, i.e., the action
  obtained from the $A$-bimodule structure on $M=A \otimes A$ given,
  for $x \otimes y \in M$ and $a,b \in A$, by the formula $a(x \otimes
  y)b = xb \otimes ay$.  The most explicit way to do this is to use
  \eqref{e:hoch-cocomplex}, where the action is inner multiplication
  on the output, i.e., $(a \cdot \phi \cdot b)(x_1 \otimes \cdots
  \otimes x_n) := a \cdot \phi(x_1 \otimes \cdots \otimes x_n) \cdot b$.
\end{exercise}
\begin{definition}\label{d:homsmooth} An algebra $A$ is \emph{homologically
    smooth} if $A$ has a finitely-generated projective $A$-bimodule
  resolution.
\end{definition}
Note that finitely-generated projective, for a complex of (bi)modules,
means that the complex is of finite length and consists of
finitely-generated projective (bi)modules.
\begin{remark}
  More generally, a complex of modules $M$ over an algebra $B$ is
  called perfect if it is a finite complex of finitely-generated
  projective modules.  If one works in the derived category $D^b(B)$
  of $B$-modules, then a complex $P_\bullet$ is quasi-isomorphic to a
  perfect complex if and only if it is \emph{compact}, i.e.,
  $\Hom(P_\bullet, -)$ commutes with arbitrary direct sums (it is
  automatic that $\Hom(M,-)$ commutes with finite direct sums for any
  $M$, but the condition to commute with arbitrary direct sums is much
  more subtle).  Compact objects of derived categories are extremely
  important.
\end{remark}
\begin{remark}\label{r:hd}
  In particular, homological smoothness implies that $A$ has finite
  Hochschild dimension: recall that this means that, for some $N \geq
  0$, we have $\HH^i(A,M) = 0$ for all $i > N$ and all bimodules $M$
  (the minimal such $N$ is then called the Hochschild dimension of
  $A$).  This is equivalent to the condition that $A$ have a bounded
  projective resolution, i.e., a resolution by a complex with finitely
  many nonzero terms, each of which are projective.  To be
  homologically smooth requires that these terms be finitely-generated
  as well (this is a subtle strengthening of the condition in
  general).  When $A$ is homologically smooth, one can show that it
  has a finitely-generated projective bimodule resolution whose length
  (i.e., number of terms) is the Hochschild dimension of $A$.
\end{remark}
\begin{definition}\label{d:cya}
  $A$ is a Calabi-Yau algebra of dimension $d$ if $A$ is homologically
  smooth and $\HH^\bullet(A, A \otimes A) = A[-d]$ as a graded
  $A$-bimodule (i.e., $\HH^i(A, A\otimes A) = 0$ for $i \neq d$, and
  $\HH^d(A, A\otimes A) = A$ as a graded $A$-bimodule).

  More generally, $A$ is \emph{twisted Calabi-Yau} of dimension $d$ if
  $A$ is homologically smooth and $\HH^\bullet(A, A \otimes A) =
  U[-d]$ as a graded $A$-bimodule, where $U$ is an invertible
  $A$-bimodule.
\end{definition}
Here, in $\HH^i(A, A \otimes A)$, $A \otimes A$ is considered as a
bimodule with the \emph{outer} bimodule structure, and the remaining
inner structure induces a bimodule structure on $A$, as shown in
Exercise \ref{exer:hh-bimod} above.
Typically, in the twisted Calabi-Yau case, $U \cong A^\sigma$ as in
Example \ref{ex:asigma}, which as we know from Exercise
\ref{exer:asigma}, is equivalent to saying that $\HH^d(A, A \otimes A)
\cong A$ as both left and right modules individually, with a common
generator (for example, $A$ could be graded, and the common generator
of $\HH^d(A, A \otimes A)$ could be the unique nonzero element up to
scaling in degree zero).

\begin{remark}
  By the Van den Bergh duality theorem (Theorem \ref{t:vdbdual}), if
  $A$ is Calabi-Yau of dimension $d$, then $A$ has Hochschild
  dimension $d$ (since $\HH^i(A, M) = 0$ for all $i > d$ and all
  bimodules $M$, and $\HH^d(A, A\otimes A) \neq 0$). It follows (as
  pointed out in Remark \ref{r:hd}) that $A$ has a finitely-generated
  projective bimodule resolution of length $d$.  Thus, we could have
  equivalently assumed the latter condition in Definition \ref{d:cya}
  rather than homological smoothness.
\end{remark}
\begin{exercise}\label{exer:kos-cy}
\begin{itemize}
\item[(a)] Prove from the Koszul resolutions from Exercise
  \ref{exer:koszul} that $\Sym V$ and $\Weyl(V)$ are Calabi-Yau of
  dimension $\dim V$.
\item[(b)] Let $\Gamma < \Sp(V)$ be finite and $\bk$ have
  characteristic zero.  Take the Koszul resolutions and apply $M
  \mapsto M \otimes \bk[\Gamma]$ to all terms, considered as bimodules
  over $A \rtimes \Gamma$ where $A$ is either $\Sym V$ or $\Weyl(V)$.
  This bimodule structure is given by
\[
(a \otimes g)(m \otimes h)(a' \otimes g') = (a \cdot g(m) \cdot
gh(a')) \otimes (ghg').
\]
Prove that the result are resolutions of $A \rtimes \Gamma$ as a bimodule
over itself.  Conclude that $A \rtimes \Gamma$ is also Calabi-Yau of
dimension $\dim V$.  In the case $A = \Sym V$, show that $A \rtimes \Gamma$
is actually Calabi-Yau given only that
 $\Gamma < \SL(V)$ (not $\Sp(V)$), still assuming $\Gamma$ is finite.
\end{itemize}
\end{exercise}
\begin{remark} \label{r:twcy-sd}
Similarly, one can show that, if $A$ is twisted
  Calabi-Yau, $\bk$ has characteristic zero, and $\Gamma$ is a finite
  group acting by automorphisms on $A$, then $A \rtimes \Gamma$ is
  also twisted Calabi-Yau.  In the case that $A$ is Calabi-Yau,
  $\Gamma$ must obey a unimodularity condition for $A \rtimes \Gamma$
  to also be Calabi-Yau.
\end{remark}

\subsection{Obstructions to second-order deformations and third
  Hochschild cohomology}\label{ss:obst-3o}
Suppose now that we have an infinitesimal deformation given by
$\gamma_1: A \otimes A \to  A$.  To extend this to a
second-order deformation, we require $\gamma_2: A \otimes A \to
 A$, such that
\[
a \star b := ab + \varepsilon \gamma_1(a \otimes b) + \varepsilon^2
\gamma_2(a \otimes b)
\]
defines an associative product on $A \otimes
\bk[\varepsilon]/(\varepsilon^3)$.

Looking at the new equation in second degree, this can be
written as
\[
a\gamma_2(b \otimes c) - \gamma_2(ab \otimes c) + \gamma_2(a \otimes bc) -
\gamma_2(a \otimes b)c = \gamma_1(\gamma_1(a \otimes b) \otimes c)
- \gamma_1(a \otimes \gamma_1(b \otimes c)).
\]
The LHS is $d \gamma_2(a \otimes b \otimes c)$, so the condition for
$\gamma_2$ to exist is exactly that the RHS is a Hochschild
coboundary.  Moreover, one can check that the RHS is always a
Hochschild three-cocycle (we will give a more conceptual explanation
when we discuss the Gerstenhaber bracket).  So the element on the RHS
defines a class of $\HH^3(A)$ which is the \emph{obstruction} to
extending the above infinitesimal deformation to a second-order
deformation:
\begin{corollary}
  $\HH^3(A)$ is the space of \emph{obstructions} to extending
  first-order deformations to second-order deformations.  If
  $\HH^3(A) = 0$, then all first-order deformations extend to
  second-order deformations.
\end{corollary}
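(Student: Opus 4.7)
The plan is to work directly from the associativity equation for the deformed product at second order, identify the required equation for $\gamma_2$ as inhomogeneous Hochschild cochain equation, and show the obstruction lives in $\HH^3(A)$.

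First I would write out explicitly the condition that $a \star b := ab + \varepsilon \gamma_1(a \otimes b) + \varepsilon^2 \gamma_2(a \otimes b)$ define an associative product modulo $\varepsilon^3$. Expanding $(a \star b) \star c$ and $a \star (b \star c)$ and collecting the coefficient of $\varepsilon^2$ yields, as indicated in the text above the corollary,
\[
(d\gamma_2)(a \otimes b \otimes c) = \gamma_1(\gamma_1(a \otimes b) \otimes c) - \gamma_1(a \otimes \gamma_1(b \otimes c)),
\]
where $d$ is the Hochschild differential on $C^\bullet(A)$. Call the right-hand side $\mathrm{Obs}(\gamma_1)(a \otimes b \otimes c) \in C^3(A)$. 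Then a second-order extension of the infinitesimal deformation $\gamma_1$ exists if and only if $\mathrm{Obs}(\gamma_1)$ is a Hochschild coboundary.

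The main step is then to verify that $\mathrm{Obs}(\gamma_1)$ is a Hochschild $3$-cocycle, i.e., $d\,\mathrm{Obs}(\gamma_1) = 0$, so that it defines a well-defined class in $\HH^3(A)$. This I would do by direct expansion: apply the Hochschild differential to $\mathrm{Obs}(\gamma_1)$, which produces a signed alternating sum of terms of the form $\gamma_1 \circ (\gamma_1 \otimes \Id)$ and $\gamma_1 \circ (\Id \otimes \gamma_1)$ multiplied or composed on the various slots. Using the $2$-cocycle condition \eqref{e:h-tc} for $\gamma_1$ to replace certain terms $a \gamma_1(b \otimes c) - \gamma_1(ab \otimes c) + \gamma_1(a \otimes bc) - \gamma_1(a \otimes b) c = 0$, all terms cancel in pairs. (This is essentially the pre-Lie / Gerstenhaber identity $d[\gamma_1,\gamma_1] = 2[d\gamma_1, \gamma_1] = 0$ once $\gamma_1$ is a cocycle, but at this point in the text the Gerstenhaber bracket has not yet been introduced, so I would carry it out by brute force.) This cocycle verification is the one genuine calculation and is the main technical obstacle.

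Once this is established, the corollary follows immediately. The class $[\mathrm{Obs}(\gamma_1)] \in \HH^3(A)$ vanishes if and only if there exists $\gamma_2 \in C^2(A)$ with $d\gamma_2 = \mathrm{Obs}(\gamma_1)$, which is exactly the condition that a second-order deformation extending $\gamma_1$ exists. In particular, if $\HH^3(A) = 0$ then the obstruction class is automatically zero, so every first-order deformation $\gamma_1$ admits a second-order extension. I would also note that the choice of $\gamma_2$, when it exists, is unique up to addition of a $2$-cocycle, foreshadowing the inductive extension to higher orders and the versal deformation statement cited in the earlier Maurer--Cartan discussion.
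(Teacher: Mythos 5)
Your proposal is correct and follows essentially the same route as the paper's discussion preceding the corollary: expand the associativity condition for $a \star b = ab + \varepsilon\gamma_1 + \varepsilon^2\gamma_2$ at order $\varepsilon^2$, identify $d\gamma_2 = \gamma_1(\gamma_1(-\otimes-)\otimes-) - \gamma_1(-\otimes\gamma_1(-\otimes-))$, check the right-hand side is a Hochschild $3$-cocycle (with the Gerstenhaber/pre-Lie interpretation deferred), and read off the obstruction class in $\HH^3(A)$. (You also correctly say ``three-cocycle'' where the paper's text has a typo reading ``two-cocycle,'' and you correctly work modulo $\varepsilon^3$ where the paper writes $\bk[\varepsilon]/(\varepsilon^2)$.)
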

We next consider general $n$-th order deformations.  By definition,
such a deformation is a deformation over   $\bk[\varepsilon]/(\varepsilon^{n+1})$.
\begin{exercise}\label{exer:obstr-nord}(*)
  Show that the obstruction to extending an $n$-th order deformation
  $\sum_{i=1}^n \varepsilon^i \gamma_i$ (where here
  $\varepsilon^{n+1}=0$) to an $(n+1)$-st order deformation
  $\sum_{i=1}^{n+1} \varepsilon^i \gamma_i$ (now setting
  $\varepsilon^{n+2}=0$), i.e., the existence of a $\gamma_{n+1}$ so
  that this defines an associative multiplication on $A \otimes
  \bk[\varepsilon] / (\varepsilon^{n+2})$, is also a class in
  $\HH^3(A)$.

  Moreover, if this class vanishes, show that two different choices of
  $\gamma_{n+1}$ differ by Hochschild two-cocycles, and that two are
  equivalent (by applying a
  $\bk[\varepsilon]/(\varepsilon^{n+2})$-module automorphism of $A
  \otimes \bk[\varepsilon] / (\varepsilon^{n+2})$ of the form $\Id +
  \varepsilon^{n+1} \cdot f$) if and only if the two choices of
  $\gamma_{n+1}$ differ by a Hochschild two-coboundary.  Hence, when
  the obstruction in $\HH^3(A)$ vanishes, the set of possible
  extensions to a $(n+1)$-st order deformation (modulo gauge
  transformations which are the identity modulo $\varepsilon^n$) form
  a set isomorphic to $\HH^2(A)$ (more precisely, it forms a
  \emph{torsor} over the vector space $\HH^2(A)$, i.e., an affine
  space modeled on $\HH^2(A)$ without a chosen zero element).  We will give a
  more conceptual explanation when we discuss formal deformations.
\end{exercise}
Note that, when $\HH^3(A) \neq 0$, it can still happen that 
infinitesimal deformations extend to all orders. For example, by
Theorem \ref{t:ps-quant}, this happens for Poisson structures on
smooth manifolds (a Poisson structure yields an infinitesimal
deformation by, e.g., $a \star b = ab + \frac{1}{2} \{a,b\} \cdot
\varepsilon$; this works for arbitrary skew-symmetric biderivations
$\{-,-\}$, but only the Poisson ones, i.e., those satisfying the
Jacobi identity, extend to all orders).

However, finding this quantization is \emph{nontrivial}: even though
Poisson bivector fields are those classes of $\HH^2(A)$ whose
obstruction in $\HH^3(A)$ to extending to second order vanishes, if
one does not pick the extension correctly, one \emph{can} obtain an
obstruction to continuing to extend to third order, etc.  In fact, the
proof of Theorem \ref{t:ps-quant} describes the space of \emph{all}
quantizations: as we will see, deformation quantizations are
equivalent to formal deformations of the Poisson structure.

\subsection{Deformations of modules and Hochschild cohomology}\label{ss:mod-hoch}
Let $A$ be an associative algebra and $M$ a module over $A$. Recall
that Hochschild (co)homology must take coefficients in an
\emph{$A$-bimodule}, not an $A$-module.  Given $M$, there is a
canonical associated bimodule, namely $\End_\bk(M)$ (this is an
$A$-bimodule whether $M$ is a left or right module; the same is true
for $\Hom_\bk(M,N)$ where $M$ and $N$ are both left modules, or
alternatively both right modules).
\begin{lemma} $\HH^i(A, \End_\bk(M)) \cong \Ext^i_A(M,M)$ for all $i \geq 0$.
More generally, $\HH^i(A, \Hom_\bk(M,N)) \cong \Ext^i_A(M,N)$ for all $A$-modules $M$ and $N$.
\end{lemma}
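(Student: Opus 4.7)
The plan is to realize both sides as the cohomology of a single chain complex, using a tensor-hom adjunction between $A$-bimodules and left $A$-modules applied to the bar resolution of $A$.

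First I would establish the adjunction: for any $A$-bimodule $X$ and left $A$-modules $M, N$, there is a natural isomorphism
\[
\Hom_{A^e}(X, \Hom_\bk(M,N)) \iso \Hom_A(X \otimes_A M, N),
\]
sending $f \mapsto \tilde f$, $\tilde f(x \otimes m) := f(x)(m)$, where $X \otimes_A M$ uses the right $A$-action on $X$ and receives its left $A$-module structure from the left $A$-action on $X$. The bimodule structure on $\Hom_\bk(M,N)$ is the standard one, $(a \cdot \phi \cdot b)(m) = a \phi(bm)$, and under this convention $A^e$-linearity of $f$ translates exactly to $A$-linearity and well-definedness of $\tilde f$ on the tensor product. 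This step is a routine check.

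Next I would apply this adjunction to the bar resolution $P_\bullet = C^\obar(A) \to A$, whose terms are $P_i = A \otimes A^{\otimes i} \otimes A$. Under the adjunction,
\[
C^\bullet(A, \Hom_\bk(M,N)) = \Hom_{A^e}(P_\bullet, \Hom_\bk(M,N)) \cong \Hom_A(P_\bullet \otimes_A M, N).
\]
The key identification is then that $P_\bullet \otimes_A M$ is precisely the bar resolution of $M$ as a left $A$-module: since $A \otimes_A M = M$, the complex $P_\bullet \otimes_A M$ has terms $A \otimes A^{\otimes i} \otimes M$ with the standard bar differential, and each such term is a free (hence projective) left $A$-module. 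Exactness on the bimodule side passes to exactness after $-\otimes_A M$ because each $P_i$ is free (in particular flat) as a right $A$-module, so tensoring with $M$ preserves the contracting homotopy that witnesses acyclicity of $P_\bullet \to A$ over $\bk$.

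The conclusion is then immediate: taking cohomology of both sides of the isomorphism of complexes yields
\[
\HH^i(A, \Hom_\bk(M,N)) = H^i\bigl(\Hom_A(P_\bullet \otimes_A M, N)\bigr) = \Ext^i_A(M,N),
\]
and the case $N = M$ gives the first statement of the lemma. The main subtlety, and the only place one must pay attention, is keeping the conventions for the bimodule structure on $\Hom_\bk(M,N)$ straight so that the adjunction is genuinely natural; once that is set up, the rest is formal.
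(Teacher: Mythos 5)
Your proof is correct and takes essentially the same approach as the paper's: both use the tensor-hom adjunction $\Hom_{A^e}(P, \Hom_\bk(M,N)) \cong \Hom_A(P \otimes_A M, N)$ applied to a projective $A$-bimodule resolution $P_\bullet$ of $A$, together with the observation that $P_\bullet \otimes_A M$ is a projective resolution of $M$ (the paper states this for an arbitrary projective $A$-bimodule resolution, you specialize to the bar resolution). One small imprecision: flatness of each $P_i$ as a right $A$-module gives exactness of $P_\bullet \otimes_A M \to M$ directly and that is all that is needed; it does not ``preserve the contracting homotopy.'' If you want the homotopy to survive $-\otimes_A M$, the relevant fact is that the bar resolution's standard contracting homotopy $s(a_0 \otimes \cdots \otimes a_n) = 1 \otimes a_0 \otimes \cdots \otimes a_n$ is right $A$-linear (not merely $\bk$-linear), so $s \otimes \Id_M$ is a well-defined contracting homotopy; but that is a separate argument from flatness.
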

\begin{proof} We prove the second statement.
First of all, for $i=0$, 
\[
\HH^0(A, \Hom_\bk(M,N)) = \{\phi \in \Hom_\bk(M,N) \mid a\cdot \phi =
\phi \cdot a, \forall a \in A\} = \Hom_A(M, N).
\]
Then the statement for higher $i$ follows because they are the derived
functors of the same bifunctors $((A\text{-mod})^\op \times A\text{-mod})
\to \bk-mod$.

Explicitly, if $P_\bullet \onto A$ is a projective $A$-bimodule
resolution of $A$, then $P_\bullet \otimes_A M \onto M$ is a
projective $A$-module resolution of $M$, and
\[
\RHom_A^\bullet(M,N) \cong \Hom_A(P_\bullet \otimes_A M, N) = 
\Hom_{A^e}(P_\bullet, \Hom_\bk(M, N)) \cong
\RHom_{A^e}^\bullet(A, \Hom_\bk(M,N)),
\]
where for the middle equality, we used the adjunction $\Hom_B(X
\otimes_A Y, Z) = \Hom_{B \otimes A^{\op}}(X, \Hom_\bk(Y, Z))$, with
$X$ a $(B,A)$-bimodule, $Y$ a left $A$-module, and $Z$ a left
$B$-module.
\end{proof}
In particular, this gives the most natural interpretation of
$\HH^0(A, \End_\bk(M))$: this is just $\End_A(M)$.  For the higher
groups we recall the following standard descriptions of $\Ext_A^1(M,M)$
and $\Ext_A^2(M,M)$, which are convenient to see using Hochschild
cochains valued in $M$.
\begin{definition}
  A deformation of an $A$-module $M$ over an augmented commutative
  $\bk$ algebra $R = \bk \oplus R_+$ is an $A$-module
  structure on $M \otimes_\bk R$, commuting with the $R$ action,
  such that $(M \otimes_\bk R) \otimes_R (R/R_+) \cong M$ as an
  $A$-module.
\end{definition}
Let $M$ be an $A$-module and let
$\rho: A \to \End_\bk(M)$ be the original (undeformed)
  module structure.
\begin{proposition}\label{p:hoch-cc}
\begin{itemize}
\item[(i)] The space of Hochschild one-cocycles valued in $\End_\bk(M)$
 is the space of \emph{infinitesimal deformations} of the module $M$ over
$R=\bk[\varepsilon]/(\varepsilon^2)$;
\item[(ii)] Two such deformations are equivalent up to an $R$-module
automorphism of $M \otimes_\bk R$
which is the identity modulo $\varepsilon$ if and only if they differ by
a Hochschild one-coboundary. 

Thus $\HH^1(A, \End_\bk(M)) \cong \Ext_A^1(M,M)$ classifies infinitesimal
deformations of $M$.

\item[(iii)] The obstruction to extending an infinitesimal deformation
  with class $\gamma \in \Ext^1_A(M,M)$ to a second-order
  deformation, i.e., over $\bk[\varepsilon]/(\varepsilon^3)$, is the
  element
\[
\gamma \cup \gamma \in \Ext_A^2(M,M) \cong \HH^2(A, \End_\bk(M)),
\]
where $\cup$ is the Yoneda cup product of extensions.
\end{itemize}
\end{proposition}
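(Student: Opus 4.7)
The plan is to follow the exact same strategy used earlier in the lecture for deforming the algebra structure, but now applied to the module structure map $\rho: A \to \End_\bk(M)$. Since we have already shown $\HH^\bullet(A, \End_\bk(M)) \cong \Ext^\bullet_A(M,M)$, parts (i) and (ii) become a direct unwinding of definitions, and part (iii) will be the substantive step.

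For (i), I would write an arbitrary $R$-linear deformation of the $A$-action on $M \otimes_\bk R$ (with $R=\bk[\varepsilon]/(\varepsilon^2)$) in the form $\rho_\varepsilon(a) = \rho(a) + \varepsilon\,\phi(a)$ for some $\bk$-linear $\phi: A \to \End_\bk(M)$, and then extract the $\varepsilon$-coefficient of the equation $\rho_\varepsilon(ab) = \rho_\varepsilon(a)\rho_\varepsilon(b)$. Recalling that the bimodule structure on $\End_\bk(M)$ is $(a\cdot T\cdot b)(m) := \rho(a)T\rho(b)(m)$, this coefficient reads
\[
\phi(ab) = \rho(a)\phi(b) + \phi(a)\rho(b) = a\cdot\phi(b) + \phi(a)\cdot b,
\]
which is exactly the Hochschild $1$-cocycle condition $d\phi = 0$ in $C^\bullet(A,\End_\bk(M))$.

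For (ii), I would parametrize $R$-module automorphisms of $M\otimes_\bk R$ reducing to the identity mod $\varepsilon$ as $\Id + \varepsilon\,\psi$ with $\psi \in \End_\bk(M) = C^0(A,\End_\bk(M))$, and then impose intertwining: $(\Id+\varepsilon\psi)\circ\rho_\varepsilon^{(2)}(a) = \rho_\varepsilon^{(1)}(a)\circ(\Id+\varepsilon\psi)$. Extracting the $\varepsilon$-coefficient gives $\phi_2(a) - \phi_1(a) = \rho(a)\psi - \psi\rho(a) = a\cdot\psi - \psi\cdot a = (d\psi)(a)$, i.e., precisely the coboundary. Combined with (i) and the identification of $\HH^\bullet(A,\End_\bk(M))$ with $\Ext^\bullet_A(M,M)$, this proves the classification statement.

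For (iii), the idea is to extend $\rho_\varepsilon = \rho + \varepsilon\gamma$ to $\tilde\rho_\varepsilon = \rho + \varepsilon\gamma + \varepsilon^2\gamma_2$ over $\bk[\varepsilon]/(\varepsilon^3)$ and collect the $\varepsilon^2$ terms of $\tilde\rho_\varepsilon(ab) = \tilde\rho_\varepsilon(a)\tilde\rho_\varepsilon(b)$. This yields
\[
\gamma_2(ab) - \rho(a)\gamma_2(b) - \gamma_2(a)\rho(b) = \gamma(a)\gamma(b),
\]
or equivalently $d\gamma_2 = -\gamma\cup\gamma$ in $C^\bullet(A,\End_\bk(M))$, where $\cup$ is the Hochschild cup product using composition of endomorphisms as the product on the coefficient bimodule $\End_\bk(M)$. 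So $\gamma_2$ exists precisely when the class $[\gamma\cup\gamma] \in \HH^2(A,\End_\bk(M))$ vanishes. The one remaining point — and I expect this to be the main obstacle of the whole proposition — is to identify this class with $[\gamma]\cup[\gamma]$ in $\Ext^2_A(M,M)$ under the Yoneda product. For this I would use the standard fact that the comparison isomorphism $\HH^\bullet(A,\End_\bk(M)) \cong \Ext^\bullet_A(M,M)$ coming from tensoring the bar resolution of $A$ by $M$ intertwines the Hochschild cup product (with endomorphism composition on the coefficients) with the Yoneda composition product of extensions; this can either be cited, or proved directly by splicing the explicit cocycle representatives — an $n$-cocycle $\phi: A^{\otimes n}\to \End_\bk(M)$ corresponds to the extension obtained by splicing $n$ copies of the natural short exact sequence $0\to M\to \End_\bk(M)\otimes_{?}\cdots \to M\to 0$ built from the bar complex — and observing that concatenation of these splices realizes both $\cup$ and Yoneda.
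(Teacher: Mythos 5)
Your proposal is correct and follows essentially the same strategy as the paper's proof: extract the coefficients of $\varepsilon$ and $\varepsilon^2$ from the associativity of the deformed action $\rho+\varepsilon\gamma+\varepsilon^2\gamma_2$, interpret the linear term as the Hochschild $1$-cocycle condition (the paper phrases this via the observation that $1$-cocycles are $A$-bimodule derivations of $\End_\bk(M)$, you do it by direct computation), identify gauge changes $\Id+\varepsilon\psi$ with coboundaries, and identify the quadratic obstruction with $\gamma\cup\gamma$. The only substantive difference is that you explicitly flag, and sketch a proof of, the compatibility of the Hochschild cup product on $C^\bullet(A,\End_\bk(M))$ (using endomorphism composition on the coefficient bimodule) with the Yoneda product on $\Ext^\bullet_A(M,M)$ under the isomorphism $\HH^\bullet(A,\End_\bk(M))\cong\Ext^\bullet_A(M,M)$ --- a point the paper's proof takes for granted without comment.
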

Here and below we make use of the cup product on
$\HH^\bullet(A,\End_{\bk}(M))=\Ext_A^\bullet(M,M)$ and on
$\HH^\bullet(A, A)$.  We give an explicit description using the
standard cochain complexes \eqref{e:hoch-cocomplex}.  More generally,
let $N$ be an $A$-algebra, i.e., an algebra equipped with an algebra
homomorphism $A \to N$; this makes $N$ also an $A$-bimodule.
Then, if $\gamma_1 \in C^i(A,N)$ and $\gamma_2 \in C^j(A,N)$, set
$\gamma_1 \cup \gamma_2 \in C^{i+j}(A, N)$ by the formula
\[
(\gamma_1 \cup \gamma_2)(x_1,\ldots,x_{i+j}) = \gamma_1(x_1,\ldots,x_i) \gamma_2(x_{i+1},\ldots,x_{i+j}),
\]
and similarly define $\gamma_2 \cup \gamma_1$.  
Hence
also $\HH^{\bullet}(A,N)$ is an associative algebra.
(Note that, while $\HH^{\bullet}(A,A)$ is graded commutative as
explained in Exercise \ref{exer:circ-prod} below, this is \emph{not}
true for $C^\bullet(A,N)$ in general, and in particular for
$\Ext^\bullet(M,M)$, where $N=\End_\bk(M)$.)
\begin{proof}[Proof of Proposition \ref{p:hoch-cc}]
  (i) Hochschild one-cocycles are precisely $\gamma \in
  \Hom_{\bk}(A, \End_\bk(M))$ such that $\gamma(ab) = a \gamma(b) +
  \gamma(a) b$, which are also known as $A$-bimodule derivations valued in
  $\End_{\bk}(M)$. Infinitesimal deformations of the $A$-bimodule $M$
  are given by algebra homomorphisms $A
  \to \End_\bk(M)[\varepsilon]/(\varepsilon^2)$ which reduce to the
  usual action $\rho: A \to \End_\bk(M)$ modulo $\varepsilon$. Given a
  homomorphism $\rho + \varepsilon \phi$ of the latter type, we see
  that $\phi$ is an $A$-bimodule derivation valued in $M$, and
  conversely.

  (ii) If we apply an automorphism $\phi = \Id + \varepsilon \cdot
  \phi_1$ of $M \otimes_\bk R$, for $\phi_1 \in \End_\bk(M)$, then the
  infinitesimal deformation $\gamma$ is taken to $\gamma'$, where
\[
(\rho + \varepsilon \gamma')(a) = \phi \circ (\rho + \varepsilon
\gamma)(a) \circ \phi^{-1} =(\rho + \varepsilon \gamma)(a) +
\varepsilon (\phi_1 \circ \rho(a) - \rho(a) \circ \phi_1) = (\rho +
\varepsilon (\gamma + d \phi_1))(a).
\]
This proves that $\gamma'-\gamma = d\phi_1$, as desired. The converse is
similar and is left to the reader.
  
(iii) 
Working over $\tilde R :=
\bk[\varepsilon]/(\varepsilon^3)$, given a Hochschild one-cocycle
$\gamma_1$, and an arbitrary element $\gamma_2 \in
C^1(A, \End_\bk(M))$,
\begin{multline*}
  (\rho+\varepsilon \gamma_1 + \varepsilon^2 \gamma_2)(ab) -
  (\rho+\varepsilon \gamma_1 + \varepsilon^2 \gamma_2)(a) 
(\rho + \varepsilon \gamma_1 + \varepsilon^2 \gamma_2)(b)
\\=
  \varepsilon^2 \bigl( \gamma_2(ab) -\gamma_1(a) \gamma_1(b) -
  \gamma_2(a) \rho(b) - \rho(a) \gamma_2(b) \bigr),
\end{multline*}
and the last expression equals $-\varepsilon^2 \cdot (
\gamma_1 \cup \gamma_1 + d \gamma_2)(ab)$. Thus
the obstruction to extending the module structure
is the class $[\gamma_1 \cup \gamma_1] \in \HH^2(A, \End_\bk(M))$.
\end{proof}

Finally, we can study general formal deformations:
\begin{definition}
  Given an $A$-module $M$ and a formal deformation $A_R$ of $A$ over
  $R = \bk \oplus R_+$, a formal deformation of $M$ to an
  $A_R$-module is an $A_R$-module structure on 
  $M \hat \otimes_\bk R$ whose tensor product over $R$ with $R/R_+ = \bk$
  recovers $M$.

In the case that $A_R$ is the trivial deformation over $R$, we also call
this a formal deformation of the $A$-module $M$ over $R$.
\end{definition}
Recall in the above definition the notation $\hat \otimes$ from
\eqref{e:hatotimes}.

Analogously to the above, one can study (uni)versal formal
deformations of $M$; when the space of obstructions,
$\Ext_A^2(M,M)$,
is zero, parallel to Theorem \ref{t:hh3=0}, there exists a versal
formal deformation over the base $\hat \cO(\Ext_A^1(M,M))$,
and in the case that $\End_A(M,M)=\bk$ (or more generally, the map
$Z(A) =\HH^0(A,A) \to \HH^0(A, \End_\bk(M)) = \End_A(M,M)$ is
surjective), then this is universal.

More generally, consider a formal deformation $A_\hbar$ of $A$
and ask not for a formal deformation $M_\hbar$ of $M$ as an
$A$-module, but rather for an $A_\hbar$-module $M_\hbar$ deforming
$M$, i.e., satisfying $M_\hbar / \hbar M_\hbar \cong M$ as
$A$-modules, and such that $M_\hbar \cong M[\![\hbar]\!]$ as
$\bk[\![\hbar]\!]$-modules. This recovers formal deformations of
$A$-modules in the case $A_\hbar = A[\![\hbar]\!]$ is the trivial
deformation.  For general $A_\hbar$, however, $M_\hbar$ need not exist
(as one no longer has the trivial deformation $M[\![\hbar]\!]$), so one
can also ask when it does.  In this generality, the calculations of
Proposition \ref{p:hoch-cc} generalize to show that, if $\theta \in
\hbar \cdot C^2(A)[\![\hbar]\!]$ gives a formal deformation $A_\hbar$
of $A$, then the condition for $\gamma \in \hbar \cdot C^1(A,\End_\bk
M)[\![\hbar]\!]$ to give a formal deformation $M_\hbar$ of $M$ to a
module over $A_\hbar$ is
\begin{equation}\label{e:mod-def}
(\rho + \gamma) \circ \theta + d\gamma + \gamma \cup \gamma = 0,
\end{equation}
where here $(\gamma \cup \gamma)(a \otimes b) := \gamma(a) \gamma(b)$.
\begin{example}
  In the presence of a multiparameter formal deformation $(A[\![t_1,
  \ldots, t_n]\!], \star)$ of $A$, this can be used to show the
  existence of a deformation $M_\hbar$ over some restriction of the
  parameter space.  Let $U = \Span \{ t_1, \ldots, t_n \}$ and let
  $\eta: U \to \HH^2(A)$ be the map which gives the class of
  infinitesimal deformation of $A$. We will need the composition $\rho
  \circ \eta: U \to \HH^2(A,\End_\bk M)=\Ext^2_A(M,M)$. Then one can
  deduce from the above
  \begin{proposition}(see, e.g., \cite[Proposition 4.1]{EM-fdrsrawp})
    Suppose that the map $\rho \circ \eta$ is surjective with kernel
    $K$. Then there exists a formal deformation $M_S := (M\hat \otimes
    \cO(S), \rho_S)$ of $M$ over a formal subscheme $S$ of the formal
    neighborhood of the origin of $U$, with tangent space $K$ at the
    origin, which is a module over $(A \hat \otimes \cO(S),\star|_S)$.
    Moreover, if $\Ext^1_A(M,M)=0$, then $S$ is unique and $M_S$ is
    unique up to 
    $\cO(S)$-linear isomorphisms which are the
    identity modulo $\cO(S)_+$.
\end{proposition}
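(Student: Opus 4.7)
The plan is to solve the master equation \eqref{e:mod-def} order by order in the maximal ideal $\mathfrak{m} = (t_1,\ldots,t_n) \subseteq R := \bk[\![t_1,\ldots,t_n]\!]$, and \emph{simultaneously} construct the defining ideal $I \subseteq R$ of $S$, so that $\cO_S = R/I$. At stage $m$ we will maintain: (a) $\gamma^{(m)} \in \mathfrak{m}\cdot C^1(A,\End_\bk M)\hat\otimes R$ satisfies \eqref{e:mod-def} modulo $I_m + \mathfrak{m}^{m+1}$; (b) $I_m \cap \mathfrak{m}/\mathfrak{m}^2$ is a fixed complement $K^\perp$ to $K$ in $U^*$, and later additions to $I$ lie in $\mathfrak{m}^2$, forcing $T_0 S = K$ at the end.

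For the first-order step, write $\theta \equiv \sum_i t_i \theta_i \pmod{\mathfrak{m}^2}$. Equation \eqref{e:mod-def} reduces to $\rho \circ \theta^{(1)} = d\gamma^{(1)}$ modulo $\mathfrak{m}^2$. The class of $\rho \circ \theta^{(1)}$ in $\mathfrak{m}/\mathfrak{m}^2 \otimes \HH^2(A,\End_\bk M) \cong U^* \otimes \HH^2(A,\End_\bk M)$ is, by definition, the map $\rho \circ \eta$, which vanishes on $K$ by hypothesis. Setting $I_1 := K^\perp R + \mathfrak{m}^2$, this class vanishes modulo $I_1$, so a choice of $\gamma^{(1)}$ exists.

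For the inductive step, write $\gamma^{(m+1)} = \gamma^{(m)} + \delta$ with $\delta$ of order $m+1$. The cochain
\[
O^{(m+1)} := \bigl(\rho \circ \theta + \gamma^{(m)}\circ \theta - \gamma^{(m)} \cup \gamma^{(m)} - d\gamma^{(m)}\bigr) \pmod{\mathfrak{m}^{m+2}+I_m}
\]
must equal $d\delta$. Using the Maurer–Cartan identity for $\theta$ (which encodes associativity of the deformed product) together with the inductive hypothesis, a standard computation in the Gerstenhaber DGLA of Hochschild cochains shows that $O^{(m+1)}$ is a Hochschild 2-cocycle with values in $\mathfrak{m}^{m+1}/(\mathfrak{m}^{m+2}+I_m\cap\mathfrak{m}^{m+1})$. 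Its cohomology class lies in the latter space tensored with $\HH^2(A,\End_\bk M)$; we enlarge $I_m$ to $I_{m+1}$ by adjoining precisely those generators (necessarily of order $\geq m+1$, hence in $\mathfrak{m}^2$) needed to kill this class. Once killed, $O^{(m+1)}$ is a coboundary and we pick $\delta$ with $d\delta \equiv O^{(m+1)}$ modulo $I_{m+1}+\mathfrak{m}^{m+2}$. Passing to the limit defines $I = \bigcup_m I_m$, $\gamma = \lim \gamma^{(m)}$, and the deformation $\rho_S := \rho + \gamma$ on $M \otimes \cO_S$.

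For the uniqueness under $\HH^1(A,\End_\bk M) = 0$: given two solutions $\gamma,\gamma'$ over $\cO_S$, construct inductively an $\cO_S$-linear automorphism $\Phi = \mathrm{Id} + \phi$ of $M\hat\otimes \cO_S$, with $\phi \in (\cO_S)_+ \cdot \End_\bk M$, intertwining them. The order-$(m+1)$ discrepancy $\gamma'-\gamma$ modulo previously matched orders is (as in Proposition \ref{p:hoch-cc}(ii)) a Hochschild 1-cocycle, and vanishing of $\HH^1(A,\End_\bk M)$ makes it a coboundary, supplying the next-order coefficient of $\phi$. The main technical obstacle throughout is the cocycle check for $O^{(m+1)}$ at each inductive stage — everything else is a formal consequence of the standard Maurer–Cartan obstruction theory, once one has identified that obstructions and ambiguities live in $\HH^2(A,\End_\bk M)$ and $\HH^1(A,\End_\bk M)$ respectively.
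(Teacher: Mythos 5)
The paper itself does not supply a proof of this proposition: it merely notes that the result ``can be deduced'' from the master equation \eqref{e:mod-def} and refers the reader to \cite{EM-fdrsrawp}. Your order-by-order Maurer--Cartan argument (solve for $\gamma$ modulo $\mathfrak m^{m+1}$ while simultaneously enlarging the defining ideal $I$ of $S$ in orders $\geq 2$ to kill the obstruction class in $\HH^2(A,\End_\bk M)$, then run the analogous induction on a $\cO_S$-linear automorphism $\Id+\phi$ using $\HH^1=0$ for uniqueness) is precisely the intended argument, and it is correct.

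Two minor points worth tightening. First, what you call a ``complement $K^\perp$ to $K$ in $U^*$'' is the \emph{annihilator} of $K$ in $U^*$; ``complement'' is the wrong word since $K\subseteq U$ and $K^\perp\subseteq U^*$ live in different spaces. Second, the one nontrivial step you defer---verifying that $O^{(m+1)}$ is a Hochschild $2$-cocycle modulo $\mathfrak m^{m+2}+I_m$---is exactly where the associativity Maurer--Cartan identity $d\theta+\theta\circ\theta=0$ for $\theta$ and the inductive hypothesis $F(\gamma^{(m)})\equiv 0$ must both be fed in; a careful writeup should display that computation rather than assert it. Finally, note that your argument only ever uses $K=\ker(\rho\circ\eta)$ and never the surjectivity of $\rho\circ\eta$; that hypothesis is needed in \cite{EM-fdrsrawp} for stronger conclusions about $S$ (e.g.\ smoothness), but the existence and tangent-space claims as stated here go through without it, as your proof in fact shows.
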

Note that the condition $\Ext^1_A(M,M)=0$ for uniqueness of the formal
definition is consistent with the case where $A_\hbar =
A[\![\hbar]\!]$ is the trivial deformation, since then, as above,
$\Ext^1_A(M,M)$ classifies infinitesimal (and ultimately formal)
deformations of $M$.

In \cite{EM-fdrsrawp}, this was used to show the existence of a unique
family of irreducible representations of wreath product Cherednik
algebras $H_{1,(k,c)}(\Gamma^n \rtimes S_n)$ for $\Gamma < \SL_2(\bC)$
finite deforming a module of the form $Y^{\otimes n} \otimes V$ for
$Y$ an irreducible finite-dimensional representation of
$H_{1,c_0}(\Gamma)$ and $V$ a particular irreducible representation of
$S_n$ (whose Young diagram is a rectangle). Here $c_0 \in
\text{Fun}_{\Gamma}(\mathcal{R},\bC)$ a conjugation-invariant function
on the set $\mathcal{R} \subseteq \Gamma$ of symplectic reflections (here $\mathcal{R} = \Gamma \setminus \{\Id\}$),
and $k \in \bC$, and there is a unique formal subscheme $S \subseteq
\bC \times \text{Fun}_{\Gamma}(\mathcal{R},\bC)$ containing $(0,c_0)$
such that $(k,c)$ is restricted to lie in $S$. Note here that $\bC
\times \text{Fun}_{\Gamma}(\mathcal{R},\bC)$ is viewed as
$\text{Fun}_{\Gamma^n \rtimes S_n}(\mathcal{R}', \bC)$ where
$\mathcal{R}' \subseteq (\Gamma^n \rtimes S_n)$ is the set of
symplectic reflections (they are all conjugate to reflections in
$\Gamma = (\Gamma \times \{1\}^{n-1}) \rtimes \{1\}$ except for the
conjugacy class of the transposition in $S_n$).
\end{example}
\subsection{Exercises}

Exercises from the notes: \ref{exer:asigma}, \ref{exer:hh-bimod},
\ref{exer:kos-cy}, and \ref{exer:obstr-nord}.

Additional exercises:
\begin{exercise}\label{exer:ug-cy}
  Prove that $U\mfg$ is twisted Calabi-Yau, with $\HH^d(U\mfg, U\mfg
  \otimes U\mfg) \cong U\mfg^\sigma$, where $\sigma(x) = x -
  \tr(\ad(x))$.  Therefore, it is Calabi-Yau if (and only if) $\mfg$
  is \emph{unimodular}, i.e., $\tr(\ad(x))=0$ for all $x$ (this 
  also follows from the Koszul resolutions from 
  Exercise \ref{exer:koszul-complexes}).  Observe that every semisimple Lie algebra, i.e., one
  satisfying $\mfg = [\mfg,\mfg]$, is unimodular (and hence the same
  is true for a reductive Lie algebra, i.e., one which is the direct
  sum of a semisimple and an abelian Lie algebra). The same is true for every nilpotent Lie algebra.
\end{exercise}
\begin{remark}\label{r:ug-cy}
  More generally, one can consider, for any finite group $\Gamma$ acting on
  the Lie algebra $\mfg$ by automorphisms, the skew product algebra
  $U\mfg \rtimes \Gamma$.  For $\bk$ of characteristic zero, generalizing
  the above (by tensoring the complexes with $\bk[\Gamma]$ and suitably
  modifying the differentials) one can show that $U\mfg \rtimes \Gamma$ is
  also twisted Calabi-Yau (cf.~Remark \ref{r:twcy-sd}). Then,
  \cite{HVOZ-cCYHad} computes that this is Calabi-Yau if and only if
  $\Gamma < \SL(V)$ and $\mfg$ is unimodular. (This extension should not be
  too surprising, since the skew-product algebra $\cO(V) \rtimes \Gamma$
  itself is Calabi-Yau if and only if $\Gamma < \SL(V)$, which is the
  condition for $\Gamma$ to preserve the volume form giving the Calabi-Yau
  structure on $V$.)
\end{remark}

\begin{exercise}\label{exer:hoch-sg}(*, but with many hints)
  In this exercise we compute the Hochschild (co)homology of a skew
  group ring.

  Let $A$ be an associative algebra over a field $\bk$ of
  characteristic zero, and $\Gamma$ a finite group acting on $A$ by
  automorphisms.  Form the algebra $A \rtimes \Gamma$, which as a
  vector space is $A \otimes \bk[\Gamma]$, with the multiplication
\[
(a_1 \otimes g_1)(a_2 \otimes g_2) = (a_1 g_1(a_2) \otimes g_1 g_2).
\]
Next, given any $\Gamma$-module $N$, let $N^\Gamma := \{n \in N \mid g \cdot n = n \text{ for all } g \in \Gamma\}$, and $N_\Gamma := N / \{n - g \cdot n \mid n \in N, g \in \Gamma\}$ be the invariants and coinvariants, respectively.
\begin{itemize}
\item[(a)] Let $M$ be an $A \rtimes \Gamma$-bimodule. Prove that
\[
\HH^\bullet(A \rtimes \Gamma, M) \cong \HH^\bullet(A, M)^\Gamma, \quad
\HH_\bullet(A \rtimes \Gamma, M) \cong \HH_\bullet(A, M)_\Gamma,
\]
where in the RHS, 
$\Gamma$ acts on $A$ and $M$ via the adjoint action, $g \cdot_{\text{Ad}} m = (gmg^{-1})$.

Hint: Write the first one as $\Ext_{A^e \rtimes (\Gamma \times
  \Gamma)}^\bullet(A \rtimes \Gamma, M)$, using that $\bk[\Gamma]
\cong \bk[\Gamma^\op]$ via the map $g \mapsto g^{-1}$.  Notice that $A
\rtimes \Gamma = \Ind_{A^e \rtimes \Gamma_\Delta}^{A^e \rtimes (\Gamma
  \times \Gamma)} A$, where $\Gamma_\Delta := \{(g,g) \mid g \in
\Gamma\} \subseteq \Gamma \times \Gamma$ is the diagonal subgroup.
Then, there is a general fact called Shapiro's lemma, for $H < K$ a
subgroup,
\[
\Ext^\bullet_{\bk[K]}(\Ind_H^K M, N) \cong \Ext^\bullet_{\bk[H]}(M, N).
\]
Similarly, we have $\Ext^\bullet_{A \rtimes K}(\Ind_H^K M, N) \cong \Ext^\bullet_{A \rtimes H}(M,N)$.
Using the latter isomorphism, show that
\begin{equation}\label{e:hoch-sp}
\Ext_{A^e \rtimes (\Gamma \times \Gamma)}^\bullet(A \rtimes \Gamma, M)
\cong \Ext_{A^e \rtimes \Gamma_\Delta}^\bullet(A, M) =
\Ext_{A^e \rtimes \Gamma}^\bullet(A, M^{\text{Ad}}),
\end{equation}
where $M^{\text{Ad}}$ means that $\Gamma$ acts by the adjoint action
from the $A \rtimes \Gamma$-bimodule structure.  Since taking
$\Gamma$-invariants is an exact functor (as $\bk$ has characteristic
zero and $\Gamma$ is finite), this says that the RHS above is
isomorphic to
\[
\Ext_{A^e}^\bullet(A, M^{\text{Ad}})^\Gamma = \HH^\bullet(A, M)^\Gamma.
\]
The proof for Hochschild homology is essentially the same, using $\Tor$.

\item[(b)] Now we apply the formula in part (a) to the special case
$M = A \rtimes \Gamma$ itself.

Let $C$ be a set of representatives of the conjugacy classes of
$\Gamma$: that is, $C \subseteq \Gamma$ and for every element $g \in
\Gamma$, there exists a unique $h \in C$ such that $g$ is conjugate to
$h$. For $g \in \Gamma$, let $Z_g(\Gamma) < \Gamma$ be the centralizer
of $g$, i.e., the collection of elements that commute with $g$. Prove
that
\[
\HH^\bullet(A \rtimes \Gamma) \cong \bigoplus_{h\in C}\HH^\bullet(A, A
\cdot h)^{Z_h(\Gamma)}, \quad
\]
Here, the bimodule action of $A$ on $A \cdot h$ is by
\[
a (b \cdot h) = ab \cdot h, \quad (b \cdot h)a = (bh(a)) \cdot h,
\]
and $Z_h(\Gamma)$ acts by the adjoint action.
\item[(c)] Now specialize to the case that $A = \Sym V$ and $\Gamma < \GL(V)$.
We will prove here that
\[
\HH^\bullet(A \rtimes G) \cong \bigoplus_{h \in C}
\Biggl( \Bigl( \wedge_{\Sym V^h} \Vect((V^h)^*) \Bigr) \otimes
\Bigl( \wedge^{\dim (V^h)^\perp} \langle \partial_\phi \rangle_{\phi \in (V^h)^\perp} \Bigr) \Biggr)^{Z_h(\Gamma)}.
\]
The perpendicular space $(V^h)^\perp$ here is the subspace of $V^*$
annihilating $V^h$.  The degree $\bullet$ on the LHS is the total
degree of polyvector field on the RHS, i.e., the sum of the degree in
the first exterior algebra with $\dim (V^h)^\perp = \dim V - \dim
V^h$.

The same argument shows that (for $\Omega^\bullet(X)$ denoting
the algebraic differential forms on $X$),
\[
\HH_\bullet(A \rtimes G) \cong \bigoplus_{h \in C} \Biggl( \Omega^\bullet((V^h)^*)
\otimes \Bigl( \wedge^{\dim (V^h)^\perp} d(((V^*)^h)^\perp)\Bigr)
\Biggr)_{Z_h(\Gamma)}.
\]

Hints: first, up to conjugation, we can always assume $h$ is diagonal
(since $\Gamma$ is finite).  Suppose that $\lambda_1, \ldots,
\lambda_n$ are the eigenvalues of $h$ on the diagonal.  Then let $h_1,
\ldots, h_n \in \GL_1$ be the one-by-one matrices $h_i = (\lambda_i)$.
Show that
\[
A = \bk[x_1] \otimes \cdots \otimes \bk[x_n], \quad
A \cdot h = (\bk[x_1] \cdot h_1) \otimes \cdots \otimes (\bk[x_n] \cdot h_n).
\]
Conclude using the K\"unneth formula, $\HH^\bullet(A \otimes B, M
\otimes N) = \HH^\bullet(A, M) \otimes \HH^\bullet(B, N)$ (for $M$ an
$A$-bimodule and $N$ a $B$-bimodule, under suitable hypotheses on $A,
B, M$, and $N$ which hold here), that
\begin{equation}\label{e:hh-kunn}
\HH^\bullet(A, A \cdot h)^{Z_h(\Gamma)}
\cong \bigotimes_{i=1}^n \HH^\bullet(\bk[x_i], \bk[x_i] \cdot h_i)^{Z_{h_i}(\Gamma)}.
\end{equation}
Since $\bk[x]$ has Hochschild dimension one (as it has a projective
bimodule resolution of length one, cf.~Remark \ref{r:hd}), conclude
that $\HH^j(\bk[x], \bk[x] \cdot h) = 0$ unless $j \leq 1$. Using the
explicit description as center and outer derivations of the module,
show that, if $h \in \GL_1$ is not the identity,
\[
\HH^0(\bk[x], \bk[x] \cdot h) = 0, \quad \HH^1(\bk[x], \bk[x] \cdot h)
= \bk.
\]
Note for the second equality that you must remember to mod by inner derivations.

On the other hand, recall that
\[
\HH^0(\bk[x], \bk[x]) = \bk[x], \HH^1(\bk[x], \bk[x]) = \bk[x],
\]
since $\HH^\bullet(\bk[x]) = \wedge^\bullet_{\bk[x]}\Der(\bk[x])$.  

Now suppose in \eqref{e:hh-kunn} that $h_i \neq \Id$ for $1 \leq i \leq j$, and that $h_i = \Id$ for $i > j$ (otherwise we can conjugate everything by a permutation matrix).
Conclude that \eqref{e:hh-kunn} implies
\begin{equation}
\HH^\bullet(A, A \cdot h)^{Z_h(\Gamma)}
\cong \Bigl( 
(\partial_{x_1} \wedge \cdots \wedge \partial_{x_j})
\otimes \wedge_{\Sym (V^h)} \Der(\Sym(V^h)) \Bigr)^{Z_h(\Gamma)}.
\end{equation}
Note that, without having to reorder the $x_i$, we could write
\[
\partial_{x_1} \wedge \cdots \wedge \partial_{x_j}
= \wedge^{\dim (V^h)^\perp} \langle \partial_{\phi} \rangle_{\phi \in (V^h)^\perp}.
\]
Put together, we get the statement. A similar argument works for Hochschild
homology.
\item[(d)] Use the same method to prove the main result of \cite{AFLS}
for $V$ symplectic and $\Gamma < \Sp(V)$ finite:
\[
\HH^i(\Weyl(V) \rtimes \Gamma)\cong \bk[S_i]^\Gamma, \quad \HH_i(\Weyl(V) \rtimes \Gamma)\cong \bk[S_{\dim V - i}]^\Gamma
\]
where
\[
S_i := \{g \in \Gamma \mid \rk(g - \Id)= i\}.
\]
Observe also that $S_i = \emptyset$ if $i$ is odd, and $S_2=$
the set of symplectic reflections (as defined in Example \ref{ex:sra}).  

Hint: Apply the result of part (b) and the method of part (c).  This
reduces the result to the case $\dim V = 2$, and to computing
$\HH^\bullet(\Weyl_1, \Weyl_1)$ and $\HH^{\bullet}(\Weyl_1, \Weyl_1
\cdot g)$ for nontrivial $g \in \SL_2(\bk)$. Then you can see from the Koszul
complexes that the first is $\bk$ (or, this can be deduced from
Theorem \ref{t:fdef-symp} in the special case $X=\bA^2$, or you can
explicitly compute it using the Van den Bergh duality
$\HH^2(\Weyl_1,\Weyl_1) \cong \HH_0(\Weyl_1, \Weyl_1)$ since $\Weyl_1$
is Calabi-Yau).  The second you can see must be $\bk[-2]$ since this
is already true for $\HH^\bullet(\bk[x,y], \bk[x,y] \cdot g)$, and
this surjects to $\gr \HH^\bullet(\Weyl_1, \Weyl_1 \cdot g)$.
\end{itemize}
\end{exercise}


\section{Dglas, the Maurer-Cartan formalism, and proof of formality
  theorems}\label{s:dgla}
Now the distinction between dg objects and ungraded objects becomes
important (especially for the purpose of signs): we will recall in
particular the notion of dg Lie algebras (dglas), which have
homological grading, and hence parity (even or odd degree).

\subsection{The Gerstenhaber bracket on Hochschild cochains}\label{ss:gerst}
We turn first to a promised fundamental structure of Hochschild
cochains: the Lie bracket, which is called its \emph{Gerstenhaber
  bracket}:
\begin{definition}
  The \emph{circle product} of Hochschild cochains $\gamma \in C^m(A),
  \eta \in C^n(A)$ is the element $\gamma \circ \eta \in
  C^{m+n-1}(A)$ given by
\begin{equation}
  \gamma \circ \eta(a_1 \otimes \cdots \otimes a_{m+n-1}) :=
  \sum_{i=1}^m (-1)^{(i-1)(n+1)}\gamma(a_1 \otimes \cdots \otimes a_{i-1} \otimes \eta(a_i \otimes \cdots \otimes a_{i+n-1}) \otimes a_{i+n} \otimes \cdots \otimes a_{m+n-1}).
\end{equation}
\end{definition}
\begin{definition}\label{d:gerst-br}
  The \emph{Gerstenhaber bracket} $[\gamma, \eta]$ of $\gamma \in
  C^m(A), \eta \in C^n(A)$ is
  \[ [\gamma,\eta] := \gamma \circ \eta - (-1)^{(m+1)(n+1)} \eta\circ
  \gamma.
\]
\end{definition}
\begin{definition}
  Given a cochain complex $C$, let $C[m]$ denote the shifted complex,
  so $(C[m])^i = C^{i+m}$.
\end{definition}
In other words, letting $C^m$ denote the ordinary vector space
obtained as the degree $m$ part of $C^\bullet$, so $C^m$ by definition
is a graded vector space in degree zero, we have
\[
C = \bigoplus_{m \in \bZ} C^m[-m].
\]
\begin{remark}\label{r:pre-lie}
  The circle product also defines a natural structure on $\mfg :=
  C^\bullet(A)[1]$ viewed as a graded vector space with zero differential:
 that of a graded \emph{right pre-Lie} algebra. This means that it satisfies
  the graded pre-Lie identity
\[
\gamma \circ (\eta \circ \theta) - (\gamma \circ \eta) \circ \theta =
(-1)^{|\theta| |\eta|} \bigl(\gamma \circ (\theta \circ \eta) - (\gamma
\circ \theta) \circ \eta\bigr).
\]
Given any graded (right) pre-Lie algebra, the obtained bracket
\[
[x, y] = x \circ y - (-1)^{|x| |y|} y \circ x
\]
defines a graded Lie algebra structure.  
\end{remark}
\begin{exercise}\label{exer:pre-lie}(*)
Verify the assertions of the remark! (The second assertion is easy, but
the first is a long computation.)
\end{exercise}
\begin{exercise}\label{exer:circ-prod}
  In fact, the circle product was originally defined by Gerstenhaber
  in order to prove that the cup product is graded commutative on
  cohomology.  Prove the following identity of Gerstenhaber, for
  $\gamma_1, \gamma_2 \in C^\bullet(A)$:
\[
\gamma_1 \cup \gamma_2 - (-1)^{|\gamma_1| |\gamma_2|} \gamma_2 \cup
\gamma_1 = d(\gamma_1 \circ \gamma_2) - ((d\gamma_1) \circ \gamma_2) -
(-1)^{|\gamma_1|}(\gamma_1 \circ (d \gamma_2)).
\]
Conclude from this identity that (a) the cup product is graded
commutative, and (b) the Gerstenhaber bracket is compatible with the
differential, i.e., it is a morphism of complexes $C^\bullet(A)[1]
\otimes C^\bullet(A)[1] \to C^{\bullet}(A)[1]$.
\end{exercise}
The remark and exercises immediately imply
\begin{proposition}
  The Gerstenhaber bracket defines a dg Lie algebra structure on the
  shifted complex $\mfg := C^\bullet(A)[1]$.
\end{proposition}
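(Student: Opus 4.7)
The plan is to deduce the proposition from Remark \ref{r:pre-lie} and Exercise \ref{exer:pre-lie}: once one knows that $(C(A)[1], \circ, d)$ is a dg right pre-Lie algebra, the antisymmetrization
\[
[x,y] := x \circ y - (-1)^{|x||y|} y \circ x
\]
automatically provides a dg Lie bracket, and this bracket agrees on the nose with the Gerstenhaber bracket of Definition \ref{d:gerst-br} once one accounts for the degree shift (a cochain $\gamma \in C^m(A)$ has degree $m-1$ in $C(A)[1]$, so $(-1)^{|x||y|}$ in the shifted grading equals $(-1)^{(m-1)(n-1)} = (-1)^{(m+1)(n+1)}$, matching the sign in Definition \ref{d:gerst-br}).

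Concretely, I would proceed in three steps. First, I would verify the graded pre-Lie identity
\[
\gamma \circ (\eta \circ \theta) - (\gamma \circ \eta) \circ \theta = (-1)^{|\theta||\eta|}\bigl(\gamma \circ (\theta \circ \eta) - (\gamma \circ \theta) \circ \eta\bigr)
\]
by direct expansion. The left-hand side is a sum over pairs of positions $(i,j)$ at which $\eta$ and $\theta$ are inserted into the arguments of $\gamma$. This sum splits into two pieces: the terms where $\theta$ is inserted inside the copy of $\eta$ (these are exactly the $\gamma \circ (\eta \circ \theta)$ contributions), and the terms where $\eta$ and $\theta$ occupy disjoint positions in $\gamma$. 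The pre-Lie identity is precisely the statement that the ``disjoint insertion'' terms cancel in pairs when one swaps the roles of $\eta$ and $\theta$; the remaining ``nested'' terms correspond to the right-hand side. Second, I would check compatibility with the Hochschild differential, $d(\gamma \circ \eta) = (d\gamma)\circ \eta + (-1)^{|\gamma|}\gamma \circ (d\eta)$; this is again a direct (though careful) sign computation, where the boundary terms of the Hochschild differential applied to $\gamma \circ \eta$ split naturally into those involving an argument of $\gamma$ and those involving an argument of the inserted $\eta$.

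Third, I would invoke the general fact (Exercise \ref{exer:pre-lie}) that the commutator of any dg right pre-Lie operation automatically satisfies graded antisymmetry, the graded Jacobi identity, and the Leibniz rule with respect to $d$. Graded antisymmetry is immediate from the definition; the Jacobi identity follows from expanding $[x,[y,z]]$ using the commutator formula, applying the pre-Lie identity to each of the six terms to move all triple circle products into a fixed associativity pattern, and observing that after cyclically summing the telescoping cancellation occurs thanks to the pre-Lie sign; the derivation property of $d$ follows from the corresponding property for $\circ$ by linearity.

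The main obstacle is bookkeeping: getting every sign correct in the shifted grading. This is why it is genuinely important to work in $C(A)[1]$ rather than in $C(A)$ itself; the circle product is \emph{not} a pre-Lie operation in the unshifted grading, and the Gerstenhaber sign $(-1)^{(m+1)(n+1)}$ in Definition \ref{d:gerst-br} is exactly the shifted-degree sign. I would therefore set up notation once with shifted degrees $\bar{m} := m-1$ at the outset of step one, so that all subsequent signs are uniform and the invocation of the general pre-Lie-to-Lie passage in step three goes through without further translation.
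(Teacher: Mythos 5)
Your proposal is correct and follows the same route as the paper: both deduce the proposition from the dg right pre-Lie structure of the circle product on $C(A)[1]$ stated in Remark \ref{r:pre-lie}, whose verification is deferred to Exercise \ref{exer:pre-lie}. Your expanded sketch of that verification (the nested/disjoint insertion decomposition for the pre-Lie identity and the shifted-sign bookkeeping $(-1)^{(m-1)(n-1)}=(-1)^{(m+1)(n+1)}$) is a sound fleshing-out of what the paper leaves to the reader.
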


\subsection{The Maurer-Cartan equation}
We now come to the key description of formal deformations:
\begin{definition}
  Let $\mfg$ be a dgla over a field of characteristic not equal to
  two.  The Maurer-Cartan equation is
\begin{equation}\label{e:mc}
d\xi + \frac{1}{2} [\xi,\xi] = 0, \quad \xi \in \mfg^1.
\end{equation}
A solution of this equation is called a \emph{Maurer-Cartan element}. Denote the space of solutions by $\MCE(\mfg)$.
\end{definition}
The equation can be written suggestively as $[d + \xi, d +\xi] = 0$,
 if one defines $[d,d]=d^2=0$ and $[d,\xi] := d\xi$. In this form
the equation is saying that the ``connection'' $d+\xi$ is flat:
\begin{example}\label{ex:flatconn}
  Here is one of the original instances and motivation of the
  Maurer-Cartan equation.  Let $\mfg$ be a Lie algebra and $X$ a
  manifold or affine algebraic variety $X$.  Then we can consider the
  dg Lie algebra $(\Omega^\bullet(X,\mfg),d) := (\Omega^\bullet(X)
  \otimes \mfg,d)$, which is the de Rham complex of $X$ tensored with
  the Lie algebra $\mfg$.  
  The grading is given by the de Rham grading, with $|\mfg|=0$.  Then,
  associated to this is a notion of \emph{connection}, defined as a
  formal expression $\nabla^\alpha := d+\alpha$, where $\alpha \in
  \Omega^1(X,\mfg)$; thus connections are in bijection with
  $\mfg$-valued one forms.  (We will explain below for the
  relationship with the standard notion of connections on principal
  bundles.)  Associated to $\nabla^\alpha$ is the endomorphism of
  $\Omega^\bullet(X,\mfg)$, given by $\beta \mapsto d\beta + [\alpha,
  \beta]$.

  The curvature of $\nabla^\alpha$, denoted $(\nabla^{\alpha})^2$ or
  $\frac{1}{2}[\nabla^\alpha,\nabla^{\alpha}]$, is formally defined as
 \begin{equation}\label{e:curv}
(\nabla^\alpha)^2 = (d + \alpha)^2 = d\alpha + \frac{1}{2}[\alpha, \alpha].
\end{equation}
Then the Maurer-Cartan equation for $\alpha$ says that this is zero.
This is clearly equivalent to the assertion that the corresponding
endomorphism to $\nabla^\alpha$ has square zero, i.e., it is a
differential on $\Omega^\bullet(X,\mfg)$.  In other words,
Maurer-Cartan elements give \emph{deformations of the differential} on
the de Rham complex valued in $\mfg$ (where $\alpha$ acts via the Lie
bracket). In general, this is a good way to think about the
Maurer-Cartan equation, as we will formalize following this example.

We explain the relationship with the standard terminology: If $G$ is
an algebraic or Lie group such that $\mfg = \Lie G$, then
$\nabla^\alpha$ as above is equivalent to a connection on the trivial
principal $G$-bundle on $X$.  Precisely, the connection on $\pi: G
\times X \to X$ associated to $\nabla^\alpha$ is the one-form
$\omega+\pi^*\alpha \in \Omega^1(G \times X, \mfg)$, with $\omega$ the
canonical connection on the trivial bundle, and the curvature of
$\omega+\pi^* \alpha$ is the pullback $\pi^*(d\alpha + \frac{1}{2}
[\alpha, \alpha])$ of the curvature as defined above.
\end{example}
  Closely related to Example \ref{ex:flatconn} is the
  following very important observation. 
\begin{proposition}\label{p:mc-twist}   Suppose $\xi \in \MCE(\mfg)$.
\begin{enumerate}
\item[(i)]
The map
  $d^\xi: y \mapsto dy + [\xi,y]$ defines a new differential on
  $\mfg$.  Moreover, $(\mfg, d^\xi, [-,-])$ is also a dgla.
\item[(ii)] Maurer-Cartan elements
of $\mfg$ are in bijection with those of the twist $\mfg^\xi$ by the
correspondence
\[
\xi+\eta \in \mfg \leftrightarrow \eta \in \mfg^\xi.
\]
\end{enumerate}
\end{proposition}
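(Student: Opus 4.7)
My plan is to prove both parts by direct computation with the graded Jacobi identity and the Maurer-Cartan equation $d\xi + \tfrac{1}{2}[\xi,\xi] = 0$. The key algebraic input throughout is the specialization of graded Jacobi with two of the three arguments equal to $\xi$ (of degree one), which will yield the identity $[\xi,[\xi,y]] = \tfrac{1}{2}[[\xi,\xi],y]$ that links the inner-derivation-squared term to the quadratic MC defect.

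For part (i), I first compute
\[
(d^\xi)^2 y \;=\; d^2 y + d[\xi,y] + [\xi,dy] + [\xi,[\xi,y]].
\]
The graded Leibniz rule for $d$ gives $d[\xi,y] = [d\xi,y] - [\xi,dy]$ since $|\xi|=1$, so the middle two terms cancel. Applying the Jacobi identity as above to the last term then yields $(d^\xi)^2 y = [d\xi + \tfrac{1}{2}[\xi,\xi],\,y] = 0$ by the Maurer-Cartan hypothesis. The graded antisymmetry and Jacobi identity for $[-,-]$ are inherited unchanged, so the only remaining dgla axiom to check is the graded Leibniz rule for $d^\xi$. Expanding
\[
d^\xi[y,z] - [d^\xi y, z] - (-1)^{|y|}[y, d^\xi z]
\]
reduces, after canceling the $d$-Leibniz terms, to $[\xi,[y,z]] - [[\xi,y],z] - (-1)^{|y|}[y,[\xi,z]]$, which is exactly the graded Jacobi identity (i.e., $[\xi,-]$ is a derivation of the bracket of degree one).

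For part (ii), I expand the MC expression for $\xi + \eta$ in $\mfg$ and group by total $\eta$-degree:
\[
d(\xi+\eta) + \tfrac{1}{2}[\xi+\eta,\xi+\eta] = \bigl(d\xi + \tfrac{1}{2}[\xi,\xi]\bigr) + \bigl(d\eta + \tfrac{1}{2}[\xi,\eta] + \tfrac{1}{2}[\eta,\xi]\bigr) + \tfrac{1}{2}[\eta,\eta].
\]
Since $|\xi|=|\eta|=1$, graded antisymmetry gives $[\eta,\xi] = [\xi,\eta]$, so the middle parenthesis collapses to $d\eta + [\xi,\eta] = d^\xi \eta$. The first parenthesis vanishes by MC for $\xi$, leaving exactly $d^\xi \eta + \tfrac{1}{2}[\eta,\eta]$, the MC expression for $\eta$ in the twisted dgla $(\mfg, d^\xi, [-,-])$. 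This proves the bijection on the nose, with no quotient or equivalence relation needed.

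I expect no conceptual obstacle; the entire argument is formal once one commits to computing carefully with the Koszul sign rule, and the main risk is a sign slip in the Jacobi specialization or in the cross-term $[\eta,\xi]$. As a sanity check, the result is precisely the dgla analogue of the geometric fact from Example \ref{ex:flatconn} that if $\nabla^\xi = d + \xi$ is a flat connection, then writing an arbitrary connection as $d + \xi + \eta = \nabla^\xi + \eta$ reexpresses its curvature $d\alpha + \tfrac12[\alpha,\alpha]$ with $\alpha = \xi + \eta$ as the curvature of $\eta$ computed relative to the new flat differential $\nabla^\xi$, which is exactly what part (ii) asserts abstractly.
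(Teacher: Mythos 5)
Your proof is correct and follows essentially the same direct computation as the paper: for (i), both reduce $(d^\xi)^2$ to $[\,d\xi+\tfrac12[\xi,\xi],\,-\,]$ via the graded Leibniz rule and the Jacobi specialization $[\xi,[\xi,y]]=\tfrac12[[\xi,\xi],y]$, and verify the twisted Leibniz rule as the derivation property of $[\xi,-]$; for (ii), both expand $d(\xi+\eta)+\tfrac12[\xi+\eta,\xi+\eta]$ and identify it with the twisted MC expression. One small wording slip: after applying Leibniz, $d[\xi,y]+[\xi,dy]$ does not vanish but leaves $[d\xi,y]$, which is exactly the term you need to combine with $[\xi,[\xi,y]]$, so your final line is right even though ``the middle two terms cancel'' slightly overstates it.
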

\begin{definition}\label{d:mc-twist}
  We call the dg Lie algebra
$(\mfg, d^\xi, [-,-])$ given by the above proposition
the \emph{twist by $\xi$}, and denote
  it by $\mfg^\xi$.  
\end{definition}
\begin{proof}[Proof of Proposition \ref{p:mc-twist}]
(i) This is an explicit verification: $(d^\xi)^2(y) =
[\xi,dy] + [\xi,[\xi,y]] + d[\xi,y] = [d\xi+ \frac{1}{2}[\xi,\xi],
y]$, and
\[
d^\xi[x,y] - [d^\xi x, y] - (-1)^{|x|}[x, d^\xi y] = [\xi,[x,y]] -
[[\xi,x], y] - (-1)^{|x|} [x, [\xi,y]] = 0,
\]
where the first equality uses that $d$ is a (graded) derivation for
$[-,-]$, and the second equality uses the (graded) Jacobi identity for
$[-,-]$.

(ii) One immediately sees that $d^\xi(\eta) + \frac{1}{2} [\eta, \eta]
= d(\xi+\eta) + \frac{1}{2} [\xi+\eta, \xi+\eta]$, using that $d\xi +
\frac{1}{2} [\xi,\xi] = 0$.
\end{proof}

\subsection{General deformations of algebras}
\begin{proposition}
  One-parameter formal deformations $(A[\![\hbar]\!], \star)$
  of an associative algebra $A$ are in bijection
  with Maurer-Cartan elements
of the dgla $\mfg := \hbar \cdot (C^\bullet(A)[1]) [\![\hbar]\!]$.
\end{proposition}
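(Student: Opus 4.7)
The plan is to set up the correspondence explicitly and then identify the associativity constraint with the Maurer-Cartan equation, using the basic principle that, for the multiplication $\mu \in C^2(A)$ of $A$, the Hochschild differential is given by the Gerstenhaber bracket $d = [\mu,-]$ (possibly up to a sign depending on conventions).

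First, I would write every formal deformation in the form
\[
a \star b = ab + \xi(a \otimes b), \qquad \xi = \sum_{i \geq 1} \hbar^i \gamma_i, \quad \gamma_i \in C^2(A,A),
\]
so that $\xi \in \hbar \cdot C^2(A,A)[\![\hbar]\!] = \mfg^1$ (the degree shift makes $C^2(A)$ sit in degree $1$ of $\mfg = C(A)[1]$), and the condition $a \star b \equiv ab \pmod{\hbar}$ is automatic. Conversely, any such $\xi \in \mfg^1$ determines a $\bk[\![\hbar]\!]$-bilinear product on $A[\![\hbar]\!]$ by the same formula; this correspondence is evidently a bijection at the level of underlying $\hbar$-linear binary operations. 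What remains is to show that associativity of $\star$ is equivalent to $\xi$ satisfying \eqref{e:mc}.

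The key observation is that $\mu \in C^2(A)$ lies in $\mfg^1$, and the defining associativity of $A$ is precisely $\mu \circ \mu = 0$, which by Definition \ref{d:gerst-br} for odd-degree elements reads $[\mu,\mu] = 0$ in $\mfg$. Moreover, an elementary comparison of the Hochschild differential formula
\[
(d\phi)(a_0 \otimes \cdots \otimes a_n) = a_0 \phi(a_1 \otimes \cdots) - \phi(a_0 a_1 \otimes \cdots) + \cdots \pm \phi(\cdots) a_n
\]
with the expansion of $[\mu,\phi]$ via the circle product shows $d\phi = [\mu,\phi]$ (after matching sign conventions, which is the main bookkeeping step). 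Granting this, the product $\star$ with structure cochain $\mu + \xi$ is associative if and only if $[\mu + \xi, \mu + \xi] = 0$; expanding, using $[\mu,\mu] = 0$ and $d = [\mu,-]$, and using that $[-,-]$ is graded symmetric on odd degree elements, yields
\[
0 = [\mu,\mu] + 2[\mu,\xi] + [\xi,\xi] = 2\, d\xi + [\xi,\xi],
\]
which is exactly the Maurer-Cartan equation \eqref{e:mc} for $\xi \in \mfg^1$.

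The main obstacle (really the only nontrivial step) is verifying $d = [\mu,-]$ with the sign conventions of Definition \ref{d:gerst-br} and the shift $\mfg = C(A)[1]$; once this is in hand, everything reduces to the one-line expansion of $[\mu+\xi, \mu+\xi]$. Note also that the construction is $\hbar$-linear and respects the $\hbar$-adic filtration, so working over $\bk[\![\hbar]\!]$ (rather than $\bk$) and inside $\hbar \cdot (C(A)[1])[\![\hbar]\!]$ does not introduce convergence issues: at each order $\hbar^n$ the associativity and Maurer-Cartan equations become identical finite conditions on $\gamma_1, \ldots, \gamma_n$, namely the $(n-1)$-st order extension problem discussed in \S\ref{ss:obst-3o} and Exercise \ref{exer:obstr-nord}.
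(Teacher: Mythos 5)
Your proof is correct, and it takes the ``conceptual'' route that the paper itself records in Remark \ref{r:mc-assoc} rather than the direct expansion used in the proof body. The paper's proof verifies the statement by writing out $f \star (g \star h) - (f \star g) \star h$ explicitly and recognizing the result as $d\gamma + \gamma \circ \gamma = d\gamma + \frac{1}{2}[\gamma,\gamma]$; you instead start from the identities $[\mu,\mu]=0$ (associativity of $\mu$) and $d = [\mu,-]$, and deduce the Maurer-Cartan equation from the one-line expansion $[\mu+\xi,\mu+\xi] = 2(d\xi + \frac{1}{2}[\xi,\xi])$. The two arguments are of course equivalent, and the paper explicitly cross-references its remark as ``a more conceptual explanation.'' The trade-off is the expected one: the direct computation is self-contained and keeps all signs visible, whereas your version is cleaner and generalizes immediately (e.g.\ to the pronilpotent base rings $R$ in the remark that follows), at the cost of deferring the sign bookkeeping for $d = [\mu,-]$ --- which you correctly flag as the only nontrivial step and which the paper also treats as immediate from the definitions. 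One small polish: it is worth stating that $[\mu,\mu] = 2\,\mu\circ\mu$ for $\mu \in C^2(A)$ with the conventions of Definition \ref{d:gerst-br}, so that $[\mu,\mu]=0$ is genuinely equivalent to associativity (this requires $\charr\bk \neq 2$, which is anyway assumed throughout).
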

\begin{proof} Let $\gamma := \sum_{m \geq 1} \hbar^m \gamma_m \in
  \mfg^1$.  Here $\gamma_m \in C^2(A)$ for all $m$, since $\mfg$ is
  shifted.

  To $\gamma \in \mfg^1$ we associate the star product $f \star g = fg
  + \sum_{m \geq 1} \hbar^m \gamma_m(f \otimes g)$. We need to show
  that $\star$ is associative if and only if $\gamma$ satisfies the
  Maurer-Cartan equation. This follows from a direct computation (see
  Remark \ref{r:mc-assoc} for a more conceptual explanation):
\begin{multline}
  f \star (g \star h)-(f \star g) \star h = \sum_{m \geq 1} \hbar^m
  \cdot \bigl( f \gamma_m(g \otimes h) - \gamma_m(fg \otimes h) +
  \gamma_m(f \otimes gh) -\gamma_m(f \otimes g)h \bigr)
  \\
  + \sum_{m,n \geq 1} \hbar^{m+n} \bigl( \gamma_m(f \otimes \gamma_n(g
  \otimes h)) - \gamma_m(\gamma_n(f \otimes g) \otimes h)\bigr)
  \\
  = d \gamma + \gamma \circ \gamma = d \gamma + \frac{1}{2} [\gamma,
  \gamma]. \qedhere
\end{multline}
\end{proof}
\begin{remark}\label{r:mc-assoc} For a more conceptual explanation of
  the proof, note that, if we let $A_0$ be an algebra with the zero
  multiplication, so that $C^\bullet(A_0)$ is a dgla with zero
  differential, then associative multiplications are the same as
  elements $\mu \in C^2(A_0) = \mfg^1$ satisfying
  $\frac{1}{2}[\mu,\mu] = 0$, where $\mfg = C^\bullet(A_0)[1]$ as
  before. (This is the Maurer-Cartan equation for $\mfg$.)  If we now
  take an arbitrary algebra $A$, we can set
  $A_0$ to be $A$ but viewed as an algebra with
 the \emph{zero} multiplication.
  Let $\mu \in C^2(A_0)$ represent the multiplication on $A$, hence $[\mu,\mu]=0$ by associativity.
Then, given $\gamma := \sum_{m \geq 1} \hbar^m
  \gamma_m \in \hbar \mfg^1 = \hbar C^2(A)[\![\hbar]\!]$, the 
 product $\mu + \gamma$ is associative if and only if, working in $(C^\bullet(A_0)[1])[\![\hbar]\!]$, we have
\[
0 = [\mu + \gamma, \mu + \gamma] = [\mu,\mu]+2[\mu,\gamma]+[\gamma,\gamma].
\]

Now, $[\mu,\gamma] = d_{A}(\gamma)$, with $d_A$ the (Hochschild)
differential on $(C^\bullet(A)[1])[\![\hbar]\!]$. 
More conceptually, this is saying that
$C^\bullet(A)[1] = C^\bullet(A_0)[1]^\mu$, the twist by $\mu$;
cf.~Proposition \ref{p:mc-twist} and Definition \ref{d:mc-twist}, as
well as Lemma \ref{l:zero-mult} below.
Then, by Proposition \ref{p:mc-twist},  Maurer-Cartan
elements $\xi \in \MCE((C^\bullet(A)[1])\![\hbar]\!])$ are the same
as associative multiplications $\mu+\xi$. They are $\mu$ modulo
$\hbar$ if and only if $\xi \in \hbar C^\bullet(A)[1]\![\hbar]\!]$.
\end{remark}
\begin{remark}
  The above formalism works, with the same proof, for formal
  deformations over arbitrary complete augmented commutative
  $\bk$-algebras.
  Namely, associative multiplications on $A \hat \otimes_\bk R$
  deforming the associative multiplication $\mu$ on $A$ are the same
  as Maurer-Cartan elements of the dgla $C(A)[1] \hat \otimes_\bk R_+$.
\end{remark}

\subsection{Gauge equivalence}\label{ss:gauge}
Recall from Example \ref{ex:flatconn} the example of flat connections
with values in $\mfg = \Lie G$ as solutions of the Maurer-Cartan
equation.  In that situation, one has a clear notion of equivalence of
connections, namely gauge equivalence: for $\gamma: X \to G$ a map,
and $\iota: G \to G$ the inversion map,
\[
\nabla \mapsto (\Ad \gamma)(\nabla); 
(d+\alpha) \mapsto d + (\Ad \gamma)(\alpha) +
\gamma \cdot d(\iota \circ \gamma).
\]
Here, the meaning of $\gamma \cdot d(\iota \circ \gamma)$ is as
follows: the derivative $d(\iota \circ \gamma)$ is defined, at each $x
\in X$, as a map $d(\iota \circ \gamma)|_x: T_xX \to
T_{\gamma(x)^{-1}} G$, and then we apply the derivative of the left
multiplication by $\gamma(x)$, $dL_{\gamma(x)}: T_{\gamma(x)^{-1}} G
\to T_e G$, to obtain an operator $\gamma \cdot d(\iota \circ
\gamma)|_x: T_xX \to T_e G = \mfg$.  We obtain in this way a one-form
$\gamma \cdot d(\iota \circ \gamma) \in \Omega^1(X, \mfg)$.  (We may
think of $d+\gamma \cdot d(\iota \circ \gamma)$ formally as
$\Ad(\gamma)(d)$; see also below for the case $\gamma=\exp(\beta)$.)

Now, restrict to the case $\gamma = \exp(\beta)$ for $\beta \in \cO(X)
\otimes \mfg$, assuming that $\bk=\bR$ or $\bC$ so $\exp$ is the usual
exponential map (if we restrict to the case where $G$ is connected,
then such elements $\gamma$ generate $G$, so generate all gauge
equivalences).  By taking a faithful representation, we may even
assume without loss of generality that $G < \GL_n$ and $\mfg <
\mfgl_n$, so $\gamma$ and $\beta$ are matrix-valued functions.  We can
then rewrite the above formula in a way not requiring $G$ or the
definition of $\gamma \cdot d(\iota \circ \gamma)$ as:
\begin{equation}\label{e:gauge-exp}
  \alpha \mapsto 
  \exp(\ad \beta)(\alpha) + \frac{1 - \exp(\ad \beta)}{\ad \beta} (d\beta),
\end{equation}
where $(\ad \beta)(\alpha) := [\beta, \alpha]$, using the Lie bracket on
$\mfg$.  The last term above can be thought of as $\exp(\ad
\beta)(d)  - d$, where we set $[d, \beta]=d(\beta)$, as explained in the
following exercise:
\begin{exercise}\label{exer:gauge}
  Verify \eqref{e:gauge-exp}. To do so, replace
  $\exp: \mfg \to G$ by its Taylor series, and
  use the standard identity $\Ad(\exp(\beta))= \exp(\ad \beta) :=
  \sum_{m \geq 0} (\ad \beta)^m$, which holds formally (setting
  $\Ad(\exp(\beta))(f)=\exp(\beta) \cdot f \cdot \exp(-\beta)$ and
  $\ad(\beta)(f) = \beta \cdot f - f \cdot \beta$), and follows from
  the basic theory of Lie groups.  Note also that, for $\alpha \in
  \cO(X) \otimes \mfg$ arbitrary, we have $([d,\beta]) \alpha :=
  d(\beta \alpha) - \beta d \alpha = (d \beta) \alpha$, so we can
  formally write $[d,\beta] = d(\beta)$ as above. Then, use all of
  this to expand and simplify $\gamma \cdot d(\iota \circ \gamma)$.
  Hint: write the latter, formally, as $\Ad(\exp(\beta))(d)-d$, then
  apply all of the above.
\end{exercise}
The above discussion motivates the following general definition, where
now $\mfg$ can be an arbitrary dgla (no longer a finite-dimensional
Lie algebra as above). \textbf{From now until the end of Section
  \ref{s:dgla}, $\bk$ should be a characteristic zero field.}
\begin{definition}
  Two Maurer-Cartan elements $\alpha, \alpha' \in \mathfrak{g}^1$ of a
  dgla are called gauge equivalent by an element $\beta \in
  \mathfrak{g}^0$ if $\alpha' = \exp(\ad \beta)(\alpha) + \frac{1 -
    \exp(\ad \beta)}{\beta} (d\beta)$, when this formula makes sense:
  for us, we take either (a) $\bk = \bR$ or $\bC$ and $\mfg$ is
  finite-dimensional as above; (b) $\mfg = \mfh \otimes_{\bk} R_+$
  with $\mfh$ an arbitrary dgla and $R$ a complete augmented (dg)
  commutative algebra; or (c) With $R$ as in (b), we can also take
  $\mfg = \mfh \otimes_{\bk} R$ and $\beta \in (R_+ \mfg)^0$.
\end{definition}
This definition is consistent with Definition \ref{d:ge}:
\begin{proposition}\label{p:star-ge}
 Two formal
  deformations $(A[\![\hbar]\!], \star)$ and $(A[\![\hbar]\!],
  \star')$ are gauge equivalent, i.e., isomorphic via a 
  $\bk[\![h]\!]$-linear automorphism of $A[\![\hbar]\!]$ which is the
  identity modulo $\hbar$, if and only if the corresponding
  Maurer-Cartan elements of $\mfg = \hbar \cdot C(A)[\![\hbar]\!]$ are
  gauge equivalent.
\end{proposition}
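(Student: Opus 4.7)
My plan is to turn the automorphism intertwining $\star$ and $\star'$ into an element of $\mfg^0$ by taking its formal logarithm, translate the intertwining condition into a statement about the two Maurer-Cartan elements $\gamma,\gamma' \in \mfg^1$ being related by an exponentiated adjoint action, and finally recognize this relation as exactly the gauge equivalence formula of Exercise \ref{exer:gauge}.

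First I would write any $\bk[\![\hbar]\!]$-linear automorphism $\phi$ of $A[\![\hbar]\!]$ congruent to $\Id$ modulo $\hbar$ in the form $\phi = \exp(\beta)$ where $\beta := \log(\phi) \in \hbar \cdot C^1(A)[\![\hbar]\!] = \mfg^0$; the logarithm series converges in the $\hbar$-adic topology since $\phi - \Id$ is divisible by $\hbar$, and conversely any such $\beta$ exponentiates to such a $\phi$. Denote the two deformed multiplications by $\mu := \mu_0 + \gamma$ and $\mu' := \mu_0 + \gamma'$, where $\mu_0 \in C^2(A)$ is the undeformed multiplication on $A$. The intertwining condition $\phi(a \star b) = \phi(a) \star' \phi(b)$ is then equivalent to $\mu' = \phi \circ \mu \circ (\phi^{-1} \otimes \phi^{-1})$, interpreted as an equality in $C^2(A)[\![\hbar]\!]$.

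Next I would reinterpret conjugation of bilinear operations by $\phi$ in terms of the Gerstenhaber bracket. Following Remark \ref{r:mc-assoc}, I view $\mu_0$ as a Maurer-Cartan element of the abelian dgla $C^\bullet(A_0)[1][\![\hbar]\!]$ with zero differential (where $A_0$ denotes $A$ with the zero multiplication), so that $\mu_0 + \gamma$ and $\mu_0 + \gamma'$ are themselves Maurer-Cartan elements there, and $\gamma, \gamma'$ are the Maurer-Cartan elements in the twist $\mfg = C(A_0)[1]^{\mu_0}$. The key technical claim is that the conjugation action of $\phi = \exp(\beta)$ on any multilinear operation $\nu \in C^k(A)$ coincides with $\exp(\ad \beta)(\nu)$ computed using the Gerstenhaber bracket. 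It suffices to verify this at the infinitesimal level, which is a direct unpacking of Definition \ref{d:gerst-br}: one checks that $[\beta, \nu]$ equals $-\beta \circ \nu + \sum_{i} \nu(\Id \otimes \cdots \otimes \beta \otimes \cdots \otimes \Id)$ with the appropriate Koszul signs, which is precisely the first-order part of $\nu \mapsto \phi \circ \nu \circ (\phi^{-1} \otimes \cdots \otimes \phi^{-1})$. Specializing to $\nu = \mu_0$ recovers the familiar identity $[\beta, \mu_0] = \pm d\beta$, consistent with the description of $\mfg$ as a twist.

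With this identification, $\mu' = \phi \circ \mu \circ (\phi^{-1} \otimes \phi^{-1})$ becomes the equality $\mu_0 + \gamma' = \exp(\ad \beta)(\mu_0 + \gamma)$ in $C(A_0)[1][\![\hbar]\!]$. Subtracting $\mu_0$ from both sides and writing $\exp(\ad \beta)(\mu_0) - \mu_0 = \frac{\exp(\ad \beta) - \Id}{\ad \beta}([\beta, \mu_0])$, then replacing $[\beta, \mu_0]$ by $\pm d\beta$, yields precisely the standard gauge transformation formula of Exercise \ref{exer:gauge} expressing $\gamma'$ in terms of $\gamma$ and $\beta$. Both directions of the proposition fall out of this equivalence: an isomorphism $\phi$ produces a gauge $\beta$ via logarithm, and a gauge $\beta$ produces an isomorphism via exponential. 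The main obstacle I expect is the step identifying conjugation of multilinear operations with $\exp(\ad \beta)$ under the Gerstenhaber bracket with consistent signs: this is the core statement that the pre-Lie structure of Remark \ref{r:pre-lie} is the correct one governing substitution of operations, and while conceptually transparent it requires careful bookkeeping with the Koszul sign rule.
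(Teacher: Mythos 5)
Your proposal is correct and takes essentially the same route as the paper: write $\phi = \exp(\beta)$ with $\beta \in \mfg^0$ (using that $\mfg$ is pronilpotent), identify conjugation of the multiplication by $\phi$ with $\exp(\ad\beta)$ under the Gerstenhaber bracket, and then use $[\beta,\mu_0] = \pm d\beta$ to recognize the result as the gauge transformation formula of Exercise~\ref{exer:gauge}. The paper simply asserts the identity $\exp(\alpha)\bigl(\exp(-\alpha)(a) \star \exp(-\alpha)(b)\bigr) = \exp(\ad\alpha)(\mu+\gamma)$ where you spell out the pre-Lie/Koszul bookkeeping behind it, but the argument is the same.
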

Here, the automorphism of $A[\![\hbar]\!]$ does not respect the
algebra structure on $A$: it is just a 
$\bk[\![\hbar]\!]$-linear automorphism.  Being the identity modulo
$\hbar$ means that the automorphism $\Phi$ satisfies the property that
$\Phi - \Id$ is a multiple of $\hbar$ as an endomorphism of the vector
space $A[\![\hbar]\!]$.
\begin{proof}
  This is an explicit verification: Let $\phi$ be an 
  automorphism of $A[\![\hbar]\!]$ which is the identity modulo
  $\hbar$. We can write $\phi = \exp(\alpha)$ where $\alpha \in
  \hbar \End_\bk(A)[\![\hbar]\!]$; one can check that $\exp(\alpha) =
  1 + \alpha + \alpha^2/2! + \cdots$ makes sense since we are using
  power series in $\hbar$.
  Let $\gamma, \gamma' \in \mfg^1$ be the Maurer-Cartan elements
  corresponding to $\star$ and $\star'$. Let $\mu: A \otimes A \to A$
  be the undeformed multiplication. Then
\begin{multline}
  \exp(\alpha) (\exp(-\alpha)(a) \star \exp(-\alpha)(b)) = \exp(\ad
  \alpha)(\mu + \gamma)(a \otimes b) \\
  = \bigl(\mu + \exp(\ad \alpha)(\gamma) + \frac{1-\exp(\ad
    \alpha)}{\ad \alpha}(d\alpha)\bigr)(a \otimes b),
\end{multline}
where the final equality follows because $[\mu, \alpha] = d\alpha$.
\end{proof}

\subsection{The dgla of polyvector fields, Poisson deformations, and
  Gerstenhaber algebra structures}
Let $X$ again be a smooth affine algebraic variety over a
characteristic zero field or a $C^\infty$ manifold.  By the
Hochschild-Kostant-Rosenberg theorem (Theorem \ref{t:hkr}), the
Hochschild cohomology $\HH^\bullet(\cO(X))$ is isomorphic to the
algebra of polyvector fields, $\wedge^\bullet_{\cO(X)} \Vect(X)$.
Since, as we now know, $C^\bullet(\cO(X))[1]$ is a dgla, one concludes
that $\wedge^\bullet_{\cO(X)} \Vect(X)[1]$ is also a dg Lie algebra
(with zero differential). In fact, this structure coincides with the
Schouten-Nijenhuis bracket:
\begin{proposition}
  The Lie bracket on $\wedge^\bullet_{\cO(X)} \Vect(X)[1]$ induced by
  the Gerstenhaber bracket is the Schouten-Nijenhuis bracket, as
  defined in Definition \ref{d:sn}.
\end{proposition}
Such a structure is called a \emph{Gerstenhaber algebra}:
\begin{definition}
  A (dg) Gerstenhaber algebra is a dg commutative algebra $B$ equipped with
  a dg Lie algebra structure on the shift $B[1]$, such that
  \eqref{e:gerst2} is satisfied.
\end{definition}
Note that, by definition, a Gerstenhaber algebra has to be
(homologically) graded; sometimes when the adjective ``dg'' is omitted
one means a dg Gerstenhaber algebra with zero differential. This is the
case for $\wedge^\bullet_{\cO(X)} \Vect(X)$.
\begin{remark}
  Note that the definition of a Gerstenhaber algebra is very similar
  to that of a Poisson algebra: the difference is that the Lie bracket
  on a Gerstenhaber algebra is \emph{odd}: it has homological
  degree $-1$.
\end{remark}
We easily observe:
\begin{proposition}\label{p:pb-sn}
A bivector field $\pi \in \wedge^2 \Vect(X)$ defines a Poisson bracket if and 
only if $[\pi,\pi]=0$.  That is, \emph{Poisson bivectors $\pi$ are
solutions of the Maurer-Cartan equation in $\wedge^\bullet_{\cO(X)} \Vect(X)[1]$.}
\end{proposition}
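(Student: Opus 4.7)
The plan is to reduce the statement to a direct computation of the Jacobiator in terms of the Schouten--Nijenhuis bracket, using the formula \eqref{e:gerst} given in the preceding proposition. Note first that the Maurer--Cartan equation in $\wedge^\bullet_{\cO_X} T_X[1]$ reads $d\pi + \tfrac{1}{2}[\pi,\pi] = 0$, but since the differential on $\wedge^\bullet_{\cO_X} T_X$ is zero, this collapses to $[\pi,\pi] = 0$. Moreover, $\pi \in \wedge^2 T_X$ lies in degree $1$ of the shifted complex $\wedge^\bullet_{\cO_X} T_X[1]$, so the Maurer--Cartan equation is a quadratic condition on $\pi$, matching the quadratic nature of the Jacobi identity.

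Given a bivector $\pi$, I define $\{f, g\} := \pi(df \wedge dg)$. Skew-symmetry is immediate from $\pi \in \wedge^2 T_X$, and the Leibniz rule follows from the fact that $d$ is a derivation and $\pi$ acts linearly on one-forms. Therefore the only nontrivial axiom of a Poisson bracket is the Jacobi identity, i.e., the vanishing of the Jacobiator
\[
J(f,g,h) := \{f,\{g,h\}\} + \{g,\{h,f\}\} + \{h,\{f,g\}\}.
\]

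The main step is to prove that $J(f,g,h) = \tfrac{1}{2}[\pi,\pi](df \wedge dg \wedge dh)$ (up to a universal nonzero constant whose sign depends on conventions). Both sides are $\cO_X$-trilinear derivations in each argument: the right side because the Schouten bracket of polyvector fields with itself is again a polyvector field, hence contracts against forms $\cO_X$-linearly; the left side because skew-symmetry together with Leibniz forces the second-derivative terms to cancel cyclically. Thus it suffices to verify the identity locally, writing $\pi = \sum_i \xi_i \wedge \eta_i$ as a sum of decomposable bivectors. Expanding $\{g,h\}$ explicitly and then applying $\{f, -\}$, summing cyclically, all the terms containing second derivatives of $f$, $g$, or $h$ cancel by skew-symmetry, and what remains are expressions involving the commutators $[\xi_i, \xi_j]$, $[\xi_i, \eta_j]$, and $[\eta_i, \eta_j]$ acting on $f$, $g$, $h$. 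Formula \eqref{e:gerst} gives exactly this same expression when $[\pi,\pi]$ is evaluated on $df \wedge dg \wedge dh$.

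The hard part, as is typical for Schouten--Nijenhuis computations, is simply tracking signs carefully to confirm that the coefficient coming out of \eqref{e:gerst} matches the coefficient coming out of cyclically summing $\{f,\{g,h\}\}$. Once this identity is established, the proposition follows: since $[\pi,\pi] \in \wedge^3 T_X$ and its contraction against arbitrary triples $df \wedge dg \wedge dh$ determines it completely (locally, differentials of coordinate functions span $T^*_X$ over $\cO_X$, and $[\pi,\pi]$ is $\cO_X$-multilinear on one-forms), the Jacobiator vanishes identically if and only if $[\pi,\pi] = 0$, which is precisely the Maurer--Cartan equation.
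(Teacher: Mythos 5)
The paper does not supply a proof of this proposition---it is posed as Exercise \ref{exer:pb-sn}---so there is no paper argument to compare against; but your strategy is the standard one and it is sound. You correctly observe that the Maurer--Cartan equation degenerates to $[\pi,\pi]=0$ because $T_\poly$ has zero differential, that skew-symmetry and Leibniz are automatic for $\{f,g\} := \pi(df\wedge dg)$, and that the whole content of the proposition is the identity $J(f,g,h) = c\,[\pi,\pi](df\wedge dg\wedge dh)$ for a nonzero universal constant $c$. Your tensoriality argument is the right reduction: both the Jacobiator and the contraction of $[\pi,\pi]$ are derivations in each of $f,g,h$ (your phrase ``$\cO_X$-trilinear derivations in each argument'' is a little awkward---a genuinely $\cO_X$-trilinear derivation would vanish---but what you mean, namely that each is tensorial in $df,dg,dh$, is correct and is what the computation $J(af,g,h)=aJ(f,g,h)+fJ(a,g,h)$ verifies), so once the identity holds on decomposable local data it holds globally, and the proposition follows since a trivector vanishes iff all its contractions against exact 1-forms vanish.

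The one thing to be aware of is that the ``hard part'' you flag at the end is precisely the content of the exercise: you have not actually carried out the cancellation of the symmetric second-order terms nor matched the sign and numerical coefficient produced by \eqref{e:gerst} against the cyclic sum. For a decomposable $\pi = X\wedge Y$, \eqref{e:gerst} gives $[\pi,\pi] = -2\,[X,Y]\wedge X\wedge Y$ after four terms (two vanishing by $[X,X]=[Y,Y]=0$), while the cyclically summed Jacobiator for $\{f,g\}=Xf\,Yg - Xg\,Yf$ produces, after the mixed second-derivative contributions of the symmetric parts of $XY$, $YX$, $XX$, $YY$ telescope away, the same expression in $[X,Y]$ up to a factor; this is exactly the sign bookkeeping you deferred, and for a complete proof it should be written out. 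The plan is right; finish the computation.
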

\begin{exercise}\label{exer:pb-sn}
Prove Proposition \ref{p:pb-sn}!
\end{exercise}
The same proof implies:
\begin{corollary}\label{c:fpoiss-mc}
  Formal Poisson structures in $\hbar \cdot \wedge^2_{\cO(X)} \Vect(X)
  [\![\hbar]\!]$ are the same as Maurer-Cartan elements of the dgla
  $\hbar\cdot (\wedge^\bullet_{\cO(X)} \Vect(X)[1])[\![\hbar]\!]$.
\end{corollary}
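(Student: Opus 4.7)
The plan is to reduce the corollary to Proposition \ref{p:pb-sn} by $\hbar$-linear extension. My first step will be to spell out what a Maurer-Cartan element of $\hbar \cdot (\wedge^\bullet_{\cO_X} T_X[1])[\![\hbar]\!]$ actually is. Since the shift places degree $2$ of polyvector fields into degree $1$ of the dgla, and since this dgla carries the zero differential, an MC element in degree $1$ is a formal power series $\pi_\hbar = \sum_{m \geq 1} \hbar^m \pi_m$ with $\pi_m \in \wedge^2_{\cO_X} T_X$, and the Maurer-Cartan equation $d\pi_\hbar + \frac{1}{2}[\pi_\hbar,\pi_\hbar] = 0$ collapses to $[\pi_\hbar,\pi_\hbar] = 0$. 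Here the Schouten-Nijenhuis bracket is extended $\hbar$-bilinearly and $\hbar$-adically continuously to formal power series.

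Next, I will unpack a formal Poisson structure on $\cO_X[\![\hbar]\!]$ as encoded by such a $\pi_\hbar$. Any $\bk[\![\hbar]\!]$-linear skew-symmetric biderivation on $\cO_X[\![\hbar]\!]$ is of the form $\{f,g\}_\hbar := \pi_\hbar(df \wedge dg)$ for a unique $\pi_\hbar \in \hbar \cdot \wedge^2_{\cO_X} T_X[\![\hbar]\!]$, since biderivations correspond to bivectors and $\bk[\![\hbar]\!]$-linearity forces the coefficient of each power of $\hbar$ to be a biderivation over $\cO_X$. Skew-symmetry and the Leibniz identity thus hold automatically, so the only nontrivial axiom is the Jacobi identity.

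The final step is to match the Jacobi identity with $[\pi_\hbar,\pi_\hbar] = 0$. By Proposition \ref{p:pb-sn}, for each ordinary bivector $\pi$ the Jacobi identity for $\{-,-\}_\pi$ holds if and only if $[\pi,\pi] = 0$; more generally, applied to triples $f,g,h \in \cO_X$, the Jacobiator of $\{-,-\}_\pi$ is a universal expression in the trivector $[\pi,\pi]$. Extending $\hbar$-linearly, the Jacobiator of $\{-,-\}_{\pi_\hbar}$ equals (up to the same universal expression) the trivector $[\pi_\hbar, \pi_\hbar] \in \hbar^2 \cdot \wedge^3_{\cO_X} T_X[\![\hbar]\!]$. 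Vanishing order-by-order in $\hbar$ gives $\sum_{m+n = k} [\pi_m, \pi_n] = 0$ for all $k \geq 2$, which is exactly the coefficient-wise expansion of $[\pi_\hbar,\pi_\hbar] = 0$.

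I do not foresee a real obstacle: the content of the corollary is that the three translations (formal Poisson bracket $\leftrightarrow$ formal bivector with $[\pi_\hbar,\pi_\hbar]=0$ $\leftrightarrow$ MC element) are all manifestly $\hbar$-linear reformulations of Proposition \ref{p:pb-sn}, once one observes that the dgla differential is zero. The only thing needing mild care is verifying that the Schouten-Nijenhuis bracket is $\hbar$-adically continuous (it is, since each graded piece of $\wedge^\bullet T_X$ is preserved and there are only finitely many pairs $(m,n)$ contributing to each $\hbar^k$-coefficient), so that the extension to $\bk[\![\hbar]\!]$ is well-defined.
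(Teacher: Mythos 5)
Your proof is correct and takes essentially the same approach as the paper, which states the corollary simply with the remark ``The same proof implies'' following Proposition~\ref{p:pb-sn}: extend the identification of Poisson bivectors with Maurer-Cartan elements $\hbar$-linearly and $\hbar$-adically continuously, noting that the dgla has zero differential so the Maurer-Cartan equation is just $[\pi_\hbar,\pi_\hbar]=0$. One small inaccuracy: you assert that \emph{any} $\bk[\![\hbar]\!]$-linear skew biderivation of $\cO_X[\![\hbar]\!]$ corresponds to a $\pi_\hbar$ lying in $\hbar \cdot \wedge^2_{\cO_X} T_X[\![\hbar]\!]$; in fact such a biderivation generally gives $\pi_\hbar \in \wedge^2_{\cO_X} T_X[\![\hbar]\!]$, and the restriction to the subspace $\hbar \cdot \wedge^2_{\cO_X} T_X[\![\hbar]\!]$ is precisely the condition that the bracket vanishes modulo $\hbar$, i.e., that it deforms the zero Poisson structure on $\cO_X$ — which is what the corollary's hypothesis already builds in, so the substance of the argument is unaffected.
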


\subsection{Kontsevich's formality and quantization theorems}\label{subsect:formal}
We can now make a precise statement of Kontsevich's formality
theorem. As before, we need $\bk$ to be a characteristic zero field
for the remainder of Section \ref{s:dgla}.

\begin{remark}
  Kontsevich proved this result for $\bR^n$ or smooth $C^\infty$
  manifolds; for the general smooth affine setting, when $\bk$
  contains $\bR$, one can extract this result from \cite{Kon-dqav};
  for more details see \cite{Yek-dqag}, and also, e.g.,
  \cite{VdB-gdqac}. These proofs also yield a sheaf-level version of
  the statement for the nonaffine algebraic setting. For a simpler
  proof in the affine algebraic setting, which works over arbitrary
  fields of characteristic zero, see \cite{DTT-hgahcraf}.  We remark
  also that, recently in \cite{Dol-acekfqrrn}, Dolgushev showed that
  there actually exists a ``correction'' of Kontsevich's formulas
  which involve only rational weights, which replaces Kontsevich's
  proof by one that works over $\bQ$.
\end{remark}
The one parameter version of the theorem is
\begin{theorem}\cite{Kform,Kon-dqav,Yek-dqag,DTT-hgahcraf}
There is a map
\[
\text{Formal Poisson bivectors in $\hbar \cdot \wedge^2
  \Vect(X)[\![\hbar]\!]$} \rightarrow \text{Formal deformations of
  $\cO(X)$}
\]
which induces a bijection modulo 
automorphisms of
$\cO(X)[\![\hbar]\!]$ which are the identity modulo $\hbar$, and sends
a formal Poisson structure $\hbar \pi_\hbar$ to a deformation quantization
of the ordinary Poisson structure $\pi \equiv \pi_\hbar \pmod
{\hbar}$.
\end{theorem}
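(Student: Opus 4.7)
The plan is to reduce the statement to the existence of an $L_\infty$ quasi-isomorphism from the dgla $T_\poly(X) := \wedge^\bullet_{\cO_X} T_X[1]$ (with zero differential) to the shifted Hochschild cochain dgla $D_\poly(X) := C(\cO_X)[1]$. Once such a quasi-isomorphism is established, the theorem follows from standard deformation-theoretic machinery: an $L_\infty$ morphism between dglas induces a map on Maurer-Cartan elements, and a quasi-isomorphism induces a bijection on $\MCE$ modulo gauge equivalence when one works over a pronilpotent augmented base such as $\hbar \cdot \bk[\![\hbar]\!]$. By Proposition \ref{p:pb-sn} (extended to $\hbar$-linearly) the MC elements of $\hbar \cdot T_\poly(X)[\![\hbar]\!]$ are exactly formal Poisson structures, and by Remark \ref{r:mc-assoc} the MC elements of $\hbar \cdot D_\poly(X)[\![\hbar]\!]$ are exactly formal deformations of $\cO_X$; by Proposition \ref{p:star-ge}, gauge equivalence on the Hochschild side corresponds to the stated isomorphisms modulo $\hbar$, and on the polyvector side it corresponds to formal diffeomorphisms (i.e.~twisting a formal Poisson bracket by a vector field flow), giving the claimed equivalence classes.

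Next I would invoke (rather than reprove) the Hochschild--Kostant--Rosenberg quasi-isomorphism $\HKR: T_\poly(X) \to D_\poly(X)$ from Theorem \ref{t:hkr}, sending $\xi_1 \wedge \cdots \wedge \xi_n \mapsto \frac{1}{n!} \sum_{\sigma \in S_n} \sgn(\sigma) \xi_{\sigma(1)} \otimes \cdots \otimes \xi_{\sigma(n)}$. As emphasized in the text, $\HKR$ is a quasi-isomorphism of complexes but \emph{not} a morphism of dglas (it fails to respect the Lie brackets beyond the lowest order). The heart of the proof is therefore to correct $\HKR$ by higher Taylor coefficients $U_n: \wedge^n T_\poly(X) \to D_\poly(X)[1-n]$ (with $U_1 = \HKR$) satisfying the $L_\infty$ morphism equations, which are an infinite hierarchy of bilinear identities coupling the $U_n$'s to the differential and Gerstenhaber bracket on $D_\poly$.

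The main obstacle is the explicit construction of these $U_n$'s. For $X = \bR^d$ this is Kontsevich's celebrated formula (\S\ref{ss:dq-fla}): $U_n$ is a sum over admissible directed graphs $\Gamma$ with $n$ ``aerial'' vertices and appropriate boundary vertices, weighted by $W_\Gamma$, where $W_\Gamma$ is an integral over the Fulton--MacPherson compactified configuration space of points in the upper half-plane $\bH$ of a product of angle one-forms. The $L_\infty$ equations follow from Stokes' theorem applied to the codimension-one boundary strata of these compactifications, each of which contributes either a term involving lower $U_k$'s or vanishes by a symmetry (e.g.\ Kontsevich's ``vanishing lemmas'' for degenerate boundary configurations). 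This constructs the desired quasi-isomorphism locally; one then globalizes either by Kontsevich's formal geometry argument (working with the bundle of formal coordinate systems and descent via a flat Fedosov-type connection) or via the operadic Tamarkin--Dolgushev--Tsygan--Tsygan approach for the algebraic setting, which moreover works over any field of characteristic zero. Granting this construction, the final bijection statement is a formal consequence of the general principle that $L_\infty$ quasi-isomorphisms of pronilpotent dglas induce bijections on $\MCE$ modulo gauge equivalence.
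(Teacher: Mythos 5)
Your proposal matches the paper's own treatment essentially step for step: reduce to the formality statement $T_\poly(X)\iso D_\poly(X)$ as an $L_\infty$ quasi-isomorphism (Theorem \ref{t:form-linf}), identify Maurer-Cartan elements of the pronilpotent twists with formal Poisson structures and formal deformations respectively, invoke that $L_\infty$ quasi-isomorphisms induce bijections on $\MCE$ modulo gauge equivalence, and then sketch Kontsevich's graph expansion with configuration-space integral weights and Stokes' theorem, plus globalization by formal geometry or the DTT operadic argument. The only cosmetic deviation is the $\frac{1}{n!}$ normalization in your $\HKR$ formula, which the paper omits; this does not affect the argument.
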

\begin{remark}
  By dividing the formal Poisson structure by $\hbar$, we also get a
  bijection modulo gauge equivalence from \emph{all} formal Poisson
  structures to formal deformations, now sending $\pi_\hbar$ to a
  deformation quantization of $\pi$; the way it is stated above
  generalizes better to the full (multiparameter) version below.
\end{remark}
We can state the full version of the theorem as follows:
\begin{theorem}\cite{Kform,Kon-dqav,Yek-dqag,DTT-hgahcraf} \label{t:form-mc}
  There is a map, functorial in dg commutative complete augmented
  $\bk$-algebras $R = \bk \oplus R_+$,
\[
\mathcal{U}: \text{Poisson bivectors in
 $\wedge^2_{\cO(X)} \Vect(X) \hat
  \otimes_\bk R_+$} \rightarrow \text{Formal deformations $(\cO(X)
  \hat \otimes_\bk R, \star)$}
\]
which induces a bijection modulo 
$R$-linear automorphisms of $\cO(X)
\hat \otimes_\bk R$ which are the identity modulo $R_+$.
Moreover, modulo $R_+^2$, this reduces to the
identity on bivectors valued in $R_+/R_+^2$.
\end{theorem}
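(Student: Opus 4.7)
The plan is to reduce both sides of the theorem to sets of Maurer--Cartan elements (modulo gauge equivalence) in suitable dg Lie algebras, and then transport one to the other via an $L_\infty$-quasi-isomorphism extending HKR. Let $T_{\poly}^\bullet := \wedge^\bullet_{\cO_X} T_X$ with zero differential and the Schouten--Nijenhuis bracket, and let $D_{\poly}^\bullet := C^\bullet(\cO_X)$ with the Hochschild differential and Gerstenhaber bracket (in the $C^\infty$ case, restricted to polydifferential cochains as in Remark \ref{r:lhc}). By Proposition \ref{p:pb-sn} and the remark extending it to arbitrary pronilpotent $R$, Poisson deformations in $\wedge^2_{\cO_X} T_X \hat\otimes R_+$ are exactly elements of $\MCE(T_{\poly}[1] \hat\otimes R_+)$; by Remark \ref{r:mc-assoc}, formal deformations $(\cO_X \hat\otimes R, \star)$ are exactly elements of $\MCE(D_{\poly}[1] \hat\otimes R_+)$. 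In both cases, gauge equivalence (Proposition \ref{p:star-ge} and its Poisson analog) corresponds to the equivalence on Maurer--Cartan sets induced by the $\mfg^0$-action. So the theorem reduces to producing a natural bijection between these two Maurer--Cartan moduli, functorial in $R$.

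The HKR theorem (Theorem \ref{t:hkr}) provides a canonical quasi-isomorphism of \emph{complexes} $\HKR: T_{\poly} \to D_{\poly}$, sending $\xi_1 \wedge \cdots \wedge \xi_k$ to the antisymmetrization of $(a_1, \ldots, a_k) \mapsto \xi_1(a_1) \cdots \xi_k(a_k)$. This is \emph{not} a dgla morphism (the Gerstenhaber bracket of polyvector fields viewed as cochains acquires non-Schouten corrections). The crucial input is therefore to construct an $L_\infty$-quasi-isomorphism $\mathcal{U} = (\mathcal{U}_n)_{n \geq 1}: T_{\poly} \rightsquigarrow D_{\poly}$ whose first Taylor coefficient $\mathcal{U}_1$ equals $\HKR$. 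Recall that this means a sequence of graded-antisymmetric maps $\mathcal{U}_n: \wedge^n T_{\poly} \to D_{\poly}[1-n]$ satisfying a hierarchy of quadratic identities expressing compatibility of $\mathcal{U}$ with the differentials and brackets up to higher coherent homotopies.

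The main obstacle is constructing these $\mathcal{U}_n$. For $X = \bR^d$, Kontsevich's formula of \S \ref{ss:dq-fla} gives $\mathcal{U}_n$ as a sum $\sum_\Gamma W_\Gamma B_{\Gamma, -}$ over admissible graphs $\Gamma$ with $n$ aerial vertices and two ground vertices $L, R$, where $B_{\Gamma, -}$ is the polydifferential operator attached to $\Gamma$ and $W_\Gamma \in \bR$ is an integral over a configuration space of points in the upper half plane (compactified in the Fulton--MacPherson sense). The $L_\infty$-morphism relations reduce to quadratic identities among the $W_\Gamma$; these are proved by applying Stokes' theorem to $d$ of the weight integrands, whose boundary integrals organize themselves via the stratification of the compactified configuration space into exactly the terms in the desired identity (with a vanishing lemma for ``bad'' strata). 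Globalizing from $\bR^d$ to arbitrary smooth $X$ is the other substantive step: this is done in \cite{Kon-dqav,Yek-dqag,VdB-gdqac} by formal geometry (gluing via a torsion-free connection on $X$, or working in the formal coordinate jet bundle), and in \cite{DTT-hgahcraf} via a direct operadic construction valid in the algebraic setting over any field of characteristic zero.

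Granting the $L_\infty$-quasi-isomorphism, a standard fact about $L_\infty$-morphisms of dglas gives the map on Maurer--Cartan sets: for $\pi \in \MCE(T_{\poly}[1] \hat\otimes R_+)$, set
\[
\mathcal{U}(\pi) := \sum_{n \geq 1} \frac{1}{n!} \mathcal{U}_n(\pi, \ldots, \pi) \in D_{\poly}^2 \hat\otimes R_+,
\]
which converges because $R_+$ is pronilpotent (Definition \ref{d:pronilp}), so only finitely many terms are nonzero modulo any $R_+^N$. The $L_\infty$-relations translate exactly into the statement that $\mathcal{U}(\pi)$ satisfies the Maurer--Cartan equation for $D_{\poly}[1]$, and that the assignment descends to a bijection on gauge-equivalence classes (this last step uses that $\mathcal{U}_1 = \HKR$ is a quasi-isomorphism together with a standard obstruction-theoretic induction on the nilpotent filtration of $R_+$, comparing deformations order by order via the cohomology class in each stage). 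Functoriality in $R$ is built into the construction. Finally, modulo $R_+^2$ only the linear term $\mathcal{U}_1 = \HKR$ survives, and $\HKR$ sends a bivector $\pi = \sum \xi \wedge \eta$ to its antisymmetrization as a polydifferential operator, which upon taking the commutator $a \star b - b \star a$ recovers exactly the original bivector $\pi$; this gives the claimed reduction to the identity on bivectors modulo $R_+^2$.
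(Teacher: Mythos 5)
Your proposal matches the paper's own exposition: the paper cites the result to \cite{Kform,Kon-dqav,Yek-dqag,DTT-hgahcraf} and then, in the subsequent subsections on dgla morphisms, $L_\infty$ morphisms, and Theorem~\ref{t:form-linf}, sketches exactly the reduction you describe — interpret both sides as Maurer--Cartan moduli of $T_\poly[1]\hat\otimes R_+$ and $D_\poly[1]\hat\otimes R_+$, then transport via Kontsevich's graph-sum $L_\infty$-quasi-isomorphism $\mathcal{U}$ extending $\HKR$, with the quadratic weight identities proved by Stokes' theorem on compactified configuration spaces and globalization handled by formal geometry (or the operadic route of \cite{DTT-hgahcraf}). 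Your outline is correct and follows essentially the same route.
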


To explain what we mean by ``the identity'' in the end of the theorem,
we note that, working modulo $R_+^2$, the Jacobi and associativity
constraints become trivial. Similarly, formal deformations over $R /
R_+^2$ are given by (not necessarily skew-symmetric) biderivations
$\cO(X) \otimes_{\bk} \cO(X) \to \cO(X) \otimes R_+/R_+^2$. Just as in
the case where $R = \bk[\varepsilon]$, up to equivalence, these are
given by their skew-symmetrization, a bivector $\wedge^2_{\cO(X)}
\Vect(X) \otimes R_+/R_+^2$.  Thus, up to equivalence, both the domain
and target reduce modulo $R_+^2$ to bivectors valued in $R_+/R_+^2$,
and we can ask that the map reduce to the identity in this case.

\subsection{Restatement in terms of morphisms of dglas}
We would like to restate the theorems above without using coefficients
in $R$, just as a statement relating the two dglas in question.  Let
us name these: $T_\poly := \bigl(\wedge^\bullet_{\cO(X)} \Vect(X)\bigr)[1]$ is the dgla
of (shifted) polyvector fields on $X$, and $D_\poly :=
C^\bullet(\cO(X))[1]$ is the dgla of (shifted) Hochschild cochains on
$X$, which in the $C^\infty$ setting are required to be differential
operators.

These dglas are clearly not isomorphic on the nose, since $T_\poly$ has
zero differential and not $D_\poly$.  They have isomorphic cohomology,
by the Hochschild-Kostant-Rosenberg theorem.   In this section we will
explain how they are quasi-isomorphic, which is equivalent to the
functorial equivalence of Theorem \ref{t:form-mc}.

First, the Hochschild-Kostant-Rosenberg (HKR) theorem (Theorem
\ref{t:hkr}) in fact gives a quasi-isomorphism of complexes $\HKR:
T_\poly \to D_\poly$, defined by
\[
\HKR(\xi_1 \wedge \cdots \wedge \xi_m)(f_1 \otimes \cdots \otimes f_m)
= \frac{1}{m!}\sum_{\sigma \in S_m}\sign(\sigma)
\xi_{\sigma(1)}(f_1) \cdots \xi_{\sigma(m)}(f_m).
\]
This clearly sends $T_\poly^{m-1} = \wedge^{m}_{\cO(X)} \Vect(X)$ to
$D_\poly^{m-1} = C^{m}(\cO(X), \cO(X))$, since the target is an
$\cO(X)$-multilinear differential operator.  Moreover, it is easy to
see that the target is closed under the Hochschild differential.  By
the proof of the HKR theorem, one in fact sees that $\HKR$ is a
quasi-isomorphism of complexes.

However, $\HKR$ is \emph{not} a dgla morphism, since it does not
preserve the Lie bracket.  It does preserve it when restricted to
vector fields, but already does not on bivector fields (which would be
needed to apply it in order to take a Poisson bivector field and
produce a star product).  For example, $[\HKR(\xi_1 \wedge \xi_2),
\HKR(\eta_1 \wedge \eta_2)]$, for vector fields $\xi_1,\xi_2,\eta_1$,
and $\eta_2$, is not, in general, in the image of $\HKR$: it is not
skew-symmetric, as one can see by Definition \ref{d:gerst-br}.

The fundamental idea of Kontsevich was to correct this deficiency by
adding higher order terms to $\HKR$. The result will not be a morphism
of dglas (this cannot be done), but it will be a more general type of
morphism
called an $L_\infty$ morphism, which we introduce in the next
subsection.  The idea behind an $L_\infty$ morphism is as follows: If we know that $\phi: \mfg \to \mfh$ has the property that $\phi[a,b] - [\phi(a),\phi(b)]$ is a boundary, say equal to $dc$, then we try to incorporate
the data of the $c$ into the morphism, by defining a map $\phi_2: \mfg \wedge \mfg \to \mfh$ sending $a \wedge b \to c$, and more generally such that
$\phi[x,y] - [\phi(x),\phi(y)] = d\phi_2(x \wedge y)$ for all $x,y$.  Then, we also need to define $\phi_3: \wedge^3 \mfg \to \mfh$ as well, and so on.  A full $L_\infty$ morphism is then a sequence of linear maps
$\phi_m: \wedge^m \mfg \to \mfh$ satisfying certain axioms.

Kontsevich therefore constructs an explicit sequence of linear maps
\[
\mathcal{U}_m: \Sym^m (T_\poly[1]) \to D_\poly[1]
\]
which satisfy these axioms, and hence yield an $L_\infty$ morphism.
Kontsevich constructs the $\mathcal{U}_m$ using graphs as in \S
\ref{ss:dq-fla}, except that now we must allow an arbitrary number of
vertices on the real axis, not merely two (the number of vertices
corresponds to two more than the degree of the target in
$D_\poly[1]$), and the outgoing valence of vertices above the real
axis can be arbitrary as well. As before, the vertices on the real
axis are sinks.  Note that $\mathcal{U}_1 = \HKR$ is just the sum of
all graphs with a single vertex above the real axis, and all possible
numbers of vertices on the real axis.

Then, if we plug in a formal Poisson bivector $\pi_\hbar$, we obtain
the star product described in \S \ref{ss:dq-fla},
\[
f \star g = \sum_{m \geq 1} \frac{1}{m!} \mathcal{U}_m(\pi_\hbar^m)(f
\otimes g),
\]
i.e., the star product is $\mathcal{U}(\exp(\pi_\hbar))$, where
$\mathcal{U} = \sum_{m \geq 1} \mathcal{U}_m$.

\subsection{$L_\infty$ morphisms}
One way to motivate $L_\infty$ morphisms is to study what we require
to obtain a functor on Maurer-Cartan elements.  We will study this
generally for two arbitrary dglas, $\mfg$ and $\mfh$.  

Given two augmented algebras $(A,A_+)$ and $(B,B_+)$, an augmented
algebra morphism is an algebra morphism $\phi: A \to B$ such that
$\phi(A_+) \subseteq B_+$.  We will always require our maps
of augmented algebras be augmented algebra morphisms. The following
then follows from definitions:

\begin{proposition}\label{p:dgla-morph} 
Any dgla morphism $F: \mfg \to \mfh$ induces a functorial map in 
complete augmented dg commutative $\bk$-algebras $R = \bk \oplus R_+$,
\[
F: \MCE(\mfg \hat \otimes_\bk R_+) \to \MCE(\mfh \hat \otimes_\bk R_+).
\]
\end{proposition}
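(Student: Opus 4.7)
The plan is to extend $F$ by $R$-linearity (and completion) to a dgla morphism $F_R \colon \mfg \hat\otimes_\bk R \to \mfh \hat\otimes_\bk R$, and then observe that any dgla morphism automatically sends Maurer-Cartan elements to Maurer-Cartan elements since it commutes with both the differential and the bracket. Functoriality in $R$ will then be essentially tautological.

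In more detail, first I would equip $\mfg \hat\otimes_\bk R$ with its canonical dgla structure: the differential $d(x \otimes r) = (dx)\otimes r + (-1)^{|x|} x \otimes (dr)$ and the bracket $[x \otimes r,\, y \otimes s] = (-1)^{|r||y|}[x,y] \otimes rs$, using that $R$ is a dg commutative $\bk$-algebra, and similarly for $\mfh \hat\otimes_\bk R$. The completed tensor product is well-defined because $R_+$ is pronilpotent, so the $R_+$-adic filtration on $\mfg \otimes R$ is complete, and the dgla operations are continuous with respect to this filtration (the bracket and differential respect the filtration by powers of $R_+$). Then $F_R := F \hat\otimes \operatorname{id}_R$ is defined by $\bk$-linearity and extends continuously to the completion; because $F$ is itself a dgla morphism, one checks immediately that $F_R \circ d = d \circ F_R$ and $F_R[x,y] = [F_R(x),F_R(y)]$.

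Now the core of the argument is a one-line verification: if $\xi \in (\mfg \hat\otimes_\bk R)^1$ satisfies the Maurer-Cartan equation $d\xi + \tfrac{1}{2}[\xi,\xi] = 0$, then applying $F_R$ yields
\[
d\,F_R(\xi) + \tfrac{1}{2}[F_R(\xi),F_R(\xi)] = F_R\bigl(d\xi + \tfrac{1}{2}[\xi,\xi]\bigr) = F_R(0) = 0,
\]
so $F_R(\xi) \in \MCE(\mfh \hat\otimes_\bk R)$. This defines the desired map. For functoriality, a morphism $\varphi \colon R \to R'$ of augmented pronilpotent dg commutative $\bk$-algebras induces dgla morphisms $\operatorname{id}_{\mfg} \hat\otimes \varphi$ and $\operatorname{id}_{\mfh} \hat\otimes \varphi$ that commute with $F_R$ and $F_{R'}$ on the nose, so the induced maps on $\MCE$ fit into a commutative diagram.

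There is no genuine obstacle here; the only thing to watch is bookkeeping, namely (i) making sure the tensor-product dgla structure is set up with consistent Koszul signs, and (ii) checking that extending $F$ to the completed tensor product is legitimate, which follows from pronilpotence of $R_+$ (so the differential and bracket are $R_+$-adically continuous and the completion is just a limit of the finite-stage quotients $\mfg \otimes R/R_+^n$, on each of which the extension of $F$ is immediate). Once those points are in place, the verification is formal.
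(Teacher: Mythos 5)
Your proof is correct, and it spells out exactly the argument the paper treats as ``obvious'' (the paper gives no proof, only a citation to Getzler): extend $F$ $R$-linearly and continuously to a dgla morphism $F_R$, note that the Maurer-Cartan equation is built from $d$ and $[-,-]$, both of which $F_R$ preserves, so $F_R$ carries solutions to solutions. The only thing worth keeping straight is that the deformation-theoretic $\MCE$ one really cares about lives in $\mfg^1\hat\otimes_\bk R_+$ (values in the augmentation ideal), as used elsewhere in the paper, but the verification you give is insensitive to this.
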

However, it is not true that all functorial maps are obtained from
dgla morphisms; in particular, if they were, then all functorial maps
as above would define functorial maps if we replace $R_+$ by arbitrary
(unital or nonunital) rings, by the remark below.  But this is not true:
with general coefficients the infinite sums in, e.g., \S \ref{ss:dq-fla}
need not converge.
\begin{remark}\label{r:dgla-nonnilp}
  In fact, dgla morphisms also induce functorial maps in ordinary 
  (not necessarily complete augmented or even augmented) dg
  commutative $\bk$-algebras $R$, taking the ordinary tensor product.
  However, the generalization to $L_\infty$ morphisms  below
  requires complete augmented $\bk$-algebras.
\end{remark}

It turns out that there is a complete augmented dg commutative
$\bk$-algebra $B$ which represents the functor $R \mapsto \MCE(\mfg
\hat \otimes_\bk R_+)$. This means that $R$-points of $\Spf B$, i.e.,
continuous augmented dg algebra morphisms $B \to R$, are functorially
in bijection with Maurer-Cartan elements of $\mfg \hat \otimes_\bk R$.
To see what $B$ is, first consider the case where $\mfg$ is abelian
with zero differential. If $R$ is concentrated in degree zero, then
Maurer-Cartan elements with coefficients in $R_+$ are elements of
$\mfg^1 \hat \otimes R$.  For any ungraded vector space $V$, the
algebra of polynomial functions on it is $\Sym(V^*)$; so here one can
consider $B=\Sym((\mfg^1)^*)$, and then elements in $\mfg^1$ are the
same as continuous algebra homomorphisms $B \to \bk$. Taking the
completion $\hat B$ of $B$ at the augmentation ideal $(V^*)$,
continuous augmented algebra homomorphisms $\hat B \to R$ are the same as
elements of $\mfg \hat \otimes R_+$.  But if we take coefficients in a
graded ring $R$, then we need to incorporate all of $\mfg$, not just
$\mfg^1$.  Observing that $\mfg^1 = (\mfg[1])^0$, the natural choice
of graded algebra is $\Sym((\mfg[1])^*)$.  We wanted a completed
algebra, so we take the completion $\hat S (\mfg[1])^*$, which has the
same continuous augmented maps to complete augmented rings $R$.  Then,
the points of $\hat S (\mfg[1])^*$ valued in an ordinary (non-dg)
complete augmented algebra $R$ are in bijection with elements of
$\mfg^1 \hat \otimes R$, as desired.

Thus, we consider the completed symmetric algebra $\hat S
(\mfg[1])^*$.  In the case $\mfg$ is nonabelian, we can account for
the Lie bracket by deforming the differential on $\hat S (\mfg[1])^*$,
so that the spectrum consists of Maurer-Cartan elements rather than
all of $H^1(\mfg)$.

The result is the \emph{Chevalley-Eilenberg complex of $\mfg$}, which
you may already know as the complex computing the Lie algebra
cohomology of $\mfg$.  
\begin{remark}\label{r:top-dual}
  We need to consider here $(\mfg[1])^*$ as the \emph{topological}
  dual to $\mfg[1]$. Since $\mfg$ is considered as discrete (hence
  $\mfg^i = \lim_{\underset{V \subseteq \mfg^i \text{
        f.~d.}}{\rightarrow}} V$ is an inductive limit of its
  finite-dimensional subspaces), the dual $\mfg^*$ is the topological
  dg vector space $\bigoplus_{m \in \bZ} (\mfg^m)^*[m]$, where each
  $(\mfg^m)^*$ is equipped with a not-necessarily discrete topology,
  given by the \emph{inverse} limit of the finite-dimensional
  \emph{quotients} $(\mfg^m)^* \onto V^*$, which are the duals of the
  finite-dimensional subspaces $V \subseteq \mfg^m$:
\[
(\mfg^m)^* := \lim_{\underset{(\mfg^m)^* \onto V^* \text{
      f.d.}}{\leftarrow}} V^*.
\]
This is an inverse limit of finite-dimensional vector
spaces.\footnote{Such a vector space is sometimes a
  \emph{pseudocompact vector space}.  Another term that appears in
  some literature is \emph{formal vector space}, not to be confused,
  however, with the notion of formality we are discussing as in
  Kontsevich's theorem!}  It is discrete if and only if $V$ is finite-dimensional.
\end{remark}
\begin{definition}\label{d:ce}
  The Chevalley-Eilenberg complex is the complete dg commutative
  algebra $C_{CE}(\mfg) := (\hat S (\mfg[1])^*, d_{CE})$, where $d$ is
  the derivation such that $-d_{CE}(x) = d_\mfg^*(x) + \frac{1}{2}
  \delta_\mfg(x)$, where the degree one map $\delta_\mfg: \mfg[1]^* \to \Sym^2
  \mfg[1]^*$ is the dual of the Lie bracket $\wedge^2 \mfg \to \mfg$.
\end{definition}
\begin{proposition}\label{p:mc-ce}
  Let $R = \bk \oplus R_+$ be a dg commutative complete augmented
  $\bk$-algebra.  Then there is a canonical bijection between
  continuous dg commutative augmented algebra morphisms $C_{CE}(\mfg)
  \to R$ and Maurer-Cartan elements of $\mfg \hat \otimes R$, given by
  restricting to $\mfg[1]^*$.
\end{proposition}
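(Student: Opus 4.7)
The plan is to construct the bijection in two stages: first, identify continuous augmentation-preserving graded-commutative algebra morphisms $\hat S(\mfg[1])^* \to R$ (ignoring differentials) with elements of $(\mfg \hat\otimes R_+)^1$; second, show that the dg condition cuts out exactly the Maurer-Cartan locus.

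For the first stage, I would invoke the universal property of the completed symmetric algebra: since $\mfg[1]^*$ generates $\hat S(\mfg[1])^*$ as a completed augmented algebra, any continuous augmentation-preserving algebra morphism $\Phi$ is determined by its restriction $\phi := \Phi|_{\mfg[1]^*}: \mfg[1]^* \to R_+$. Conversely, given any continuous degree-zero linear map $\phi: \mfg[1]^* \to R_+$, the formula $\alpha_1 \cdots \alpha_n \mapsto \phi(\alpha_1)\cdots \phi(\alpha_n)$ assembles into a continuous algebra morphism, because the image of $\Sym^n(\mfg[1])^*$ lies in $R_+^n$ and $R$ is complete with respect to the $R_+$-adic topology; this is precisely where pronilpotency is used to ensure convergence. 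Under the standard topological duality, continuous degree-zero linear maps $\mfg[1]^* \to R_+$ are exactly degree-zero elements of $\mfg[1] \hat\otimes R_+$, i.e., elements $\xi \in (\mfg \hat\otimes R_+)^1$.

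For the second stage, I would exploit the fact that both $\Phi \circ d_{CE}$ and $d_R \circ \Phi$ are $\Phi$-derivations $\hat S(\mfg[1])^* \to R$ of degree $+1$ (using that $d_{CE}$ and $d_R$ are graded derivations and $\Phi$ is an algebra map). Hence these two maps agree on all of $\hat S(\mfg[1])^*$ if and only if they agree on the generating subspace $\mfg[1]^*$. For $\alpha \in \mfg[1]^*$, writing $\langle -,-\rangle$ for the pairing extended $R$-linearly to $\mfg[1]^* \otimes (\mfg \hat\otimes R_+) \to R_+$, the definition of $d_{CE}$ yields
\[
\Phi(d_{CE}\alpha) = \Phi(d_\mfg^* \alpha) + \tfrac{1}{2}\Phi(\delta_\mfg \alpha) = \langle \alpha, d_\mfg \xi\rangle + \tfrac{1}{2}\langle \alpha, [\xi,\xi]\rangle,
\]
because $d_\mfg^*$ is dual to $d_\mfg$, and $\delta_\mfg: \mfg[1]^* \to \Sym^2(\mfg[1])^*$ is dual (under the Koszul shift turning $\wedge^2$ into $\Sym^2$) to the bracket $[-,-]: \Sym^2(\mfg[1]) \to \mfg[1]$, so that applying the multiplicative map $\Phi$ to the quadratic element $\delta_\mfg \alpha$ produces the pairing of $\alpha$ with $[\xi,\xi]$. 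Similarly $d_R \Phi(\alpha) = d_R\langle \alpha, \xi\rangle = \langle \alpha, d_R \xi\rangle$. Equating the two expressions and ranging over all $\alpha \in \mfg[1]^*$ yields exactly $d\xi + \tfrac{1}{2}[\xi,\xi] = 0$, where $d$ denotes the total differential on $\mfg \hat\otimes R_+$.

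The main obstacle is not conceptual but sign-bookkeeping: carefully tracking the shifts so that (i) the duality $\mfg[1] \leftrightarrow \mfg[1]^*$ turns the degree-$(+1)$ bracket on $\mfg$ into a degree-$(+1)$ coderivation $\delta_\mfg$ landing in the symmetric square, (ii) the Koszul signs in the derivation property make $d_{CE}^2 = 0$ equivalent to the full dgla axioms on $\mfg$, so that $C_{CE}(\mfg)$ is actually a dg algebra, and (iii) the signs in the total differential on $\mfg \hat\otimes R_+$ line up with those implicit in $d_R\circ \langle \alpha, - \rangle$. Once these conventions are fixed coherently, the Maurer-Cartan equation emerges directly from the computation above, and the inverse map is clearly given by $\xi \mapsto \Phi_\xi$ where $\Phi_\xi(\alpha_1 \cdots \alpha_n) = \langle \alpha_1, \xi\rangle \cdots \langle \alpha_n, \xi\rangle$, completing the bijection.
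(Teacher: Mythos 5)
Your proof is correct and follows the same two-stage strategy as the paper: first identify continuous algebra morphisms out of the completed symmetric algebra with degree-zero continuous linear maps $\mfg[1]^* \to R_+$ (equivalently, elements of $(\mfg\hat\otimes R_+)^1$), then show that compatibility with the differential is precisely the Maurer--Cartan equation. You add worthwhile detail the paper elides—notably the observation that $\Phi\circ d_{CE}$ and $d_R\circ\Phi$ are both $\Phi$-derivations, so their equality need only be checked on the topological generators $\mfg[1]^*$, and the explicit role of pronilpotency in making the symmetric-algebra universal property converge—but the underlying argument is the same as the paper's.
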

\begin{proof}
  It is clear that, if we do not consider the differential, continuous
  algebra homomorphisms $C_{CE}(\mfg) \to R$ are in bijection
  with continuous graded maps $\chi: \mfg[1]^* \to R_+$.  Such elements,
  because of the continuity requirement, are the same as elements
  $x_\chi \in \mfg[1] \hat \otimes R_+$. Then, $\chi$ commutes with the
  differential if and only if $\chi \circ d_\mfg^* + \frac{1}{2} (\chi
  \otimes \chi) \circ \delta_\mfg + d \circ \chi=0$, i.e., if and only if
  $d(x_\chi) + \frac{1}{2} [x_\chi,x_\chi] = 0$.
\end{proof}
\begin{corollary}\label{c:mc-bij}
There is a canonical bijection
\begin{multline}
  \{\text{Functorial in $R$ maps $F: MCE(\mfg \hat \otimes R_+) \to
    \MCE(\mfh \hat \otimes R_+)$}
  \} \\
  \leftrightarrow \{\text{continuous dg commutative augmented morphisms $F^*: C_{CE}(\mfh) \to
    C_{CE}(\mfg)$}\},
\end{multline}
where $R$ ranges over dg commutative complete augmented
$\bk$-algebras.
\end{corollary}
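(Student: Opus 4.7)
The plan is to deduce the corollary from Yoneda's lemma, using Proposition \ref{p:mc-ce} as the representability statement for the functor $R \mapsto \MCE(\mfg \hat\otimes R)$ on the category of dg commutative pronilpotent augmented $\bk$-algebras. Concretely, Proposition \ref{p:mc-ce} identifies this functor with the representable functor $R \mapsto \Hom^{\mathrm{cts}}_{\mathrm{dgca}}(C_{CE}(\mfg), R)$, and the same for $\mfh$. Hence natural transformations between the two are, by Yoneda, the same as continuous dg commutative morphisms $C_{CE}(\mfh) \to C_{CE}(\mfg)$ in the opposite direction.

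Explicitly, given $F^* \colon C_{CE}(\mfh) \to C_{CE}(\mfg)$, I would define the associated natural transformation by the composite
\[
\MCE(\mfg \hat\otimes R) \iso \Hom^{\mathrm{cts}}(C_{CE}(\mfg), R) \xrightarrow{(-)\circ F^*} \Hom^{\mathrm{cts}}(C_{CE}(\mfh), R) \iso \MCE(\mfh \hat\otimes R),
\]
where the outer bijections are those of Proposition \ref{p:mc-ce}; functoriality in $R$ is immediate from the associativity of composition. For the reverse direction, I would observe that $C_{CE}(\mfg)$ is itself an object of the ambient category, since its augmentation ideal $\hat S^{\geq 1}(\mfg[1])^*$ is pronilpotent by construction ($\hat S$ being the completion with respect to powers of this ideal, so that $\cap_m I^m = 0$). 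Plugging $R := C_{CE}(\mfg)$ into Proposition \ref{p:mc-ce}, the identity morphism $\Id \colon C_{CE}(\mfg) \to C_{CE}(\mfg)$ corresponds to a \emph{universal} Maurer-Cartan element $x_{\mathrm{univ}} \in \mfg[1] \hat\otimes C_{CE}(\mfg)$. Then $F(x_{\mathrm{univ}}) \in \MCE(\mfh \hat\otimes C_{CE}(\mfg))$ is, by Proposition \ref{p:mc-ce} applied to $\mfh$, the data of a continuous dg commutative morphism $F^* \colon C_{CE}(\mfh) \to C_{CE}(\mfg)$.

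That these two operations are mutually inverse is the standard Yoneda argument: any functorial map out of a representable functor is determined by its value on the universal element, and unwinding the definitions shows that feeding $F^*$ through the first construction and then extracting the universal element recovers $F^*$. I expect the only real obstacle to be the bookkeeping with completions and topologies: one must verify that $C_{CE}(\mfg)$ genuinely lies in the stated category of pronilpotent dg commutative augmented algebras, and that the notion of ``continuous'' morphism out of $C_{CE}(\mfg)$ into such an $R$ matches precisely the $R$-valued points appearing in Proposition \ref{p:mc-ce} (i.e., is controlled by the filtration by powers of the augmentation ideal). Once these topological details are arranged, the argument is purely formal.
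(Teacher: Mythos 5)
Your proposal is correct and takes essentially the same route as the paper: both prove the corollary as a Yoneda-type statement, using Proposition \ref{p:mc-ce} to identify $R \mapsto \MCE(\mfg \hat\otimes R)$ with the functor represented by $C_{CE}(\mfg)$, define the forward direction by composition with the pullback $F^*$, and recover $F^*$ from $F$ by evaluating at $R = C_{CE}(\mfg)$ on the universal Maurer--Cartan element corresponding to $\Id_{C_{CE}(\mfg)}$. Your explicit note that $C_{CE}(\mfg)$ itself lies in the test category (pronilpotency of $\hat S^{\geq 1}(\mfg[1])^*$, and the matching of the continuity conditions) is a worthwhile point that the paper leaves implicit, but it does not change the argument.
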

\begin{proof}
  This is a Yoneda type result: given a continuous dg commutative
  augmented morphism $C_{CE}(\mfh) \to C_{CE}(\mfg)$, the pullback
  defines a map $MCE(\mfg \hat \otimes R_+) \to \MCE(\mfh \hat \otimes
  R_+)$ for every $R$ as described, which is functorial in $R$.
  Conversely, given the functorial map $\MCE(\mfg \hat \otimes R_+)
  \to \MCE(\mfh \hat \otimes R_+)$, we apply it to $R = C_{CE}(\mfg)$
  itself.  Then, by Proposition \ref{p:mc-ce}, the identity map
  $C_{CE}(\mfg) \to R$ yields a Maurer-Cartan element $I \in \mfg \hat
  \otimes C_{CE}(\mfg)_+$ (the ``universal'' Maurer-Cartan
  element). Its image in $\MCE(\mfh \hat \otimes C_{CE}(\mfg)_+)$
  yields, by Proposition \ref{p:mc-ce}, a continuous dg commutative
  augmented morphism $C_{CE}(\mfh) \to C_{CE}(\mfg)$.  It is
  straightforward to check that these maps are inverse to each other.
\end{proof}
\begin{definition}\label{d:linf-dgca}
  An $L_\infty$ morphism $F: \mfg \to \mfh$ is a continuous dg
  commutative augmented morphism $F^*: C_{CE}(\mfh) \to C_{CE}(\mfg)$.
\end{definition}
\begin{remark}
  If we remove the requirement ``augmented,'' then one obtains
  so-called \emph{curved $L_\infty$} morphisms.
\end{remark}
\begin{exercise}\label{exer:linf-dgla}
  Show that a dgla morphism is an $L_\infty$ morphism. More precisely,
  show that a dgla morphism $\mfg \to \mfh$ induces a canonical
  continuous dg commutative morphism $C_{CE}(\mfh) \to C_{CE}(\mfg)$
  (note that one can define a canonical linear map owing to the dual
  in the definition of $C_{CE}$ (Definition \ref{d:ce}); you need to
  show it is actually a morphism of dg commutative algebras).
\end{exercise}

We will refer to $F^*$ as the \emph{pullback} of $F$.
Thus, Corollary \ref{c:mc-bij} can be alternatively stated as
\begin{multline}\label{e:mc-bij2}
  \{\text{Functorial in $R$ maps $F: \MCE(\mfg \hat \otimes R_+) \to
    \MCE(\mfh \hat \otimes R_+)$}
  \} \\
  \leftrightarrow \{\text{$L_\infty$ morphisms $F: \mfg \to \mfh$}\}.
\end{multline}
Finally, the above extends to describe quasi-isomorphisms:
\begin{definition}
  An $L_\infty$ quasi-isomorphism is a $L_\infty$ morphism which is an
  isomorphism on homology, i.e., a dg commutative augmented
  quasi-isomorphism $F^*: C_{CE}(\mfh) \to C_{CE}(\mfg)$.
\end{definition}
This can also be called a homotopy equivalence of dglas.
\begin{theorem}
If $F$ is a quasi-isomorphism, then the above functorial map is a bijection
on gauge equivalence classes.
\end{theorem}
The theorem is part of \cite[Theorem 4.6]{Kform}, but is older and
considered standard.  (For example, to see that quasi-isomorphisms
admit quasi-inverses, see \cite{Hin-haha};
this is the dg version of the statement that a map of formal
neighborhoods of the origin of two vector spaces is an isomorphism if
and only if it is an isomorphism on tangent spaces (the formal inverse function theorem).  Using this, the
statement reduces to showing that a quasi-isomorphism which is the
identity on cohomology is also the identity on gauge equivalence
classes.)
\begin{remark}
  In fact, the above theorem can be significantly strengthened: the
  Maurer-Cartan set $\MCE(\mfg \hat \otimes R_+)$ is not just a set
  with gauge equivalences, but in fact a simplicial complex. The
  statement that $\MCE(\mfg \hat \otimes R_+) \to \MCE(\mfh \hat
  \otimes R_+)$ is a bijection on gauge equivalences is the same as
  the statement that it induces an isomorphism on $\pi_0$.  In fact, a
  quasi-isomorphism $\mfg \to \mfh$ induces a homotopy equivalence
  $\MCE(\mfg \hat \otimes R_+) \simeq \MCE(\mfh\hat \otimes R_+)$
  \cite[Proposition 4.9]{Get-ltnla}, which is stronger.
\end{remark}
\begin{remark}\label{r:kd}
  In fact, everything we have discussed above is an instance of Koszul
  duality: the dgla $\mfg$ is (derived) Koszul dual to its
  Chevalley-Eilenberg complex $C_{CE}(\mfg)$, a dg commutative
  algebra; in general, if $A$ and $B$ are algebras of any type (e.g.,
  algebras over an operad) and $A^!$ is the (derived) Koszul dual of
  $A$ (an algebra of the Koszul dual type, e.g., an algebra over the
  Koszul dual operad), then Proposition \ref{p:mc-ce} generalizes to
  the statement: Homotopy (i.e., infinity) morphisms $A \to B$
  identify with Maurer-Cartan elements in $A^! \hat \otimes B$ (note
  that $A^! \hat \otimes B$, properly defined, always has a dgla or at
  least an $L_\infty$-algebra structure). Similarly, Corollary
  \ref{c:mc-bij} and \eqref{e:mc-bij2} generalize to: Homotopy
  morphisms $A \to B$ are in bijection with functorial maps $\MCE(A
  \hat \otimes C) \to \MCE(B \hat \otimes C)$ where $C$ is an algebra
  of the Koszul dual type.  (Note that, taking $C = A^!$, the element
  in $\MCE(B \hat \otimes A^!)$ corresponding to the original morphism
  is the image of the canonical element in $\MCE(A \hat \otimes A^!)$
  corresponding to the identity.)
\end{remark}

\subsection{Explicit definition of $L_\infty$ morphisms}
Let us write out explicitly what it means to be an $L_\infty$
morphism.  Let $\mfg$ and $\mfh$ be dglas.  Then an $L_\infty$
morphism is a continuous commutative dg algebra morphism $F^*:
C_{CE}(\mfh) \to C_{CE}(\mfg)$. Since $C_{CE}(\mfh)$ is the symmetric
algebra on $\mfh[1]^*$, this map is uniquely determined by its
restriction to $\mfh[1]^*$. We then obtain a sequence of maps
\[
F_m^*: \mfh[1]^* \to \Sym^m \mfg[1]^*,
\]
or dually,
\[
F_m: \Sym^m \mfg[1] \to \mfh[1].
\]
The $F_m^*$ are the Taylor coefficients of $F^*$, since they are the parts
of $F^*$ of polynomial degree $m$, i.e., the order-$m$ Taylor coefficients
of the map on Maurer-Cartan elements.

The condition that $F^*$ commute with the differential says that 
\[
F^*(d^*_{\mfh}(x)) + F^*(\frac{1}{2} \delta_{\mfh}(x)) =
d^*_{\mfg}F^*(x) + \frac{1}{2} \delta_{\mfg}F^*(x).
\]
Writing this in terms of the $F_m$, we obtain that, for all $m \geq 1$,
\begin{equation}\label{e:linf-expl}
  d_{\mfh} \circ F_m + \frac{1}{2}\sum_{i+j=m} [F_i, F_j]_{\mfh} =
  F_m \circ d_{CE,\mfg},
\end{equation}
where $d_{CE,\mfg}$ is the Chevalley-Eilenberg differential for $\mfg$.

Now, let us specialize to $\mfg = T_\poly$ and $\mfh = D_\poly$, with
$F_m = \mathcal{U}_m$ for all $m \geq 1$.  In terms of Kontsevich's
graphs, the second term of \eqref{e:linf-expl} involves a sum over all
ways of combining two graphs together by a single edge to get a larger
graph, multiplying the weights for those graphs. The last term (the
RHS) of \eqref{e:linf-expl} involves summing over all ways of
expanding a graph by adding a single edge.  The first term on the LHS
says to apply the Hochschild differential to the result of all graphs,
and this can be suppressed in exchange for adding to Kontsevich's map
$\mathcal{U}$ a term $\mathcal{U}_0: \bR = \Sym^0 (T_\poly[1]) \to
D_\poly[1]$ which sends $1 \in \bR$ to $\mu_A \in C^2(A)$.  (Recall
that $\bk=\bR$ for Kontsevich's construction; this is needed to define
the weights of the graphs.)

\subsection{Formality in terms of a $L_\infty$ quasi-isomorphism}
We deduce from the preceding material that the formality theorem, Theorem
\ref{t:form-mc}, can be restated as
\begin{theorem}\cite{Kform,Kon-dqav,Yek-dqag,DTT-hgahcraf}
\label{t:form-linf}
There is an $L_\infty$ quasi-isomorphism,
\[
T_\poly(X) \iso D_\poly(X).
\]
\end{theorem}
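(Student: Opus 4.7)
The plan is to follow Kontsevich's explicit construction: build an $L_\infty$-morphism $\mathcal{U}: T_\poly \to D_\poly$ whose linear term recovers $\HKR$, then verify both the $L_\infty$-relations and the quasi-isomorphism property. I would work first in the case $X=\bR^d$, deferring globalization to the descent/gluing arguments of \cite{Kon-dqav,Yek-dqag,DTT-hgahcraf}. The Taylor coefficients are
\[
\mathcal{U}_m(\gamma_1 \cdots \gamma_m) = \sum_{\Gamma} W_\Gamma \cdot B_{\Gamma,\gamma_1, \ldots, \gamma_m},
\]
summing over admissible labelled graphs $\Gamma$ with $m$ ``aerial'' vertices in the open upper half-plane and some number $n$ of ``terrestrial'' sink vertices on the real axis, where aerial vertex $i$ has outgoing valence equal to the tensor degree of $\gamma_i$. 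The polydifferential operator $B_{\Gamma,\gamma_1,\ldots,\gamma_m}$ is built as in \S \ref{ss:dq-fla} by contracting the tensor components of the $\gamma_i$ along edges, and the weights are integrals
\[
W_\Gamma = \frac{1}{(2\pi)^{|E(\Gamma)|}} \int_{\overline{C}_{m,n}^+} \bigwedge_{e \in E(\Gamma)} d\phi_e
\]
over a connected component of Kontsevich's Fulton--MacPherson-type compactification of the configuration space, where $\phi_e$ is the Lobachevsky angle associated to the edge $e$.

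The central step is to verify the explicit $L_\infty$-morphism identity \eqref{e:linf-expl} for every $m$. I would apply Stokes' theorem to the closed top-degree form $\bigwedge_e d\phi_e$ on $\overline{C}_{m,n}$: then $\int_{\partial \overline{C}_{m,n}} \bigwedge_e d\phi_e = 0$. The boundary decomposes into codimension-one strata indexed by subsets $S$ of vertices collapsing together, falling into two types. In Type I, $S$ consists solely of aerial vertices collapsing to a single interior point; summing over compatible $(\Gamma,S)$ with $|S|=i$ and $|\{\text{remaining aerial vertices}\}|=j$ produces exactly the bracket term $\tfrac{1}{2}\sum_{i+j=m}[\mathcal{U}_i,\mathcal{U}_j]_{D_\poly}$. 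In Type II, $S$ contains a consecutive block of terrestrial vertices together with some aerial ones; these contribute both the Hochschild differential $d_{D_\poly}\mathcal{U}_m$ (when one aerial vertex collapses with adjacent terrestrials) and, for $|S|=2$ aerial vertices collapsing together with a small cluster, the Schouten--Nijenhuis term $\mathcal{U}_m \circ d_{CE,T_\poly}$. Identity \eqref{e:linf-expl} then follows from the vanishing of the total boundary integral.

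The main obstacle is proving that all other boundary strata contribute zero. This rests on several geometric vanishing lemmas. The crucial one, Kontsevich's ``double-edge'' lemma, uses $d\phi \wedge d\phi = 0$ to kill graphs with two edges joining the same pair of vertices; this is what makes contractions with a skew bivector produce a Poisson structure rather than something asymmetric. A second vanishing, for Type I strata with $|S| \ge 3$ where the induced subgraph has excess edges, reduces via the rescaling symmetry of the collapsed cluster to integrating a closed form over a quotient of a sphere of the wrong dimension. A parallel argument, using the action of the stabilizer of the real axis, rules out Type II strata where more than two consecutive terrestrial vertices collapse, or where an aerial subcluster collapses to the real axis with wrong dimension. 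A careful combinatorial bookkeeping then matches the surviving strata exactly with the two sides of \eqref{e:linf-expl}.

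Finally, I would observe that $\mathcal{U}_1$ is the sum over graphs with one aerial vertex and any number of sinks; direct computation of the angle integrals collapses this to the HKR map, which is a quasi-isomorphism of underlying complexes by Theorem \ref{t:hkr}. Since an $L_\infty$-morphism is a quasi-isomorphism precisely when its linear component is, this proves Theorem \ref{t:form-linf} for $X=\bR^d$. Globalization to smooth affine varieties over an arbitrary field of characteristic zero (or to general $C^\infty$ manifolds) then follows by twisting by a formal affine connection on the bundle of formal coordinate systems and descending the fibrewise $L_\infty$-morphism, as carried out in \cite{Kon-dqav,Yek-dqag,DTT-hgahcraf}.
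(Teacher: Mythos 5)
Your proposal follows the same route as the paper's (which is itself only a high-level pointer to Kontsevich's construction): define Taylor coefficients $\mathcal{U}_m$ as sums of graph operators weighted by configuration-space integrals, verify \eqref{e:linf-expl} by Stokes' theorem over $\overline{C}_{m,n}$, use vanishing lemmas to kill bad boundary strata, and observe $\mathcal{U}_1 = \HKR$. But you have the two types of codimension-one boundary strata matched to the wrong sides of the $L_\infty$ identity, and this is not a harmless relabeling.

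When a set $S$ of aerial vertices collapses to a point in the \emph{interior} of the upper half-plane (your Type I), the integral over the internal factor $C_S$ produces a number, and the collapsed cluster is replaced by a single new \emph{aerial} vertex whose outgoing valence is the number of external edges: geometrically this manufactures a new polyvector field. Kontsevich's vanishing lemma kills these unless $|S|=2$, and for $|S|=2$ the surviving contribution is precisely the Schouten--Nijenhuis bracket of the two polyvectors inserted into $\mathcal{U}_{m-1}$. So Type I accounts for the right-hand side $F_m \circ d_{CE, T_\poly}$ of \eqref{e:linf-expl}. Conversely, when a cluster of aerial points plus a block of consecutive terrestrial points collapses to the \emph{real axis} (your Type II), the internal integral produces a polydifferential operator which is then fed into a new terrestrial sink of the external graph; summing these strata builds the circle products, hence the Gerstenhaber bracket $\tfrac12\sum_{i+j=m}[\mathcal{U}_i,\mathcal{U}_j]_{D_\poly}$, and the purely terrestrial collapses contribute the $d_{D_\poly}\mathcal{U}_m$ terms. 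You have these exactly reversed: you attribute $[\mathcal{U}_i,\mathcal{U}_j]_{D_\poly}$ to interior collapse and the Schouten term to axis collapse. The geometry does not allow this: an interior collapse never produces an operator insertion slot, and an axis collapse never produces a polyvector. Two smaller slips: the Stokes argument applies to the (closed) form $\bigwedge_e d\phi_e$ of degree $\dim\overline{C}_{m,n}-1$, not a top-degree form; and the ``double-edge'' vanishing is a simple observation about degenerate graphs, not the crucial lemma --- the load-bearing vanishing is Kontsevich's lemma ruling out interior collapses of $\geq 3$ points.
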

The proof is accomplished for $X=\bR^n$ in \cite{Kform} by finding
weights $w_\Gamma$ to attach to all of the graphs $\Gamma$ described
above, so that the explicit equations of the preceding section are
satisfied. These explicit equations are quadratic in the weights, and
are of the form, for certain graphs $\Gamma$, denoting by $|\Gamma|$
the number of edges of $\Gamma$,
\[
\sum_{|\Gamma_1|+|\Gamma_2|=|\Gamma|} c(\Gamma,\Gamma_1,\Gamma_2) w_{\Gamma_1} w_{\Gamma_2}
+ \sum_{|\Gamma'|=|\Gamma|-1} c(\Gamma,\Gamma') w_{\Gamma'} = 0,
\] 
for suitable coefficients $c(\Gamma,\Gamma_1,\Gamma_2)$ and
$c'(\Gamma,\Gamma')$.  In fact, the graphs that appear are all of the
following form: in the first summation, $\Gamma_1 \subseteq \Gamma$ is
a subgraph which is incident to the real line, and $\Gamma_2 =
\Gamma/\Gamma_1$ is obtained by contracting $\Gamma_1$ to a point on
the real line.  In the second summation, $\Gamma'$ is obtained by
contracting a single edge in $\Gamma$.  The coefficient
$c(\Gamma,\Gamma_1,\Gamma_2)$ is a signed sum of the ways of realizing
$\Gamma_1 \subseteq \Gamma$ such that $\Gamma_2=\Gamma/\Gamma_1$, and
the coefficient $c(\Gamma,\Gamma')$ is a signed sum over edges $e$ in
$\Gamma$ such that $\Gamma/e = \Gamma'$.

To find weights $w_\Gamma$ satisfying these equations, Kontsevich
defines the $w_\Gamma$ as certain integrals over partially
compactified configuration spaces of vertices of the graph, such that
the above sum follows from Stokes' theorem for the configuration space
$C_\Gamma$ associated to $\Gamma$, whose boundary strata are of the
form $C_{\Gamma_1 \times \Gamma_2}$ or $C_{\Gamma'}$ (the technical
part of the resulting proof involves showing that the terms from
Stokes' theorem not appearing in the desired identity vanish).
\begin{remark}\label{r:ginf}
  In fact, $T_\poly$ and $D_\poly$ are quasi-isomorphic not merely as
  $L_\infty$ algebras but in fact as homotopy Gerstenhaber
  (``$G_\infty$'') algebras, i.e., including the structure of cup
  product (and additional ``brace algebra'' structures in the case of
  $D_\poly$). Although, by Theorem \ref{t:hkr}, the HKR morphism is
  compatible with cup product on cohomology,\footnote{The
    analogous sheaf-theoretic statement for nonaffine smooth varieties
    is no longer true: see, e.g., \cite{CV-HcAc,CV-GfGl}} the
  preceding statement is much stronger than this. 
  In \cite{DTT-hgahcraf} the main result actually constructs a
  homotopy Gerstenhaber equivalence between $T_\poly(X)$ and
  $D_\poly(X)$, in the general smooth affine algebraic setting over
  characteristic zero; in \cite{Wil-nBiKSif}, Willwacher completes
  Kontsevich's $L_\infty$ quasi-isomorphism to a homotopy Gerstenhaber
  quasi-isomorphism (which requires adding additional Taylor series
  terms).  In fact, in \cite{Wil-nBiKSif}, Willwacher shows that
  Kontsevich's morphism lifts to a ``$KS_\infty$ quasi-isomorphism''
  of pairs 
\[
(\HH^\bullet(\cO(X)), \HH_\bullet(\cO(X))) \iso (C^\bullet(\cO(X)),
C_\bullet(\cO(X))),
\] where we equip the
  Hochschild homology with the natural operations by the contraction
  and Lie derivative operations from Hochschild cohomology (i.e., the
  calculus structure), and similarly equip Hochschild chains with the
  analogous natural operations by Hochschild cochains.
\end{remark}

\subsection{Twisting the $L_\infty$-morphism; Poisson and Hochschild
  cohomology}\label{ss:tw-linf}
Given a formal Poisson
structure $\pi_\hbar \in \MCE(\hbar \cdot T_\poly(X)[\![\hbar]\!])$
and its image star product $\star$, corresponding to the element
$\mathcal{U}(\pi_\hbar) \in \MCE(\hbar \cdot
D_\poly(X)[\![\hbar]\!])$, we obtain a quasi-isomorphism of twisted
dglas,
\begin{equation}\label{e:tw-kont-quism}
  T_\poly(X)(\!(\hbar)\!)^{\pi_\hbar} \iso
  D_\poly(X)(\!(\hbar)\!)^{\mathcal{U}(\pi_\hbar)}.
\end{equation}
This has an important meaning in terms of Poisson and Hochschild cohomology.
Namely, the RHS computes the Hochschild cohomology of the algebra
$(\cO(X)(\!(\hbar)\!), \mathcal{U}(\pi_\hbar))$, which follows from the
following result, cf.~Remark \ref{r:mc-assoc}:
\begin{lemma}\label{l:zero-mult}
Let $A_0$ be a vector space, viewed as an algebra
with the zero multiplication, and let $A=(A_0,\mu)$ be
an associative algebra with multiplication map $\mu \in C^2(A_0)$.
Then $C^\bullet(A) = C^\bullet(A_0)^\mu$.
\end{lemma}
The proof follows, as in Remark \ref{r:mc-assoc}, because $C^\bullet(A)$ and
$C^\bullet(A_0)$ have the same underlying graded vector space; the differential on $C^\bullet(A_0)$ is zero, and one checks that the differential on $C^\bullet(A)$ is the operation $[\mu,-]$ of taking the Gerstenhaber bracket with $\mu$.
\begin{corollary} \label{c:two-mults}
If $\mu, \mu' \in C^2(A_0)$ are two different associative
multiplications on $A_0$, then setting $A=(A_0,\mu)$ and
$A'=(A_0,\mu')$, we have $C^2(A')=C^2(A)^{\mu'-\mu}$.
\end{corollary}
Thus, applying the corollary to the situation where $A_0 = \cO(X)[\![\hbar]\!]$, with $\mu$ the undeformed multiplication 
and $\mu'$ the one corresponding to $\star$ (i.e., $\mu + \mathcal{U}(\pi_\hbar)$), we obtain the promised
\begin{corollary}
$\HH^*(\cO(X)[\![\hbar]\!],\star) \cong H^*(D_\poly(X)(\!(\hbar)\!)^{\mathcal{U}(\pi_\hbar)})$.
\end{corollary}
Similarly, we can interpret the first term as the Poisson cohomology
of the Poisson algebra $(\cO(X)(\!(\hbar)\!),\pi_\hbar)$:
\begin{definition}[\cite{Br}]
Let $X$ be a smooth affine variety with a Poisson bivector
$\pi$. Then, the Poisson cohomology of 
$X$ is the cohomology of the dgla $T_\poly(X)^{\pi}$.
\end{definition}
\begin{remark}
  The definition of Poisson cohomology in the general nonsmooth affine
  context is quite different, and is expressed as the cohomology of a
  canonical differential on the Lie algebra of derivations of the free
  Poisson algebra generated by $\cO(X)^*[1]$, taking the topological
  dual as in Remark \ref{r:top-dual}. Note that this is completely
  analogous to the definition of Hochschild cohomology, which can also
  be defined as the cohomology of a canonical differential on the Lie
  algebra of derivations of the free associative algebra generated by
  $A^*[1]$ (in general, the analogous cohomology for an algebra $A$ of
  any type is given as the cohomology of a free algebra of the Koszul
  dual type generated by $A^*[1]$ with a canonical differential,
  cf.~Remark \ref{r:kd}).  In the smooth affine case, one can show
  this coincides with the above definition.
\end{remark}
\begin{corollary} Working over $\bk(\!(\hbar)\!)$,
the Poisson cohomology of $(\cO(X)(\!(\hbar)\!),\pi_\hbar)$ is
$H^*(T_\poly(X)(\!(\hbar)\!)^{\pi_\hbar})$.
\end{corollary}
We conclude:
\begin{corollary}\label{c:hphh}
There is an isomorphism of graded vector spaces,
\begin{equation}\label{e:hphh}
  \HP^\bullet(\cO(X)(\!(\hbar)\!), \pi_\hbar) \iso 
  \HH^\bullet(\cO(X)(\!(\hbar)\!), \star).
\end{equation}
In particular, applied to degrees $0, 1$, and $2$, there
are canonical $\bk(\!(\hbar)\!)$-linear
isomorphisms:
\begin{enumerate}
\item[(i)] From the Poisson center of $(\cO(X)(\!(\hbar)\!),
  \pi_\hbar)$ to the center of $(\cO(X)(\!(\hbar)\!), \star)$;
\item[(ii)] From the outer derivations of $(\cO(X)(\!(\hbar)\!), \pi_\hbar)$ to the outer derivations of $(\cO(X)(\!(\hbar)\!), \star)$;
\item[(iii)] From infinitesimal Poisson deformations of
  $(\cO(X)(\!(\hbar)\!), \star)$ to infinitesimal algebra deformations
  of $(\cO(X)(\!(\hbar)\!), \star)$.
\end{enumerate}
\end{corollary}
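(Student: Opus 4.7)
The plan is to prove the stated isomorphism by realizing it as the cohomology of the twisted $L_\infty$-quasi-isomorphism \eqref{e:tw-kont-quism}, which in turn comes from twisting Kontsevich's formality quasi-isomorphism $\mathcal{U}: T_\poly(X) \to D_\poly(X)$ (Theorem \ref{t:form-linf}) by the Maurer-Cartan element $\pi_\hbar$. Once \eqref{e:tw-kont-quism} is established, the main statement follows immediately on cohomology: by definition the cohomology of $(T_\poly(X)(\!(\hbar)\!), [\pi_\hbar,-])$ is $\HP^\bullet(\cO_X(\!(\hbar)\!), \pi_\hbar)$, and the twisted differential $[\mu + \mathcal{U}(\pi_\hbar), -] = [\star, -]$ on $D_\poly(X)(\!(\hbar)\!)$ is precisely the Hochschild differential for the deformed product (cf.~Remark \ref{r:mc-assoc}), so its cohomology is $\HH^\bullet(\cO_X(\!(\hbar)\!), \star)$.

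To produce the twisted $L_\infty$-morphism I would use the standard explicit formula on Taylor coefficients,
\[
\mathcal{U}^{\pi_\hbar}_m(y_1 \otimes \cdots \otimes y_m) := \sum_{k \geq 0} \frac{1}{k!}\, \mathcal{U}_{m+k}(\pi_\hbar^{\otimes k} \otimes y_1 \otimes \cdots \otimes y_m),
\]
which converges $\hbar$-adically because $\pi_\hbar \in \hbar \cdot T_\poly(X)[\![\hbar]\!]$ forces the $k$-th term to lie in $\hbar^k \cdot D_\poly(X)[\![\hbar]\!]$. The verification that the $\mathcal{U}^{\pi_\hbar}_m$ satisfy the $L_\infty$-axioms \eqref{e:linf-expl} for the twisted source and target dglas is a direct rearrangement of the axioms for $\mathcal{U}$, using the Maurer-Cartan equation $d\pi_\hbar + \frac{1}{2}[\pi_\hbar,\pi_\hbar]=0$ to cancel the terms involving the new differentials; this is dual to the statement that $\mathcal{U}$ sends Maurer-Cartan elements to Maurer-Cartan elements, and can be read off cleanly from the pullback description of Definition \ref{d:linf-dgca} applied to the twisted Chevalley-Eilenberg complexes.

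The hard part will be showing that the first Taylor coefficient $\mathcal{U}^{\pi_\hbar}_1$ is a quasi-isomorphism of the twisted complexes. The key observation is that since $\pi_\hbar$ and $\mathcal{U}(\pi_\hbar)$ are divisible by $\hbar$, reduction modulo $\hbar$ collapses the twisted differentials to the original ones ($0$ on $T_\poly$ and the ordinary Hochschild differential on $D_\poly$), and it collapses $\mathcal{U}^{\pi_\hbar}_1$ to its $k=0$ term, namely $\mathcal{U}_1 = \HKR$. By the Hochschild-Kostant-Rosenberg theorem (Theorem \ref{t:hkr}) the mod-$\hbar$ reduction is a quasi-isomorphism. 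The $\hbar$-adic filtration on both twisted complexes is complete, Hausdorff, and compatible with the $\bk[\![\hbar]\!]$-linear map; since both sides are $\hbar$-torsion free, the associated spectral sequence converges and a mod-$\hbar$ quasi-isomorphism promotes to a quasi-isomorphism over $\bk[\![\hbar]\!]$, which remains a quasi-isomorphism after inverting $\hbar$. Parts (i)--(iii) then follow by reading off $H^0$, $H^1$, $H^2$ of each side: the Poisson center and algebra center in degree $0$, (outer) Poisson and associative derivations in degree $1$, and infinitesimal Poisson and associative deformations in degree $2$, using the identifications from Lecture 3.
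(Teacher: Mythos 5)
Your argument is correct and follows the same route as the paper: deduce the corollary by taking cohomology of the twisted $L_\infty$-quasi-isomorphism \eqref{e:tw-kont-quism}, itself obtained by twisting Kontsevich's formality map by the Maurer-Cartan element $\pi_\hbar$, identifying the two sides as the Lichnerowicz (Poisson) and Hochschild complexes for $\pi_\hbar$ and $\star$, and then reading off degrees $0$, $1$, $2$. The paper merely asserts \eqref{e:tw-kont-quism}; you usefully fill in the standard argument for why twisting preserves the quasi-isomorphism property, namely reduction mod $\hbar$ to the HKR map together with $\hbar$-adic completeness and $\hbar$-torsion-freeness.
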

\begin{remark}\label{r:ginf2}
  In fact, the above isomorphism \eqref{e:hphh} is an isomorphism of
  \emph{$\bk(\!(\hbar)\!)$-algebras}, and hence also the morphism (i)
  is an isomorphism of algebras, and the morphisms (ii)--(iii) are
  compatible with the module structures over the Poisson center on the
  LHS and the center of the quantization on the RHS.  This is highly
  nontrivial and was proved by Kontsevich in \cite[\S 8]{Kform}.  More
  conceptually, the reason why this holds is that Kontsevich's
  $L_\infty$ quasi-isomorphism lifts to a homotopy Gerstenhaber
  isomorphism, as we pointed out in Remark \ref{r:ginf}.
\end{remark}
We can now sketch a proof of Theorem \ref{t:fdef-symp}.  Suppose that
$X=(X,\omega)$ is an symplectic affine variety or $C^\infty$ manifold
equipped with its canonical Poisson bracket. Recall that the symplectic
condition means
that $\omega$ is a closed two-form such that the map $\xi \mapsto
i_\xi(\omega)$ defines an isomorphism $\omega^\sharp: \Vect(X) \to \Omega^1(X)$
between vector fields and one-forms.
  The corresponding Poisson structure is given by
$(\omega^\sharp)^{-1}: \Omega^1(X) \to \Vect(X)$, namely, $(\omega^\sharp)^{-1} =
\pi^\sharp: \alpha \mapsto i_\pi \alpha$. For ease of notation, we
will write $\pi = \omega^{-1}$.

Suppose that $X$ is symplectic as above.  Then, \cite{Br} shows that
the Poisson cohomology $\HP^\bullet(\cO(X))$ is isomorphic to the de
Rham cohomology of $X$.  Now, let $\pi_\hbar := \hbar \pi$ be the
obtained Poisson structure on $\cO(X)(\!(\hbar)\!)$; this corresponds
to the symplectic structure $\omega_\hbar := \hbar^{-1} \omega$.
Then, working over $\bk(\!(\hbar)\!)$, \cite{Br} implies the first
statement of the theorem.

For the second statement, we first caution that Theorem \ref{t:hh3=0}
\emph{cannot} be applied in general, since
$\HH^3(\cO(X)(\!(\hbar)\!),\star)\cong H_{DR}^3(X)(\!(\hbar)\!)$,
which need not be zero for general $X$. Nonetheless, a versal family
can be explicitly constructed: by \eqref{e:tw-kont-quism}, it suffices
to construct a versal family of deformations of the Poisson
structure on $(\cO(X)(\!(\hbar)\!),\pi_\hbar)$.  Let $\alpha_1,
\ldots, \alpha_k$ be closed two-forms on $X$ which map to a basis of
$H^2_{DR}(X)$.  Then we obtain the versal family of symplectic
structures $\hbar^{-1}(\omega + c_1 \alpha_1 + \cdots + c_k \alpha_k)$
over $\bk(\!(\hbar)\!)[\![c_1,\ldots,c_k]\!]$, and similarly the
versal family $\hbar (\omega + c_1 \alpha_1 + \cdots + c_k \alpha_k)^{-1}$
of Poisson deformations by inverting the elements of this family.

\subsection{Explicit twisting of $L_\infty$ morphisms}
We caution that, unlike in the untwisted case, \eqref{e:hphh} is
\textbf{not} obtained merely from the HKR morphism. Indeed, even in
degree zero, an element which is Poisson central for $\pi_\hbar$ need
not correspond in any obvious way to an element which is central in
the quantized algebra.  The only obvious statement one could make is
that we have a map modulo $\hbar$, where $\pi_\hbar = \hbar \cdot \pi
+ $ higher order terms, with $\pi$ an ordinary Poisson structure,
\[
Z(\cO(X), \pi) \leftarrow 
Z(\cO(X)[\![\hbar]\!], \star)/(\hbar) 
\]
which is quite different (and weaker) than the above statement.

To write the correct formula for the isomorphism \eqref{e:hphh}, we
need to discuss functoriality for twisting.  Namely,
given an $L_\infty$ morphism $F:
\mfg \to \mfh$
and a Maurer-Cartan
element $\xi \in \MCE(\mfg)$, we need to define an $L_\infty$ morphism
$F^\xi: \mfg^\xi \to \mfh^{F(\xi)}$. 
Given this, we
 obtain the twisted $L_\infty$ quasi-isomorphism
\eqref{e:tw-kont-quism}, then pass to cohomology to obtain
\eqref{e:hphh}.

For Kontsevich's morphism $\mathcal{U}$, which is explicitly defined
by $\mathcal{U}^*$, this produces the HKR isomorphism
$H^\bullet(T_\poly(X)) = \wedge^{\bullet+1}_{\cO(X)} \Vect(X) \to
H^\bullet(D_\poly(X)) \cong \HH^{\bullet+1}(\cO(X))$.  However, to
apply this to the twisted versions $\mathcal{U}^{\pi_\hbar}:
T_{\poly}(X)^{\pi_\hbar} \to D_{\poly}(X)^{\mathcal{U}(\pi_\hbar)}$,
we need an explicit formula for the pullback $(F^\xi)^*$. This is
nontrivial by the observation at the beginning of the subsection---on
cohomology one does \emph{not} obtain the HKR morphism.

By Proposition \ref{p:mc-twist}.(ii), there is a canonical map on Maurer-Cartan elements $F^{\xi}: \MCE(\mfg) \to \MCE(\mfh)$:
\begin{definition}
Set $F^{\xi}(\eta) := F(\eta+\xi)-F(\xi)$.  
\end{definition}
We extend this to a map $\MCE(\mfg \hat \otimes R_+) \to \MCE(\mfh \hat \otimes R_+)$ for all complete augmented $R$ functorially. 
Explicitly, this implies that the pullback $(F^\xi)^*$ can be defined
as follows:  $(F^{\xi})^* = T^*_{\xi} \circ F^* \circ T^*_{-F(\xi)}$, where
$T_\xi(\eta) = \eta+\xi$, so $T^*_\xi(x) = x-\xi(x)$ for $x \in \mfg[1]^*$ (and $\xi \in \MCE(\mfg) \subseteq \mfg^1$, so $\xi(x) = 0$ if $x \in (\mfg[1]^*)^m$ with $m \neq 0$).
This extends uniquely to a continuous augmented dg morphism.


Explicitly, the formula we obtain on Taylor coefficients $F^\xi_m$ is,
for $\eta_1, \ldots, \eta_m \in \mfg$,
\begin{gather}
  F^\xi_m(\eta_1 \wedge \cdots \wedge \eta_m) = \sum_{k \geq 0}
  {m+k \choose k} F^\xi_{m+k}(\xi^{\wedge k} \wedge \eta_1 \wedge \cdots \wedge
  \eta_m).
\end{gather}
Thus, the composition of $(F^\xi)^*$ with the projection yields the
map on cohomology,
\begin{equation}\label{e:kont-tgt}
  H^\bullet(\mfg^\xi) \to H^\bullet(\mfh^{F(\xi)}), \quad
  x \mapsto \sum_{m \geq 1} m F_m(x \wedge \xi^{\wedge(m-1)}).
\end{equation}
Note that this formula is not so obvious from the simple definition
above: one has to be careful to interpret that formula functorially so
as to be given by conjugating $F^*$ by a dg algebra isomorphism.

Let us now explain a geometric interpretation, which was used by
Kontsevich in \cite[\S 8]{Kform} (this also gives an alternative way
to derive \eqref{e:kont-tgt}).  He observes that the cohomology of the
twisted dgla $\mfg^\xi$ is the \emph{tangent space} in the moduli
space $\MCE(\mfg) / \sim$ to the Maurer-Cartan element $\xi$.  Here
$\sim$ denotes gauge equivalence.  This is because, if we fix $\xi$,
and differentiate the Maurer-Cartan equation $d(\xi+\eta)+\frac{1}{2}
[\xi+\eta, \xi+\eta]$ with respect to $\eta$, then the tangent space
is the kernel of $d+\ad \xi$ on $\mfg^1$, and two tangent vectors
$\eta$ and $\eta'$ are gauge equivalent if and only if they differ by
$(d+\ad \xi)(z)$ for $z \in \mfg^0$.  Functorially, this says that the
dg vector space $(\mfg, d+\ad \xi)$ is the dg tangent space to the
moduli space of Maurer-Cartan elements of $\mfg$, taken with
coefficients in arbitrary complete augmented dg commutative algebras:
this is because, if you use such a complete augmented dg commutative
algebra $R$ which is not concentrated in degree zero, then $\MCE(\mfg
\hat \otimes R_+)$ will detect cohomology of $\mfg$ which is not
merely in degree one. So its cohomology is the (cohomology of the)
tangent space.

Therefore, we will adopt Kontsevich's terminology and refer to
\eqref{e:kont-tgt} as the \emph{tangent map}
(in \cite[\S 8]{Kform}, it is denoted by $I_T$, at least in the
situation of \eqref{e:tw-kont-quism} with $X$ the dual to a
finite-dimensional Lie algebra, equipped with its standard Poisson
structure).

\subsection{The algebra isomorphism $(\Sym \mfg)^\mfg \iso Z(U\mfg)$
  and Duflo's isomorphism}\label{ss:duflo}
Now let $\mfg$ be a finite-dimensional Lie algebra and $X = \mfg^* =
\Spec \Sym \mfg$ the associated affine Poisson variety.  The Poisson
center $\HP^0(\cO(X)) = Z(\cO(X))$ is equal to $(\Sym \mfg)^\mfg$.  By
Remark \ref{r:ginf2}, Kontsevich's morphism induces an isomorphism of
algebras
\begin{equation}\label{e:pc-c-lie}
(\Sym \mfg)^\mfg[\![\hbar]\!] \iso Z(\Sym \mfg[\![\hbar]\!], \star).
\end{equation}
Moreover, by Example \ref{ex:symg-sp} and Exercise
\ref{exer:kont-ug-iso},
$(\Sym \mfg[\![\hbar]\!], \star) \cong U_\hbar \mfg$, so we obtain
from the above an isomorphism
\begin{equation}\label{e:pc-c-lie-u}
(\Sym \mfg)^\mfg[\![\hbar]\!] \iso Z(U_\hbar \mfg).
\end{equation}
We can check that this is actually defined over polynomials in
$\hbar$, since the Poisson bracket has polynomial degree $-1$ (i.e.,
$|\{f,h\}|=|f|+|h|-1$ for homogeneous $f$ and $h$), so that the target
of an element of $(\Sym \mfg)^\mfg$ is polynomial in $\hbar$.
Therefore we get an isomorphism $(\Sym \mfg)^\mfg[\hbar] \iso
Z(T\mfg[\hbar] / (xy-yx-\hbar\{x,y\})$, and further modding by
$(\hbar-1)$ we obtain an isomorphism
\[
(\Sym \mfg)^\mfg \iso Z(U\mfg).
\]
That such an isomorphism exists, for \emph{arbitrary}
finite-dimensional $\mfg$, is a significant generalization of the
Harish-Chandra isomorphism for the semisimple case; it was first
noticed by Kirillov and then proved by Duflo, using a highly
nontrivial formula.  By partially computing his isomorphism
\eqref{e:pc-c-lie}, Kontsevich was able to show that his isomorphism
\eqref{e:pc-c-lie-u} coincides with the Duflo-Kirillov
isomorphism. The latter isomorphism is given by the following explicit
formula. Let $\symm: \Sym \mfg \to U \mfg$ be the symmetrization map,
$x_1 \cdots x_m \mapsto \frac{1}{n!} \sum_{\sigma \in S_n}
x_{\sigma(i)} \cdots x_{\sigma(m)}$. This is a vector space
isomorphism (even an isomorphism of $\mfg$-representations via the
adjoint action) but \emph{not} an algebra isomorphism.  However, it
can be corrected to an isomorphism $I_{DK}: (\Sym \mfg)^\mfg \iso
Z(U\mfg)$ given by $I_{DK} = \symm \circ I_{\text{strange}}$, where
$I_{\text{strange}}: \Sym \mfg \to \Sym \mfg$ is given by an
(infinite-order) constant-coefficient differential operator. 
Such operators can be viewed as formal power series functions
of $\mfg$, i.e., elements of $\hat S \mfg^*$, via the inclusions
\[
\Sym^m \mfg^* \into \Hom_\bk(\Sym^{\bullet} \mfg, \Sym^{\bullet-m} \mfg). 
\]
In other words, if $\mfg$ has a basis $x_i$ with dual basis
$\partial_i \in \mfg^*$, then an element of $\Sym \mfg^*$ is a
polynomial in the $\partial_i$, i.e., a constant-coefficient
differential operator, and $\hat S \mfg^*$ is a power series in the $\partial_i$.

Then, the element $I_{\text{strange}} \in \hat S \mfg^*$, as a power
series function of $\mfg$, is expressed as
\begin{equation}\label{e:kd-fla}
I_{\text{strange}} =  \Bigl( x \mapsto \exp\Bigl( \sum_{k \geq 1} \frac{B_{2k}}{4k(2k!)} \tr(\ad(x)^{2k}) \Bigl) \Bigr),
\end{equation}
where $B_2, B_4, \ldots$ are Bernoulli numbers.  Unpacking all of this, we see that $I_{\text{strange}}(x^m)-x^m$ is a certain linear combination of elements of the form
\[
x^{m-2(i_1+\cdots+i_k)} \tr((\ad x)^{2i_1}) \cdots \tr((\ad x)^{2i_k}),
\] 
for $i_1, \ldots, i_k$ positive integers such that $2(i_1+\cdots+i_k) \leq m$
and $k \geq 1$.

Let us explain in more detail how to unpack Kontsevich's isomorphism
and see in the process why it might coincide with the Duflo-Kirillov
isomorphism.  As explained in the previous subsection,
\eqref{e:pc-c-lie} is a \emph{nontrivial} map, given by
\[
f \mapsto f + \sum_{m \geq 1} (m+1)\hbar^{m} \mathcal{U}_{m+1}(f \wedge
\pi^{\wedge m}).
\]
We are interested in the case that $f$ is an element of the Poisson
center of $\cO(X)[\![\hbar]\!]$, which means that $f$ corresponds to a
vertex (in the upper-half plane) with no outgoing edges (it is a
polyvector field of degree zero).  Since $\pi$ is a bivector field,
the term $\pi^{\wedge m}$ corresponds to $m$ vertices in the
upper-half plane with two outgoing edges.  Thus, the above sum
re-expresses as a sum over all graphs in the upper half plane $\{(x,y)
\mid y \geq 0\} \subseteq \bR^2$, up to isomorphism, which have a
single vertex labeled by $f$ in the upper half plane with no outgoing
edges and $m$ vertices labeled by $\pi$ with two outgoing edges each.
Moreover, we can discard all the graphs where the two outgoing edges
of a given vertex labeled by $\pi$ have the same target, i.e., we can
assume the graph has no \emph{multiple edges}, since $\pi$ is
skew-symmetric, and thus the resulting bilinear operation (cf.~\S
\ref{ss:dq-fla}) would be zero. 

As a result, we conclude by counting that each vertex labeled by $\pi$
has exactly one incoming edge, and the vertex labeled $f$ has $m$
incoming edges, one from each vertex labeled by $\pi$.  We can express
such a graph as a union of oriented $m$-gons labeled by $\pi$, with
each vertex of each $m$-gon pointing to a single additional vertex
labeled by $f$.  In the case where there is a single $m$-gon, we put
the vertex $f$ in the center of the $m$-gon, and the resulting graph
looks like a wheel, so is called a \emph{wheel}.  We consider a
general graph to be a union of wheels (where the union is taken by
gluing the vertices labeled $f$ together to a single vertex).

As we explain in Exercise \ref{exer:kont-duflo}.(ii), the differential
operator corresponding to an $m$-gon is $x \mapsto \tr((\ad x)^m)$,
and we explain in part (iii) that the differential operator attached
to a union of $m_i$-gons is the product of these differential
operators. Then, in part (iv), we deduce that Kontsevich's isomorphism
should be obtained from a linear map $\Sym \mfg \to U \mfg$ sending
$x^m$ to a polynomial in $x$ and $\tr((\ad(x))^j)$, as in the
Kirillov-Duflo isomorphism.

Kontsevich shows that the weight of a union of wheels is the product
of the weights of the wheels. Moreover, by a symmetry argument he uses
also elsewhere, he concludes that the weight is zero when a wheel has
an odd number of edges.  This shows (see Exercise
\ref{exer:kont-duflo}.(v) below) that
Kontsevich's isomorphism must be the composition of $\symm$ and
an operator of the form \eqref{e:kd-fla} except possibly with
different coefficients than $\frac{B_{2k}}{4k(2k!)}$.

Finally, Kontsevich shows that there can only be at most one
isomorphism of this form, so (without computing them!) his
coefficients must equal the $\frac{B_{2k}}{4k(2k!)}$.




\subsection{Exercises}

Exercises from the notes: \ref{exer:pre-lie}, \ref{exer:circ-prod}, \ref{exer:gauge}, \ref{exer:pb-sn}, and \ref{exer:linf-dgla}.

Additional exercises:
\begin{exercise}\label{exer:kont-ug-iso}
Let $\mfg$ be a finite-dimensional Lie algebra. Equip
  $\cO(\mfg^*)$ with its standard Poisson structure.  Show that, for
  Kontsevich's star product $\star$ on $\Sym \mfg$ associated to this
  Poisson structure,
one has
\[
v \star w - w \star v = \hbar [v,w], \quad \forall v,w \in \mfg.
\]
Hint: Show that only the graph corresponding to the Poisson bracket
can give a nonzero contribution to $v \star w - w \star v$ when $v,w
\in \mfg$.
\end{exercise}

\begin{exercise}(*, but with the proof outlined) \label{exer:hh3=0}
  Prove, following the outline below, the following slightly weaker
  version of the first statement of Theorem \ref{t:hh3=0}: there is a
  map from formal power series $\gamma = \sum_{m \geq 1} \hbar^m \cdot
  \gamma_m \in \hbar \cdot \HH^2(A)[\![\hbar]\!]$ to formal
  deformations of $A$ which exhausts all formal deformations up to
  gauge equivalence.  (With a bit more work, the proof below can be
  extended to give the first statement of the Theorem.) This is
  similar to Exercise \ref{exer:obstr-nord} and its proof.


Namely, use the Maurer-Cartan formalism, and the fact that, if
$C^\bullet$ is an arbitrary complex of vector spaces, there exists a
homotopy $H: C^\bullet \to C^{\bullet-1}$ such that
$\Id - (Hd+dH)$ is a projection of $C^\bullet$ onto a subspace of
$\ker(d)$ which maps isomorphically to $H^\bullet(C)$ (this is called
a Hodge decomposition; we remark that it always satisfies $dHd=d$).
In this case, let $i: H^\bullet(C) \cong \im(\Id-(Hd+dH)) \into
C^\bullet$ be the obtained inclusion; we have $C^\bullet =
i(H^\bullet(C)) \oplus (dH+Hd)(C^\bullet)$, with $(dH+Hd)(C^\bullet)$
a contractible complex.

In the case $\HH^3(A)=0$, let $H$ be a homotopy as above for
$C^\bullet := C^\bullet(A)$. Then, on Hochschild three-cocycles $Z^3(A)$,
$dH|_{Z^3(A)} = \Id$ is the identity.  Now, if $\gamma = \sum_{m \geq 1}
\hbar^m \cdot \gamma_m \in \hbar \cdot \HH^2(A)[\![\hbar]\!]$, we can
construct a corresponding solution $x$ of the Maurer-Cartan equation
$\MC(x) := dx + \frac{1}{2}[x,x]=0$ as follows.  Set $x^{(1)} := 
i(\gamma)$, so in particular $dx^{(1)}=0$. Then, also $\MC(x^{(1)})
\in \hbar^2 Z^3(A)$.  Set $x^{(2)} := x^{(1)} - H \circ \MC(x^{(1)})$.
  Then $\MC(x^{(2)}) \in \hbar^3 C^3(A)[\![\hbar]\!]$.  We
claim that $\MC(x^{(2)}) \in \hbar^3 Z^3 + \hbar^4 C^3$. 
\begin{lemma}
Let $\mfg$ be any dgla, and suppose that
$z \in \hbar \mfg^1[\![\hbar]\!]$ satisfies $\MC(z) \in \hbar^{n}
\mfg^2$.  Then, $\MC(z) \in \hbar^{n} Z^2(\mfg) + \hbar^{n+1}
\mfg^3$.
\end{lemma}
\begin{proof} We have
\[
d \MC(z) = [dz,z] \equiv -\frac{1}{2} [[z,z],z] \pmod {\hbar^{n+1}},
\]
but the RHS is zero by the Jacobi identity.
\end{proof}
Thus $\MC(x^{(2)}) \in \hbar^3 Z^3(A) + \hbar^4 C^3(A)[\![\hbar]\!]$.
Inductively for $n \geq 2$, suppose that $x^{(n)}$ is constructed and
$\MC(x^{(n)}) \in \hbar^{n+1} Z^3(A) + \hbar^{n+2}
C^3(A)[\![\hbar]\!]$. Then, we set $x^{(n+1)} := x^{(n)} - H \circ
\MC(x^{(n)})$, i.e., $x^{(n+1)} = (\Id - H \circ \MC)^n i(\gamma)$. 
It follows from construction that $\MC(x^{(n+1)}) \in
\hbar^{n+2} C^3(A)[\![\hbar]\!]$.  By the above lemma, it is in
$\hbar^{n+2} Z^3(A) + \hbar^{n+3} C^3(A)[\![\hbar]\!]$, completing the
inductive step.  By construction, $x^{(n+1)} \equiv x^{(n)} \pmod
{\hbar^{n+1}}$, so that $x:=\lim_{n \to \infty} x^{(n)}$ exists. Also
by construction, $x$ is a solution of the Maurer-Cartan equation. 

One can show that the map $x \mapsto \gamma$ above yields all possible
formal deformations of $A$ up to gauge equivalence, by showing
inductively on $m$ that it yields all $m$-th order deformations, i.e.,
deformations over $\bk[\hbar]/(\hbar^{m+1})$, for all $m \geq 1$. This
is because the tangent space to the space of extensions of an $m$-th
order deformation to $(m+1)$-st order deformations modulo gauge
equivalence is given by $\hbar^{m+1} \cdot \HH^2(A)$.
\end{exercise}

\begin{exercise}\label{exer:kont-duflo}(*) 
  Here, following \cite[\S 8]{Kform}, we complete the steps from \S
  \ref{ss:duflo} above, outlining why Kontsevich's isomorphism $(\Sym
  \mfg)^{\mfg} \to Z(U \mfg)$ must be of the form \eqref{e:kd-fla}
  except with possibly different coefficients than the
  $\frac{B_{2k}}{4k(2k!)}$ there.  In particular, we show here why it
  is at least a power series in $\tr(\ad(x)^i)$, and how the desired
  formula follows from certain properties of the weights.

  The description in \S \ref{ss:dq-fla} of polydifferential operators
  from graphs with two vertices on the real line generalizes in a
  straightforward way to define operators
\[
B_\Gamma(\gamma): \cO(X)^{\otimes_\bk n} \to \cO(X), \quad \gamma:
V_\Gamma \to T_\poly(X),
\]
where $\Gamma$ is an arbitrary graph in the upper-half plane with $n$
(not necessarily two) vertices on the real line, and $\gamma: V_\Gamma
\to T_\poly(X)$ is a function sending each vertex $v$ to an element of
degree equal to one less than the number of outgoing edges from $v$
(i.e., the corresponding degree in $\wedge_{\cO(X)}^\bullet \Vect(X)$ is equal to
the number of outgoing edges).  Define this generalization, or
alternatively see \cite{Kform}.

Now, let $X = \mfg^*$ equipped with its standard Poisson structure,
for $\mfg$ a finite-dimensional Lie algebra.  Recall the wheels from
\S \ref{ss:duflo}.  Let $W_m$ denote the wheel with $m$ vertices
labeled $\pi$ and one vertex labeled $f$.  We are interested in the
operators
\[
B_{W_m}(-,\pi,\ldots,\pi): \Sym \mfg \to \Sym \mfg,
\]
placing the vertex labeled $f$ first, where $\pi$ is the Poisson
bivector on $\mfg^*$.
\begin{itemize}
\item[(i)]
Show that $B_{W_m}(-,\pi,\ldots,\pi)$ is a constant-coefficient differential
operator of order $m$, i.e., that
\[
B_{W_m}(-,\pi,\ldots,\pi) \in \Sym^m \mfg^*.
\]
\item[(ii)] In terms of $\Sym^m \mfg^* = \cO(\mfg)$, show that
  $B_{W_m}(-,\pi,\ldots,\pi)$ corresponds to the polynomial function
\[
x \mapsto \tr ((\ad x)^m), x \in \mfg.
\]
\item[(iii)] Now suppose that $\Gamma$ is a graph which is a union
  (glued at the vertex labeled $f$) of $k$ wheels $W_{m_1}, \ldots,
  W_{m_k}$.  Show that, considered as polynomials in $\cO(\mfg) =
  \Sym^m \mfg^*$,
\[
B_{\Gamma}(-,\pi,\ldots,\pi) = x \mapsto \prod_{i=1}^k \tr((\ad x)^{m_i}).
\]
\item[(iv)] 
Conclude that Kontsevich's isomorphism 
\begin{equation}\label{e:expl-pcm-g}
(\Sym \mfg)^\mfg[\![\hbar]\!] \iso Z(\cO(\mfg^*)[\![\hbar]\!], \star)
\end{equation}
has the form, for some constants $c_{m_1,\ldots,m_k}$, now viewed as
an element of $\hat S \mfg^*$, i.e., a power series function contained
in the completion $\hat \cO(\mfg)$,
\[
x \mapsto \sum_{k; m_1, \ldots, m_k} c_{m_1,\ldots,m_k} \cdot \prod_{i=1}^k \tr((\ad x)^{m_i}),
\]
viewed as an element of $\hat S \mfg^*$, i.e., a formal sum of
differential operators (with finitely many summands of each
order). Note here that we allow $m_1, \ldots, m_k$ to all be
independent (and do not require, for example, $m_1 \leq \cdots \leq
m_k$).
\item[(v)] Now, it follows from Kontsevich's explicit definition of
  the weights $c_\Gamma$ associated to graphs $\Gamma$, which are the
  coefficients of $B_\Gamma$ in the definition of his $L_\infty$
  quasi-isomorphism $\mathcal{U}$, that
\begin{gather*}
  c_{m_1,\ldots,m_k} = \frac{1}{k!} \prod_{i=1}^k c_{m_i}, \text{ and }\\
  c_m = 0 \text{ if $m$ is odd.}
\end{gather*}
Using these identities (if you want to see why they are true, see
\cite{Kform}), conclude that \eqref{e:expl-pcm-g} is given by the
(completed) differential operator corresponding to the series
(cf.~\cite[Theorem 8.5]{Kform})
\[
x \mapsto \exp\bigl( \sum_{m \geq 1} c_{2m} \tr((\ad x)^{2m}) \bigr)
\in \hat S \mfg^* = \hat \cO(\mfg).
\]
\end{itemize}
\end{exercise}

\section{Calabi-Yau algebras and isolated hypersurface
  singularities}\label{s:cy}
The goal of this section, which may seem somewhat of an abrupt
departure from previous sections, is to introduce Calabi-Yau algebras
(first mathematically defined and studied in \cite{GinzCY}) and use them
to study deformation theory. Our motivation is twofold: many of the
deformations we have studied can be obtained and understood by
realizing algebras as quotients of Calabi-Yau algebras and deforming
the Calabi-Yau algebras, and second, because Calabi-Yau algebras are
a unifying theme appearing throughout this book.  We explain each of
these motivations in more detail (in \S \ref{ss:cy-mot1} and
\ref{ss:cy-mot2}), before discussing first the homological properties
of these algebras (\S \ref{ss:cy-hom}), then the remarkable phenomenon
that they are often defined by potentials (\S \ref{ss:cy-pot}) and how
that can be exploited to deform them (\S \ref{ss:cy-pot-def}), and
finally how to use them to study deformations of isolated hypersurface
singularities and del Pezzo surfaces (\S \ref{ss:eg}), following
Etingof and Ginzburg.

\subsection{Motivation: deformations of quotients of
Calabi-Yau algebras}\label{ss:cy-mot1}
We have seen that, in many cases, it is much easier to describe
deformations of algebras via generators and relations than via star
products (and note that, by Exercise \ref{exer:deduce-kdp-filt}, all
formal deformations can be obtained by deforming the relations).  For
example, this was the case for the Weyl and universal enveloping
algebras, as well as the symplectic reflection algebras, where it is
rather simple to write down the deformed relations, but the star
product is complicated.  More generally, given a Koszul algebra, one
can easily study the formal homogeneous (or filtered) deformations of
the relations that satisfy the PBW property, i.e., give a flat
deformation, by using the Koszul deformation principle (Theorem
\ref{t:kdp}, see also Theorem \ref{t:kdp-filt}), even though writing
down the corresponding star products could be difficult.

In our running example of $\bC[x,y]^{\bZ/2} = \bC[x^2,xy,y^2] =
\cO(\Nil \mfsl_2)$, one way we did this was not by directly finding a
noncommutative deformation of the singular ring $\bC[x,y]^{\bZ/2}$
itself, but rather by deforming either $\bC[x,y]$ to $\Weyl_1$ and
taking $\bZ/2$ invariants, or more generally deforming $\bC[x,y]
\rtimes \bZ/2$ to a symplectic reflection algebra $H_{1,c}(\bZ/2)$ and
then passing to the spherical subalgebra $e H_{1,c}(\bZ/2) e$.

Another way we did it was by realizing the ring as $\cO(\Nil
  \mfsl_2)$ for the subvariety $\Nil \mfsl_2 \subseteq \mfsl_2 \cong
\bA^3$ inside affine space, then first deforming $\cO(\mfsl_2) =
\cO(\mfsl_2^*)$ to the noncommutative ring $U \mfsl_2$, then finding
a central quotient that yields a quantization of $\cO(\Nil \mfsl_2)$.

We would like to generalize this approach to quantizing more general
hypersurfaces in $\bA^3$.  Suppose $f \in \bk[x,y,z]$ is a
hypersurface and $\bk$ has characteristic zero. Then we claim that
there is a canonical Poisson bivector field on $Z(f)$.  This comes
from the \emph{Calabi-Yau} structure on $\bA^3$,
i.e., \emph{everywhere nonvanishing volume form}.  Namely, $\bA^3$ is
equipped with the volume form
\[
dx \wedge dy \wedge dz.
\]
The inverse of this is the everywhere nonvanishing top polyvector field
\[
\partial_x \wedge \partial_y \wedge \partial_z,
\]
in the sense that the
contraction of the two is the constant function $1$.
Now, we can contract this with $df$ and obtain a bivector field,
\begin{equation}\label{e:pif}
\pi_f := (\partial_x \wedge \partial_y \wedge \partial_z) (df)
= \partial_x(f) \partial_y \wedge \partial_z
+ \partial_y(f) \partial_z \wedge \partial_x
+ \partial_z(f) \partial_x \wedge \partial_y.
\end{equation}
\begin{exercise}  \label{exer:hypsfc}
Show that this is Poisson.  Show also that $f$ is
  Poisson central, so that the quotient $\cO(Z(f)) = \cO(\bA^3) /
  (f)$ is Poisson, i.e., the surface $Z(f) \subseteq \bA^3$ is
  canonically equipped with a Poisson bivector from the Calabi-Yau
  structure on $\bA^3$.  

  Moreover, show that the Poisson bivector $\pi_f$ is
  \emph{unimodular}: for every Hamiltonian vector field $\xi_g :=
  i_\pi(dg) = \{g, -\}$ with respect to this Poisson bivector, we have
  $L_{\xi_g} (\vol) = 0$, where $\vol = dx \wedge dy \wedge dz$.
  Equivalently, $L_{\pi}(\vol) = 0$.

  More generally (but harder), show that, for an arbitrary complete
  intersection surface $Z(f_1, \ldots, f_{n-2})$ in $\bA^n$ with $\bk$ still
of characteristic zero, then
\[
(\partial_{1} \wedge \cdots \wedge \partial_n)(df_1 \wedge \cdots
\wedge df_{n-2})
\]
is a Poisson structure.  Show that $f_1, \ldots, f_{n-2}$ are Poisson
central, so that the surface $Z(f_1, \ldots, f_{n-2})$ is a closed
Poisson subvariety, and in particular has a canonical Poisson
structure.  Moreover, show that $\pi$ is \emph{unimodular}: for every
Hamiltonian vector field $\xi_g := i_\pi(dg)=\{g,-\}$, we have $L_{\xi_g} (\vol)
= 0$, where $\vol = dx_1 \wedge \cdots \wedge dx_n$.  Equivalently,
$L_{\pi}(\vol) = 0$.

From this, we can deduce that the same holds if $\bA^n$ is replaced by
an arbitrary $n$-dimensional Calabi-Yau variety $X$, i.e., a variety
$X$ equipped with a nonvanishing volume form, since the Jacobi
condition $[\pi, \pi] = 0$ can be checked in the formal neighborhood
of a point of $X$, which is isomorphic to the formal neighborhood of
the origin in $\bA^n$.
\end{exercise}
Now, our strategy, following \cite{EGdelpezzo}, for deforming $X =
Z(f) \subseteq \bA^3$ is as follows:
\begin{enumerate}
\item First, consider \emph{Calabi-Yau} deformations of $\bA^3$, i.e.,
  noncommutative deformations of $\bA^3$ as a Calabi-Yau algebra. We
  should consider these in the direction of the Poisson structure
  $\pi_f$ defined above.
\item Next, inside such a Calabi-Yau deformation $A$ of $\cO(\bA^3)$,
  we identify a central (or more generally normal) element $\Phi$
  corresponding to $f \in \cO(\bA^3)$.
\item Then, the quantization of $\cO(X)$ is $A / (\Phi)$.
\end{enumerate}
In order to carry out this program, we need to recall the notion of
\emph{(noncommutative) Calabi-Yau algebras} and their convenient
presentation by relations derived from a single \emph{potential}.
Then it turns out that deforming in the direction of $\pi_f$ is
obtained by deforming the potential of $\bA^3$ in the ``direction of
$f$.''  Since $f$ is Poisson central, by Kontsevich's theorem
(Corollary \ref{c:hphh}), $f$ deforms to a central element of the
quantization.

\subsection{Calabi-Yau algebras as a unifying theme}\label{ss:cy-mot2}
Recall Definition \ref{d:cya}.  These algebras are ubiquitous and in
fact they have appeared throughout the entire book:
\begin{enumerate}
\item A commutative Calabi-Yau algebra is the algebra of functions on
  a Calabi-Yau affine algebraic variety, i.e., a smooth affine
  algebraic variety equipped with an everywhere nonvanishing volume
  form;
\item Most deformations we have considered of Calabi-Yau algebras are
  still Calabi-Yau. This includes:
\begin{enumerate}
\item The universal enveloping algebra $U \mfg$ of a
  finite-dimensional Lie algebra $\mfg$ which is unimodular ($\tr(\ad
  x) = 0$ for all $x \in \mfg$): this is Calabi-Yau of dimension $\dim
  \mfg$ (and without the unimodular condition, is twisted Calabi-Yau
  of the same dimension);
\item The Weyl algebras $\Weyl(V)$ (Calabi-Yau of dimension $\dim V$);
\item The skew-group ring $\cO(V) \rtimes G$ for a vector space $V$ and
$G < \SL(V)$ finite;
\item All symplectic reflection algebras;
\end{enumerate}
\item The invariant subrings $\Weyl(V)^G$ for $G < \Sp(V)$ finite, and
  more generally, all homologically smooth spherical symplectic reflection algebras;
\item All NCCRs that resolve a Gorenstein singularity (as discussed
in Wemyss's chapter; 
\item All of the regular algebras discussed in Rogalski's chapter
  are either
  Calabi-Yau or at least twisted Calabi-Yau. In particular, the
  quantum versions of $\bA^n$, with
\[
x_i x_j = r_{ij} x_j x_i, \quad i < j,
\]
are Calabi-Yau if and only if, setting $r_{ji} := r_{ij}^{-1}$ for $i < j$,
\[
r_{ij} r_{ji} = 1, \forall i,j, \quad
\prod_{j \neq m} r_{mj} = 1, \forall m.
\]
For instance, in three variables, we have a single parameter $q =
r_{12} = r_{23} = r_{31}$ and then $q^{-1} = r_{21} = r_{32} =
r_{13}$.
\end{enumerate}
\subsection{Van den Bergh duality and the BV differential}\label{ss:cy-hom}
\begin{theorem} \cite{VdBdual} \label{t:vdbdual} Let $A$ be Calabi-Yau
  of dimension $d$.  Then, fixing an isomorphism $\HH^d(A, A\otimes A)
  \cong A$ yields a canonical isomorphism, for every $A$-bimodule $M$,
\begin{equation}\label{e:vdb}
\HH_i(A,M) \iso \HH^{d-i}(A,M).
\end{equation}

More generally, if $A$ is twisted Calabi-Yau with $\HH^d(A, A \otimes
A) \cong U$, for $U$ an invertible $A$-bimodule,
then
$\HH_i(A,U \otimes_A M) \iso \HH^{d-i}(A, M)$.
\end{theorem}
We actually only will require that $U$ be a projective right
$A$-module (which is implied if it is only left invertible, since then
the functor $U \otimes_A -: A\text{-mod} \to A\text{-mod}$ has a
quasi-inverse and hence preserves projectives).
\begin{remark}
  The above theorem is easy to prove, as we will show, but it is an
  extremely important observation. 
\end{remark}
\begin{proof}
  This is a direct computation. For the first statement, using that
  $\HH^\bullet(A,M) = H^i(\RHom_{A^e}(A, M))$ and similarly
  $\HH_\bullet(A, M) = A \otimes^{\operatorname{L}}_{A^e} M$,
\[
\RHom_{A^e}(A, M) \cong \RHom_{A^e}(A, A \otimes A)
\otimes^{\operatorname{L}}_{A^e} M \cong A[-d]
\otimes^{\operatorname{L}}_{A^e} M.
\]
The homology of the RHS identifies with $\HH_{d-\bullet}(A,M)$ (the
degrees were inverted here because $\HH_\bullet$ uses homological
grading and $\HH^\bullet$ uses cohomological grading).  For the second
statement, replacing $\RHom_{A^e}(A, A\otimes A)$ by $U[-d]$, we get
\[
\RHom^\bullet(A,M) \cong U[-d] \otimes^{\operatorname{L}}_{A^e} M
= A[-d] \otimes^{\operatorname{L}}_{A^e} (U \otimes_A^{\operatorname{L}} M),
\]
so that if $U$ is projective as a right $A$-module, hence
$U \otimes_A^{\operatorname{L}} M = U \otimes_A M$ is an ordinary $A$-bimodule,
the homology of the RHS is $\HH_{d-\bullet}(A, U \otimes_A M)$, as desired (and as remarked above, invertibility implies projectivity as a right $A$-module).
\end{proof}
Next, recall the HKR theorem: $\HH^\bullet(\cO(X)) \cong
\wedge^\bullet_{\cO(X)} \Vect(X)$ when $X$ is smooth affine. There is a
counterpart for Hochschild homology: $\HH_\bullet(\cO(X)) \cong
\Omega^\bullet(X)$, the algebraic de Rham differential forms.
Moreover, there is a \emph{homological} explanation of the de Rham
differential: this turns out to coincide with the \emph{Connes
  differential}, which is defined on the Hochschild homology of an
\emph{arbitrary} associative algebra.

In the presence of the Van den Bergh duality isomorphism
\eqref{e:vdb}, this differential $B$ yields a differential on the
Hochschild cohomology of a Calabi-Yau algebra,
\[
\Delta: \HH^\bullet(A) \to \HH^{\bullet-1}(A).
\]
This is often called the Batalin-Vilkovisky (BV) differential.
As pointed out in \cite[\S 2]{EGdelpezzo},
\begin{proposition}The infinitesimal deformations of a Calabi-Yau
  algebra A \emph{within the class of Calabi-Yau algebras} are
  parameterized by $\ker(\Delta): \HH^2(A) \to \HH^1(A)$.
\end{proposition}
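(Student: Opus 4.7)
The plan is to identify CY-compatible infinitesimal deformations with those that allow the fundamental class in Hochschild homology to be lifted, and then to use the Cartan-type homotopy formula from the Tamarkin--Tsygan calculus structure to recognize the resulting obstruction as $\Delta[\gamma]$.

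First I would restate the CY condition in homological language. By Van den Bergh duality (Theorem \ref{t:vdbdual}), a CY structure of dimension $d$ on $A$ is equivalent to choosing a ``fundamental class'' (or noncommutative volume form) $[\omega]\in\HH_d(A)$ such that cap product with $[\omega]$ induces the duality isomorphism $\HH^\bullet(A)\iso\HH_{d-\bullet}(A)$. By definition, the BV operator $\Delta$ on $\HH^\bullet(A)$ is exactly the Connes differential $B$ on $\HH_\bullet(A)$ transported through this isomorphism, and the class $[\omega]$ is $B$-closed. Thus I can treat the CY datum as the pair $(A,[\omega])$.

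Next, given an infinitesimal algebra deformation $(A[\varepsilon]/(\varepsilon^2),\star_\gamma)$ with class $[\gamma]\in\HH^2(A)$, the Hochschild chain complex of the deformation is computed on $C_\bullet(A)\otimes\bk[\varepsilon]/(\varepsilon^2)$ with differential $b+\varepsilon L_\gamma$, where $L_\gamma$ is the Lie derivative operation of the calculus structure attached to the cocycle $\gamma$. The deformation will be CY (with fundamental class extending $[\omega]$) precisely when there is a lift $[\omega]+\varepsilon[\omega_1]\in\HH_d(A[\varepsilon]/(\varepsilon^2))$ whose associated cap product is still a quasi-isomorphism. Non-degeneracy is an open condition that is automatic at first order, so the only real obstruction is the cohomological one: the existence of $[\omega_1]$ requires that $L_\gamma[\omega]$ vanish in $\HH_d(A)$.

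The computation of this obstruction is the heart of the proof. The Tamarkin--Tsygan calculus provides the Cartan homotopy formula
\[
L_\gamma \;=\; [B,\iota_\gamma]\pm\iota_{d\gamma}
\]
on chains. Since $\gamma$ is a Hochschild cocycle, this descends on cohomology to $L_\gamma=[B,\iota_\gamma]$, and $B[\omega]=0$ gives $L_\gamma[\omega]=B(\iota_\gamma[\omega])$. The very definition of $[\omega]$ as the trivialization inducing VdB duality means that the cap class $\iota_\gamma[\omega]\in\HH_{d-2}(A)$ corresponds to $[\gamma]\in\HH^2(A)$, while $B$ corresponds to $\Delta$. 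Therefore $L_\gamma[\omega]=0$ in $\HH_d(A)$ if and only if $\Delta[\gamma]=0$ in $\HH^1(A)$. Finally, one checks compatibility with the equivalence relation: gauge equivalent algebra deformations $\gamma\sim\gamma+d\phi$ satisfy $\Delta(d\phi)=-d\Delta(\phi)$ (since $\Delta$ and the Hochschild differential graded-commute), so the condition $\Delta[\gamma]=0$ is well defined on $\HH^2(A)$, and the set of equivalence classes of CY deformations is identified with $\ker(\Delta\colon\HH^2(A)\to\HH^1(A))$.

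The main obstacle is the calculus-theoretic bookkeeping: one must invoke, and be precise about, the Tamarkin--Tsygan (BV-)calculus structure on the pair $(C^\bullet(A),C_\bullet(A))$ to justify the Cartan formula and the identification $\iota_\gamma[\omega]\leftrightarrow[\gamma]$, and one must verify that a CY structure on a deformation is genuinely captured by a lift of the fundamental class (rather than some weaker datum). The analytic/existence parts (unobstructedness of non-degeneracy at first order, well-definedness of $\Delta$ on cohomology) are comparatively routine once this framework is in place.
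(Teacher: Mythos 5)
The paper does not actually prove this proposition; it cites \cite[\S 2]{EGdelpezzo} and moves on, so there is no in-text argument to compare against. Your reconstruction follows the standard route and is essentially correct: identify the CY datum with a fundamental class $[\omega]\in\HH_d(A)$, note that the deformed Hochschild chain differential is $b+\varepsilon L_\gamma$, so the obstruction to lifting $[\omega]$ is $L_\gamma[\omega]\in\HH_d(A)$, and then use the Cartan-type identity $L_\gamma=[B,\iota_\gamma]$ on a cocycle together with $B[\omega]=0$ to rewrite this as $B(\iota_\gamma[\omega])$, which under Van den Bergh duality is precisely $\Delta[\gamma]\in\HH^1(A)$.

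Two steps are asserted without justification and deserve a sentence each. First, you say $[\omega]$ is $B$-closed but do not explain why; this is in fact automatic because Van den Bergh duality gives $\HH_{d+1}(A)\cong\HH^{-1}(A)=0$, so $B[\omega]$ vanishes for degree reasons (equivalently, $\Delta(1)=0$). Second, your argument shows that the \emph{chosen} fundamental class $[\omega]$ lifts if and only if $\Delta[\gamma]=0$; for this to parameterize deformations ``within the class of CY algebras'' (a property of the algebra, not of the pair $(A,[\omega])$) one should remark that the condition is independent of the choice of $[\omega]$ --- the fundamental classes form a torsor over $Z(A)^\times$, which for the connected graded algebras of interest in \cite{EGdelpezzo} is just $\bk^\times$, so $\ker L_\gamma$ is unchanged by rescaling. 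The Nakayama-style observation that nondegeneracy is automatic at first order, and the invocation of the Rinehart/Daletskii--Gelfand--Tsygan Cartan homotopy formula from the calculus structure, are both correct in spirit; only the sign conventions in the Cartan formula would need to be pinned down in a fully written-out argument.
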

This is not necessarily the right question to ask, however: if one
asks for the deformation space of $A$ \emph{together with its
  Calabi-Yau structure}, i.e., isomorphism $\HH^d(A, A\otimes A)\cong A$,
one obtains:
\begin{theorem}\cite{VdBV-CYdnch} The infinitesimal
  deformations of pairs $(A,\eta)$ where $A$ is a Calabi-Yau algebra
  and $\eta$ an $A$-bimodule quasi-isomorphism in the derived category
  $D(A^e)$,
\[
\eta: C^\bullet(A, A \otimes A) \iso A[-d],
\]
is given by the negative cyclic homology
$\operatorname{HC}^-_{d-2}(A)$. The obstruction to extending to second
order is given by the string bracket $[\gamma, \gamma] \in
\operatorname{HC}^-_{d-3}(A)$.
\end{theorem}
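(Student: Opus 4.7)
The plan is to identify the deformation problem with Maurer-Cartan elements in a dgla (equivalently, an $L_\infty$-algebra) built from negative cyclic chains. I will first reinterpret the CY datum $\eta$ as a cyclic (noncommutative ``symplectic'') structure on $A$, then assemble the deformation dgla and identify its underlying complex with a shift of $\operatorname{CC}^-_\bullet(A)$, and finally read off the tangent space and the quadratic obstruction.

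First, for the cyclic reformulation, note that $\eta: C^\bullet(A, A \otimes A) \iso A[-d]$ is adjoint to a class $[\omega] \in \HH_d(A)$ whose contraction realizes Van den Bergh duality, as in Theorem \ref{t:vdbdual}. Because $\eta$ must be compatible with the $A_\infty$-structure on $A$ and its deformations, the natural enhancement promotes $[\omega]$ to a negative cyclic cycle $\omega \in \operatorname{CC}^-_d(A)$ projecting to $[\omega]$ under $\pi$; equivalently, $(A,\eta)$ becomes a \emph{cyclic $A_\infty$-algebra} of degree $-d$, i.e., an $A_\infty$-structure together with an invariant, non-degenerate pairing.

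Next, I assemble the deformation dgla. Deformations of $A$ as an associative (or $A_\infty$) algebra are controlled, as in Lecture 4, by $L := C^\bullet(A)[1]$ with the Gerstenhaber bracket. The compatibility condition with $\omega$ cuts out a cyclic sub-dgla, and Van den Bergh duality transports this, at the chain level, into a dgla supported on negative cyclic chains (up to a shift by $d$). More precisely, the controlling object is quasi-isomorphic to $\operatorname{CC}^-_{\bullet+d-1}(A)$ with differential $b+uB$ on the negative cyclic bicomplex, and with bracket obtained by cyclic symmetrization of the Gerstenhaber circle product; this is precisely the Chas--Sullivan/Menichi \emph{string bracket}. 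By construction, it is compatible via $\pi:\operatorname{CC}^-_\bullet(A)\to \HH_\bullet(A)$ with the Gerstenhaber bracket, composed with VdB duality.

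With the dgla in hand, both claims are immediate. Infinitesimal deformations are $H^1$ of the shifted dgla, i.e., $\operatorname{HC}^-_{d-2}(A)$. Under $\pi$ this maps to $\HH_{d-2}(A)\cong \HH^2(A)$, returning the underlying Hochschild deformation class of $A$; the relevant portion of the SBI exact sequence
\[
\operatorname{HC}^-_{d-2}(A)\xrightarrow{\pi}\HH_{d-2}(A)\xrightarrow{B}\HH_{d-3}(A)
\]
guarantees that a Hochschild class $\gamma$ lifts to a CY-pair deformation if and only if $B(\gamma)=0$, which under VdB is precisely the condition $\Delta\gamma=0$ from the proposition immediately preceding the theorem. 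Finally, by the standard Maurer--Cartan formalism, the quadratic obstruction to extending a first-order deformation $\gamma$ is $\tfrac12 [\gamma,\gamma]$, living in $H^2$ of the dgla, i.e., in $\operatorname{HC}^-_{d-3}(A)$, with $[-,-]$ the string bracket.

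The main obstacle is the second step: rigorously producing an (at least $L_\infty$) bracket on $\operatorname{CC}^-_\bullet(A)$ whose induced cohomology bracket is the string bracket, and verifying that this really is the bracket controlling the \emph{cyclic} deformation problem for $(A,\eta)$. The Gerstenhaber circle product is not literally cyclically invariant, so the symmetrization is well-defined only up to a hierarchy of coherent homotopies, and the clean formulation passes through the cyclic operad and its Koszul-type resolution (cf.\ Getzler--Jones, Kontsevich--Soibelman). Reconciling this $L_\infty$-structure with the explicit deformation complex of the pair $(A,\eta)$, so that ``string bracket'' in the statement is unambiguous, is the technically substantive part of the argument and the heart of what De V\"olcsey and Van den Bergh prove.
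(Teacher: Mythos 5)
The paper does not actually prove this theorem: it is stated with attribution to De V\"olcsey and Van den Bergh and no argument is given, so there is nothing in the text to compare your proof against. Evaluated on its own, your sketch has the right overall shape (control the deformation problem by a dgla or $L_\infty$-algebra built from negative cyclic chains, then read off tangent space and obstruction via Maurer--Cartan), and you are right to flag the construction of a coherent $L_\infty$-structure on negative cyclic chains, inducing the string bracket, as the substantive step. But there is a genuine conceptual slip in your first step. The datum $\eta: C^\bullet(A, A \otimes A) \iso A[-d]$ is a \emph{smooth} (sometimes called \emph{left}) Calabi--Yau structure on $A$: it lives on the bimodule side, $\RHom_{A^e}(A, A \otimes A) \iso A[-d]$, and presupposes that $A$ is homologically smooth. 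Your reinterpretation of $(A,\eta)$ as ``a cyclic $A_\infty$-algebra of degree $-d$, i.e., an $A_\infty$-structure together with an invariant, non-degenerate pairing'' describes the dual notion, a \emph{proper} (or \emph{right}) Calabi--Yau structure in the sense of Kontsevich--Soibelman and Costello, which is a different structure with a different deformation complex; the two coincide only for algebras that are both smooth and proper. Your ultimate landing spot $\operatorname{HC}^-_d(A)$ is indeed the correct home for smooth CY structures (this is precisely what De V\"olcsey--Van den Bergh prove, and what Brav--Dyckerhoff and Kontsevich--Vlassopoulos establish in the $A_\infty$/dg setting), but the route you give through cyclic $A_\infty$-pairings does not justify it.

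A smaller but concrete error: in your SBI-type sequence
$\operatorname{HC}^-_{d-2}(A) \xrightarrow{\pi} \HH_{d-2}(A) \xrightarrow{B} \HH_{d-3}(A)$,
Connes' $B$ goes the wrong way; $B$ raises Hochschild degree, so the relevant map is $B: \HH_{d-2}(A) \to \HH_{d-1}(A)$, which under Van den Bergh duality is exactly the BV operator $\Delta: \HH^2(A) \to \HH^1(A)$ from the proposition immediately preceding the theorem in the notes. Moreover, a Hochschild class lifts along $\pi$ to $\operatorname{HC}^-_{d-2}(A)$ exactly when it dies under the connecting map $\HH_{d-2}(A) \to \operatorname{HC}^-_{d-1}(A)$ of the long exact sequence, a potentially strictly stronger condition than $B(\gamma)=0$, since $B$ is that connecting map post-composed with the projection $\operatorname{HC}^-_{d-1}(A) \to \HH_{d-1}(A)$. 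This distinction is exactly why ``deformations of $A$ within CY algebras'' (the preceding proposition, governed by $\ker\Delta$) and ``deformations of the pair $(A,\eta)$'' (the present theorem, governed by $\operatorname{HC}^-_{d-2}(A)$) are genuinely different problems.
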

You should think of cyclic homology as a noncommutative analogue of de
Rham cohomology, i.e., the homology of $(\HH_\bullet(A), B)$ where $B$
is Connes' differential above (although this is not correct in
general, and moreover there are three flavors of cyclic homology:
ordinary, negative, and periodic; it is the periodic flavor that
coincides with the de Rham cohomology for algebras of functions on
smooth affine varieties).

\subsection{Algebras defined by a potential, and
Calabi-Yau algebras of dimension three}\label{ss:cy-pot}
It turns out that ``most'' Calabi-Yau algebras are presented by a
\emph{(super)potential} (as recalled in the introduction, a precise version
of the existence for general dimension is given in \cite{VdB-cyas}
when the algebra is complete; on the other hand counterexamples in the
general case are given in \cite{Dav-sam}).

In the case of dimension three, potentials have the following form:
\begin{definition}
  Let $V$ be a vector space. A \emph{potential} is an element $\Phi
  \in TV / [TV, TV]$.
\end{definition}
For brevity, from now on we write $TV_\cyc := TV / [TV, TV]$.
\begin{definition}
  Let $\xi \in \Der(TV)$ be a constant-coefficient vector field, e.g.,
  $\xi = \partial_i$. We define an action $\xi: TV_\cyc \to TV$ 
as
  follows: for cyclic words $[v_1 \cdots v_m]$ with $v_i \in V$,
\[
\xi[v_1 \cdots v_m] := \sum_{i=1}^m \xi(v_i) v_{i+1} \cdots v_m v_1
\cdots v_{i-1}.
\]
Then we extend this linearly to $TV_\cyc$.
\end{definition}
\begin{definition}\label{d:alg-pot}
  The algebra $A_\Phi$ defined by a potential $\Phi \in TV_\cyc$ is
\[
A_\Phi := TV / (\partial_1 \Phi, \ldots, \partial_n \Phi).
\]
\end{definition}
\begin{definition}
  A potential $\Phi$ is called a Calabi-Yau potential (of
  dimension three) if $A_\Phi$ is a Calabi-Yau algebra of dimension three.
\end{definition}
\begin{example} For $V = \bk^3$, $\bA^3$ is defined by the
  potential 
\begin{equation}\label{e:a3-pot}
\Phi = [xyz]-[xzy] \in TV_\cyc,
\end{equation}
since
\[
\partial_x \Phi = yz-zy, \quad \partial_y \Phi = zx-xz,
\quad \partial_z \Phi = xy-yx.
\]
\end{example}
\begin{example} The universal enveloping algebra $U \mfsl_2$ of a Lie
  algebra is defined by the potential
  \[ [efh]-[ehf]-\frac{1}{2}[h^2] -2[ef].
\]
\end{example}
\begin{example}\label{ex:skly-e6}
  The Sklyanin algebra with relations
\[
xy-tyx + cz^2 = 0, \quad yz-tzy+cx^2 = 0, \quad zx-xz+cy^2 = 0,
\]   
from Rogalski's chapter
is given by the potential
\[ [xyz] - t[xzy] + \frac{c}{3}[x^3+y^3+z^3].
\]
\end{example}
\begin{example} As mentioned above, NCCRs of Gorenstein singularities
  are Calabi-Yau; in Wemyss's chapter,
  comments are made throughout
  that the examples given can be presented by a potential.
  In fact, by \cite{VdB-cyas},
  any complete Calabi-Yau algebra of dimension three can be presented
  by a potential; this means that, for any of the examples in
  Wemyss's chapter,
  at least after completing at the ideal
  generated by all the arrows, the relations can be expressed by a
  potential. In most cases there, the singularity is either
  already complete (a quotient of $\bC[\![x_1,\ldots,x_n]\!]$), or else
  it is a cone (i.e., the relations are all homogeneous with respect
  to some assignment of each variable to a weight) and the
  potential is as well, hence defined without completing.
\end{example}
\subsection{Deformations of potentials and PBW theorems}\label{ss:cy-pot-def}
The first part of the following theorem was proved in the filtered case
in \cite{BerTai} and in the formal case in \cite{EGdelpezzo}. I have written
informal notes proving the converse (the second part). The theorem can also be
viewed as a consequence of the main result of \cite{VdB-cyas}.
\begin{theorem}
  Let $A_\Phi$ be a graded Calabi-Yau algebra defined by a
  (Calabi-Yau) potential $\Phi$.  Then for any filtered or formal
  deformation $\Phi + \Phi'$ of $\Phi$, the algebra $A_{\Phi+\Phi'}$
  is a filtered or formal Calabi-Yau deformation of $A_\Phi$.

Conversely, all filtered or formal deformations of $A_\Phi$ are
obtained in this way.
\end{theorem}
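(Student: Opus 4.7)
The plan is to exploit Ginzburg's bimodule resolution of $A_\Phi$: a CY algebra of dimension three presented by a potential admits a canonical self-dual length-three projective $A_\Phi$-bimodule resolution of ranks $1, n, n, 1$ (with $n = \dim V$), whose differentials are assembled from $\partial_i \Phi$ at the outer positions and from the ``Hessian'' $\partial_i \partial_j \Phi$ at the middle. Under the substitution $\Phi \mapsto \Phi + \Phi'$ all differentials deform in the evident way, and the identity $d^2 = 0$ for the deformed complex follows from the elementary cyclic-derivative identities that hold for \emph{any} potential. So the first task is to verify that the deformed complex remains exact: this is the PBW/flatness statement. For filtered deformations it follows from the Braverman--Gaitsgory / Polishchuk--Positselski Koszul deformation criterion applied to the Koszul algebra $A_\Phi$; for formal deformations one reduces to the filtered case via the Rees algebra construction discussed in the filtered-to-formal correspondence.

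Once flatness is in hand, the CY property of $A_{\Phi+\Phi'}$ is nearly automatic: the deformed resolution is visibly self-dual, so applying $\Hom_{(A_{\Phi+\Phi'})^e}(-, A_{\Phi+\Phi'} \otimes A_{\Phi+\Phi'})$ recovers the resolution up to a shift by $3$, yielding $\HH^\bullet(A_{\Phi+\Phi'}, A_{\Phi+\Phi'}\otimes A_{\Phi+\Phi'}) \cong A_{\Phi+\Phi'}[-3]$ as bimodules, which is exactly Definition \ref{defn:cya}. This step is short provided the previous one succeeds.

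For the converse I would compute $\HH^2(A_\Phi)$ using the same Ginzburg resolution. The resulting cochain complex identifies $2$-cocycles (of the relevant degree, in the filtered case) with potentials $\Psi$ modulo those of the form $\sum_i x_i \xi_i(\Phi)$ for $\xi_i \in V^*$, i.e., the ones arising from gauge transformations (linear changes of generators). This matches the parameter space of potential deformations of $\Phi$, up to equivalence, with $\HH^2(A_\Phi)$. Combined with an obstruction-theoretic argument --- the obstructions, living in $\HH^3(A_\Phi)$, are via the same identification precisely the obstructions to deforming a potential, so they match on both sides --- one concludes that every filtered or formal deformation of $A_\Phi$ is equivalent to one of the form $A_{\Phi+\Phi'}$.

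The step I expect to be the main obstacle is the flatness/PBW statement of the first paragraph: verifying exactness of the deformed Ginzburg complex is the genuine technical content, handled by Berger--Taillefer in the filtered case and by Etingof--Ginzburg in the formal case, and it is not formally implied by $d^2 = 0$. Everything else --- the CY property, the $\HH^2$ identification in terms of cyclic potentials, and the matching of obstructions in $\HH^3$ --- flows essentially formally once the deformed resolution is known to be acyclic.
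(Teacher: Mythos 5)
The paper itself contains no proof of this theorem: the sentence immediately preceding it states that the forward direction was proved in the filtered case by Berger--Taillefer \cite{BerTai} and in the formal case by Etingof--Ginzburg \cite{EGdelpezzo}, and that the converse is established only in unpublished informal notes of the author. Your overall strategy --- deform Ginzburg's self-dual length-three bimodule resolution, extract the CY property from self-duality once flatness is known, and run the converse through $\HH^2$ and $\HH^3$ computed from the same resolution --- has the right shape and matches what those references do for the forward direction.

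Two of your supporting steps, however, do not hold up as stated. The reduction of the formal case to the filtered case via the Rees algebra is backwards: the paper's correspondence identifies filtered deformations with \emph{graded} formal deformations (those compatible with a positive weight on $\hbar$), but a general formal deformation over $\bk[\![\hbar]\!]$ need not be graded, so it does not arise from a filtered one; Etingof--Ginzburg must and do handle the formal case directly. Similarly, appealing to the Braverman--Gaitsgory / Polishchuk--Positselski criterion presupposes $A_\Phi$ is Koszul, which is not in the hypotheses and fails in the paper's main applications: for a quasihomogeneous potential with generator weights $(a,b,c) \neq (1,1,1)$, the three relations $\partial_x\Phi, \partial_y\Phi, \partial_z\Phi$ have the distinct degrees $d-a$, $d-b$, $d-c$, so $A_\Phi$ is neither Koszul nor $N$-Koszul. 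Berger--Taillefer's flatness proof is not a Koszul argument but a direct Hilbert-series comparison along the deformed Ginzburg complex. Finally, for the converse you assert that $2$-cocycles identify with potentials modulo gauge; that is the crux, and observing that the ambient obstruction spaces agree is not enough. One must show that the natural map from (equivalence classes of) first-order potential deformations to $\HH^2(A_\Phi)$ is an isomorphism --- surjectivity being the hard part --- and that it intertwines the two obstruction maps. Via Van den Bergh duality ($\HH^2 \cong \HH_1$ and $\HH^3 \cong \HH_0 = A_\Phi/[A_\Phi,A_\Phi]$, which receives the natural image of $TV_\cyc$) this is plausible, but as written it is an assertion rather than an argument.
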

Now, let us return to the setting of $\cO(\bA^3) = A_\Phi$, which
is Calabi-Yau using the potential \eqref{e:a3-pot}.  Let $f \in \cO(\bA^3)$
be a hypersurface.  Then one obtains as above the Poisson structure
$\pi_f$ on $\bA^3$ for which $f$ is Poisson central.  As above, all
quantizations are given by Calabi-Yau deformations $\Phi + \Phi'$.  Moreover,
by Corollary \ref{c:hphh}, $f$ deforms to a central element, call it
$f_{\Phi'}$, of each such quantization.  Therefore, one obtains a
quantization of the hypersurface $(\cO(\bA^3)/(f), \pi_f)$, namely
$A_{\Phi+\Phi'} / (f_{\Phi'})$.

Next, what we would like to do is to extend this to \emph{graded}
deformations, in the case that $\Phi$ is homogeneous, i.e., cubic:
this will yield the ordinary three-dimensional Sklyanin algebras. More
generally, we will consider the \emph{quasihomogeneous} case, which
means that it is homogeneous if we assign each of $x$, $y$, and $z$
certain weights, which need not be equal. This will recover the
``weighted Sklyanin algebras.''  Note that producing actual graded
deformations is not an immediate consequence of the above theorem, and
they are not immediately provided by Kontsevich's theorem either.  In
fact, their existence is a special case (the nicest nontrivial one!)
of the following broad conjecture of Kontsevich:
\begin{conjecture}\cite[Conjecture 1]{Kon-dqav} \label{c:kont-conj}
  Suppose that $\pi$ is a quadratic Poisson bivector on $\bA^n$, i.e.,
  $\{-,-\}$ is homogeneous, and $\bk=\bC$.  Then, the star-product
  $\star_\hbar$ in the Kontsevich deformation quantization, up to
  a suitable formal gauge equivalence, actually
  converges for $\hbar$ in some complex neighborhood of zero,
  producing an actual deformation $(\cO(\bA^n), \star_\hbar)$
  parameterized by $|\hbar| < \epsilon$, for some $\epsilon > 0$.
\end{conjecture}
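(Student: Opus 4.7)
The structural fact that makes the quadratic case special is that the Kontsevich star-product \emph{preserves the polynomial grading} on $\cO_{\bA^n} = \Sym V^*$, where $V = \bk^n$. Indeed, each internal vertex $v$ of a Kontsevich graph contributes one factor of the quadratic bivector $\pi^{ij}$ (polynomial degree $2$), and the $2m$ edges of a graph with $m$ internal vertices each act as one derivative at their target. A direct count shows that the total degree balance of $B_{\Gamma,\pi}(f \otimes g)$ is $\deg f + \deg g + 2m - 2m = \deg f + \deg g$. Consequently, for homogeneous inputs $f \in \Sym^p V^*$ and $g \in \Sym^q V^*$, every coefficient of $\hbar^m$ in $f \star_\hbar g$ lies in the \emph{finite-dimensional} space $\Sym^{p+q} V^*$. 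Convergence thus reduces to showing that, in any monomial basis of $\Sym^{p+q} V^*$, the scalar coefficients of the matrix entries of $\star_\hbar$ are power series in $\hbar$ with at most geometric growth.

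\textbf{Reducing convergence to a weight bound.} After fixing a basis, each matrix entry is a sum
\[
a_m \;=\; \sum_{\Gamma \in \mathcal{G}_m^{p,q}} (1/2)^m W_\Gamma \cdot B_{\Gamma,\pi}^{\mathrm{coeff}},
\]
where $\mathcal{G}_m^{p,q}$ denotes isomorphism classes of Kontsevich graphs with $m$ internal vertices that contribute to the chosen matrix element, and the combinatorial factors $B_{\Gamma,\pi}^{\mathrm{coeff}}$ are polynomial in the structure constants of $\pi$, hence bounded by $C_1^{m+p+q}$. The task is to exhibit a formal gauge equivalence $T = \mathrm{Id} + \hbar \cdot (\cdots)$ such that, for the gauged product $\star'_\hbar$, the weighted sum $\sum_{\Gamma \in \mathcal{G}_m^{p,q}} |W_\Gamma|$ grows at most like $C_2^m$. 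A promising choice is the ``wheel-killing'' gauge used by Kontsevich in the Duflo application (Section \ref{ss:duflo}): it isolates precisely the graph classes (wheels) whose weights are least amenable to control, and in many situations it reduces the problem to summing over simpler graph families.

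\textbf{Main obstacle and an alternative route.} The genuine difficulty is estimating $\sum_\Gamma |W_\Gamma|$. Although each $W_\Gamma$ is bounded by $\mathrm{vol}(C_\Gamma)/m!$ for the underlying configuration space $C_\Gamma$, the cardinality $|\mathcal{G}_m^{p,q}|$ grows super-exponentially in $m$, so no naive bound suffices; one must exploit cancellations induced by the Poisson--Jacobi identity $[\pi,\pi]=0$ and by the full system of $L_\infty$-relations \eqref{e:linf-expl} restricted to the quadratic locus. The hardest point will be organizing these cancellations into a quantitative form. An alternative, and possibly more tractable, route is to avoid summing Kontsevich graphs entirely and instead \emph{construct the candidate limit algebra $A_\hbar$ directly}. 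Concretely, for many quadratic Poisson structures (including those arising from cubic Calabi--Yau potentials, cf.~Example \ref{ex:skly-e6}), one can write down a flat graded deformation of $\Sym V^*$ presented by a deformed potential, apply the Calabi--Yau deformation theorem of the previous subsection to obtain a one-parameter family $A_{\Phi + \hbar \Phi'}$ defined for all $\hbar \in \bC$, and then use the Hochschild-cohomology classification (Corollary \ref{c:hphh}) to identify this family with Kontsevich's formal quantization up to a formal gauge transformation. Convergence would then be automatic, and the residual task is to verify that the comparison gauge transformation is itself convergent---a question at the level of $\HH^1$ rather than of the weights $W_\Gamma$ themselves.
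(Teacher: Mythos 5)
The statement you are trying to prove is labeled as a \emph{conjecture} in the paper (attributed to Kontsevich's \cite[Conjecture 1]{Kon-dqav}), and the paper does not prove it; it only asserts, without details, that the special case of $\pi_f$ for quasihomogeneous $f$ with an isolated singularity on $\bA^3$ is known to hold (via the Etingof--Ginzburg potential construction of the following subsections). So there is no proof in the text to compare against, and you should not expect to close this.

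That said, your first observation is correct and worth making precise: for a quadratic bivector, each internal vertex of a Kontsevich graph contributes a degree-$2$ coefficient $\pi^{ij}$, and each of the $2m$ edges in a graph with $m$ internal vertices applies exactly one $\partial_{I(e)}$ to its target, so the net degree of $B_{\Gamma,\pi}(f\otimes g)$ is $(2m) - (2m) + \deg f + \deg g = \deg f + \deg g$; hence $\star_\hbar$ preserves the polynomial grading, and each coefficient of $\hbar^m$ lands in the finite-dimensional space $\Sym^{p+q}V^*$. This legitimately reduces convergence to an estimate on scalar power series. But your ``reduction to a weight bound'' is exactly where the genuine content of the conjecture lives, and nothing in your sketch resolves it: the number of graph isomorphism classes in $\mathcal{G}_m^{p,q}$ grows faster than exponentially, individual weight bounds $|W_\Gamma| \le \mathrm{vol}(C_\Gamma)/m!$ are not known to save the sum, and the cancellations you invoke via $[\pi,\pi]=0$ and the $L_\infty$-relations have never been organized into a quantitative convergence estimate. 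Describing the difficulty is not the same as overcoming it.

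Your alternative route also has gaps that keep it from amounting to a proof of the stated conjecture. First, it applies only to $n=3$ and to the specific Poisson structures $\pi_f$ coming from a cubic potential, while Conjecture~\ref{c:kont-conj} is for arbitrary $n$ and arbitrary homogeneous quadratic Poisson bivectors. Second, even in that special case, producing an explicit one-parameter family $A_{\Phi+\hbar\Phi'}$ converging for all $\hbar\in\bC$ and showing (via Corollary~\ref{c:hphh}) that it agrees with Kontsevich's formal quantization up to \emph{formal} gauge equivalence leaves open exactly the issue you flag at the end: whether the comparison gauge transformation is itself convergent, which is not controlled by an $\HH^1$ computation alone. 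This is essentially the residual difficulty the paper alludes to when it says the potential construction ``confirms Kontsevich's conjecture \ldots up to the fact that $A_{\Phi_{P,Q,R}^{e^\hbar,c}}$ need not (\`a priori) correspond precisely to the Poisson structure $\hbar\cdot\pi_f$, but rather to some formal Poisson deformation thereof.'' In short: your proposal is a reasonable survey of plausible strategies, and the degree-preservation reduction is correct and useful, but as a proof it stops at precisely the point where the conjecture begins.
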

As we will see (and was already known, via more indirect
constructions), the conjecture holds for $\pi_f$ with $f \in
\cO(\bA^3)$ quasihomogeneous such that $Z(f)$ has an isolated
singularity at the origin.

\subsection{Etingof-Ginzburg's quantization of isolated
  quasihomogeneous surface singularities}\label{ss:eg}
In \cite{EGdelpezzo}, Etingof and Ginzburg constructed a (perhaps
universal) family of \emph{graded} quantizations of hypersurface
singularities in $\bA^3$. These essentially coincide with the Sklyanin
algebras associated to types $E_6, E_7$, and $E_8$ (the $E_6$ type is
the one of Example \ref{ex:skly-e6}).  Because these hypersurface
singularities also deform to affine surfaces $S \setminus E$ where $S$
is a (projective) del Pezzo surface and $E \subseteq S$ is an elliptic
curve, the family yields the universal family of quantizations of
these del Pezzo surfaces (in more detail, by \cite[Proposition
1.1.3]{EGdelpezzo}, the Rees algebra of $\cO(S \setminus E)$ is the
homogeneous coordinate ring of the anti-canonical embedding of $S$, so
the Rees algebras of filtered deformations of $\cO(S \setminus E)$ can be viewed
as deformations of the projective del Pezzo surface).  In this
subsection, we will stick to the hypersurface singularities and will
not mention del Pezzo surfaces any further.

Let $\bk$ have characteristic zero. Suppose that $x, y$, and $z$ are
assigned positive weights $a, b, c > 0$, not necessarily all one.  Let
$d := a+b+c$.  Suppose $f \in
\bk[x,y,z]$ is a polynomial which is weight-homogeneous of degree $m$.
Then the Poisson bracket $\pi_f$ \eqref{e:pif} has weight $m-d$. If we
want to deform in such a way as to get a \emph{graded}
quantization, we will need to have $m=d$, which is called the
\emph{elliptic} case.  For a filtered but not necessarily graded
quantization, we need only that $f$ be a sum of weight-homogeneous
monomials of degree $\leq d$.  The case where $f$ is
weight-homogeneous of degree strictly less than $d$ turns out to yield
all the \emph{du Val} (or Kleinian) singularities $\bA^2/\Gamma$ for
$\Gamma < \SL(2)$ finite, which we discussed in Remark \ref{r:McKay} and Theorem \ref{t:quiv}.

So, to look for graded quantizations, suppose $f$ is homogeneous of
degree $d$. We also need to require that $Z(f)$ has an isolated
singularity at the origin. In this case, it is well-known that the
possible choices of $f$, up to weight-graded automorphisms of
$\bk[x,y,z]$, fall into three possible families, each parameterized by
$\tau \in \bk$:
\[
\frac{1}{3}(x^3 + y^3 + z^3) + \tau \cdot xyz, \quad \frac{1}{4} x^4 +
\frac{1}{4} y^4 + \frac{1}{2} z^2 + \tau \cdot xyz, \quad \frac{1}{6}
x^6 + \frac{1}{3} y^3 + \frac{1}{2} z^2 + \tau \cdot xyz.
\]
Let $p$, $q$, and $r$ denote the exponents: so in the first case,
$p=q=r=3$, in the second case $p=q=4$ and $r=2$, and in the third
case, $p=6, q=3$, and $r=2$.  Note that $ap=bq=cr$; so in the first
case we could take $a=b=c=1$; in the second, $a=b=1, c=2$; in the
third, $a=1, b=2$, and $c=3$.  Let $TV_\cyc^{\leq m}$ and $TV_\cyc^m$
be the subspaces of $TV_\cyc$ of weighted degrees $\leq m$ and exactly
$m$, respectively. Similarly define $TV_\cyc^{< m} = TV_\cyc^{\leq
  m-1}$.  For a filtered algebra $A$, we similarly define $A^m,
A^{\leq m}$, and $A^{< m}$.

We define $\mu$ to be the Milnor number of the singularity of the
homogeneous $f$ above at the origin, i.e.,
\[
\mu = \frac{(a+b)(a+c)(b+c)}{abc} = p+q+r-1.
\]
Finally, we will consider more generally inhomogeneous polynomials
replacing $f$ above, of the form $P(x)+Q(y)+R(z)$, such that
$\gr(P(x)+Q(y)+R(z)) = f$ and $P(0)=Q(0)=R(0)=0$. Clearly such $f$
have $\mu$ parameters.

Similarly, we will consider potentials $\Phi_{P,Q,R}^{t,c}$ of the form
\begin{equation}
\Phi_{P,Q,R}^{t,c} := [xyz]-t[yxz] + c(P(x) + Q(y) + R(z)).
\end{equation}
If we set $t := e^{\tau \hbar}$, then working over $\bk[\![\hbar]\!]$, with
$c \in \bk$ and $P(x), Q(y), R(z)$ polynomials as above,
the algebra $A_{\Phi_{P,Q,R}^{t,c}}$ (recall Definition \ref{d:alg-pot}) is 
a deformation quantization of $\cO(\bA^3)$, equipped with the Poisson bracket
\[
\pi_f, \quad f = \tau \cdot xyz-c\bigl(P(x)+Q(y)+R(z)\bigr).
\]
Thus, for $\bk=\bC$, the graded algebras $A_{\Phi_{P,Q,R}^{t,c}}$
indeed give graded quantizations (for actual values of $\hbar$) of the
Poisson structures $\pi_f$ associated to all quasihomogeneous (when
$P$, $Q$, $R$ are quasihomogeneous), and more generally all
filtered polynomials $f$ of weighted degree $\leq d$. This
is actually true for arbitrary values of $a,b,c \geq 1$ with
$d=a+b+c$, without requiring that the homogeneous part of $f$
have an isolated singularity at zero.   This
confirms Kontsevich's conjecture \ref{c:kont-conj} in these cases,
with $\hbar \in \bC$ convergent everywhere, up to the fact that
$A_{\Phi_{P,Q,R}^{t,c}}$ need not a priori correspond
precisely to the Poisson structure $\hbar \cdot \pi_f$, but rather to
some formal Poisson deformation thereof.  

However, in this case, since the $f$ above are all possible
filtered polynomials of degree $\leq d$, we can conclude that,
for every $f_\hbar \in \hbar \cdot \cO(\bA^3)^{\leq d}[\![\hbar]\!]$,
letting $\star_{f_\hbar}$ be the Kontsevich quantization of
$\pi_{f_\hbar}$, there must exist $\Phi_{\hbar} \in TV_\cyc^{\leq
  d}[\![\hbar]\!]$ such that, via a continuous isomorphism which is the
identity modulo $\hbar$,
\begin{equation}\label{e:quants}
(\cO(\bA^3)[\![\hbar]\!], \star_{f_\hbar}) \cong A_{\Phi_{\hbar}}.
\end{equation}
\begin{question}
  What is the relation between the parameters $f_\hbar$ and
  $\Phi_{\hbar}$ above? In particular:
\begin{enumerate}
\item Does \eqref{e:quants} send star products that, up to a gauge
  equivalence, converge for $\hbar$ in some neighborhood of zero to
  potentials that, up to a continuous automorphism of
  $TV[\![\hbar]\!]$, also converge in some neighborhood of zero?
\item Is
  $A_{\Phi_{P,Q,R}^{t,c}}$ isomorphic (via a continuous $\bk[\![\hbar]\!]$-algebra isomorphism
  which is the identity modulo $\hbar$) to Kontsevich's
  quantization of $\hbar \cdot \pi_f$ (or some constant multiple), for
$f = \tau \cdot xyz - c\bigl(P(x)+Q(y)+R(z)\bigr)$ (and $t=e^{\tau \hbar}$)?
\end{enumerate}
\end{question}
As stated above, if the second question had a positive answer, this
would be enough to confirm Conjecture \ref{c:kont-conj} in this case.

Returning to the case that the degrees $(a,b,c)$ are in the set
$\{(1,1,1),(1,1,2),(1,2,3)\}$, i.e., that generic quasihomogeneous
polynomials of degree $d$ have an isolated singularity at the origin,
the main result of \cite{EGdelpezzo} shows that, 
for generic $A_{\Phi}$ with $\Phi$ filtered of degree $\leq d = a+b+c$,
the algebra $A_{\Phi}$ is in the family $A_{\Phi_{P,Q,R}^{t,c}}$
and the family provides a versal deformation; moreover, the center
is a polynomial algebra in a single generator, and quotienting by this
generator produces a quantization of the original isolated singularity, which
also restricts to a versal deformation generically:
\begin{theorem}\cite{EGdelpezzo} \begin{itemize}
  \item[(i)] Suppose that $\Phi \in TV_\cyc^d$ is a homogeneous Calabi-Yau
    potential of weighted degree $d=a+b+c$, where $|x|=a$, $|y|=b$, and $|z|=c$ (which can be arbitrary).
Then
    for any potential $\Phi' \in TV_\cyc^{< d}$ of degree strictly
    less than $d$, $\Phi + \Phi'$ is also a Calabi-Yau potential.  Moreover,
    the Hilbert series of the associated graded
 algebra of the Calabi-Yau algebra $A_{\Phi + \Phi'}$ is
\[
h(\gr(A_{\Phi+\Phi'});t) = \frac{1}{(1-t^a)(1-t^b)(1-t^c)}.
\]
\item[(ii)] There exists a nonscalar central element $\Psi \in
A_{\Phi + \Phi'}^{\leq d}$.
\item[(iii)] Now suppose that $(a,b,c) \in \{(1,1,1),(1,1,2),(1,2,3)\}$ as
above.  Then, if $\Phi$ is generic, then there exist parameters $P, Q, R, t$, $c$, as above, such that for all $\Phi'\in TV_\cyc^{< d}$,
\[
A_{\Phi+\Phi'} \cong A_{\Phi_{P,Q,R}^{t,c}}.
\]
Moreover, in this case, the center $Z(A_{\Phi+\Phi'}) = \bk[\Psi]$ is a polynomial
algebra in one variable, and the Poisson center $Z(\gr A_{\Phi +\Phi'}) = \bk[\gr \Psi]$.
\item[(iv)] Keep the assumption of (iii). The family
 $\{A_{\Phi_{P',Q',R'}^{t',c'}}\}$ restricts to a versal deformation of $A_{\Phi+\Phi'}$
in the formal neighborhood of $(P,Q,R,t,c)$, depending
on $\mu$ parameters. Moreover,
the family $\{A_{\Phi_{P',Q',R'}^{t',c'}} / \Psi_{P',Q',R'}^{t',c'}\}$ restricts
to a versal deformation of $A_{\Phi+\Phi'}/(\Psi)$ in the same formal neighborhood.
\end{itemize}
\end{theorem}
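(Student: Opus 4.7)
The plan is to exploit the standard Ginzburg--Van den Bergh bimodule resolution of $A_\Phi$ as a length-$3$ complex of free bimodules, whose two middle differentials are built from the cyclic partial derivatives $\partial_i \Phi$. For part (i), the theorem stated immediately above reduces the CY claim for $A_{\Phi+\Phi'}$ to exactness of the corresponding complex obtained by replacing $\Phi$ by $\Phi + \Phi'$; this follows by a standard filtered-algebra argument (lifting a contracting homotopy from $\gr$) since on $\gr$ one recovers the original resolution for $A_\Phi$. The Hilbert series formula is then forced by taking the Euler characteristic of the resolution and using $\gr A_{\Phi+\Phi'} = A_\Phi$.

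For part (ii), I first observe that in the homogeneous case $A_\Phi \cong \cO_{\bA^3}$ with Poisson bracket $\pi_f$, where $f$ is the polynomial corresponding to $\Phi$, the polynomial $f$ is Poisson central by Exercise \ref{exer:hypsfc}. Passing to the Rees algebra of the filtered deformation $A_{\Phi+\Phi'}$ produces a graded formal deformation of $(\cO_{\bA^3}, \pi_f)$, and Corollary \ref{c:hphh}(i) identifies the Poisson center with the center of the quantization, so $f$ lifts to a central element $\Psi \in A_{\Phi+\Phi'}^{\leq d}$. An alternative, more concrete route is to write $\Psi$ down as an explicit noncommutative degree-$d$ polynomial and verify centrality by hand modulo the relations $\partial_i \Phi = 0$; this second route avoids invoking formality and is probably what \cite{EGdelpezzo} actually do.

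For part (iii), the classical classification of isolated quasihomogeneous hypersurface singularities in three variables of weighted degree $d = a+b+c$ forces the weight triple into $\{(1,1,1),(1,1,2),(1,2,3)\}$, with the singularity falling into one of the three one-parameter families $E_6, E_7, E_8$. For generic homogeneous $\Phi$ one puts the potential into the normal form $[xyz] - t[yxz] + c(P_0(x) + Q_0(y) + R_0(z))$ via the $\GL(V)$-action matching the singularity classification, and absorbs any $\Phi' \in TV_\cyc^{<d}$ into the lower-order $P, Q, R$. The assertion $Z(A_{\Phi+\Phi'}) = \bk[\Psi]$ then follows by combining the Hilbert series of (i) with the fact that on $\gr$ the Poisson center of $(\bk[x,y,z], \pi_f)$ is exactly $\bk[f]$, since $f$ defines an isolated singularity at the origin; any additional central element would violate this via the PBW-type identification of $\gr Z(A_{\Phi+\Phi'})$ with a subalgebra of the Poisson center.

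For part (iv), versality reduces to a tangent-space comparison: the $(P,Q,R,t,c)$-family has $\mu = p+q+r-1$ parameters, and one checks that this equals $\dim \HH^2(A_\Phi)_{\leq 0}$ by computing the latter from the resolution of (i) and identifying it with (a graded piece of) the Milnor algebra of $f$. The versality of the quotient family then follows from the universal deformation principle applied to $A_{\Phi+\Phi'}/(\Psi)$, using that $\Psi$ varies continuously with the parameters by Corollary \ref{c:hphh}. The hardest step I anticipate is the normal-form reduction in (iii): this requires a nontrivial $\GL(V)$-invariant-theoretic classification of generic degree-$d$ cyclic potentials subject to the CY condition, and the genericity hypothesis enters precisely here to guarantee the nondegeneracy needed to diagonalize the cubic part and match it to the singularity classification.
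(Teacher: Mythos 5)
The paper does not actually prove this theorem---it is cited to \cite{EGdelpezzo} and the surrounding text only sketches the strategy---so there is no ``paper proof'' to compare against line-by-line; I'll evaluate your sketch on its own terms.

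Parts (i) and (iv) look sound: part (i) is essentially the theorem stated just above it in the text together with the Euler-characteristic computation $h(A)^{-1} = 1-(t^a+t^b+t^c)+(t^{d-a}+t^{d-b}+t^{d-c})-t^d=(1-t^a)(1-t^b)(1-t^c)$, and the Hochschild-dimension-equals-Milnor-number calculation you propose for (iv) is exactly the kind of argument used in \cite{EGdelpezzo}. For part (i), note however that you have the logic slightly backwards: one only gets a surjection $A_\Phi\onto\gr A_{\Phi+\Phi'}$ for free; it is the fact that the Ginzburg bimodule resolution \emph{deforms} to a resolution (the preceding theorem) that forces this surjection to be an isomorphism, not the other way around.

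Part (ii) is where there is a genuine gap. You assert ``in the homogeneous case $A_\Phi\cong\cO_{\bA^3}$'', but this is false in the generality of the theorem: $\Phi$ is an arbitrary homogeneous CY potential of degree $d$, e.g.\ the Sklyanin potential $[xyz]-t[xzy]+\frac{c}{3}[x^3+y^3+z^3]$, for which $A_\Phi$ is already a noncommutative three-dimensional Sklyanin algebra, not $\cO_{\bA^3}$. Consequently $\gr A_{\Phi+\Phi'}=A_\Phi$ is not commutative and does not carry a Poisson bracket $\pi_f$, so the formality argument does not apply as stated. Moreover, even in the du Val case where $\gr A_{\Phi+\Phi'}$ \emph{is} commutative, Corollary~\ref{c:hphh} gives an isomorphism between Poisson center and center only after inverting $\hbar$; to produce an actual element $\Psi\in A^{\leq d}_{\Phi+\Phi'}$ of the filtered algebra, not merely of the localization $\hat A[\hbar^{-1}]$, one needs the grading/$\bG_m$-equivariance of Kontsevich's morphism to conclude that the lift of a homogeneous element is a polynomial in $\hbar$. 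Your ``alternative more concrete route'' of producing $\Psi$ explicitly is indeed what one needs---and, in fact, \cite{EGdelpezzo} produce $\Psi$ by a direct construction (a noncommutative analogue of an Euler/moment-map element) adapted to the potential formalism, precisely because the formality route does not give the degree bound cleanly. You should not present the formality route as the primary argument without first resolving these two points.

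For part (iii), the strategy is plausible but note that the constraint $(a,b,c)\in\{(1,1,1),(1,1,2),(1,2,3)\}$ is not ``forced''---it is a \emph{hypothesis} of (iii), and it is the condition under which a generic quasihomogeneous $f$ of degree $d=a+b+c$ has an isolated singularity at the origin. The normal-form reduction and the identification $Z(\gr A_{\Phi+\Phi'})=\bk[\gr\Psi]$ via the Poisson center $\bk[f]$ are the right ideas, though the latter again presupposes that $\gr A_{\Phi+\Phi'}$ is commutative, which only holds when $\Phi$ is the standard potential $[xyz]-[xzy]$; for a general Sklyanin $\Phi$ one needs the analogous computation of the center of $\gr A_{\Phi+\Phi'}=A_\Phi$ as a (noncommutative) Sklyanin algebra, which is a separate known calculation.
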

\begin{question}
  Note that we did \emph{not} say anything above about the family
  $A_{\Phi_{P,Q,R}^{t,c}} / (\Psi_{P,Q,R}^{t,c})$ restricting to a
  versal quantization of the original singularity $\cO(\bA^3) / (f)$
  for $f = \tau \cdot xyz - c\bigl(P(x)+Q(y)+R(z)\bigr))$.  More
  precisely, does the family of central reductions,
\[
A_{\Phi_{\hbar \cdot P_\hbar,\hbar \cdot Q_\hbar,\hbar \cdot R_\hbar}}^{t_\hbar,
\hbar \cdot c_\hbar} / (
\Psi_{\hbar \cdot P_\hbar,\hbar \cdot Q_\hbar,\hbar \cdot R_\hbar}^{t_\hbar,
\hbar \cdot c_\hbar}),
\]
produce a versal deformation quantization, for $P_\hbar(x), Q_\hbar(y), R_\hbar(z) \in \cO(\bA^3)[\![\hbar]\!]$ and $t_\hbar, c_\hbar \in \bk[\![\hbar]\!]$
such that $P=P_0, Q=Q_0, R=R_0, c=c_0$, and $t_\hbar \equiv e^{\tau \hbar} \pmod{\hbar^2}$?
\end{question}
In the case that $P, Q$, and $R$ are homogeneous, i.e., $f=P(x)+Q(y)+R(z)$,
we can explicitly write out the central elements $\Psi$ for
the first two possibilities for $f$ (\cite[(3.5.1)]{EGdelpezzo}).  For the
first one, $f=\frac{1}{3}(x^3+y^3+z^3) + \tau \cdot xyz$, we get
\begin{equation}
\Psi = c \cdot y^3 + \frac{t^3-c^3}{c^3+1}(yzx+c \cdot z^3) - t \cdot zyx.
\end{equation}
For the second one, $\frac{1}{4} x^4 + \frac{1}{4} y^4 + \frac{1}{2} z^2 + \tau \cdot xyz$, we get
\begin{equation}
\Psi = (t^2+1)xyxy - \frac{t^4+t^2+1}{t^2-c^4}(t \cdot xy^2x + c^2 \cdot y^4) + t \cdot y^2x^2.
\end{equation}
For the third case, the answer is too long, but you can look it up at

\centerline{\url{http://www-math.mit.edu/~etingof/delpezzocenter}.}  

\noindent Moreover,
there one can obtain the formulas for the central elements $\Psi$ in
the filtered cases as well.

\subsection{Exercises}

Exercise \ref{exer:hypsfc} from the notes.

\section{Solutions to exercises}

\begin{proof}[Solution to Exercise \ref{exer:weyl-diff}] 
  (a) Write $\Weyl_n := A/(R)$ where $A$ is the free algebra on
  $x_1,\ldots,x_n$ and $y_1, \ldots, y_n$, and $R$ is the span of the
  defining relations $x_i y_j - y_j x_i = \delta_{ij}$, $x_i x_j - x_j
  x_i$, and $y_i y_j - y_j y_i$. We then have a unique homomorphism
  $\phi: A \to \caD(\bA^n)$ by the assignment $\phi(x_i)=x_i$ and
  $\phi(y_i)=-\partial_i$.  We show that $\phi(R)=0$, and hence $\phi$
  factors through a homomorphism $\bar \phi: \Weyl_n \to \caD(\bA^n)$.
  This follows from a direct computation: to verify $x_i (-\partial_j)
  - (-\partial_j) x_i = \delta_{ij}$, we check that $x_i (-\partial_j)
  (f) - (-\partial_j)(x_if) = -(-\partial_j)(x_i) \cdot f =
  \delta_{ij} f$; then note that multiplication operators by $x_i$ and
  $x_j$ commute, as do partial derivative operators
  $\partial_i, \partial_j$.  It is clear that $\bar \phi$ is surjective, since
  it sends generators to generators.

  (b) We need to show that $\bar \phi$ is injective. We first claim
  that $\Weyl_n$ is spanned by the operators $f \cdot P$ where $f \in
  \bk[x_1,\ldots,x_n]$ and $P \in \bk[y_1, \ldots, y_n]$ are
  monomials. This is easy to see, because given any monomial in $x_i$
  and $y_j$, we can apply the relations to push the $y_j$ to the right
  and the $x_i$ to the left, resulting in a linear combination of
  elements of the desired form.  Next, we claim that the resulting
  elements $\bar \phi(f P)$ are linearly independent (again assuming
  $f$ and $P$ are monomials).  This implies not only that $\bar \phi$
  is an isomorphism, but in fact that these elements $fP$ form a basis
  of $\Weyl_n$ (note that, in Exercise \ref{exer:weyl-basis} below, we
  will actually see that this is a basis for arbitrary commutative
  rings $\bk$, and not merely for characteristic zero fields; but note
  that $\bar \phi$ will no longer be an isomorphism in this
  generality).

It remains to prove the final claim.  For a contradiction, suppose
that a nonzero linear combination $F := \sum_{f,P} \lambda_{f,P} \bar
\phi(fP)$ of such monomials is zero, with $\lambda_{f,P} \in \bk$. Let
$P$ be of maximal degree such that $\lambda_{f,P}\neq 0$.  Let $g \in
\bk[x_1,\ldots,x_n]$ be the monomial corresponding to $P$, i.e., such
that, viewing $g$ as a function in $n$ variables, $P =
g(\partial_1,\ldots,\partial_n)$. Then, applying the operator $F$ to
$g$, we get $F(g) = cf$ where $c \geq 1$ is a positive integer
(specifically, if $g = x_1^{r_1} \cdots x_n^{r_n}$, then $c = r_1!
r_2! \cdots r_n!$).  Since we assumed that $\bk$ had characteristic
zero, we have $cf \neq 0$, which is a contradiction, since $F$ was
assumed to be zero.  (Note that this proof works when $\bk$ is any
ring of characteristic zero, not necessarily a field.)
\end{proof}

\begin{proof}[Solution to Exercise \ref{exer:weylv}] It is clear from the definition
  that the map $\phi: \Weyl_n \to \Weyl(V)$ such that $\phi(x_i)=x_i$
  and $\phi(y_i)=y_i$ is a homomorphism. It is surjective since $V$ is
  spanned by the $x_i$ and $y_i$.  We only need to show it is
  injective.  The kernel is $(R)$ where $R$ is the span of the
  relations $xy - yx - (x,y)$, for $x,y \in V$.  We only need to show
  these are zero in $\Weyl_n$.  To do so, write $x$ and $y$ as a
  linear combination of the $x_i$ and $y_i$.  Then $xy-yx-(x,y)$
  decomposes as a linear combination of the defining relations of
  $\Weyl_n$ (where $x$ and $y$ are replaced by basis elements).
\end{proof}

\begin{proof}[Solution to Exercise \ref{exer:weyl-gr}] 
Following the hint, we showed
  above in the solution to Exercise \ref{exer:weyl-diff}.(b) that a basis
  for $\caD(\bA^n)$ is of the desired form.  Since the associated
  graded elements of this basis are clearly a basis for $\Sym V$,  it
  remains only to see that the degree of these elements matches the
  description: under the additive filtration, $x_i$ and $y_i$ are both
  in degree one, whereas under the geometric filtration, the $x_i$ are
  in degree zero and the $y_i$ are in degree one.
\end{proof}
\begin{proof}[Solution to Exercise \ref{exer:weyl-basis}]
Setting $\bk=\bZ$, the result
  actually follows from the solution given for Exercise
  \ref{exer:weyl-diff}.(b) above.  But then, applying $\otimes_{\bZ} \bk$
  gives the result for $\Weyl_n(\bk)$ defined over an arbitrary rings
  $\bk$, again by the same relations (since $\Weyl_n(\bk) =
  \Weyl_n(\bZ) \otimes_{\bZ} \bk$).  In the case that $\bk$ is a field
  and $V$ is a vector space, we then know that $\Weyl_n = \Weyl(V)$ by
  Exercise \ref{exer:weylv}.
\end{proof}
\begin{proof}[Solution to Exercise \ref{exer:def-reln}]
  Let $\phi: TV=\gr(TV) \to \gr A$ be the tautological surjection.  We
  show that $\phi(R)=0$, so that it descends to a homomorphism $\bar
  \phi: B \to \gr A$.  Indeed, $R=\gr(R)=\gr(E) \subseteq \gr(TV)=TV$,
  which implies that $\phi(R)=0$.
\end{proof}
\begin{proof}[Solution to Exercise \ref{exer:poissbr}]
   The skew-symmetry and Jacobi
  identities for $\{-,-\}$ follow immediately from those identities
  for the commutator $[a,b]=ab-ba$, by taking associated graded, using
  the identities $\{\gr_m a,\gr_n b\}=\gr_{m+n-d}[a,b]$ and $\{\gr_m
  a,\{\gr_n b,\gr_p c\}\} = \gr_{m+n+p-2d}[a,[b,c]]$, for $a \in
  A_{\leq m}, b \in A_{\leq n}$, and $c \in A_{\leq p}$ (which follow
  immediately).  The Leibniz rule is then equivalent to the statement
  that $\{\gr_m a,-\}$ is a derivation for all $a \in A_{\leq m}$ (and
  all $m \geq 0$). This follows because $[a,-]$ is a derivation
  already in $A$: $[a,bc] = [a,b]c + b[a,c]$; we then apply
  $\gr_{m+n+p-d}$, assuming $a \in A_{\leq m}, b \in A_{\leq n}$, and
  $c \in A_{\leq p}$.
\end{proof}
\begin{proof}[Solution to Exercise \ref{exer:zeropbr}] If $d$ is not maximal such that
  $\gr(A)$ is commutative, then $[A_{\leq m}, A_{\leq n}] \subseteq
  A_{\leq (m+n-d-1)}$ for all $m,n \geq 0$.  Thus $\gr_{m+n-d}[A_{\leq
    m}, A_{\leq n}] = 0$, and hence $\{\gr_m A, \gr_n A\}=0$.
\end{proof}
\begin{proof}[Solution to Exercise \ref{exer:pb-lie}]  
  For $x,y \in \mfg$, we have $\{\gr_1 x,
  \gr_1 y\}= \gr_1 (xy-yx)=\gr_1 \{x,y\}$ (denoting the Lie bracket on
  $\mfg$ also by $\{-,-\}$), which is the bracket of
  \eqref{e:sym-g}. Then, \eqref{e:sym-g} follows from the Leibniz
  identity (indeed, it is clear that the Leibniz identity uniquely
  determines the Poisson bracket from the Lie bracket on generators,
  so there is a unique formula for the LHS of \eqref{e:sym-g}, and it
  is easy to check this formula is the RHS).
\end{proof}
\begin{proof}[Solution to Exercise \ref{exer:weyl-symv}]
 By Exercise \ref{exer:weyl-gr}
  (solved above), we only need to identify the Poisson bracket on $\gr
  \Weyl(V)$ with the bracket on $\Sym V$. By the Leibniz rule, it
  suffices to check this on homogeneous generators. In the case of the
  additive filtration, $\gr V$ is homogeneous of degree one, and we
  have $uv-vw = (u,v)$ in $\Weyl(V)$, matching $\{u, v\}=(u,v)$ in
  $\Sym V$, as desired.  In the case of the geometric filtration, the
  same identity holds, requiring $u$ and $v$ to be homogeneous (of
  degree zero or one) in $V$.
\end{proof}
\begin{proof}[Solution to  Exercise \ref{exer:diffb}] 
  By definition, $\Diff$ is
  nonnegatively filtered.  We only need to show that its associated
  graded algebra is commutative, i.e., that $[\Diff_{\leq m}(B),
    \Diff_{\leq n}(B)] \subseteq \Diff_{\leq m+n-1}(B)$. We prove this
  inductively on the sum $m+n$.  Applying $\ad(b)$ for $b \in B$, we
  obtain by the Leibniz identity for commutators,
\begin{multline*}
\ad(b)([\Diff_{\leq m}(B), \Diff_{\leq n}(B)]) \subseteq
   [\ad(b)(\Diff_{\leq m}(B)), \Diff_{\leq n}(B)] + [\Diff_{\leq
       m}(B), \ad(b) \Diff_{\leq n}(B)] \\ \subseteq [\Diff_{\leq
       m-1}(B), \Diff_{\leq n}(B)] + [\Diff_{\leq m}(B), \Diff_{\leq
       n-1}(B)],
\end{multline*}
and both terms on the RHS lie in $\Diff_{m+n-1}(B)$ by the inductive
hypothesis.
\end{proof}
\begin{proof}[Solution to Exercise \ref{exer:quiver1}] 
(i) This is a matter of checking the definition.  Given a representation of $Q$, we let every path in $\bk Q$ act by the corresponding composite of linear transformations.  Given a representation of $\bk Q$, we restrict the representation to the subset $Q \subseteq \bk Q$ to obtain a representation of $Q$.

(ii)  More generally, if we fix $(V_i)$, then every representation of
the form $(\rho, (V_i))$ is clearly given by the choices of linear operators
$\rho(a): V_{a_t} \to V_{a_h}$ for all $a \in Q_1$.
\end{proof}

\begin{proof}[Solution to Exercise \ref{exer:quiver2}] 
If there is only one vertex, then paths
in the quiver are the same as words in the arrows, which form a basis
for the tensor algebra over $\bk Q_1$.
\end{proof}

\begin{proof}[Solution to Exercise \ref{exer:quiver3}] 
  For any vector space, $T^*V \cong V \oplus V^*$, canonically.  Thus
  the assertion follows from Exercise \ref{exer:quiver1}.(ii).
\end{proof}

\begin{proof}[Solution to Exercise \ref{exer:quiver4}] 
(i) This follows because the relations are quadratic (hence homogeneous).  

(ii) The associated graded space of the relations for $\Pi_\lambda(Q)$
identifies with the span of the relations for $\Pi_0(Q)$, so by Exercise
\ref{exer:def-reln} we have a surjection $\gr \Pi_\lambda(Q) \to
\Pi_0(Q)$. 

(iii) For the quiver $Q$ with two vertices and one arrow, from one
vertex to the other, we can consider $\lambda=(0,1)$.  Let $a, a^*$ be
the two arrows in $\bar Q$, with $\lambda_{a_t}=0$ and
$\lambda_{a_h}=1$.  Then $\Pi_\lambda(Q) = \bk \bar Q / (a^* a, a a^*
- a_h)$. Thus $a^* (a a^*) = a^* a_h = a^*$ in $\Pi_\lambda(Q)$, but
also $(a^* a) a^* = 0$. So $a^* = 0$ in $\Pi_\lambda(Q)$, hence also
in $\gr \Pi_\lambda(Q)$, but this is not true in $\Pi_0(Q)$.
\end{proof}

\begin{proof}[Solution to Exercise \ref{exer:quiver5}] 
By definition, $\Rep_d(\Pi_\lambda(Q))$ is
the subspace of the space of all representations of $\bar Q$ satisfying
the relation $\mu(\rho)=\lambda \cdot \Id$.
\end{proof}

\begin{proof}[Solution to Execise \ref{exer:nonflat-def}]
  The first statement follows because the monomials given are those in
  which $xy$ and $yx$ (which are zero) do not appear. The next
  statement, that $A_{a,b}=\{0\}$ for $a \neq b$, follows because in
  this case $xyx=ax=bx$ implies $x = 0$, and then $0=xy=a$ and
  $0=yx=b$ shows that, since one of $a$ and $b$ is nonzero, we obtain
  the relation $1=0$, i.e., $A_{a,b} = \{0\}$.

  The final statement, that $A_{a,a} \cong \bk[x,x^{-1}]$, follows
  because in this case the relations are equivalent to $y=ax^{-1}$.
  It is clear that a basis of $\bk[x,x^{-1}]$ is given by monomials in
  either $x$ or $x^{-1}=a^{-1} y$, but not both, proving the next
  statement.  Since the basis is the same as for the case $a=0$, we
  obtain that $A_{a,b}$ is flat along the diagonal, as desired.
\end{proof}
\begin{proof}[Solution to Exercise \ref{exer:weyl-usl2}]
(a) One way to correct this, that we will use below, is $e=-\frac{1}{2}D^2, f=\frac{1}{2}x^2$,
and $h = [e,f] = -\frac{1}{2} (xD + Dx)$.

(b) For this, we first compute that, in a highest weight
  representation of highest weight $m$, the element $C$ acts by
  $\frac{1}{2}m^2+m$.  Indeed, since $C$ is central, it suffices to
  compute this on the highest weight vector $v$, where
  $(ef+fe+\frac{1}{2} h^2)(v) = (h+\frac{1}{2} h^2)(v) = (\frac{1}{2}
  m^2 + m)v$.  Thus we only have to solve $\frac{1}{2} m^2 + m =
  -\frac{3}{8}$.  We obtain the solutions $-\frac{1}{2},
  -\frac{3}{2}$, as desired.

(c) It is clear that $\{1, x^2, x^4, \ldots\}$ is an eigenbasis under
  the operator $h = -\frac{1}{2} (x D + Dx)$, with eigenvalues
  $-\frac{1}{2}, -\frac{5}{2}, -\frac{9}{2}, \ldots$.  Moreover, it
  is clearly generated by the highest weight vector $1$. So this is a
  highest weight representation of highest weight $-\frac{1}{2}$.

(d)  We know that all
  finite-dimensional irreducible representations of $\mfsl_2$ are
  highest weight representations, with nonnegative highest
  weights. Part (b) above shows that $(U\mfsl_2)^\eta$ does not admit
  such highest weight representations, hence it admits no
  finite-dimensional irreducible representations.  By the isomorphism
  of (a), neither does $\Weyl(V)^G$.  For the final statement, note
  that any finite-dimensional representation has an irreducible
  quotient, so the existence of a finite-dimensional representation
  implies the existence of a finite-dimensional irreducible
  representation.
\end{proof}

\begin{proof}[Solution to Exercise \ref{exer:flat-hilb}]
  The first statement follows because a surjection of
  finite-dimensional spaces is an isomorphism if and only if the
  dimensions are equal.  The second statement follows for the same
  reason, applied to each weight space individually.
\end{proof}

\begin{proof}[Solution to Exercise \ref{exer:grob}]
(a) Notice that the defining relations form a Gr\"obner basis:
  $xy-yx-z, yz-zy-x$, and $zx-xz-y$.  Therefore, using the
  lexicographical ordering on monomials with $x < y < z$, we can
  uniquely reduce any element to a linear combination of monomials of
  the form $x^a y^b z^c$, just as for $\bk[x,y,z]$.  (That is, since
  we have a Gr\"obner basis for a filtered algebra $A$ whose
  associated graded set is a Gr\"obner basis for the associated graded
  algebra $B = \gr(A)$, we conclude that the canonical surjection $B
  \to \gr(A)$ of Exercise \ref{exer:def-reln} is an isomorphism.)

  It is clear that this algebra is the enveloping algebra of the
  three-dimensional Lie algebra with basis $x,y,z$ satisfying
  $[x,y]=z, [y,z]=x$, and $[z,x]=y$.  This must be isomorphic to
  $\mfsl_2$ since the latter is the unique three-dimensional Lie
  algebra $\mfg$ which is semisimple, i.e., satisfying
  $[\mfg,\mfg]=\mfg$.  Alternatively, we can explicitly write an
  isomorphism with $\mfsl_2$, by the assignment $x=\frac{1}{2}(e-f),
  y=\frac{1}{2}(e+f)$, and $z=\frac{1}{2}h$.

  (b) For example, one can get a deformation which is not flat by
  setting $[x,y]=x, [y,z]=0$, and $[z,x]=y$.  This does not satisfy
  the Jacobi identity: $[x,[y,z]]+[y,[z,x]]+[z,[x,y]]=0+0+[z,x]=y$.
  Thus it will not be flat: the element $y$ is generated by these
  relations, and hence so is $x$ since $[x,y]=x$.  Thus the quotient
  by these relations is $\bk[z]$.

  (c) A Gr\"obner basis is again given by the defining relations,
  whose associated graded relations are the defining relations for the
  skew product $B:=\Sym(\bk^2) \rtimes \bZ/2 = \bk[x,y] \rtimes
  \bZ/2$, independently of $\lambda$.  Thus, letting $A$ denote the
  Cherednik algebra we define here, we get an isomorphism $B \to
  \gr(A)$, which says (by our definition) that $A$ is a flat
  deformation of $B$ for all $\lambda$.

  (d) For the associated graded relations, we still get a semidirect
  product $\bk[x,y] \rtimes \bZ/2$, where now the action of $\bZ/2$ is
  by $\pm \Id$ on $x$, but is trivial on $y$.  But for the Cherednik
  algebra, we will get the relation
\[
0 = [z,1]=[z,[xy-yx]] = [z,x]y + x[z,y]-[z,y]x - y[z,x]
= 2z(xy-yx),
\]
and together with the relation $xy-yx=1$ we get $z=0$. Since $z^2=1$
we get $1=0$, so that the algebra defined by these relations is the
zero algebra.
\end{proof}

\begin{proof}[Solution to Exercise \ref{exer:weyl-simple}]
We identify $\Weyl(V)$ with $\Weyl_n$ for $\dim V = 2n$.
To see that $\Weyl_n \rtimes \bZ/2$ is simple, first note that
  if you have an ideal generated by an element of the form $f \in
  \Weyl_n$, then it is the unit ideal, because $\Weyl_n$ itself is
  simple (this is easy to verify: take commutators of $f$ sequentially
  with generators $x_i$ or $y_i$ until we get a nonzero element of
  $\bk$).  Writing $\bZ/2 = \{1,\sigma\}$, we also have $(f\sigma) =
  (f\sigma)\sigma = (f) = (1)$ for $\sigma \in \bZ/2$.  Now take an
  ideal of the form $(a+b\sigma)$ for nonzero $a,b \in \Weyl_n$.  Again by
  taking commutators with generators, we obtain that there is an
  element of the form $(1+c\sigma)$ for some $c \in \Weyl_n$. If
 $c = 0$, we are done. Otherwise,
  $[x, 1+c\sigma] = (xc + cx)\sigma$, which cannot be zero since
  $\gr(xc+cx)=\gr(xc) = \gr(x)\gr(c) \neq 0$.  We then get from above
  that $((xc+cx)\sigma)=(1)$ and hence our ideal is the unit ideal.

  For $\Weyl_n^{\bZ/2}$, we can also do an explicit computation;
  alternatively, we can follow the hint.  Namely, for $f \in
  \Weyl_n^{\bZ/2}$, note that $f=efe$.  Thus,
\begin{multline*}
\Weyl_n^{\bZ/2} \cdot f \cdot \Weyl_n^{\bZ/2} = e (\Weyl_n \rtimes
\bZ/2)(efe)(\Weyl_n \rtimes \bZ/2)e \\ = e (\Weyl_n \rtimes
\bZ/2)(f)(\Weyl_n \rtimes \bZ/2)e = e (\Weyl_n \rtimes \bZ/2) e =
\Weyl_n^{\bZ/2}. \qedhere
\end{multline*}
\end{proof}

\begin{proof}[Solution to Exercise \ref{exer:hoch-res}] Following the
  hint, we see that if $\gr(C_\bullet)$ is exact, then so must be
  $C_\bullet$.  Thus, in the situation at hand, since $\gr(Q_\bullet)
  \to \gr(M)$ is exact, so is $Q_\bullet \to M$.
\end{proof}

\begin{proof}[Solution to Exercise \ref{exer:aug}](a) Given a
  codimension-one ideal, we have the algebra morphism $B \to
  B/B_+=\bk$, and given the algebra morphism, we can take its kernel,
  which has codimension one.

(b) It is clear that $TV/(TV)_+ = \bk$, so by (a) this is an augmentation.

(c) Let $V \subseteq B_+$ be any generating subspace (e.g., simply
  $V=B_+$); we then have a canonical surjection of augmented algebras,
  $TV \to B$. Since this is compatible with the augmentation, the
  kernel must be in the augmentation ideal, $(TV)_+$, so we can let
  $R$ be this kernel (or any generating subspace of the kernel).

(d) We follow the hint.  The last two differentials become zero by construction, since they involve multiplying $V$ on $\bk$. Then the first homology is $V$.



(e) The second assertion (about the case that $R$ is spanned by
homogeneous elements) follows because we can assume that $R$ has
minimal Hilbert series among subspaces spanned by homogeneous elements
which generate the same ideal, call it $J$.  This is equivalent to the
given condition $R \cap (RV+VR) = \{0\}$, for $R$ spanned by
homogeneous elements (hence, $R$ a graded subspace of $TV$), which we
can see because, for all $m \geq 2$, $R_m$ is a complementary subspace
in $J_m$ to $J_m \cap (R_{\leq (m-1)})$.

We now consider the assertion $\Tor_A(\bk,\bk) \cong R$.
For the complex in the hint to be well-defined, we need to show that
the multiplication map $TV \otimes R \to TV \cdot R$ is injective
(hence an isomorphism). To see this, for a contradiction, suppose that
the kernel, call it $K$, is nonzero, and that $m \geq 0$ is the least
nonnegative integer such that the projection of the kernel to $T^m V
\otimes R$ is nonzero.  By the assumption $(RV+VR) \cap R = \{0\}$ (in
fact we only need $(TV)_+ R \cap R = \{0\}$), we must have $m \neq
0$. Take an element $f \in K$ projecting to a nonzero element of $T^m
V \otimes R$.  Let $(v_i)$ be a basis of $V$ and write $f = \sum_i v_i
f_i$ for $f_i \in TV \otimes R$.  Then we clearly have $f_i \in K$ as
well for all $i$; but some $f_i$ must have a nonzero projection to
$T^{m-1} V \otimes R$, a contradiction.

Let $K$ be the kernel of the map $B \otimes R \to B \otimes V$, and
consider its inverse image $\tilde K \subseteq TV \otimes R \subseteq
TV$.  Then it is clear that $\tilde K = (TV \cdot R) \cap (TV \cdot R
\cdot TV_+)$.  Thus we can let $S = \tilde K$ (or any left $TV$-module
generating subspace thereof), and we obtain the extension of the
resolution given in the hint.  Applying $\otimes_B \bk$, and using the
assumption $R \cap (RV+VR)=\{0\}$, we get a complex $S \to R \to V
\onto \bk$. We claim that the maps are zero. The map $R \to V$ is zero
by the assumption $R \subseteq ((TV)_+)^2$. The final step, showing
that $S \to R$ is zero, uses the full strength of the assumption $R
\cap (RV+VR)=\{0\}$.  Namely, we can assume $S = \tilde K = (TV \cdot
R) \cap (TV \cdot R \cdot TV_+)$.  Then the kernel of $S \to R$ is
$(TV_+ \cdot R) \cap (TV \cdot R \cdot TV_+)$.  This is all of $S$,
though, because if $\sum_i f_i r_i = \sum_i g_i r_i' h_i'$ for
$f_i,g_i \in TV$, $h_i \in TV_+$, and $r_i, r_i' \in R$, then we can
write $f_i = \lambda_i + f_i^+$ for $f_i^+ \in TV_+$, and we then have
$\sum_i \lambda_i r_i = -\sum_j f_j^+ r_j + \sum_i (g_i r_i' h_i') \in
R \cap (RV+VR)$, which must be zero.  Hence $f_i = f_i^+$, and $(TV_+
\cdot R) \cap (TV \cdot R \cdot TV_+) = (TV \cdot R) \cap (TV \cdot R
\cdot TV_+)$, so the map $S \to R$ is zero, as desired.
\end{proof}

\begin{proof}[Solution to Exercise \ref{exer:quadratic}]
  This follows from the previous exercise.  Namely, in the graded
  case, we can certainly assume $B = TV/(R)$ where $R$ is spanned by
  homogeneous elements in degrees $\geq 2$. It follows from part (d)
  that $V \cong \Tor_1(\bk,\bk)$. If $R$ is minimal, then it follows
  from part (e) that $R \cong \Tor_2(\bk, \bk)$. Hence, if
  $\Tor_2(\bk,\bk)$ is concentrated in degree two, $B$ is quadratic by
  taking minimal $R$. The converse is immediate from part (e) above.
\end{proof}

\begin{proof}[Solution to Exercise \ref{exer:koszul}]
  If we apply $\otimes_B \bk$ to
  \eqref{e:k}, we immediately get that (2) implies (1).  We
  show that (1) implies (2). Take a minimal graded
  projective resolution as in \eqref{e:k}, where by minimal we mean
  that the Hilbert series of the $V_i$ are minimal (which uniquely
  determines the resolution, inductively constructing $B \otimes V_i
  \to B \otimes V_{i-1}$ so that $h(V_i;t)$ is minimal). Then $V_i$ is
  in degrees $\geq i$ for all $i$. We show by induction that $V_i$ is in
  degree exactly $i$ for all $i$.  Indeed, if not, and $m$ is minimal
  such that $V_m$ is not only in degree $m$, then applying $\otimes_B \bk$
  to the minimal resolution, we obtain that $\Tor_m(\bk,\bk) \cong V_m$ is not
  concentrated in degree $m$, a contradiction.  
\end{proof}

\begin{proof}[Solution to Exercise \ref{exer:kdp-filt}]
(a) Taking associated graded, $\gr_3(E \otimes V \cap (V+\bk) \otimes E)
  \subseteq \gr_3(E \otimes V) \cap \gr_3((V+\bk) \otimes E) = R\otimes V
  \cap V \otimes R$.

  (b) The fact that the sequences are complexes is straightforward,
  and the exactness at $B \otimes V$, $B$, $A\otimes V$, and $A$ was
  already observed in Exercise \ref{exer:aug}.(d). Now, restrict the
  first complex (which is a complex of graded modules) to degrees
  $\leq 3$. Then, its exactness follows from Exercise
  \ref{exer:aug}.(e), since in this case the degree $\leq 3$ part of
  $(R)V \cap (TV)_+ \cdot R$ is just $RV \cap VR$, which is $S$ by
  definition.

  (c) Note that the maps $A \otimes T \to A \otimes E$ and $B \otimes
  S \to B \otimes R$ are injective in degrees $\leq 3$.  Restricting
  to degrees $\leq 3$, the second complex deforms the first except
  possibly at $A \otimes T$.  Moreover, in this situation, the
  associated graded of the kernel of $A \otimes E \to A \otimes V$
  must be $S$ in degree three; since $E$ deforms $R$, this implies
  that the kernel lies in $(V \oplus \bk) \otimes E$, and hence must
  be $(V+k)E \cap EV$ (here the parentheses do \emph{not} denote an
  ideal).


(d) The solution is outlined in the problem sheet.
\end{proof}


\begin{proof}[Solution to Exercise \ref{exer:cont-fd}]
  Here we note that, if $B$ is finite-dimensional, then
  $B[\![\hbar]\!] = B \otimes_{\bk} \bk[\![\hbar]\!]$, the ordinary
  tensor product (allowing only finite linear combinations).
  Explicitly, if $b_1,\ldots,b_n$ is a basis of $B$, then
  $B[\![\hbar]\!]$ is a free $\bk[\![\hbar]\!]$-module with basis
  $b_1,\ldots,b_n$.  Then, \eqref{e:star-cont} follows from the fact
  that $bf \star b'g = (b \star b') fg$ for $b,b' \in B$ and $f,g \in
  \bk[\![\hbar]\!]$.

  On the other hand, if $B$ is infinite-dimensional, then this
  argument does not apply: knowing $b \star b'$ only will determine
  $(b_1 f_1 + \cdots + b_m f_m) \star (c_1 g_1 + \cdots + c_n g_n)$ by
  $\bk[\![\hbar]\!]$-linearity, for $b_1,\ldots,b_m,c_1,\ldots,c_n \in
  B$ and $f_1,\ldots,f_m,g_1,\ldots,g_n \in \bk[\![\hbar]\!]$.  So
  multiplying two series with coefficients linearly independent in $B$
  will not be determined by $\bk[\![\hbar]\!]$-linearity from the star
  product on $B$, i.e., \eqref{e:star-cont} need not hold.  (We will
  not attempt to construct a counterexample, however.)
\end{proof}
\begin{proof}[Solution to Exercise \ref{exer:moyal}]
(a) This follows immediately from the formula: in particular,
  $x_i \star y_j = x_i y_j + \frac{1}{2} \hbar \delta_{ij}$, $y_j
  \star x_i = x_i y_j - \frac{1}{2} \hbar \delta_{ij}$, $x_i \star x_j
  = x_i x_j$, and $y_i \star y_j = y_i y_j$. Also, $f \star z_i = fz_i
  = z_i \star f$ for all $f$.

  (b) This follows because $e^{\frac{1}{2} \hbar \pi} = \sum_{n \geq
    0} \frac{1}{n!} \frac{1}{2^n} \hbar^n \pi^n$, and $\pi^n$
  decreases degree by $2n$, so the sum evaluated on any element $f
  \otimes g$ is finite.

  (c) Since we have a star product defined over $\bk[\hbar]$, i.e.,
  $(\cO(X)[\hbar],\star)$, we can quotient by the ideal $(\hbar-1)$
  (which is not the unit ideal) and get back a star product which is
  obtained from the above by setting $\hbar=1$; in particular it is
  clearly a filtered deformation of the undeformed product.

  (d) By part (a), the given map is a homomorphism, and it is clearly
  surjective.  To test injectivity, it suffices to show that the
  associated graded morphism is injective, but the latter morphism is
  the identity.  Uniqueness follows because the $x_i, y_i$, and $z_i$
  generate the source (they also generate the target, of course).

  (e) If we show that the map, call it $\Phi$, is an algebra morphism,
  then it will automatically invert the morphism of (d), since it
  sends the generators $x_i, y_i$, and $z_i$ to the generators $x_i,
  y_i$, and $z_i$.  We have to show that, for generators $f_i,g_j \in
  \{x_1, \ldots, x_m, y_1, \ldots, y_m, z_1, \ldots, z_{n-2m}\}$,
  that, with multiplication taken in the Weyl algebra,
\begin{equation}\label{e:moyal-id}
\Phi(f_1\cdots f_n)\Phi(g_1\cdots g_p) = 
\Phi(f_1 \cdots f_n\star g_1 \cdots g_p).
\end{equation}
The LHS can be expanded as 
\begin{equation}\label{e:moyal-id-lhs}
\frac{1}{n! p!} \sum_{\sigma \in S_n, \tau \in S_p}
f_{\sigma(1)} \cdots f_{\sigma(n)} g_{\tau(1)} \cdots g_{\tau(p)}.
\end{equation}
Now we can attempt to symmetrize this by applying the relations of the
Weyl algebra.  More precisely, recall that an $n,p$-shuffle is a
permutation $\theta \in S_{n+p}$ such that $\theta(1) < \cdots <
\theta(n)$ and $\theta(n+1) < \cdots < \theta(n+p)$.  Let
$\text{Sh}_{n,p} \subseteq S_{n+p}$ be the set of all such shuffles,
which has size $\frac{(n+p)!}{n!p!}$. For each $\theta \in
\text{Sh}_{n,p}$, and each summand $f_{\sigma(1)} \cdots f_{\sigma(n)}
g_{\tau(1)} \cdots g_{\tau(p)}$ of the above, we can rearrange the
terms according to the shuffle $\theta$, by moving first
$f_{\sigma(n)}$ to its proper place, then $f_{\sigma(n-1)}$, etc.;
each time we move an $f_i$ past a $g_j$, we apply the relation $f_i
g_j = g_j f_i + [g_j,f_i]$. Since the $g_j$ and $f_i$ are generators
of the Weyl algebra, $[g_j, f_i] \in \bk[\![\hbar]\!]$.  Doing this,
we obtain the following identity. Define the following index set we
will use for the summation that results:
\[
\text{Pairs}_{n,p}:=\{(I,J,\iota) \mid I \subseteq
\{1,\ldots,n\}, J \subseteq \{1,\ldots,p\}, \iota: I \to J \text{ bijective}\},
\]
which is the set of partial pairings between $\{1,\ldots,n\}$ and
  $\{1,\ldots,p\}$. Define the subset
\[
\text{Pairs}_{\theta} =\{(I,J,\iota) \in \text{Pairs}_{n,p} \mid
\iota(i) < \theta(i), \forall i \in I\}.
\]
For each $1 \leq i \leq
n$, let $[f_i]$ denote $f_i$ if $i \notin I$, and otherwise let it
denote $[f_i, g_{\iota(i)}]=\{f_i, g_{\iota(i)}\}$.  Let $\{g_j\}$
denote $g_j$ if $j \notin J$ and $1$ otherwise.  We then get
\[
f_1 \cdots f_n g_1 \cdots g_p 
= \sum_{(I,J,\iota) \in \text{Pairs}_{\theta}} 
 \theta\bigl(\prod_{i=1}^n [f_{i}] \prod_{j=1}^n \{g_{j}\}\bigr),
\]
where the $\theta$ rearranges the following $n+p$ terms according to
the permutation.  Call the RHS $R_\theta$.  We can then write the LHS
as $\frac{n!p!}{(n+p)!}\sum_{\theta \in \text{Sh}_{n,p}} R_\theta$, a
symmetrization.  Applying this to \eqref{e:moyal-id-lhs}, one can check
that we get the
RHS of \eqref{e:moyal-id} identically. Namely, both yield the following
expansion:
\[
\sum_{(I,J,\iota) \in \text{Pairs}_{n,p}} 
 \frac{1}{2^{|I}} 
\symm \bigl(\prod_{i=1}^n [f_{i}] \prod_{j=1}^n \{g_{j}\}\bigr),
\]
where now we take the product in the symmetric algebra and apply
$\symm: \Sym V \to \Weyl(V)$.  This is so because, by symmetry, both
sides have to be a sum of the above form but possibly with
coefficients $c_{(I,J,\iota)}$ replacing $\frac{1}{2^{|I|}}$; the LHS
of \eqref{e:moyal-id} has these coefficients being the probability
that, in a random ordering of $f_1,\ldots,f_n,g_1,\ldots,g_p$, we have
$f_i$ occurring before $g_{\iota(i)}$ (which is $\frac{1}{2^{|I|}}$),
and the RHS is $\frac{1}{2^{|I|}}$ by definition. See also \cite[\S
  4.3]{GSmoy} for a similar proof in the context of a Moyal product on
  algebras defined from quivers.
\end{proof}
\begin{proof}[Solution to Exercise \ref{exer:moyal-weyl-unique}]
  Since $\pi^2$ annihilates $v \otimes w$ when $v,w$ are generators,
  it is immediate that any quantization will satisfy the relations of
  part (a).  Therefore the map given is an isomorphism.  The fact that
  it is the identity modulo $\hbar$ is equivalent to the statement
  that the associated graded morphism is the identity endomorphism of
  $\cO(\bA^n)[\![\hbar]\!]$, which is immediate from the definition.
\end{proof}
\begin{proof}[Solution to Exercise \ref{exer:cont-fd2}]
  As above, if $A$ is finite-dimensional, then $A[\![\hbar]\!] = A
  \otimes_\bk \bk[\![\hbar]\!]$, so the space of
  $\bk[\![\hbar]\!]$-linear endomorphisms of $A$ is identified with
  $\End_{\bk}(A) \otimes \bk[\![\hbar]\!]$, and every element therein
  is continuous.  Note that, if $A$ is infinite-dimensional, then a
  $\bk[\![\hbar]\!]$-linear automorphism need not be continuous.
\end{proof}

\begin{proof}[Solution to Exercise \ref{exer:koszul-complexes}]
 (a) Here is a bimodule resolution of $\Sym V$:\footnote{It
    actually can be directly obtained from the given resolution of
    $\bk$,
    if one notices that $\Sym V$ is a Hopf algebra: to go from the
    above resolution to the bimodule one, one applies the functor of
    induction from $\Sym V$ to $(\Sym V)^{\otimes 2}$ (using the Hopf
    algebra structure); for the opposite direction, one applies the
    functor $\otimes_{\Sym V} \bk$ (which exists for arbitrary
    augmented algebras).}
\begin{gather}
  0 \to \Sym V \otimes \wedge^{\dim V} V \otimes \Sym V \to \Sym V
  \otimes \wedge^{\dim V - 1} V \otimes \Sym V \to \cdots \notag \\
  \to \Sym V \otimes
  V \otimes \Sym V \to \Sym V \otimes \Sym V \onto \Sym V, \\
  f \otimes (v_1 \wedge \cdots \wedge v_i) \otimes g \mapsto
  \sum_{j=1}^i (-1)^{j-1} (fv_j) \otimes (v_1 \wedge \cdots \hat v_j
  \cdots \wedge v_i) \otimes g \notag \\ +(-1)^{j} f \otimes (v_1
  \wedge \cdots \hat v_j \cdots \wedge v_i) \otimes (v_j g).
\end{gather}
Cutting off the $\Sym V$ term and applying $\Hom_{(\Sym V)^e} (-,\Sym
V \otimes \Sym V)$, we get the same complex, up to the isomorphism
$\wedge^{\dim V - i} V^* \cong \wedge^{i} V$, which comes from
contracting with a nonzero element of $\wedge^{\dim V} V$.  Thus, we
compute that $\HH^\bullet(\Sym V, \Sym V \otimes \Sym V) = \Sym
V[-\dim V]$, as desired.

(b) The latter complex deforms to give a resolution of $\Weyl(V)$,
given by the same formula as above:
\begin{gather}
  0 \to \Weyl(V) \otimes \wedge^{\dim V} V \to \Weyl(V)\otimes
  \wedge^{\dim V - 1}
  V \to \cdots \notag \\ \to \Weyl(V) \otimes \Weyl(V) \onto \Weyl(V), \\
  f \otimes (v_1 \wedge \cdots \wedge v_i) \otimes g \mapsto
  \sum_{j=1}^i (-1)^{j-1} (fv_j) \otimes (v_1 \wedge \cdots \hat v_j
  \cdots \wedge v_i) \otimes g +\notag\\(-1)^{j} f \otimes (v_1 \wedge
  \cdots \hat v_j \cdots \wedge v_i) \otimes (v_j g).
\end{gather}
As pointed out, to see it is a resolution, we only need to verify it
is a complex, which is straightforward. Now, if we cut off the last
term $\Weyl(V)$ and apply $\Hom_{\Weyl(V)^e}(-,\Weyl(V))$, just as
before we get the same complex as before applying this functor, so the
homology will again be concentrated in degree $d$:
$HH^\bullet(\Weyl(V), \Weyl(V) \otimes \Weyl(V)) = \Weyl(V)[-\dim V]$,
as desired.

(c) The fact that this forms a complex, call it $P_\bullet$ (cutting
off the $U\mfg$, setting $P_0=U \mfg \otimes U \mfg$), is a
straightforward verification.  Then, setting $Q^\bullet :=
\Hom_{U\mfg^e}(P_\bullet, U\mfg \otimes U\mfg)$, we may not any longer
get an isomorphic complex (this is related to the fact that $U\mfg$
is not Calabi-Yau in general, as we will see in the next exercise
sheet), but the associated graded
complex is still the resolution of $\Sym V$, and hence it is still a
deformation of the resolution of $\Sym V$.  Thus, by the same argument
as before, $Q^\bullet$ is a resolution of its $\dim V$-th
cohomology, $\HH^{\dim V}(U\mfg, U\mfg \otimes U\mfg)$, which is a
filtered deformation of the bimodule $\Sym V$ (i.e., it is a filtered
$U\mfg$-bimodule whose associated graded $\Sym V$-bimodule is $\Sym
V$). 


(d) The first paragraph is straightforward to verify explicitly
following the details given.

For the final assertion, the first isomorphism is immediate from the
first paragraph.  For the second, we can write $U\mfg^{ad} = Z(U\mfg)
\oplus W$ where the center, $Z(U\mfg)$, is the sum of all trivial
subrepresentations, and $W$ is the sum of all nontrivial irreducible
subrepresentations.  Thus, $H_{CE}^{\bullet}(\mfg,W)=0=H^{CE}_\bullet(\mfg,W)$,
and we obtain the final isomorphism since $Z(U\mfg) = Z(U\mfg)
\otimes_{\bk} \bk$ (regarding the latter as the vector space
$Z(U\mfg)$ tensored with the trivial representation $\bk$).
\end{proof}

\begin{proof}[Solution to Exercise \ref{exer:sra-flat}]
  Since the deformation of the quadratic relation is by a scalar term
  only, denoting by $E \subseteq V^{\otimes 2} + \bk$ the deformed
  relation, we have (referring to Exercise \ref{exer:kdp-filt}) that
  $T = E \otimes V \cap (V+\bk) \otimes E$ must equal $E \otimes V
  \cap V \otimes E$.  Next note that $S = R \otimes V \cap V \otimes
  R$ is spanned by half of the undeformed Jacobi identity, $[x,y]
  \otimes z + [y,z] \otimes x + [z,x] \otimes y$. Therefore, the map
  $\gr(T) \to S$ is an isomorphism if and only if the given (deformed)
  Jacobi identity is satisfied.
\end{proof}

\begin{proof}[Solution to Exercise \ref{exer:deduce-kdp-filt}]
(a) Given only $A$ together with $\phi: A/\hbar A \iso B$, $A$
  is generated by $\phi^{-1}(V)$ as a topological
  $\bk[\![\hbar]\!]$-module.  This follows because it is so generated
  modulo $\hbar$, and $A$ is $\hbar$-adically complete as it is a
  topologically free $\bk[\![\hbar]\!]$-module.  So we have a
  continuous surjection $q: TV[\![\hbar]\!] \to A$.  Now, for every
  element $r \in R$, we have $q(r) \in \hbar A$, so there exists a
  power series $r' \in \hbar TV[\![\hbar]\!]$ such that $q(r+\hbar r')
  = 0$.  Let $\{r_i\}$ be a basis of $R$ and let $\{r_i'\}$ be as
  before. Let $E$ be the span of the $r_i'$. Then $E \to
  TV[\![\hbar]\!] \to TV$ is an isomorphism onto $R$. We also have an
  obvious surjection $\psi: TV[\![\hbar]\!]/(E) \to A$. Moreover, the
  composition of this with the quotient to $A/\hbar \cong B$ clearly
  is the identity on $V$. We have only to show that $\psi$ is an
  isomorphism.  To see this, consider the $\hbar$-adic filtration.  We
  get the surjection $\gr(\psi): TV[\![\hbar]\!]/(R) \to
  \gr_{\hbar}(TV[\![\hbar]\!]/(E))=\gr_{\hbar}(A)$.  It suffices to
  show that this is an isomorphism. Composing with the isomorphism
  $\gr_{\hbar}(A) \cong B[\![\hbar]\!]$, we obtain a surjection
  $TV[\![\hbar]\!](R) \to B[\![\hbar]\!]$, which is nothing but the
  canonical isomorphism.  So $\gr(\psi)$ is an isomorphism, and hence
  so is $\psi$.

(b) Most of the details here are provided; we leave to the reader to
  fill in the missing ones.
\end{proof}

\begin{proof}[Solution to Exercise \ref{exer:koszul-def}]
Suppose $A$ is a graded formal deformation over
  $\bk[\![\hbar]\!]$ of a Koszul algebra $B$.  Since $B$ is Koszul, it
  admits a graded free bimodule resolution $P_\bullet \to B$, with
  each $P_i$ generated in degree $i$.  The same argument as in
  Exercise \ref{exer:koszul}.(d) (taking now the
  $\hbar$-adic filtration, instead of the weight filtration) shows
  that $P_\bullet \to B$ deforms to a graded bimodule resolution
  $P^{\hbar}_\bullet \to A$, with each $P^{\hbar}_i$ free over $A$ of
  the same rank as $P_i$ over $B$.  Moreover, $P^{\hbar}_i$ is also
  generated in degree $i$, so $A$ is Koszul over $\bk[\![\hbar]\!]$ in the sense given.
\end{proof}



\begin{proof}[Solution to Exercise \ref{exer:asigma}]
 (a) We follow the hint. To see
  $\sigma$ is a $\bk$-algebra homomorphism, note that $\sigma(ab)
  \cdot 1 = 1 \cdot ab = (1 \cdot a) \cdot b = \sigma(a) \cdot 1 \cdot
  b = \sigma(a) \sigma(b) \cdot 1$.  Then
  $\sigma(ab)=\sigma(a)\sigma(b)$ because $M$ is a free left module.
  It is clear that $\sigma(1)=1$, and easy to check that $\sigma$ is
  $\bk$-linear. Then, let $\tau$ be defined by $h \cdot 1 = 1 \cdot
  \tau(h)$.  Then it is clear that $\sigma \circ \tau = \tau \circ
  \sigma = \Id$.

  (b) This follows immediately from (a), noting that, as a graded
  module on either side, $M \cong A$ (since $M$ is generated in degree
  zero), and $\sigma$ must be a graded automorphism.
\end{proof}

\begin{proof}[Solution to Exercise \ref{exer:hh-bimod}]
  This follows from the hint: we only need to observe that the given
  action on \eqref{e:hoch-cocomplex} is indeed compatible with the
  differential and outer bimodule structure. More generally, if $M$ is
  a $A^e \otimes B$-module, then $\HH^\bullet(A, M)$ always has a
  canonical $B$-module structure; in this case we can view $M=A
  \otimes A$ as an $A^e \otimes A^e$-module, where the first $A^e$
  acts via the outer action and the second $A^e$ via the inner action.
\end{proof}

\begin{proof}[Solution to Exercise \ref{exer:kos-cy}]
  (a) We computed $\HH^\bullet(\Sym V, \Sym V \otimes \Sym V)$ and
  $\HH^\bullet(\Weyl(V), \Weyl(V) \otimes \Weyl(V))$ in the last
  exercise sheet, as vector spaces.  If we take care of the bimodule
  action, we see that $\HH^{\dim V}(\Sym V, \Sym V \otimes \Sym V)$
  really is $\Sym V$ as a bimodule, since the bimodule complex
  computing this is isomorphic to the original one computing $\Sym V$
  as a bimodule.  The same is true for $\Weyl(V)$.

(b) The resulting complex is still exact because $\bk[\Gamma]$ is
flat over $\bk$ (in fact, free), and we haven't changed the differential
by applying $\otimes_{\bk} \bk[\Gamma]$. We only need to check we get
a complex of $A \rtimes \Gamma$-bimodules, and this follows directly from
the definition of the bimodule structure.  For the final assertion, note
that, for general $\Gamma < \GL(V)$, we can still consider
$\Sym V \rtimes \Gamma$, and the above yields a bimodule resolution.
Applying $\Hom_{(\Sym V)^e}(-, \Sym V)$, we get a complex computing $\HH^\bullet(\Sym V, \Sym V \otimes \Sym V)$, and we get that this is $(\Sym V)^\sigma[-d]$,
where $\sigma$ is the automorphism $\sigma(v \otimes \gamma) =
\det(\gamma) (v \otimes \gamma)$.  So this is trivial if $\Gamma < \SL(V)$,
which yields that in this case $\Sym V \rtimes \Gamma$ is trivial.
\begin{remark}\label{r:asigma-inn}
  For a general algebra $B$ and automorphism $\sigma$, we have $B
  \cong B^\sigma$ as bimodules if and only if $\sigma$ is inner, i.e.,
  of the form $\sigma(a)=x^{-1}ax$ for some invertible $x \in B$.  In
  the above situation of $B=\Sym V \rtimes \Gamma$, and $\sigma(v
  \otimes \gamma)=\det(\gamma)(v \otimes \gamma)$, one can check that
  $\sigma$ is not inner when $\Gamma$ is not in $\SL(V)$, so that
  actually $\Sym V \rtimes \Gamma$ is Calabi-Yau (for $\Gamma$ finite)
  if and only if $\Gamma < \SL(V)$.
\end{remark} 
\end{proof}

\begin{proof}[Solution to Exercise \ref{exer:obstr-nord}]
 Let $\Gamma_n := \sum_{i=1}^n
  \varepsilon^i \gamma_i$.  Then, working in
  $\bk[\varepsilon]/(\varepsilon^{n+2})$, we have $\frac{1}{2}[\mu + \Gamma_n,
  \mu + \Gamma_n] = \delta \varepsilon^{n+1}$, for some $\delta \in
  C^2(A,A)$. Moreover, we claim that $\delta$ is a cocycle. Using
  $[\mu,-]=d(-)$, we have
\[
d(\frac{1}{2}[\mu+\Gamma_n, \mu+\Gamma_n]) = [d\Gamma_n, \mu + \Gamma_n] 
= [d\Gamma_n, \Gamma_n]=
[[\mu,\Gamma_n],\Gamma_n].
\]
Then, since $[\Gamma_n]$ is a multiple of $\varepsilon$ and
$[\mu,\Gamma_n] + \frac{1}{2}[\Gamma_n, \Gamma_n]$ is a multiple of
$\varepsilon^{n+1}$, the RHS equals (modulo $\varepsilon^{n+2}$):
\[
-[\frac{1}{2}[\Gamma_n, \Gamma_n], \Gamma_n] = 0,           .
\]
by the Jacobi identity (which holds on cochains identically). Thus
$\delta$ is indeed a three-cycle, and defines a cohomology class
$[\delta] \in \HH^3(A)$.

Now, to extend $\Gamma_n$ to an $(n+1)$-st order deformation
$\Gamma_{n+1} = \Gamma_n + \varepsilon^{n+1} \gamma_{n+1}$, we need to
solve the equation $\frac{1}{2}[\mu + \Gamma_{n+1}, \mu + \Gamma_{n+1}]=0$
(modulo $\varepsilon^{n+2}$), which simplifies to
\[
d(\gamma_{n+1}) + \delta = 0.
\]
Hence, the condition to extend $\Gamma_n$ to an $(n+1)$-st order
deformation is the condition that the cohomology class $[\delta] \in
\HH^3(A)$ is zero.

Finally, the space of choices of $\gamma_{n+1}$ is equal to
$d^{-1}(\delta)$, which is an affine space on the space of Hochschild
two-cocycles.  If we apply an automorphism of the form $\Phi = \Id +
\varepsilon^{n+1}(f)$, we get that $\Phi \circ (\mu+\Gamma_n) \circ
(\Phi^{-1} \otimes \Phi^{-1})$ is nothing but $\mu + \Gamma_n +
[\varepsilon^{n+1} f, \Gamma_n] = \mu + \Gamma_n -\varepsilon^{n+1}
(df)$.  So, up to these gauge equivalences, the space of extensions to
an $n+1$-st order deformation $\Gamma_{n+1}$ is an affine space on
$\HH^2(A)$ (provided it is nonempty, i.e., $\delta$ is a coboundary).
\end{proof}

\begin{proof}[Solution to Exercise \ref{exer:ug-cy}]
  We computed $\HH^\bullet(U\mfg, U\mfg \otimes U\mfg)$ as a vector
  space in the last exercise sheet.  A little more work computes the
  bimodule structure.  We find that $\HH^{\dim V}(U\mfg, U\mfg \otimes
  U\mfg) \cong U\mfg^{\sigma}$ for $\sigma$ the automorphism
  $\sigma(x)=x - \tr(\ad(x))$ because $\tr(ad(x))$ coincides with the
  action of $\ad(x)$ on $\wedge^{\dim V} V$.  Therefore $U \mfg$ is
  twisted Calabi-Yau, and actually Calabi-Yau when $\tr(ad(x))=0$ for
  all $x$ (i.e., $\mfg$ is unimodular).  For the final statement,
  note that, if $\mfg = [\mfg, \mfg]$, then for some $x_1^{(i)},
  x_2^{(i)} \in \mfg$, we have $x = \sum_i [x_1^{(i)}, x_2^{(i)}]$,
  and hence $\tr(\ad(x)) = \sum_i \tr([\ad(x_1^{(i)}),
  \ad(x_2^{(i)})]) = 0$. It is clear that the adjoint action, hence
  also its trace, is zero on an abelian Lie algebra, and we conclude
  that all reductive Lie algebras are unimodular.
\end{proof}
\begin{remark}
  If $\tr(\ad(x)) \neq 0$ for some $x$, then $U \mfg^{\sigma}$ is not
  isomorphic to $U \mfg$, since the automorphism $\sigma$ is not
  inner. In fact, the only invertible elements of $U\mfg$ are scalars,
  since $fg = 1$ implies $\gr(f) \gr(g) = 1$ as $\Sym \mfg$ has no
  zero divisors.  (We remark that we could alleviate this by
  completing $U \mfg$ at the augmentation ideal $(\mfg)$, in which
  case all elements not in the ideal are invertible; however, $\sigma$
  does not preserve this ideal so it still cannot be inner.) So
  $U\mfg$ is Calabi-Yau if and only if $\mfg$ is unimodular.
\end{remark}

\begin{proof}[Solution to Exercise \ref{exer:hoch-sg}]
 (a) The details for the first isomorphism (for Hochschild
  cohomology) are already given in the hint.  We omit the similar
  computation for Hochschild homology.

(b) Write $A \rtimes \Gamma = \bigoplus_{g \in G} A \cdot g$
as an $A$-bimodule. For every pair of elements $g,h \in \Gamma$,
we have an isomorphism by conjugation,
$\Ad(g): \HH^\bullet(A, A \cdot h) \iso \HH^\bullet(A, A \cdot (ghg^{-1}))$.
Therefore, letting $\mathcal{C}_h := \Ad(\Gamma) \cdot h$ denote
the conjugacy class, we obtain, as a $\Gamma$-representation,
\[
\HH^\bullet(A, A \cdot \mathcal{C}_h) \cong \Ind_{Z_h(\Gamma)}^\Gamma \HH^\bullet(A, A \cdot h),
\]
and taking $\Gamma$-invariants, we get $\HH^\bullet(A, A \cdot h)^{Z_h(\Gamma)}$.
Summing over all conjugacy classes yields the desired formula.

(c) The necessary details are in the hint (we omit the corresponding
ones for Hochschild homology).

(d) The necessary details here are also given; see \cite{AFLS} for the
full details (it is only a few pages).
\end{proof}


\begin{proof}[Solution to Exercise \ref{exer:pre-lie}]
First, we verify the final assertion, that the skew-symmetrization
of a dg (right) pre-Lie product is a Lie bracket.  This is because the
pre-Lie identity, upon skew-symmetrization, becomes a multiple of the Jacobi identity, and the compatibility with the differential carries over to the skew-symmetrization.  


Going back to the situation of $C^\bullet(A)[1]$ for $A$ an associative
algebra, we have to check that $\circ$ is a derivation and that it
satisfies the right pre-Lie identity. The derivation property follows
from the fact that $d(\gamma \circ \eta)$ and
$d(\gamma) \circ \eta +(-1)^{|\gamma|} \gamma \circ (d \eta)$, applied
to $(a_1 \otimes \cdots \otimes a_{m+n}$, are both
\begin{multline*}
\sum_{i=1}^m (-1)^{(i-1)(n+1)}\bigl( a_1\gamma(a_2 \otimes \cdots \otimes a_{i} \otimes \eta(a_{i+1} \otimes \cdots \otimes a_{i+n}) \otimes a_{i+n+1} \otimes \cdots \otimes a_{m+n}) + \\
\sum_{k=1}^{i-1}(-1)^k\gamma(a_1 \otimes \cdots a_k a_{k+1} \cdots \otimes a_{i} \otimes \eta(a_{i+1} \otimes \cdots \otimes a_{i+n}) \otimes a_{i+n+1} \otimes \cdots \otimes a_{m+n}) + \\
\sum_{k=i}^{i+n-1} (-1)^k \gamma(a_1 \otimes \cdots \otimes a_{i} \otimes \eta(a_{i+1} \otimes \cdots a_k a_{k+1} \cdots \otimes a_{i+n}) \otimes a_{i+n+1} \otimes \cdots \otimes a_{m+n}) + \\
\sum_{k=i+n}^{m+n-1}  (-1)^k \gamma(a_1 \otimes \cdots \otimes a_{i} \otimes \eta(a_{i+1} \otimes \cdots \otimes a_{i+n}) \otimes a_{i+n+1} \otimes \cdots 
a_k a_{k+1} \cdots \otimes a_{m+n}) + \\
 (-1)^{m+n} \gamma(a_1 \otimes \cdots \otimes a_{i} \otimes \eta(a_{i+1} \otimes \cdots \otimes a_{i+n}) \otimes a_{i+n+1} \otimes \cdots \otimes a_{m+n-1})a_{m+n} \bigr).
\end{multline*}
The basic point is that the terms on the LHS are all printed above:
they involve multiplying adjacent components either inside $\gamma$ or
inside $\eta$ with a sign.  The corresponding terms on the RHS all
occur with the same sign, and the RHS also has some terms which cancel
pairwise, of the form $\pm \gamma(a_1 \otimes \cdots \otimes a_i
\eta(a_{i+1} \otimes \cdots \otimes a_{i+n}) \otimes \cdots \otimes
a_{m+n})$ and similarly replacing the middle by $\eta(a_1 \otimes
\cdots \otimes a_{i+n-1})a_{i+n}$.  One could (and should) think of
this diagrammatically, where $\eta$ takes $n$ successive inputs to one
output, $\gamma$ takes $m$ successive inputs to one output, and the
multiplication map takes two successive inputs to one output, and then
both the LHS and RHS express as the same signed combination of
diagrams.

A similar, but more involved, direct computation shows that the
pre-Lie identity is satisfied (see \cite{Ge}).  The basic idea is
that, diagrammatically, both sides are the ways of applying $\eta$ to
some successive inputs and $\theta$ simultaneously to other successive
inputs, and finally applying $\gamma$ to the result.
\end{proof}

\begin{proof}[Solution to Exercise \ref{exer:circ-prod}]
(a) The identity is again a similar explicit computation; we refer
  to \cite{Ge} for details. Both sides diagrammatically give the
  result of applying $\gamma_1$ and $\gamma_2$ each to different
  blocks of successive inputs (so $\gamma_1$ to either the leftmost or
  rightmost $|\gamma_1|$ inputs, and $\gamma_2$ to the other inputs)
  and then multiplying the result, with an appropriate sign.  On the
  RHS, all other ways of applying $\gamma_1, \gamma_2$, and the
  multiplication cancel pairwise.

  One can similarly verify the identity that the cup product is
  compatible with the differential, so it descends to a binary
  operation on cohomology.  Using the identity, we see that the LHS is
  a coboundary, hence the cup product is symmetric on cohomology,
  as desired. 

  Note that, since the cup product is associative on cochains, it is
  also on cohomology, so we get a commutative graded algebra structure
  on Hochschild cohomology.

(b) Note that the Gerstenhaber bracket is the
skew-symmetrization of the circle product, and so $d[\gamma_1,
\gamma_2] - [d\gamma_1, \gamma_2] - (-1)^{|\gamma_1|}[\gamma_1,
d\gamma_2]$ is the skew-symmetrization of the LHS, which is zero.  So
the Gerstenhaber bracket is indeed compatible with the differential,
which implies we obtain a Gerstenhaber bracket on Hochschild
cohomology.

We remark that one can similarly directly verify the Leibniz identity
for the Gerstenhaber bracket on Hochschild cohomology (see \cite{Ge}).
\end{proof}

\begin{proof}[Solution to Exercise \ref{exer:gauge}]
Since we have assumed $G < \GL_n$ and $\mfg < \mfgl_n$ and
  $\bk=\bR$ or $\bC$, we can write
\[
\gamma \cdot d(\iota \circ \gamma) = \exp(\beta) \cdot
d(\exp^{-\beta}) = \Ad(\exp^{\beta})(d) = \exp(\ad(\beta))(d) =
\frac{\exp(\ad(\beta)) - 1}{\ad(\beta)} (-d \beta),
\]
where $\Ad(\exp^{\beta})(d)$ and $\exp(\ad(\beta))(d)$ are just two
formal expressions (standing for certain infinite linear combinations
of terms of the form $\beta^a d \beta^b$, which are equal because
$\Ad(\exp^{-}) = \exp(\ad(-))$ as formal series).  This implies
\eqref{e:gauge-exp}.
\end{proof}

\begin{proof}[Solution to Exercise \ref{exer:pb-sn}]
From $\pi \in \wedge^2 \Vect(X)$ we define
the bracket $\{f,g\}=\pi(f \otimes g)$.  The skew-symmetry
of the bracket is immediate from the skew-symmetry of $\pi$,
and the Leibniz rule follows from the fact that $\Vect(X)$
acts by derivations on $\cO(X)$.

We need to check that $[\pi, \pi]=0$ if and only if the Jacobi
identity is satisfied.  
Write $\pi = \sum_i \xi_i^{(1)} \otimes \xi_i^{(2)}$, a skew-symmetric element of $\Vect(X)^{\otimes 2}$
(which is identified with the image of
 $\sum_i \xi_i^{(1)} \wedge \xi_i^{(2)}$ 
under the skew-symmetrization map $\wedge^2 \Vect(X) \to \Vect(X)^{\otimes 2}$). 
Then we have
\[
[\pi,\pi] = 4 \sum_{i,j} [\xi_i^{(1)}, \xi_j^{(1)}] \wedge \xi_j^{(2)}
\wedge \xi_i^{(2)}.
\]
Let $\Phi = \sum_{i,j} [\xi_i^{(1)}, \xi_j^{(1)}] \otimes \xi_j^{(2)}
\otimes \xi_i^{(2)}$. Then
\[
\Phi(f \otimes g \otimes h) = \{\{f,g\},h\}-\{\{f,h\},g\}.
\]
Skew-symmetrizing and multiplying by four, we get
\[ [\pi,\pi](f \otimes g \otimes h) = -\frac{8}{3} (\{f,\{g,h\}\} +
\{g,\{h,f\}\} + \{h,\{f,g\}\}),
\]
which implies that $[\pi,\pi]=0$ if and only if the Jacobi identity
is satisfied.
\end{proof}

\begin{proof}[Solution to Exercise \ref{exer:linf-dgla}]
Given a dgla morphism $\phi: \mfg \to \mfh$, we need to show
that the induced map $\phi^*: \hat S (\mfh[1])^* \to
\hat S (\mfg[1])^*$ is a dg algebra morphism.  It is clear that it is
an algebra morphism, since this only requires $\phi$ to be linear.  
We need to show that $\phi^*$ is compatible with the differential.  This follows
from the fact that $\mfg \to \mfh$ is both compatible with the differential
and with the Lie bracket, since the differential $d_{CE}$ is the sum of two
terms, one corresponding to each.  Namely, for $\xi \in \mfh[1]^*$
and $a,b \in \mfg$, we have
\begin{gather*}
  \phi^* \circ d_{CE}(\xi)(a \otimes b) = \xi([\phi(a),\phi(b)]) =
  \xi(\phi([a,b])) = d_{CE} \circ \phi^*(\xi)(a \otimes b), \\ \phi^*
  \circ d_{CE}(\xi)(a) = \xi(d \phi(a)) = \xi(\phi(da)) = d_{CE} \circ
  \phi^*(\xi)(a). \qedhere
\end{gather*}
\end{proof}

\begin{proof}[Solution to Exercise \ref{exer:kont-ug-iso}]
  The Poisson bivector has degree $-1$, or equivalently, $|\pi(f
  \otimes g)| = |f|+|g|-1$ when $f$ and $g$ are homogeneous.  Hence,
  the only possible graphs that can be nonzero applied to $v \otimes
  w$ for $v,w \in \mfg$ are those corresponding to the product, $vw$,
  the Poisson bracket $\{v,w\} = \pi(v \otimes w)$, and finally
  $\pi^2(v \otimes w)$.  But, $\pi^2$ is symmetric (since $\pi$ is
  skew-symmetric), and hence $\pi^2(v\otimes w - w \otimes v)= 0$;
  similarly $vw-wv=0$.  Therefore the only graph that can contribute
  to $v \star w - w \star v$ is the one corresponding to the Poisson
  bracket, $\{v,w\}=[v,w]$. We conclude that $v \star w - w \star v =
  c \hbar [v,w]$ for some $c \in \bk$.  Then, the fact that we get a
  deformation quantization implies that $c=1$.
\end{proof}

\begin{proof}[Solution to Exercise \ref{exer:hh3=0}]
The necessary details are given already in the exercise sheet.
\end{proof}

\begin{proof}[Solution to Exercise \ref{exer:kont-duflo}]
The details below (and more) are all taken from \cite{Kform}.

(i) To see that $B_{W_m}(\pi,\ldots,\pi)$ has order $m$, note that
by construction it has $m$ arrows pointed at the vertex on labeled $f$
and each arrow corresponds to differentiation.  Moreover, since $\pi$ has
degree $-1$, this operator has degree $-m$; an operator of degree $-m$ and
order $m$ must be a constant-coefficient operator.

(ii) The formula is equivalent to the statement that, if we apply the
wheel to the function $x^m$ placed at the vertex $f$ (for $x \in \mfg$), we
obtain $\tr((\ad x)^m)$. Note that $\pi(x \otimes -) = \ad(x)(-)$.
Let $x_i$ be a basis of $\mfg$, and write $\ad(x) = \sum_{i,j} c_{ij}
x_i \partial_j$ for some $c_{ij} \in \bk$. Applying the wheel to $x^m$
yields
\[
\sum_{i_1, i_2, \ldots, i_m} c_{i_1 i_2} c_{i_2 i_3} \cdots c_{i_m i_1},
\]
but this is nothing but $\tr((\ad x)^m)$, as desired.

(iii) This follows immediately from the single wheel ($k=1$) case, by
the definition of $B_\Gamma$.

(iv) Kontsevich's isomorphism must be expressed as a formal sum of
graphs, and the graphs must have exactly one vertex labeled $f$
(since the underlying linear map of our isomorphism is the restriction
of a linear map from a single copy of $\Sym \mfg$ to $\Sym
\mfg[\![\hbar]\!]$).  In the graphs that appear, each vertex 
labeled $\pi$
cannot have both of its arrows pointing to the same
vertex, since $\pi$ is skew-symmetric. Also, each vertex 
labeled $\pi$
can be the target of at most one arrow, since $\pi$
is linear: in more detail, if we write $\pi = \sum_{j,k}
f_{jk} \partial_j \wedge \partial_k$ for $f_{jk} \in \mfg$, applying
more than one partial derivative to $f_{jk}$ would yield zero, hence
$B_\Gamma=0$ if $\Gamma$ is a graph with a vertex 
labeled $\pi$ which is
the target of multiple arrows. Since every vertex labeled $\pi$
 is the source of two arrows, the only possibility (to
have $B_\Gamma \neq 0$) is to have every vertex labeled $\pi$
 pointing to both the vertex labeled $f$
and one other vertex,
 such that each vertex labeled $\pi$
is the target of
exactly one arrow.  Such graphs are the union of wheels, so the result
follows from (iii).

(v) The stated results imply that the isomorphism is a sum of the form
\[
x \mapsto \sum_{m_1, \ldots, m_k \text{ even }}
\frac{1}{k!} c_{m_1} \cdots c_{m_k} \tr((\ad x)^{m_1}) \cdots \tr((\ad x)^{m_k}),
\]
which coincides with the given formula (since the $m_1, \ldots, m_k$
can occur in all possible orderings, e.g., $2,4$ and $4,2$ both occur).
\end{proof}


\begin{proof}[Solution to Exercise \ref{exer:hypsfc}] 
More generally, if $f_1, \ldots, f_k \in \cO(\bA^n)$
cut out a variety $X$ of dimension $n-k$,  then we can form
\[
\Xi_X:=i_{\partial_1 \wedge \cdots \wedge \partial_n}(df_1 \wedge
\cdots \wedge df_k) \in \wedge^{n-k}_{\cO(\bA^n)} \Vect(\bA^n),
\]
a $(n-k)$-polyvector field on $\bA^n$.  Note that, by construction,
$i_{\Xi_X}(df_i) = 0$ for all $i$.

We claim that $\Xi_X$ is unimodular in the sense that each vector
field of the form $\xi=i_{\Xi_X}(dh_1 \otimes \cdots \otimes
dh_{n-k-1}) \in \Vect(\bA^n)$ is divergence-free, i.e.,
$L_{\xi}(\vol)=0$ with $\vol=dx_1 \wedge \cdots \wedge dx_n$ the
standard volume form on $\bA^n$.  Equivalently, $i_{\partial_1 \wedge
  \cdots \wedge \partial_n}(\alpha)$ is unimodular whenever $\alpha$
is an exact $(n-1)$-form.  This is a standard local computation, and
it only uses that $\alpha$ is closed: writing $\alpha = \sum_i f_i
dx_1 \wedge \cdots \wedge \widehat{dx_i} \wedge \cdots \wedge dx_n$,
we have $i_{\partial_1 \wedge \cdots
  \wedge \partial_n}(d\alpha)=\sum_i \frac{d f_i}{dx_i}$, which is
the same as the divergence of $i_{\partial_1 \wedge \cdots
  \wedge \partial_n}(\alpha)=\sum_i f_i \partial_i$.

By construction, $\Xi_X$ is parallel to $X$ (since $f_1, \ldots, f_k$
vanish there); algebraically this is saying that we have a
well-defined map,
\[
\Xi_X|_X: \cO(X)^{\otimes (n-k)} \to \cO(X), \quad (g_1 \otimes \cdots
\otimes g_{n-k}) \mapsto i_{\Xi_X}(dg_1 \otimes \cdots \otimes
dg_{n-k}),
\]
which is skew-symmetric and a derivation in each component. As a
result, so is $[\Xi_X, \Xi_X]$, which on $X$ is obviously zero (in the
case $\dim X \geq 2$ this is because its degree is greater than $\dim
X$; for $\dim X = 1$ this is clear).  Thus $[\Xi_X, \Xi_X]=0$.  In the
case $\dim X = 2$, this implies that $\Xi_X$ is a Poisson bivector on
$\bA^n$ (by Proposition \ref{p:pb-sn}).  Finally, to see that $f_i$
are central in this bracket, we recall the identity $i_{\Xi_X}(df_i) =
0$ above, but $i_{\Xi_X}(df_i)$ is the Hamiltonian vector field $\xi_f
:= \{f,-\}$.
\end{proof}

\bibliographystyle{amsalpha}
\bibliography{local-bib}

\end{document}